\documentclass[11pt,a4paper]{article}
\usepackage[T1]{fontenc}
\usepackage[utf8]{inputenc}
\usepackage{lmodern}
\usepackage[margin=28mm]{geometry}
\usepackage{amsmath,amssymb,amsthm,mathtools}
\allowdisplaybreaks[4]
\usepackage{microtype}
\usepackage[colorlinks,linkcolor=blue,anchorcolor=blue,citecolor=blue]{hyperref}

\numberwithin{equation}{section}
\newtheorem{theorem}{Theorem}[section]
\newtheorem{proposition}[theorem]{Proposition}
\newtheorem{lemma}[theorem]{Lemma}
\newtheorem{corollary}[theorem]{Corollary}
\newtheorem*{historicaltheorem}{Theorem}
\newtheorem*{conjecture}{Conjecture}
\theoremstyle{definition}
\newtheorem{definition}{Definition}[section]

\DeclareMathOperator{\Ric}{Ric}
\DeclareMathOperator{\tr}{tr}
\DeclareMathOperator{\PSH}{PSH}

\newcommand{\R}{\mathbb R}
\newcommand{\C}{\mathbb C}

\title{Pseudo-Calabi Flow on Toric Variety}
\author{Junhao Tian \and Kai Zheng}
\date{}
\begin{document}

\maketitle

\begin{abstract}
In this paper, we prove the long time existence of the pseudo-Calabi
flow on a compact toric variety for any $T^n$-invariant initial
K\"ahler metric. If the Mabuchi functional is proper modulo
automorphisms, the flow converges exponentially to a constant scalar
curvature K\"ahler (cscK) metric. In particular, this gives a flow
proof that properness of the K-energy implies the existence of a
cscK metric in the toric setting.
\end{abstract}

\tableofcontents

\section{Introduction}\label{sec:introduction}
The problem of finding a constant scalar curvature K\"ahler metric
in a fixed K\"ahler class is part of Calabi's theory of extremal
metrics \cite{Calabi1982,Calabi1985}. Let $(M,\omega_0)$ be a connected
compact K\"ahler manifold of complex dimension $n$, and put the volume
$V:=\int_M\omega_0^n$. Throughout the paper, we use
$dd^c:=i\partial\bar\partial$.

The metrics in $[\omega_0]$ are represented by the K\"ahler potentials
\begin{equation}\label{eq:calc-s1-001}
 \mathcal H:=\{\varphi\in C^\infty(M,\R):
              \omega_\varphi:=\omega_0+dd^c\varphi>0\}.
\end{equation}
Potentials differing by a constant represent the same metric.
Writing $g_{\varphi,i\bar j}$ for the local coefficients of
$\omega_\varphi$ and $g_\varphi^{i\bar j}$ for their inverse, we set
$\Delta_\varphi:=g_\varphi^{i\bar j}\partial_i\partial_{\bar j}$ and
$R(\omega_\varphi):=\tr_{\omega_\varphi}\Ric(\omega_\varphi)$.
A metric $\omega_\varphi$ is a constant scalar curvature K\"ahler
metric, abbreviated cscK, if
\begin{equation}\label{eq:calc-s1-002}
 R(\omega_\varphi)=\underline R,\qquad
 \underline R:=\frac1V\int_M R(\omega_\varphi)\,\omega_\varphi^n.
\end{equation}
Here $\underline R$ depends only on the K\"ahler class and the
complex structure.

Mabuchi's $K$-energy $\mathcal M:\mathcal H\to\R$
\cite{Mabuchi1986} is normalized by $\mathcal M(0)=0$ and has
first variation
\begin{equation}\label{eq:calc-s1-003}
 \frac d{ds}\mathcal M(\varphi_s)
 =\frac1V\int_M\dot\varphi_s
           \bigl(\underline R-R(\omega_{\varphi_s})\bigr)
                           \,\omega_{\varphi_s}^n
\end{equation}
for every smooth path $\varphi_s$ in $\mathcal H$. Its critical
points are precisely the cscK metrics.

Calabi proposed the flow
\begin{equation}\label{eq:calc-s1-004}
 \partial_t\varphi
 =-\operatorname{grad}_{L^2}\mathcal M(\varphi)
 =R(\omega_\varphi)-\underline R
\end{equation}
as a method for finding canonical metrics
\cite{Calabi1982,Calabi1985}. Here the gradient is taken with
respect to the Mabuchi $L^2$ metric on $\mathcal H$, as described in
\cite[Section~2]{ChengTian2026}:
\begin{equation}\label{eq:calc-s1-005}
 (a,b)_{L^2,\varphi}
 :=\frac1V\int_Mab\,\omega_\varphi^n,\qquad
 \operatorname{grad}_{L^2}\mathcal M(\varphi)
 =\underline R-R(\omega_\varphi).
\end{equation}
The tangent vectors $a,b$ are smooth real functions on $M$.

Short time existence of the Calabi flow holds for every smooth
initial K\"ahler metric; see Chen--He \cite{ChenHe2008}.
For its long time behavior, Li--Wang--Zheng
\cite{LiWangZheng2018} developed estimates for the regularity
scale and proved convergence of the modified Calabi flow on
extremal K\"ahler surfaces, provided that the initial metric can
be joined to an extremal metric by a path of initial metrics
whose Calabi flows all exist for a long time. Their higher
dimensional result also requires a common scalar curvature
bound along these flows. More recently, they proved that a
Calabi flow on a compact K\"ahler surface can be continued
through a finite time whenever the Sobolev constants remain
uniformly bounded \cite{arXiv:2610.05217}.

In view of the difficulties in studying the Calabi flow as a
fourth order nonlinear parabolic equation, we turn to the
pseudo-Calabi flow, which provides another natural and promising
approach to the construction of cscK metrics.

We use the complex gradient convention of \cite{ChengTian2026}:
for a real function $a$ and a K\"ahler metric $\omega$ with local
coefficients $g_{i\bar j}$,
\begin{equation}\label{eq:calc-s1-006}
 |\nabla_\omega a|_\omega^2:=g^{i\bar j}a_i a_{\bar j}.
\end{equation}
The real inner product
$\langle\nabla_\omega a,\nabla_\omega b\rangle_\omega$ is obtained
by polarization. For $\omega=\omega_\varphi$, we write
$\nabla_\varphi$ and $|\cdot|_\varphi$.
Another natural choice is then the Dirichlet metric (see \cite{MR3412344}) on potentials
modulo constants:
\begin{equation}\label{eq:calc-s1-007}
 (a,b)_{D,\varphi}
 :=\frac1V\int_M
       \langle\nabla_\varphi a,\nabla_\varphi b\rangle_\varphi
                                      \,\omega_\varphi^n.
\end{equation}
The corresponding negative gradient flow of $\mathcal M$ is
the pseudo-Calabi flow introduced by Chen--Zheng
\cite{ChenZheng2013}.

\begin{definition}\label{def:pcf}
A smooth path of K\"ahler metrics $\omega_{\varphi(t)}$ is a
pseudo-Calabi flow if, modulo functions of time, its potentials
satisfy
\begin{equation}\label{eq:calc-s1-008}
 \partial_t\varphi
 =\Delta_\varphi^{-1}\bigl(\underline R-R(\omega_\varphi)\bigr).
\end{equation}
Here $\Delta_\varphi^{-1}$ denotes the inverse of $\Delta_\varphi$
on functions of zero $\omega_\varphi^n$-mean.
\end{definition}
The variational formulation in
\cite[Theorem~3.1 and Remark~3.2]{ChenZheng2013}
gives the gradient with respect to the Dirichlet metric:
\begin{equation}\label{eq:calc-s1-009}
 \operatorname{grad}_D\mathcal M(\varphi)
 =\Delta_\varphi^{-1}\bigl(R(\omega_\varphi)-\underline R\bigr).
\end{equation}
Thus the stationary metrics are precisely the cscK metrics.

To exhibit its analytic structure, write
$F:=\log(\omega_\varphi^n/\omega_0^n)$. The identity
\begin{equation}\label{eq:calc-s1-010}
 \Delta_\varphi F
 =\tr_{\omega_\varphi}\Ric(\omega_0)-R(\omega_\varphi)
\end{equation}
gives the following equivalent system, with the sign and zero-mean
normalization of $P$ used in \cite[(1.4)]{ChengTian2026}:
\begin{equation}\label{eq:pcf}
 \partial_t\varphi=F+P,\qquad
 \Delta_\varphi P=\underline R-
               \tr_{\omega_\varphi}\Ric(\omega_0),\qquad
 \int_M P\,\omega_\varphi^n=0.
\end{equation}
The metric at time zero is taken as the reference metric, so
$\varphi(0)=0$. We use this normalization unless otherwise stated.
The system is formally of second order, but $P$ depends on the
metric through an elliptic equation on all of $M$.

Chen--Zheng \cite[Theorems~4.1 and~4.43]{ChenZheng2013}
proved short time existence and uniqueness for smooth initial
K\"ahler metrics, together with instantaneous smoothing for
initial potentials in the little H\"older space $C^{2,\alpha}$,
$0<\alpha<1$, with positive associated metric. Here little
H\"older means the closure of smooth functions in the indicated
norm. They also proved exponential convergence when the initial
potential is sufficiently close in $C^{2,\alpha}$ to a cscK
potential \cite[Theorem~1.3]{ChenZheng2013}.

The ultimate expectation combines the long time existence
conjecture and the convergence question attributed to Chen in
\cite[Conjecture~8.3 and Question~8.5]{ChenZheng2013}.
We state them together as follows.
\begin{conjecture}
On a compact K\"ahler manifold, the pseudo-Calabi flow from every
smooth initial K\"ahler metric exists for all $t\geq0$.
If its K\"ahler class contains a cscK metric, the flow converges
smoothly to a cscK metric in that class as $t\to\infty$.
\end{conjecture}
Once smooth convergence is known, the local stability theorem
gives exponential convergence.

Cheng--Tian \cite[Proposition~1.1]{ChengTian2026} obtained a short
time regularization result for initial potentials plurisubharmonic
with respect to $\omega_0$, with sufficiently small logarithmic
volume ratio in supremum norm. The solution exists on $[0,T_0]$,
where $T_0$ depends only on the smooth background, and is smooth
for $t>0$, with all derivatives bounded away from time zero.
This leads to stability under a $C^0$ perturbation of the volume
form, improving the earlier $C^{2,\alpha}$ condition on the
potential. We recall the full stability statement from
\cite[Theorem~1.2]{ChengTian2026}.

\begin{historicaltheorem}[Cheng--Tian]
Let $(M,\omega_*)$ be a compact cscK manifold. There exists
$\varepsilon_*>0$ such that, for every initial K\"ahler metric
$\omega\in[\omega_*]$ satisfying
\begin{equation}\label{eq:calc-s1-011}
 1-\varepsilon_*\leq\frac{\omega^n}{\omega_*^n}
                          \leq1+\varepsilon_*,
\end{equation}
the pseudo-Calabi flow $\omega(t)$ with $\omega(0)=\omega$ exists
for all $t\geq0$ and converges exponentially to a cscK metric
$\omega_\infty\in[\omega_*]$, which need not equal $\omega_*$.
More precisely, for each $k\geq0$ there are $C_k,\lambda_k>0$ such
that
\begin{equation}\label{eq:calc-s1-012}
 \|\omega(t)-\omega_\infty\|_{C^k(M,\omega_*)}
                  \leq C_ke^{-\lambda_k t},\qquad t\geq1.
\end{equation}
\end{historicaltheorem}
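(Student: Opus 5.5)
The plan is to run the flow from $\omega$, show that after a short time it lies in a $C^{2,\alpha}$ neighbourhood of a cscK metric, and then invoke the local convergence theorem \cite[Theorem~1.3]{ChenZheng2013}. Write $\omega=\omega_*+dd^c\psi$, and set $F_0:=\log(\omega^n/\omega_*^n)$, so that $|F_0|\le 2\varepsilon_*$. Since $\psi$ is $\omega_*$-plurisubharmonic with small logarithmic volume ratio, \cite[Proposition~1.1]{ChengTian2026} applies with $\omega_*$ as the smooth background. It gives a solution on $[0,T_0]$, smooth for $t>0$, with $T_0$ depending only on $(M,\omega_*)$ and with derivative bounds at positive times.

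\textbf{Step 1 (quantitative smallness at $t=T_0$).} I would show that $\omega(T_0)$ is $C^{k}$-close to $\omega_*$, up to an element of the automorphism orbit, with error tending to $0$ as $\varepsilon_*\to0$. First, $|F_0|\le2\varepsilon_*$ gives $\|\psi-\sup\psi\|_{C^0}\le C\varepsilon_*^{\beta}$ for some $\beta>0$, by Kołodziej-type stability for the complex Monge--Ampère equation. Indeed, $\omega_*^n$ and $e^{F_0}\omega_*^n$ are $L^\infty$-close densities, and $0$ and $\psi$ solve the two equations. Second, the regularization estimates of Cheng--Tian depend continuously on the data. So I would argue by contradiction and compactness: take a sequence with $\varepsilon_*\to0$; then $\psi_j\to0$ in $C^0$. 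The flows are uniformly smooth on $[T_0/2,T_0]$, so a subsequence converges smoothly to a pseudo-Calabi flow $\varphi_\infty(t)$. The $C^0$ convergence of the initial data to $0$, together with uniqueness of weak solutions from $C^0$ data (which also comes from the Cheng--Tian construction via a comparison or energy argument), forces $\varphi_\infty(t)$ to be the stationary flow at $\omega_*$. Hence $\omega_j(T_0)\to\omega_*$ smoothly.

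\textbf{Step 2 (local stability).} Once $\|\omega(T_0)-\omega_*\|_{C^{2,\alpha}}$ is below the threshold of \cite[Theorem~1.3]{ChenZheng2013}, that theorem yields global existence from $T_0$ onward and exponential convergence in every $C^k$ to a cscK metric $\omega_\infty$. Because of automorphisms, $\omega_\infty$ may differ from $\omega_*$; the local theorem handles this through the kernel of the Lichnerowicz operator, which the Mabuchi-functional and Dirichlet-gradient structure reduce to a finite-dimensional centre manifold. Rescaling the time shift gives \eqref{eq:calc-s1-012} for $t\ge1$, after possibly shrinking $T_0$ to at most $1$. On $[1,T_0]$, if $T_0>1$, there is nothing to prove beyond the smooth bounds.

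\textbf{Main obstacle.} The hard part is Step 1, specifically the continuity or uniqueness of the flow at merely $C^0$-close (volume-form-close) initial data, so that smallness of $F_0$ propagates to smallness of $\omega(T_0)-\omega_*$ in $C^{2,\alpha}$. I would first try a direct maximum-principle estimate for $\partial_t\varphi=F+P$. The nonlocal term $P$ is controlled via elliptic estimates by $\operatorname{tr}_{\omega_\varphi}\Ric(\omega_*)$, which is bounded through the parabolic Schwarz/Chern--Lu inequality. This should give $\sup_{[0,T_0]}|F(t)|\le C\varepsilon_*$, and interpolation with the higher-order bounds at $t=T_0$ would then produce the required $C^{2,\alpha}$ smallness.
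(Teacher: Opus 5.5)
The paper does not reprove this result. It quotes it from Cheng--Tian and describes it as their short-time regularization (Proposition~1.1 there) combined with Chen--Zheng's $C^{2,\alpha}$ local stability, which is exactly your outline.

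The gap is in Step~1, at the point you flag yourself. To identify the smooth limit $\varphi_\infty$ on $[T_0/2,T_0]$ with the stationary flow, you need uniqueness of the pseudo-Calabi flow from merely bounded initial potentials. Neither the regularization result as quoted nor the local theorem provides this, and you only assert it parenthetically. Neither mechanism you suggest is available:
\begin{itemize}
\item No comparison principle is available for $\partial_t\varphi=F+P$. The term $P$ is determined by an elliptic equation on all of $M$ with a global normalization, and it is not monotone in $\varphi$.
\item A contraction or energy argument would need convexity of $\mathcal M$ along geodesics of the Dirichlet metric. Neither you nor the cited works establish this.
\end{itemize}
You would also need uniform bounds on $F+P$ near $t=0$ just to know that $\varphi_\infty$ attains the datum $0$.

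Your fallback fails for a structural reason. Along the flow, $(\partial_t-\Delta_\varphi)F=\Delta_\varphi P=\underline R-\tr_{\omega_\varphi}\Ric(\omega_*)$. A Schwarz/Chern--Lu bound makes the right-hand side bounded, not small. It is small only when $\omega_\varphi$ is already close to $\omega_*$, which is what you are trying to prove. If $|F_0|\le2\varepsilon_*$ but $\omega$ is far from $\omega_*$ (which forces $F_0$ to oscillate rapidly), the maximum principle gives only $|F(t)|\le2\varepsilon_*+Ct$ on the fixed interval $[0,T_0]$.

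The missing idea is to replace uniqueness by the Lyapunov structure.
\begin{itemize}
\item Take $\omega_*$ as reference and use the decomposition of $\mathcal M$ from the proof of Lemma~\ref{lem:continuous-gauge-bounds}. The entropy term $\int_MF_0\,d\mu_\psi$ is $O(\varepsilon_*)$. The remaining terms are invariant under adding constants to $\psi$, so by your Ko\l{}odziej estimate they are bounded by $C\|\psi-\sup_M\psi\|_\infty=O(\varepsilon_*^\beta)$. Hence $\mathcal M(\omega)\le\mathcal M(\omega_*)+C\varepsilon_*^\beta$.
\item A cscK metric minimizes $\mathcal M$ in its class (Berman--Berndtsson; Chen--Li--P\u{a}un). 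So the dissipation identity \eqref{eq:Mabuchi-dissipation} gives $\int_0^{T_0}\|\nabla_\varphi f\|_{L^2(d\mu_\varphi)}^2\,dt\le C\varepsilon_*^\beta$.
\item In your contradiction sequence, the smooth limit on $[T_0/2,T_0]$ therefore has zero dissipation. Hence $f_\infty\equiv0$, and the limit is a cscK metric $\omega'\in[\omega_*]$. It may be $g^*\omega_*\neq\omega_*$, which is consistent with the statement.
\item For large $j$, $\omega_j(T_0)$ lies inside Chen--Zheng's $C^{2,\alpha}$ threshold around this fixed $\omega'$. This contradicts the assumed failure.
\end{itemize}
This is the same mechanism the paper uses in Section~\ref{sec:proper-convergence}: select times of vanishing dissipation, extract a cscK limit, and only then apply local stability. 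With this replacement, your Step~2 goes through unchanged.
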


The estimates of Cheng--Tian
\cite[Definition~4.3 and Corollary~4.7]{ChengTian2026}
also show that a bound
for $F$, together with uniformly continuity control of $P$,
gives higher order estimates. A central issue is therefore to
control $P$ with respect to a fixed background metric. A volume
bound alone does not give uniform ellipticity of $\Delta_\varphi$,
so the usual elliptic H\"older estimates do not apply directly.

In the toric setting, Guillemin \cite{Guillemin1994} and Abreu
\cite{Abreu1998,Abreu2003} described $T^n$-invariant K\"ahler metrics by
convex functions on a fixed moment polytope. Donaldson
\cite{Donaldson2002,Donaldson2005,Donaldson2009} developed this
description to study scalar curvature and stability, and proved
existence of cscK metrics on K-stable polarized toric surfaces
with zero Futaki invariant. Zhou--Zhu \cite{ZhouZhu2008} studied
relative stability and properness of the modified Mabuchi functional
on toric manifolds. Chen--Li--Sheng \cite{ChenLiSheng2014} proved
that uniform stability is necessary for the existence of toric
extremal metrics. Chen--Cheng
\cite[Theorem~1.10]{ChenCheng2021} subsequently characterized
toric cscK existence in arbitrary dimension by uniform stability.

For the $T^2$-invariant Calabi flow on toric Fano surfaces, Chen--He
\cite{ChenHe2010} proved long time existence and subsequential
convergence in the Cheeger--Gromov sense to an extremal metric,
assuming positive extremal Hamiltonian potential and an initial
Calabi energy below an explicit threshold determined by the
K\"ahler class. Sz\'ekelyhidi \cite{Szekelyhidi2008} proved that,
if a toric Calabi flow exists for all time, its Calabi energy
approaches the infimum, and related this infimum to optimal
destabilization. Huang \cite{Huang2016} obtained
$C^0$ estimates under a uniform Sobolev bound, and exponential
convergence of the modified Calabi flow under uniform $K$-stability
and uniform bounds for the Ricci curvature and Sobolev constant.
The analysis of these flows requires estimates near the boundary
of the moment polytope.

In this paper, we study the pseudo-Calabi flow on compact
toric varieties from $T^n$-invariant initial
metrics. On each finite time interval, we bound the volume ratio
and $P$, and prove that $P$ has a uniform modulus of
continuity, then the higher order
estimates of Cheng--Tian then allow us to continue the flow.
We obtain the first main theorem.

\begin{theorem}\label{thm:main}
Let $M$ be a compact toric variety, and
let $\omega_0$ be a K\"ahler metric invariant under the compact
torus $T^n$. The pseudo-Calabi flow \eqref{eq:pcf} with initial metric
$\omega_0$ has a unique smooth solution for all $t\geq0$.
\end{theorem}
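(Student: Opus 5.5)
We argue by continuation. By the short time existence and uniqueness of Chen--Zheng, the flow \eqref{eq:pcf} has a unique smooth solution on a maximal interval $[0,T_{\max})$. Since $\omega_0$ is $T^n$-invariant and the equation is natural under biholomorphisms, uniqueness forces $\omega_{\varphi(t)}$, $F$ and $P$ to remain $T^n$-invariant. Suppose $T_{\max}<\infty$. It suffices to prove that on $[0,T_{\max})$ the quantities $\|F\|_{L^\infty}$ and $\|P\|_{L^\infty}$ are bounded and that $P(t)$ has a modulus of continuity with respect to $\omega_0$ independent of $t$. The higher order estimates of Cheng--Tian (\cite[Definition~4.3 and Corollary~4.7]{ChengTian2026}) then give uniform $C^k$ bounds. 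We restart the flow at a time close to $T_{\max}$ using the short time theory, either in $C^{2,\alpha}$ or via \cite[Proposition~1.1]{ChengTian2026}, whose existence time depends only on the controlled data. This extends the flow past $T_{\max}$, a contradiction.

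To obtain the bounds, we pass to symplectic coordinates on the moment polytope $\Delta$. A $T^n$-invariant metric in $[\omega_0]$ corresponds to a symplectic potential $u_t=u_0+f_t$ with $f_t$ smooth on $\bar\Delta$, and $\omega_\varphi^n$ corresponds to $\det D^2u\,dx$ up to the fixed factor. The key simplification is that every $T^n$-invariant function is a function of $x\in\Delta$. For such functions the complex Laplacian becomes $\Delta_\varphi h=u^{ij}h_{ij}$ up to normalization, and Abreu's formula gives $R=-\tfrac12 u^{ij}_{\ \,ij}$. Hence $R-\underline R=-(u^{ij}_{\ \,ij}+c)$ is in divergence form. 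Writing $\Delta_\varphi P$ in $x$-coordinates (with respect to the Lebesgue measure $dx$, the pushforward of $\omega_\varphi^n$ up to constants) gives $\partial_i(u^{ij}\partial_j P)$. The equation for $P$, in its $F+P$ form, therefore reads
\begin{equation*}
\partial_i\bigl(u^{ij}\partial_jP\bigr)=-\tfrac12\partial_i\partial_j u^{ij}-\tfrac12\underline R,
\end{equation*}
with the zero-mean condition now taken against $dx$. The Dirichlet form $\int_\Delta u^{ij}P_iP_j\,dx$ is exactly the pseudo-Calabi energy.

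Along the flow, the identity $\partial_t\varphi=F+P$ becomes, via Legendre duality, $\partial_t u=-(F+P)$ as a function of $x$ (up to a time function). Here $F=-\log\det D^2u+\log\det D^2u_0$ plus the pullback correction, which is bounded. Thus $\partial_t u=\log\det D^2u-P+\text{bounded}$. The $K$-energy is decreasing, so $\mathcal M(u_t)\le\mathcal M(u_0)$. Donaldson's formula writes $\mathcal M$ as $-\int\log\det D^2u\,dx$ plus a linear term $\mathcal L(u)$. Normalizing $u_t$ at an interior point, we get bounds on $\int_{\partial\Delta}u\,d\sigma$ and an $L^1$ bound on $u$ on $[0,T]$, since $\partial_tu$ is controlled in a linear-in-time way. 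Next we apply the parabolic maximum principle to the scalar equation $\partial_t u=\log\det D^2u-P+b$ together with the elliptic equation for $P$. At a maximum of $P$ in $\bar\Delta$ (the boundary is handled by Guillemin boundary behaviour: $P$ is smooth on $M$, so its normal derivative in $x$ vanishes suitably), the divergence form with the $u^{ij}_{\ \,ij}$ right-hand side prevents a direct bound. Instead we would bound $P$ by testing the equation: multiplying by $P$ and integrating by parts gives $\int u^{ij}P_iP_j\,dx=-\tfrac12\int u^{ij}_{\ \,j}P_i\,dx+$ boundary terms. The boundary terms are explicit via Donaldson's integration by parts $\int u^{ij}_{\ \,ij}h=\int u^{ij}h_{ij}-2\int_{\partial\Delta}h\,d\sigma$. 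This yields
\begin{equation*}
\int_\Delta u^{ij}P_iP_j\,dx\le C\int_{\partial\Delta}|P|\,d\sigma+C\int|D^2P|\ldots .
\end{equation*}
That estimate is not closed, so we plan instead to exploit the energy identity $\tfrac{d}{dt}\mathcal M=-\int u^{ij}P_iP_j$. Integrated in time, it gives $\int_0^T\!\!\int u^{ij}P_iP_j\le C$.

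The main obstacle is upgrading these integral bounds to $L^\infty$ bounds on $F$ and $P$, and then to a modulus of continuity of $P$ on $\bar\Delta$, uniform up to the boundary of the polytope, despite the possible degeneration of $u^{ij}$. Our first attempt is to bound $\det D^2u$ from both sides by the maximum principle applied to $\partial_t u$. Differentiating the flow in $t$ shows that $w=\partial_tu$ satisfies a linear equation $\partial_t w=u^{ij}w_{ij}-\partial_tP$, and for $\partial_t P$ we use a second maximum principle argument on the coupled system. Once $\det D^2u$ is pinched and $u$ has convexity control from the $L^1$ bound, Caffarelli--Guti\'errez interior Harnack/H\"older theory for linearized Monge--Amp\`ere operators gives H\"older continuity of $P$ in the interior. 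Near $\partial\Delta$ we use Guillemin's boundary structure to reflect to local model charts $\C^k\times(\C^*)^{n-k}$. There the equation is again of linearized Monge--Amp\`ere type with bounded determinant, and the same theory, in the form of Le's boundary H\"older estimates, should yield the needed uniform modulus of continuity. This feeds the Cheng--Tian criterion and completes the continuation argument.
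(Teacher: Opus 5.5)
Your outer argument is the paper's: continuation by contradiction through Cheng--Tian's criterion (Theorem~\ref{thm:continuation-criterion}). That criterion needs a bound on $F$ and a time-uniform modulus of continuity for $P$ on $[0,T_{\max})$. Neither input is actually produced, and the mechanisms you propose for them do not work.

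\textbf{The $C^0$ bounds.} A maximum principle for $w=\partial_tu$ does not close. $\partial_tP$ comes from differentiating a nonlocal elliptic equation whose operator depends on $D^2u$, so it depends nonlocally on derivatives of $w$. There is therefore no pointwise maximum principle for the ``coupled system'', and a bound on $w=-(F+P)$ would not separate $F$ from $P$ anyway.

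Several preliminary steps are also unsound.
\begin{itemize}
\item The dissipation is $\int_\Delta u^{ij}(F+P)_i(F+P)_j\,dx$, not the Dirichlet energy of $P$.
\item Monotonicity of $\mathcal M$ alone gives no $L^1$ control of $u$; that is what properness is for. Saying ``$\partial_tu$ is controlled linearly in time'' presupposes the bound you are trying to prove.
\item $\log m_0(D_xu_t)-\log m_0(D_xu_0)$ is only bounded by $C|D_xu_t-D_xu_0|$, which is not uniformly controlled up to $\partial\Delta$.
\end{itemize}

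The idea you are missing is that on the open orbit $\tr_{\omega_\varphi}\Ric(\omega_0)=-\Delta_\varphi\log m_0$. Hence $P$ drops out, and $\log m_t=F+\log m_0$ satisfies exactly $(\partial_t-\Delta_\varphi)\log m_t=\underline R$. The maximum principle for $\log m_t+\alpha\sqrt{1+|\xi|^2}$, combined with the exponential tail of $m_0$, then gives an $L^p$ bound for $e^F$ on $[0,T)$ (Proposition~\ref{prop:toric-density-lp}).

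Even this is only an integral bound. Converting it into bounds on $\|\varphi\|_{C^\beta}$, $\|P\|_\infty$ and $\|F\|_\infty$ is the content of Proposition~\ref{prop:positive-time-volume}:
\begin{itemize}
\item H\"older bounds for $\varphi$ from the $L^p$ density;
\item degenerate Sobolev inequalities giving an $L^{n+1}$ bound for $P$;
\item an auxiliary parabolic Monge--Amp\`ere equation controlling $\int_Me^{-F}d\mu_0$;
\item Cheng--Tian's pointwise bound for $P$;
\item a time-weighted maximum principle for $F$.
\end{itemize}

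\textbf{The modulus of continuity near the divisors.} Your ``should yield'' does not supply the modulus of continuity of $P$ up to the toric divisors. Caffarelli--Guti\'errez theory and Le's boundary estimates concern real linearized Monge--Amp\`ere equations for convex functions with pinched Monge--Amp\`ere measure. Le's estimates are moreover for Dirichlet problems on the boundary of such a domain.

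Near a face of $\Delta$ neither setting is available.
\begin{itemize}
\item In moment coordinates, $\det D^2u_t$ blows up like $\prod_r\ell_r^{-1}$. The face carries no boundary condition, since it is where circle orbits collapse.
\item In the chart $\C^k\times(\C^*)^{n-k}$, the equation $\Delta_{\omega_t}q=0$ is complex, with only a pinched complex determinant and no uniform ellipticity. The real theory does not transfer, and no reflection reduces it to the real case.
\end{itemize}
Your interior step is fine: for $p\in\Delta^\circ$ the logarithmic sections are bounded and Caffarelli--Guti\'errez theory applies.

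At the faces the paper needs a new argument.
\begin{itemize}
\item It introduces sublevel domains $D_{p,s,t}=\{\psi_{p,t}<s\}$ centered at moment fibers over arbitrary $p\in\Delta$. These are relatively compact in $U_p$, strictly pseudoconvex and logarithmically convex.
\item It proves a relative Green estimate through complex Monge--Amp\`ere capacity and Pr\'ekopa's theorem (Proposition~\ref{prop:relative-green}).
\item It proves a weighted Poincar\'e inequality on the convex logarithmic image, which yields a weak Harnack inequality and oscillation decay (Propositions~\ref{prop:weighted-crossover} and~\ref{prop:potential-oscillation}).
\item A Legendre-duality argument, using the uniform $C^{1/2}$ bound on $u_t$, places nearby points of $M$ in a common sublevel set of small height.
\end{itemize}
Without a boundary-uniform oscillation estimate of this kind, Theorem~\ref{thm:continuation-criterion} cannot be applied.
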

This proves the long time existence conjecture of
\cite{ChenZheng2013} for $T^n$-invariant metrics in every dimension and
every K\"ahler class of a smooth compact toric variety. 

To state the convergence theorem, we introduce the $E$-functional:
\begin{equation}\label{eq:energy-normalization}
 E(\varphi):=\frac1{(n+1)V}\sum_{j=0}^n
       \int_M\varphi\,\omega_\varphi^j\wedge\omega_0^{n-j}.
\end{equation}
The space of normalized potentials is
$\mathcal H_0:=\{\varphi\in\mathcal H:E(\varphi)=0\}$.
Since $E(\varphi+c)=E(\varphi)+c$,
each metric has a unique representative in $\mathcal H_0$.

Let $G:=\operatorname{Aut}_0(M)$. Its elements act trivially on
cohomology and therefore preserve $[\omega_0]$.
Let $d_1$ be the path length distance on $\mathcal H$ induced by
the norm $V^{-1}\int_M|a|\,\omega_\varphi^n$ on
$a\in T_\varphi\mathcal H$.
\begin{definition}
\label{def:properness}
For $g\in G$, let $g\cdot\varphi\in\mathcal H_0$ represent
$g^*\omega_\varphi$. For a subgroup $A\subset G$, set
\begin{equation}\label{eq:calc-s1-013}
 d_{1,A}(0,\varphi):=\inf_{g\in A}d_1(0,g\cdot\varphi).
\end{equation}
Following \cite[Definition~4.8]{ChenCheng2021},
the Mabuchi functional is proper on $\mathcal S\subset\mathcal H_0$
modulo $A$ if it is bounded below on $\mathcal S$ and, for every
sequence $\varphi_j\in\mathcal S$,
\begin{equation}\label{eq:properness-assumption}
 d_{1,A}(0,\varphi_j)\longrightarrow\infty
 \quad\Longrightarrow\quad
 \mathcal M(\varphi_j)\longrightarrow+\infty.
\end{equation}
\end{definition}
Thus properness bounds $d_{1,A}(0,\varphi)$ on every sublevel
of $\mathcal M$.

The relation between properness and canonical metrics has its origins
in work on positive K\"ahler--Einstein metrics by Ding
\cite{Ding1988}, Ding--Tian \cite{DingTian1992}, and Tian
\cite{Tian1997}. Its formulation modulo
automorphisms was developed by Darvas--Rubinstein
\cite{DarvasRubinstein2017}. Berman--Darvas--Lu
\cite{BermanDarvasLu2020} proved that cscK existence implies
properness, and Chen--Cheng \cite{ChenCheng2021I,ChenCheng2021}
proved the converse. We recall their main result here
\cite[Theorem~1.6]{ChenCheng2021}.

\begin{historicaltheorem}[Chen--Cheng]
Let $(M,\omega_0)$ be a compact K\"ahler manifold, and let
$G=\operatorname{Aut}_0(M)$. The class $[\omega_0]$ contains a
cscK metric if and only if the Mabuchi functional is proper on
the full space $\mathcal H_0$ modulo $G$ in the sense of
Definition~\ref{def:properness}.
\end{historicaltheorem}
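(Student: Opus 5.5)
The statement is recalled from \cite{ChenCheng2021}, so the plan below follows the standard two-direction strategy rather than anything specific to the toric setting. I would treat the two implications separately, both working in the metric completion $(\mathcal E^1,d_1)$ of $\mathcal H_0$ and using the extension of $\mathcal M$ to $\mathcal E^1$.

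\emph{Existence implies properness.} Suppose $\omega_{\varphi_*}\in[\omega_0]$ is cscK. I would proceed in four steps.
\begin{enumerate}
\item Extend $\mathcal M$ to a $d_1$-lower semicontinuous functional on $\mathcal E^1$. Use its convexity along finite energy $C^{1,\bar1}$ geodesics, obtained from the Chen--Tian formula together with the Berman--Berndtsson convexity argument.
\item Show that $\varphi_*$ minimizes $\mathcal M$ on $\mathcal E^1$. By convexity, the slope of $\mathcal M$ at $\varphi_*$ along any geodesic ray is nonnegative.
\item Show that minimizers are unique modulo $G$. Here I would use the regularity of minimizers and the Berman--Berndtsson uniqueness theorem.
\item Argue by contradiction. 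If properness failed, there would be a sequence $\varphi_j$ with $\mathcal M(\varphi_j)$ bounded and $d_{1,G}(0,\varphi_j)\to\infty$. Choose $g_j$ so that $g_j\cdot\varphi_j$ nearly realizes the infimum in \eqref{eq:calc-s1-013}. Join $\varphi_*$ to $g_j\cdot\varphi_j$ by unit speed geodesics, and use the compactness of sublevel sets of entropy in $\mathcal E^1$ to extract a limiting geodesic ray. Along this ray $\mathcal M$ is bounded, hence constant by convexity and minimality.
\end{enumerate}
A Darvas--Rubinstein type argument would then show that a ray on which $\mathcal M$ is constant is generated by a holomorphic vector field in $\mathfrak g$. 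This contradicts the near-minimality of $d_1$ in the choice of $g_j$. Bounding below the Mabuchi functional is immediate from the minimality of $\varphi_*$.

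\emph{Properness implies existence.} I would use Chen's continuity path
\[
 t\bigl(R(\omega_\varphi)-\underline R\bigr)=(1-t)\bigl(\tr_{\omega_\varphi}\omega_0-n\bigr),\qquad t\in[0,1].
\]
\begin{enumerate}
\item Check that the path is solvable at $t=0$ and open for $t<1$. The linearization is invertible there, because the twisted functional $t\mathcal M+(1-t)J$ is strictly convex.
\item Obtain a priori estimates for closedness, along a sequence $t_j\to1$. Properness bounds $d_{1,G}$ along the path, and after pulling back by suitable automorphisms this bounds the entropy $\int_M F e^F\omega_0^n$.
\item Apply the key Chen--Cheng estimates. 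An $L\log L$ bound on $e^F$ for the coupled system $\omega_\varphi^n=e^F\omega_0^n$, $\Delta_\varphi F=-R+\tr_{\omega_\varphi}\Ric(\omega_0)$ yields $C^0$ bounds on $\varphi$ and $F$. This is done by comparing with auxiliary complex Monge--Amp\`ere equations via the Aleksandrov--Bedford--Taylor maximum principle. The argument then passes to $W^{1,2p}$ bounds on $F$, $C^{1,1}$ bounds on $\varphi$, and smooth estimates by bootstrapping.
\end{enumerate}

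The main obstacle is the $C^0$ estimate for this fourth order equation from entropy alone. The second direction's $t=1$ endpoint also needs care with automorphisms: the obstruction to invertibility there is the kernel $\mathfrak g$. I would handle this by working orthogonally to $G$-orbits, or by a perturbation that replaces $\varphi$ by $g\cdot\varphi$ minimizing $d_1$.
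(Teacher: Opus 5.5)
The paper does not prove this statement. It quotes it, crediting necessity to Berman--Darvas--Lu and sufficiency to Chen--Cheng, and your two halves follow those references in outline. The sufficiency half, however, has a genuine gap at the automorphism step.

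\textbf{The gap in sufficiency.} Chen's continuity path is not $G$-equivariant. If $\varphi_t$ solves it, then $g\cdot\varphi_t$ solves the twisted equation in which $\omega_0$ inside the trace term is replaced by $g^*\omega_0$. The elements $g_t$ supplied by properness may leave every compact subset of $G$ as $t\to1$, so $g_t^*\omega_0$ is not uniformly bounded. In the equation for $\Delta F$ of the moved metric, the twisting contributes $-\frac{1-t}{t}\tr g_t^*\omega_0$, which cannot be absorbed into $C\tr\omega_0$. The Chen--Cheng a priori estimates depend on the twisting form, so they give nothing uniform for $g_t\cdot\varphi_t$. Your closing remark about the kernel $\mathfrak g$ at $t=1$ does not address this, since the endpoint is reached by closedness, not by invertibility there.

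\textbf{The missing idea.} Use properness modulo $G$ only once, to produce a minimizer. Three ingredients do this: $G$-invariance of $\mathcal M$ (a consequence of its lower bound), the Chen--Tian formula, and entropy compactness in $\mathcal E^1$. Together they give a $d_1$-limit $u\in\mathcal E^1$ of the moved solutions (or of any moved minimizing sequence), and $u$ minimizes the extended $\mathcal M$. Each smooth $\varphi_t$ minimizes $t\mathcal M+(1-t)J$ over $\mathcal E^1$. Comparing with $u$ then gives $J(\varphi_t)\le J(u)$ and $\mathcal M(\varphi_t)\le\mathcal M(u)+\frac{1-t}{t}J(u)$ for the \emph{unmoved} solutions. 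So the entropy bound and the estimates apply with the fixed twisting form $\frac{1-t}{t}\omega_0$, and no automorphism ever enters the PDE. This non-equivariance is exactly what the present paper sidesteps for its flow: the pseudo-Calabi speed commutes with pullback, so the moving gauge of Definition~\ref{def:continuous-torus-normalization} costs only transport terms.

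\textbf{Smaller points in necessity.} Choose $g_j$ to nearly minimize $d_1(\varphi_*,g\cdot\varphi_j)$ rather than $d_1(0,g\cdot\varphi_j)$. Otherwise an error of size $2d_1(0,\varphi_*)$ must be beaten by working at a larger distance along the geodesics. Bounding $\mathcal M(g_j\cdot\varphi_j)$ by $\mathcal M(\varphi_j)$ also uses $G$-invariance of $\mathcal M$, which you should state.

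You do not need the limiting ray to be generated by a holomorphic vector field; that would further require knowing that the $d_1$-limit is a psh geodesic. A single limit point suffices. Minimality, regularity of minimizers and uniqueness give $\gamma(1)=h\cdot\varphi_*$ for some $h\in G$. Since $G$ acts by $d_1$-isometries, this already yields $1\le d_1(\gamma(1),\gamma_j(1))+o(1)\to0$, a contradiction.
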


There is a folklore belief that Chen-Cheng's theorem ought to admit a proof using geometric flow, in which a cscK metric is
obtained as the limit of a global solution from arbitrary initial
data. 

We give such a proof in the toric setting. Let
$T_{\C}\simeq(\C^*)^n$ be the complexification of $T^n$, and denote by
$\mathcal H_0^T$ the $T^n$-invariant elements of $\mathcal H_0$.
The group $G$ can be larger than $T_{\C}$. Our hypothesis is
Definition~\ref{def:properness} with
$(\mathcal S,A)=(\mathcal H_0^T,G)$.

\begin{theorem}\label{thm:proper-convergence}
Let $M$ be a smooth compact toric variety, and fix a K\"ahler class.
Suppose that the Mabuchi functional is proper modulo $G$ on
$\mathcal H_0^T$ in the sense of Definition~\ref{def:properness},
or, equivalently, that the associated moment polytope is
$L^1$ stable in the sense of
\cite[Definition~7.7]{ChenCheng2021}. Then the
pseudo-Calabi flow from any smooth $T^n$-invariant initial metric
converges in $C^\infty$, exponentially in time, to a $T^n$-invariant
cscK metric $\omega_\infty$ (We do NOT assume the existence of cscK as a condition). The convergence holds for the metric
flow itself and for the potentials normalized by vanishing
$E$-functional.  
\end{theorem}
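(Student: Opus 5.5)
We plan to combine the long time existence of Theorem~\ref{thm:main} with the monotonicity of the $K$-energy along the flow, and then use the Cheng--Tian stability theorem to pass from subsequential convergence to exponential convergence. Since the flow is the negative gradient flow of $\mathcal M$ for the Dirichlet metric, along the solution
\begin{equation*}
 \frac{d}{dt}\mathcal M(\varphi(t))
 =-\frac1V\int_M|\nabla_\varphi P_R|_\varphi^2\,\omega_\varphi^n\le0,
 \qquad P_R:=\Delta_\varphi^{-1}\bigl(\underline R-R(\omega_\varphi)\bigr).
\end{equation*}
Hence $\mathcal M$ is bounded above by $\mathcal M(0)=0$ along the flow. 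Because the flow preserves $T^n$-invariance (uniqueness in Theorem~\ref{thm:main}), the normalized potentials $\tilde\varphi(t)\in\mathcal H_0^T$ lie in a sublevel set of $\mathcal M$. Properness then gives $\sup_t d_{1,G}(0,\tilde\varphi(t))<\infty$. Since $\mathcal M$ is bounded below, $\int_0^\infty\|\nabla P_R\|_{L^2}^2\,dt<\infty$, so there are times $t_j\to\infty$ with $\|\nabla_{\varphi}P_R\|_{L^2(\omega_{\varphi(t_j)})}\to0$.

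The next step is to select $g_j\in G$ with $d_1(0,g_j\cdot\tilde\varphi(t_j))\le C$. Pulling back by automorphisms commutes with the flow, since the equation is diffeomorphism invariant, and preserves $\mathcal M$. Modulo $T^n$-normalization we may take $g_j\in T_{\C}$: on a toric variety the $d_1$-infimum over $G$ should be comparable to that over the centralizer of $T^n$, and translating by $T_{\C}$ corresponds to adding an affine function to the symplectic potential. This keeps $\psi_j:=g_j\cdot\tilde\varphi(t_j)$ $T^n$-invariant.

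In symplectic coordinates on the moment polytope $\Delta$, bounded $d_1$ together with bounded $\mathcal M$ gives, via Chen--Cheng's estimates \cite{ChenCheng2021}, a bounded entropy $\int F e^F\omega_0^n$. The decay of the Calabi-type quantity $\|\nabla P_R\|_{L^2}$ is then the main analytic input. We will try to use the a priori estimates developed for Theorem~\ref{thm:main}, namely bounds on $F$ and a modulus of continuity of $P$ on time intervals of length one. The essential point is that these estimates depend only on the initial volume ratio and the entropy/$d_1$ data, not on elapsed time. Restarting the flow at $\psi_j$ and running it for a unit time, we aim for uniform $C^{k}$ bounds at time $t_j+1$, after pulling back by $g_j$. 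Arzel\`a--Ascoli then yields a subsequence converging smoothly to a $T^n$-invariant metric $\omega_\infty$. Passing the decay of the dissipation to the limit, and using the smoothing to upgrade $L^2$ decay at nearby times, shows $\nabla P_R=0$ at the limit, so $\omega_\infty$ is cscK.

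Finally, pick $j$ large so that $\omega^n/\omega_\infty^n$ for $g_j^*\omega(t_j+1)$ lies within $\varepsilon_*$ of $1$. The Cheng--Tian theorem then gives exponential convergence of the flow starting there to some cscK $\omega'_\infty$. Undoing the fixed automorphism $g_j$, which is a single fixed map, gives exponential $C^\infty$ convergence of $\omega(t)$ itself to $(g_j^{-1})^*\omega'_\infty$, a $T^n$-invariant cscK metric. Exponential convergence of the $E$-normalized potentials follows by integrating $\partial_t\varphi=F+P$ and using that $E$ has derivative $V^{-1}\int\dot\varphi\,\omega_\varphi^n$.

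The main obstacle is the third step: making the estimates of Theorem~\ref{thm:main} uniform in time, depending only on $d_1$, entropy and $\mathcal M$ bounds rather than on $T$. In the toric setting we would try first to bound $P$ and the volume ratio through the Legendre transform, using Donaldson's boundary behaviour of symplectic potentials, controlled by the $L^1$ bound on $\Delta$ given by properness.
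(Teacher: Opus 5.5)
Your outline has the same skeleton as the paper's proof: monotonicity of $\mathcal M$, properness, a torus gauge, uniform estimates on unit windows, a cscK limit at times of vanishing dissipation, and then local stability with one fixed automorphism. But the step you yourself flag as the main obstacle is a genuine gap, and the way you propose to fill it would not work.

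\textbf{The missing estimate.} The estimates behind Theorem~\ref{thm:main} are not uniform in time. The $L^p$ bound of Proposition~\ref{prop:toric-density-lp} comes from a maximum principle for $\log m_t+\alpha\rho$. It therefore grows like $e^{\underline R t}$, and it depends on the pointwise initial density through $\sup(m_0e^{\alpha\rho})$. If you restart at $\psi_j=g_j\cdot\tilde\varphi(t_j)$, nothing controls the density of $\psi_j$ pointwise, or even in $L^p$ for some $p>1$. Bounded $d_1$ and bounded $\mathcal M$ give only bounded entropy, i.e.\ $L\log L$ control. That is not enough for Proposition~\ref{prop:positive-time-volume}, which needs $\sup_{[s,s+L]}\int_M e^{pF}\,d\mu_0\le B$ with $p>1$ on the whole window, for a flow in a fixed gauge. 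What is missing is precisely a time-uniform $L^p$ bound for the volume ratio, Proposition~\ref{prop:proper-density-bound}.

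\textbf{How the paper obtains it.} It uses three ingredients absent from your proposal.
\begin{enumerate}
\item A negative exponential bound $\int_M e^{-rP}\,d\mu_0\le C_B$ for some $r>1$, depending only on bounds for $J$ and the entropy (Proposition~\ref{prop:negative-P-exponent}). A toric $L^1$ Poincar\'e inequality gives $\int_M|P|\,d\mu_\varphi\le C(1+J)$. A global lower contact-set argument applied to $P-\log m_0$, which satisfies $\tr\bigl(H^{-1}D_\xi^2(P-\log m_0)\bigr)=\underline R>0$, then yields the linear lower bound $P-\log m_0\ge\frac{\kappa_0}{4}|\xi|-C$.
\item Differentiating $I(t)=\int_M e^{pF-(p-1)\lambda\varphi}\,d\mu_0$ and applying Jensen's inequality together with the bound on $e^{-rP}$. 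This produces a damping term, giving $I'/I+c\log I\le C+C|b'|$.
\item A gauge that is differentiable in time; choosing $g_j$ separately at the times $t_j$ does not suffice. The paper sets $b(t)=\int_\Delta\zeta D_xu\,dx$ and proves $|b'|^2\le C\|\nabla_\varphi f\|^2_{L^2(d\mu_\varphi)}$, hence $\int_0^\infty|b'|^2\,dt<\infty$. This makes the transport term harmless in the differential inequality. It also keeps the relative automorphisms $g_{b(s)-b(t)}$ in a compact set on each window $[s,s+4]$, so the bound transfers to the frozen-gauge flow to which Proposition~\ref{prop:positive-time-volume} applies.
\end{enumerate}

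\textbf{Secondary issues.} Your reduction from $G$ to $T_{\C}$ (``should be comparable'') is asserted, not proved. The paper needs Proposition~\ref{prop:toric-group-reduction}, which uses $G=T_{\C}\mathcal N K$ and plurisubharmonicity of $g\mapsto J_{\omega_K}(g^*\eta)$. Also, your smoothing gives bounds at $t_j+1$, while the dissipation is small at $t_j$. Start the window before $t_j$ (the paper uses $s_j=t_j-3$) so that the small-dissipation time lies inside the controlled range.
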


The implication from $L^1$ stability to properness follows from
\cite[Proposition~7.13 and Lemma~7.14]{ChenCheng2021},
which give coercivity modulo $T_{\C}$, and from
$d_{1,G}(0,\varphi)\leq d_{1,T_{\C}}(0,\varphi)$.
Conversely, the proof below obtains a $T^n$-invariant cscK metric
under the properness assumption. Its existence implies $L^1$
stability by the necessity direction of
\cite[Theorem~7.2]{ChenCheng2021}.

We explain the main estimates. On finite time intervals, the
exponential decay of the background density and a convex weight
in logarithmic coordinates give an $L^p$ bound for the volume
ratio, for some $p>1$. This bounds the normalized potential.
The uniform Sobolev follows, and then yield
spatial moment estimates for $P$. An auxiliary parabolic Monge--Amp\`ere
equation gives the inverse-density integral bound needed for the
pointwise estimate of $P$ by Cheng--Tian. A maximum principle
argument with a time factor then bounds the volume ratio after
a positive waiting time. Together with short-time existence, this
gives the finite-time $C^0$ estimates. To control the continuity of $P$, we
subtract a local potential and estimate the oscillation of the
remaining harmonic function on toric sublevel domains. The Guillemin
boundary formula and convexity make the argument valid at every
boundary stratum.

For convergence, we first reduce properness modulo $G$ to
properness modulo $T_{\C}$. The Mabuchi bound then controls the
$d_1$ distance and the entropy after a continuous torus normalization, whose
speed is controlled by the Dirichlet dissipation. A global
contact-set argument gives a negative exponential estimate for
$P$. Differentiating a weighted density integral along the flow
and using Jensen's inequality yields uniform $L^{1+\varepsilon}$
integrability of the volume ratio. We obtain smooth compactness
on fixed time intervals and a cscK limit at times with vanishing
dissipation. Only then do we apply local stability, with one
fixed automorphism, to obtain exponential convergence of the
original flow.

Section~\ref{sec:preliminaries} collects the analytic identities
and toric preliminaries, proves the reduction of properness, and
derives the polytope form of the flow.
Section~\ref{sec:volume} first establishes an $L^p$ bound for the
volume ratio and then proves a positive-time $C^0$ estimate from
this bound, independently of the initial pointwise density.
The finite-time volume and potential bounds follow as a corollary. The two parts of Section~\ref{sec:P-estimates}
prove, respectively, the modulus of continuity of $P$ needed for
Theorem~\ref{thm:main} and the negative exponential estimate used
for convergence. Section~\ref{sec:proper-convergence} completes
the proof of Theorem~\ref{thm:proper-convergence}.

\textbf{Acknowledgment:} The first author would like to thank Prof.~Jingrui Cheng for
inspiring him to study this problem and Prof.~Xiuxiong Chen for
enlightening discussions. The second author is partially funded by
NSFC grant No.~12571091.

\textbf{AI Disclosure:} AI tools were used for language editing and
typesetting, to generate some elementary results, including
Lemma~\ref{lem:real-variable-iteration}. ChatGPT 6.1 Sol suggests the proof
strategy for Proposition~\ref{prop:relative-green}. The authors take
full responsibility for all content of this paper.

\section{Preliminaries}\label{sec:preliminaries}
We use the notation and functionals introduced in
Section~\ref{sec:introduction}. We first recall some basic properties needed in the later sections, then turn to toric geometry and the flow
in moment coordinates. The moment polytope is denoted by $\Delta$.
\subsection{Basic properties}
With the complex gradient convention of
Section~\ref{sec:introduction}, we have
\begin{equation}\label{eq:calc-s2-001}
 \begin{gathered}
 \Delta_\omega(ab)=a\Delta_\omega b+b\Delta_\omega a
                +2\langle\nabla_\omega a,\nabla_\omega b\rangle_\omega,\quad
 \Delta_\omega a\,\omega^n=n\,dd^ca\wedge\omega^{n-1}.
 \end{gathered}
\end{equation}
The distance associated with the fixed background metric is $d_0$.
We use the normalized volume measures
\begin{equation}\label{eq:normalized-volume-measures}
 d\mu_\varphi:=V^{-1}\omega_\varphi^n,\qquad
 d\mu_0:=V^{-1}\omega_0^n.
\end{equation}
With $F$ as in Section~\ref{sec:introduction}, the relative entropy
and Dirichlet energy are
\begin{equation}\label{eq:entropy-dirichlet-definitions}
 \begin{aligned}
 \int_M\log\frac{d\mu_\varphi}{d\mu_0}\,d\mu_\varphi
 =\int_M F\,d\mu_\varphi,\quad
 \|\nabla_\varphi a\|_{L^2(d\mu_\varphi)}^2
 :=\int_M|\nabla_\varphi a|_\varphi^2\,d\mu_\varphi.
 \end{aligned}
\end{equation}

The first variation of the $E$-functional in
\eqref{eq:energy-normalization} is
\begin{equation}\label{eq:calc-s2-002}
 \frac d{ds}E(\varphi_s)=\int_M\dot\varphi_s\,d\mu_{\varphi_s}.
\end{equation}
For the estimate and the convergence argument, we add a function of time to the
potential so that $E(\varphi(t))=0$. The preceding variation and
\eqref{eq:pcf} give its normalized speed
\begin{equation}\label{eq:normalized-pcf-speed}
 f:=\partial_t\varphi=F+P-\int_M F\,d\mu_\varphi,\qquad
 \int_M f\,d\mu_\varphi=0.
\end{equation}
The function $P$ retains the zero-mean normalization in
\eqref{eq:pcf}. 

Using Definition~\ref{def:pcf} in the first variation
of $\mathcal M$ and integrating by parts gives the dissipation
identity of Chen--Zheng
\cite[Theorem~3.1 and Remark~3.2]{ChenZheng2013}:
\begin{equation}\label{eq:Mabuchi-dissipation}
 \frac d{dt}\mathcal M(\varphi(t))=-\|\nabla_\varphi f\|_{L^2(d\mu_\varphi)}^2.
\end{equation}
Both sides are unchanged by adding a function of time to the
potential. 

We use \eqref{eq:pcf} for the finite time argument in
Section~\ref{sec:volume} and in Subsection~\ref{sec:P-modulus}.
We use \eqref{eq:normalized-pcf-speed} in
Proposition~\ref{prop:positive-time-volume} and
Section~\ref{sec:proper-convergence}.
The two normalizations differ only by a function of time added
to the potential, so they do not change the metric flow or our
geometric conclusions. The first simplifies the finite time
computations, while the second removes the drift of the additive
constant and makes the long time estimates more concise. 

%%%%%%%%%%%%%%%%%%%%%%%%%%%%%%%%%%%%%%%%%%%%%%%%%%%%%%%%%%%%%%%%%%%%%%%%%%%%%%%%%%%%%%%%%%%%%%%%%%%%%%%%%%%%%%%%%%%%%%%%%%%%%%%%%%%%%%%%%%%%%%%%%%%%%%%%%%%%
\subsection{Compact toric variety}
We now assume that $M$ is a smooth compact toric variety: the
algebraic torus $T_{\C}\simeq(\C^*)^n$ is a dense open orbit and
its multiplication action extends to $M$. The fixed metric
$\omega_0$ is invariant under $T^n\simeq(S^1)^n$.
A function $a$ is $T^n$-invariant if $a\circ g=a$ for every
$g\in T^n$; a metric $\omega$ is $T^n$-invariant if
$g^*\omega=\omega$ for every $g\in T^n$.

We choose dual integral bases for the character and one-parameter subgroup
lattices, identifying each with $\mathbb Z^n$. Moment coordinates
and facet normals are paired using these bases; see
\cite[Chapters~2--3]{CoxLittleSchenck2011}.

\begin{definition}\label{def:delzant-polytope}
A full-dimensional compact convex polytope $\Delta\subset\R^n$
is Delzant if it has an irredundant presentation
\begin{equation}\label{eq:calc-s2-003}
 \Delta=\{x\in\R^n:\ell_r(x):=\langle x,\nu_r\rangle-\lambda_r\geq0,
                       \quad 1\leq r\leq d\},
\end{equation}
where $\lambda_r\in\R$, the inward normals $\nu_r\in\mathbb Z^n$
are primitive, and exactly $n$ facets meet at every vertex, with
their normals forming a basis of $\mathbb Z^n$. 

A normal is
primitive if the greatest common divisor of its components is one. We denote the facets by
$\mathcal F_r:=\{x\in\Delta:\ell_r(x)=0\}$.
\end{definition}
The compact torus action is Hamiltonian for a $T^n$-invariant K\"ahler
form. Delzant's theorem \cite{Delzant1988} identifies its moment
image with a Delzant polytope, which determines the Hamiltonian
torus action up to equivariant symplectomorphism. We fix the additive
constant of the moment map so that $\Delta$ remains fixed as the
metric varies in the fixed K\"ahler class.

We recall the orbit and chart description from
\cite[Example~1.2.21 and Sections~3.1--3.2]{CoxLittleSchenck2011}.
\begin{itemize}
\item
A face of $\Delta$ of codimension $k$ corresponds to a complex
torus orbit of dimension $n-k$, with compact-torus stabilizer of
dimension $k$. 
\item
Facets correspond to toric divisors and vertices
to fixed points. 
\item
Each vertex $v$ determines a holomorphic chart
$U_v\simeq\C^n$. It consists of the complex torus orbits
corresponding to faces containing $v$. 
\item These charts cover $M$,
and their coordinates on the open orbit are monomials in any fixed
character basis.
\end{itemize}

We use the Legendre description of $T^n$-invariant metrics in
\cite[Proposition~3.1.7]{Donaldson2002}, with the following normalization.
On the open orbit $(\C^*)^n$, let $z_1,\ldots,z_n$ be the character
coordinates associated with the chosen character basis, and put
\begin{equation}\label{eq:calc-s2-004}
 \xi_i:=\log|z_i|^2,\qquad \theta_i:=\arg z_i.
\end{equation}
The angular variables have period $2\pi$. A $T^n$-invariant metric in the
fixed class is represented by a strictly convex function of $\xi$:
\begin{equation}\label{eq:calc-s2-005}
 \omega_\varphi=dd^c\Phi,\qquad
 \Phi(\xi):=\Phi_0(\xi)+\varphi(\xi),\qquad D^2_\xi\Phi>0.
\end{equation}
We fix the affine normalization of $\Phi_0$ once and for all.

\begin{definition}
\label{def:moment-legendre}
For the strictly convex potential $\Phi$, the moment coordinates
and the Legendre transform are
\begin{equation}\label{eq:legendre}
 x_\varphi:=D_\xi\Phi:\R^n\longrightarrow\Delta^\circ,
 \qquad
 u(x):=\sup_{\xi\in\R^n}
       \{\langle x,\xi\rangle-\Phi(\xi)\}.
\end{equation}
The moment map convention is
$\iota_{\partial/\partial\theta_i}\omega_\varphi=-dx_{\varphi,i}$.
\end{definition}
In particular,
\begin{equation}\label{eq:calc-s2-006}
 D_xu=\xi,\qquad D_x^2u=(D_\xi^2\Phi)^{-1},\qquad
 \Phi(\xi)+u(x)=\langle x,\xi\rangle
 \quad\text{when }x=D_\xi\Phi(\xi).
\end{equation}
The map $x_\varphi$ extends to the moment map on $M$. Thus $x$ denotes a
coordinate on the fixed polytope, while $x_\varphi$ denotes the moment map
of $\omega_\varphi$. In these coordinates,
\begin{equation}\label{eq:toric-volume}
 \omega_\varphi=\sum_i dx_{\varphi,i}\wedge d\theta_i,\qquad
 \frac{\omega_\varphi^n}{n!}=dx_\varphi\,d\theta,\qquad
 m:=\det D_\xi^2\Phi,\qquad e^F=\frac{m}{m_0}.
\end{equation}
Here $m_0:=\det D_\xi^2\Phi_0$ is the fixed reference density.

For a $T^n$-invariant function $a$ on $M$, let
$\widehat a(x):=a(D_xu(x))$ denote its expression in moment
coordinates; here $a$ is viewed as a function of $\xi$ on the open
orbit. Thus
\begin{equation}\label{eq:toric-integration}
 \int_M a\,\omega_\varphi^n
 =n!(2\pi)^n\int_\Delta\widehat a\,dx,
 \qquad V=n!(2\pi)^n|\Delta|.
\end{equation}

Write $u_0:=\Phi_0^*$ for the reference symplectic potential.
We shall also use the elementary identity
\begin{equation}\label{eq:legendre-C0}
 \|u-u_0\|_{L^\infty(\Delta)}
 =\|\varphi\|_{L^\infty(M)}.
\end{equation}
Indeed, a bound $|\Phi-\Phi_0|\leq C$ gives the same bound for
their Legendre transforms by taking suprema in \eqref{eq:legendre}.
The reverse inequality follows by applying the same argument to
$\Phi=u^*$ and $\Phi_0=u_0^*$.

\subsection{Boundary conditions}
Guillemin \cite{Guillemin1994} gave an explicit potential determined
by the facet functions of $\Delta$; see also
\cite[Theorem~2.5]{Abreu2003}. With the normalization used here,
it is
\begin{equation}\label{eq:guillemin-potential}
 u_G(x):=\sum_{r=1}^d\ell_r(x)\log\ell_r(x).
\end{equation}
\begin{definition}\label{def:symplectic-potential}
A symplectic potential on $\Delta$ is a function
$u\in C^\infty(\Delta^\circ)$ of the form
\begin{equation}\label{eq:calc-s2-007}
 u=u_G+h,\qquad h\in C^\infty(\Delta).
\end{equation}
The condition $h\in C^\infty(\Delta)$ means that $h$ extends smoothly
to a neighborhood of the closed polytope. 

We require $D^2u>0$ on $\Delta^\circ$ and
$(\det D^2u)\prod_r\ell_r$ to extend to a smooth positive function
on $\Delta$. These conditions are the Guillemin boundary conditions
in the conventions of this paper.
\end{definition}
By Abreu's boundary characterization \cite[Theorem~2.8]{Abreu2003},
Legendre duality identifies smooth $T^n$-invariant K\"ahler metrics in
the fixed class with these symplectic potentials, modulo constants.
In particular, $u$ extends continuously
to $\Delta$. The logarithmic terms describe the collapse
of the circle orbits at the toric divisors. The coefficient in
\eqref{eq:guillemin-potential} corresponds to
$\xi_i=\log|z_i|^2$ and $dd^c=i\partial\bar\partial$:
Abreu's logarithmic radial variable is $\xi/2$ and his symplectic
potential is $u/2$. This accounts for the coefficient $1$, rather
than $1/2$, in \eqref{eq:guillemin-potential}.

Write $h_0:=u_0-u_G\in C^\infty(\Delta)$ and
$x_0:=D_\xi\Phi_0$ for the background moment map. The Guillemin
formula gives
\begin{equation}\label{eq:background-log-coordinates}
 \xi=D u_0(x_0)
 =\sum_r\nu_r\log\ell_r(x_0)+\sum_r\nu_r+D h_0(x_0).
\end{equation}
This identity describes the logarithmic behavior near every face
without changing coordinates.

The following background estimates
follow from Guillemin's boundary description \cite{Guillemin1994},
in the form of \cite[Theorem~2.8]{Abreu2003}, and the smooth torus
action. The logarithmic moment bound then follows from
Jensen's inequality. We include the derivation in the
fixed coordinates used here.

\begin{proposition}
\label{prop:logarithmic-tail}
There are positive constants $b_0,\kappa_0$, depending only on $\Delta$
in the fixed lattice coordinates, and positive background constants
$c,C$ such that
\begin{equation}\label{eq:background-exponential-tail}
 \begin{gathered}
 c e^{-b_0|\xi|}\leq m_0(\xi)\leq C e^{-\kappa_0|\xi|},\\
 |\log m_0(\xi)|\leq C(1+|\xi|),\qquad
 |D_\xi\log m_0(\xi)|\leq C.
 \end{gathered}
\end{equation}
For every smooth $T^n$-invariant K\"ahler potential $\varphi$,
\begin{equation}\label{eq:entropy-log-coordinate-moment}
 \int_M|\xi|\,d\mu_\varphi
 \leq C\bigl(1+\int_M F\,d\mu_\varphi\bigr).
\end{equation}
Integrals involving $\xi$ are taken on the open orbit; its
complement has zero volume for every K\"ahler metric.
\end{proposition}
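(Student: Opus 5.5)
We prove the three background bounds in \eqref{eq:background-exponential-tail} from the Legendre description, and then derive \eqref{eq:entropy-log-coordinate-moment} by a Gibbs-variational (Jensen) inequality.

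\emph{Step 1: background density.} Since $D^2_\xi\Phi_0=(D^2_xu_0)^{-1}$ at $x=x_0(\xi)$, we have
\[
 m_0(\xi)=\bigl(\det D^2u_0(x_0(\xi))\bigr)^{-1}.
\]
By Definition~\ref{def:symplectic-potential}, $\det D^2u_0=\delta(x)/\prod_r\ell_r(x)$ with $\delta$ smooth and positive on $\Delta$. Hence
\[
 m_0(\xi)\asymp\prod_r\ell_r(x_0(\xi)).
\]
We compare $|\xi|$ with $-\log\prod_r\ell_r$ using \eqref{eq:background-log-coordinates}. Near a face, only the facets through it have small $\ell_r$. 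The normals of these facets are part of a lattice basis at any vertex of that face, so they are linearly independent. Therefore
\[
 |\xi|\asymp\sum_r|\log\ell_r(x_0)|+O(1),
\]
with constants depending only on the normals, hence only on $\Delta$. This gives $c\,e^{-b_0|\xi|}\leq m_0\leq C e^{-\kappa_0|\xi|}$ and $|\log m_0|\leq C(1+|\xi|)$. For the derivative, use the chain rule:
\[
 D_\xi\log m_0=-(D^2u_0)^{-1}D_x\log\det D^2u_0 .
\]
Writing $\log\det D^2u_0=\log\delta-\sum_r\log\ell_r$, the dangerous term is $\sum_r(D^2u_0)^{-1}\nu_r/\ell_r$. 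From the Guillemin form, $(D^2u_0)^{-1}\nu_r=O(\ell_r)$ near $\mathcal F_r$. To see this, use the vertex chart: in the adapted basis, $D^2u_0$ is diagonal-dominant with entries $1/\ell_r$ up to a smooth remainder. Equivalently, $D_\xi\log m_0$ is the smooth function $D_\xi\log\det(D^2_\xi\Phi_0)$ on $M$, expressed through the action vector fields, which are smooth and bounded. So it is bounded.

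\emph{Step 2: the moment bound.} Set $b:=\kappa_0/2$ and apply Jensen's inequality to the probability measure $d\mu_\varphi$:
\[
 \int_M b|\xi|\,d\mu_\varphi
 \leq \int_M F\,d\mu_\varphi+\log\int_M e^{b|\xi|}\,d\mu_0 .
\]
This is the duality $\int g\,d\mu\le H(\mu|\mu_0)+\log\int e^g d\mu_0$. Truncate $g$ at level $N$ and let $N\to\infty$ by monotone convergence; this handles the unboundedness of $|\xi|$. In moment coordinates, \eqref{eq:toric-volume} gives
\[
 d\mu_0=c_n\,m_0\,d\xi\,d\theta .
\]
Hence
\[
 \int_M e^{b|\xi|}\,d\mu_0\leq C\int_{\R^n}e^{b|\xi|-\kappa_0|\xi|}\,d\xi<\infty
\]
by Step~1. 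Dividing by $b$ gives \eqref{eq:entropy-log-coordinate-moment}. The complement of the open orbit is a finite union of divisors, so it has measure zero for every smooth volume form.

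\emph{Main obstacle.} The delicate point is the uniform comparison of $|\xi|$ with $\sum_r|\log\ell_r|$, and the derivative bound, uniformly near every boundary stratum, including corners where several $\ell_r$ vanish. I handle it vertex by vertex. In the basis $\{\nu_r\}_{r\ni v}$ given by the Delzant condition, \eqref{eq:background-log-coordinates} reads $\xi=\sum_{r\ni v}\nu_r\log\ell_r+\text{bounded}$. The facets not containing $v$ stay away from $0$ on a neighborhood of $v$. The polytope is covered by finitely many vertex neighborhoods plus a compact interior part, and on each piece all constants are uniform.
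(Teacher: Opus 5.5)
Your outline follows the paper's proof for the two-sided exponential bounds (Guillemin's $m_0\asymp\prod_r\ell_r$ and inversion of the Delzant basis at a vertex). It also follows the paper for \eqref{eq:entropy-log-coordinate-moment}, via truncated Jensen with $s=\kappa_0/2$; there you also need $\int_M F\,d\mu_\varphi\geq0$ to absorb the additive constant. However, the derivative bound $|D_\xi\log m_0|\leq C$ is not proved as written.

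Your first argument uses diagonal dominance of $D^2u_0$ in the vertex basis. That holds only where all $\ell_r$ with $v\in\mathcal F_r$ are small. Near the relative interior of a positive-dimensional face through $v$, some entries $1/\ell_r$ are $O(1)$ and comparable to the smooth remainder, so $(D^2u_0)^{-1}\nu_r=O(\ell_r)$ is not established there. Your second argument assumes the conclusion: $\log m_0\to-\infty$ along the toric divisors, so smoothness of the action fields does not by itself make $X_i\log m_0$ extend smoothly to $M$.

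The missing identity, which is the paper's whole argument, is $\partial_{\xi_i}\log m_0=\operatorname{div}_{\omega_0^n}X_i$. On the open orbit $\omega_0^n=n!\,m_0\,d\xi\,d\theta$, and the flow of $X_i=\partial_{\xi_i}$ preserves $d\xi\,d\theta$. Hence $\mathcal L_{X_i}\omega_0^n=(\partial_{\xi_i}\log m_0)\,\omega_0^n$, and the ratio $\mathcal L_{X_i}\omega_0^n/\omega_0^n$ is smooth on the compact manifold $M$.

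Alternatively, your polytope route can be completed. In vertex coordinates $y_i=\ell_{r_i}$, write $D_y^2u_0=Y^{-1}+S$ with $Y=\operatorname{diag}(y_i)$ and $S$ bounded. Then $(D_y^2u_0)^{-1}=Y(I+SY)^{-1}$. Since $\det(I+SY)=\det Y\,\det D_y^2u_0$ is bounded below by the Guillemin determinant condition, $(I+SY)^{-1}$ is bounded, and symmetry of $(D_y^2u_0)^{-1}$ gives the column estimate.

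The boundedness of $S$ also requires the right cover. ``Finitely many vertex neighborhoods plus a compact interior part'' does not cover $\Delta$ when $n\geq2$ if the neighborhoods are small balls: boundary points near the middle of an edge lie in neither. Use instead the closed sets $\{x\in\Delta:\ell_r(x)\geq c\ \text{whenever}\ v\notin\mathcal F_r\}$. For small $c$ these cover $\Delta$, since each facet through a point contains every vertex of that point's minimal face, and compactness makes $c$ uniform. This is how the paper organizes the argument, and your face-by-face remark in Step~1 is the same idea.
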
 

\begin{proof}
Cover $\Delta$ by finitely many closed sets, each associated
with a vertex $v$, on which $\ell_r$ has a positive lower bound
whenever $v\notin\mathcal F_r$. To obtain such a cover, for each
point choose a vertex of the smallest face containing it.
Every facet containing the point also contains this vertex,
so the remaining $\ell_r$ stay positive on a sufficiently small
closed neighborhood. Compactness then gives a finite cover.
On the set associated with $v$, relabel the facets through $v$
as $\mathcal F_1,\ldots,\mathcal F_n$. For $x_0\in\Delta^\circ$
in this set, the logarithms $\log\ell_r(x_0)$ with $r>n$ are
bounded, and \eqref{eq:background-log-coordinates} gives
\begin{equation}\label{eq:calc-s2-008}
 \left|\xi-\sum_{r=1}^{n}\nu_r\log\ell_r(x_0)\right|
 \leq C_{\omega_0}.
\end{equation}
By Definition~\ref{def:delzant-polytope}, the normals
$\nu_1,\ldots,\nu_n$ form a basis. The matrix with these columns
is therefore invertible, and
\begin{equation}\label{eq:calc-s2-009}
 \begin{pmatrix}
 \log\ell_1(x_0)\\ \vdots\\ \log\ell_n(x_0)
 \end{pmatrix}
 =\begin{pmatrix}\nu_1&\cdots&\nu_n\end{pmatrix}^{-1}
   \left(\sum_{r=1}^{n}\nu_r\log\ell_r(x_0)\right).
\end{equation}
The inverse matrix depends only on $\Delta$ and the chosen
vertex, so
\begin{align}
 \sum_{r=1}^{n}|\log\ell_r(x_0)|
 &\leq C_\Delta
       \left|\sum_{r=1}^{n}\nu_r\log\ell_r(x_0)\right|\notag\\
 &\leq C_\Delta|\xi|+C_{\omega_0}.
 \label{eq:calc-s2-010}
\end{align}
Here $C_\Delta\geq1$ depends only on $\Delta$ in the fixed lattice
coordinates, while $C_{\omega_0}\geq1$ may also depend on the
fixed background and may increase from line to line.

For $r>n$, the functions $\ell_r$ are bounded above and away
from zero on this set, so
$\sum_{r=n+1}^{d}|\log\ell_r(x_0)|\leq C_\Delta$. Adding these
bounded terms gives the estimate for all facets:
\begin{align}
 \sum_{r=1}^{d}|\log\ell_r(x_0)|
 &=\sum_{r=1}^{n}|\log\ell_r(x_0)|
   +\sum_{r=n+1}^{d}|\log\ell_r(x_0)|\notag\\
 &\leq C_\Delta|\xi|+C_{\omega_0}.
 \label{eq:calc-s2-011}
\end{align}
The Guillemin boundary conditions give
\begin{equation}\label{eq:calc-s2-012}
 C_{\omega_0}^{-1}\prod_{r=1}^{d}\ell_r(x_0)
 \leq m_0(\xi)\leq C_{\omega_0}\prod_{r=1}^{d}\ell_r(x_0).
\end{equation}
Hence we have 
\begin{equation}\label{eq:calc-s2-013}
 -C_\Delta|\xi|-C_{\omega_0}
 \leq\log m_0(\xi)
 \leq-C_\Delta^{-1}|\xi|+C_{\omega_0}.
\end{equation}
This proves the first two estimates in
\eqref{eq:background-exponential-tail} on this set.
Taking uniform constants over the finite cover proves them
for all $\xi\in\R^n$, with $b_0=C_\Delta$ and
$\kappa_0=C_\Delta^{-1}$.

Let $X_i$ be the global smooth real vector field generated by
the complex torus action $z_i\mapsto e^{s/2}z_i$. On the open orbit,
$X_i=\partial_{\xi_i}$ and
$\omega_0^n=n!m_0\,d\xi\,d\theta$, so
\begin{equation}\label{eq:calc-s2-014}
 \partial_{\xi_i}\log m_0
 =\operatorname{div}_{\omega_0^n}X_i.
\end{equation}
The right-hand side is smooth on the compact manifold $M$,
which proves the derivative bound in
\eqref{eq:background-exponential-tail}.

Choose $0<s<\kappa_0$. By toric integration and
\eqref{eq:background-exponential-tail},
\begin{align}
 \int_Me^{s|\xi|}\,d\mu_0
 &=\frac1{|\Delta|}\int_{\R^n}e^{s|\xi|}m_0\,d\xi\notag\\
 &\leq\frac C{|\Delta|}\int_{\R^n}
            e^{-(\kappa_0-s)|\xi|}\,d\xi<\infty.
 \label{eq:calc-s2-015}
\end{align}
Since $d\mu_\varphi=e^F d\mu_0$ is a probability measure,
Jensen's inequality for the concave function $\log$ gives,
for every $N>0$, 
\begin{align}
 &s\int_M\min\{|\xi|,N\}\,d\mu_\varphi
       -\int_M F\,d\mu_\varphi=\int_M\log\!\left(e^{s\min\{|\xi|,N\}-F}\right)
                                             \,d\mu_\varphi\notag\\
 &\leq\log\int_M e^{s\min\{|\xi|,N\}-F}\,d\mu_\varphi =\log\int_M e^{s\min\{|\xi|,N\}}\,d\mu_0\notag\\
 &\leq\log\int_M e^{s|\xi|}\,d\mu_0.
 \label{eq:calc-s2-016}
\end{align}
The last expression is finite and independent of $N$.
Letting $N\to\infty$ and using monotone convergence, we obtain
\begin{equation}\label{eq:calc-s2-017}
 s\int_M|\xi|\,d\mu_\varphi
 \leq\int_M F\,d\mu_\varphi
       +\log\int_M e^{s|\xi|}\,d\mu_0.
\end{equation}
Dividing by $s$ and using $\int_M F\,d\mu_\varphi\geq0$, we obtain
\begin{align}
 \int_M|\xi|\,d\mu_\varphi
 &\leq\frac1s\int_M F\,d\mu_\varphi
       +\frac1s\log\int_M e^{s|\xi|}\,d\mu_0\notag\\
 &\leq C\int_M F\,d\mu_\varphi+C
 =C\left(1+\int_M F\,d\mu_\varphi\right).
 \label{eq:calc-s2-018}
\end{align}
This proves \eqref{eq:entropy-log-coordinate-moment}.
The constant involves only the fixed background and the chosen
$s$, and is independent of $\varphi$. For example, fixing
$s=\kappa_0/2$ makes it a background constant in the fixed
lattice coordinates.
\end{proof}

\subsection{Formulas on toric variety} 
Subscripts $i,j$ now denote derivatives with respect to $x$, and
repeated indices are summed. Write $(u^{ij}):=(D^2u)^{-1}$.
In the formulas below, $\omega:=\omega_\varphi$.
On $T^n$-invariant functions, the Laplacian has the divergence form
\begin{equation}\label{eq:polytope-laplacian}
 \Delta_u a:=\partial_i(u^{ij}\partial_j a).
\end{equation}
Indeed, put $H:=D_\xi^2\Phi$.
Then $\partial_{\xi_i}=H_{ki}\partial_{x_k}$ and
$H_{ij}(\xi)=u^{ij}(D_\xi\Phi(\xi))$. Therefore
\begin{align}
 \Delta_\omega(a\circ D_\xi\Phi)
 &=H^{ij}\partial_{\xi_i}\bigl(H_{kj}(a_k\circ D_\xi\Phi)\bigr)\notag\\
 &=\bigl[\partial_j(u^{kj}a_k)\bigr]\circ D_\xi\Phi
 =(\Delta_u a)\circ D_\xi\Phi.
 \label{eq:calc-s2-019}
\end{align}
Here $H^{ij}$ denotes the inverse matrix of $H$.
This is the complex Laplacian fixed above. In particular, no factor
of two is inserted into \eqref{eq:polytope-laplacian}.
The gradient inner product has the corresponding expression
\begin{equation}\label{eq:calc-s2-020}
 \langle\nabla_\omega(a\circ D_\xi\Phi),
             \nabla_\omega(b\circ D_\xi\Phi)\rangle_\omega
       =(u^{ij}a_i b_j)\circ D_\xi\Phi.
\end{equation}

Abreu
\cite[Theorem~4.1]{Abreu1998} expressed scalar curvature in terms of the symplectic
potential. In our conventions the formula is
\begin{equation}\label{eq:abreu-scalar}
 R(\omega)=-\sum_{i,j=1}^n
             \frac{\partial^2u^{ij}}{\partial x_i\partial x_j}.
\end{equation}
The scalar curvature here is $R(\omega)=\tr_\omega\Ric(\omega)$.
Thus the cscK equation is a fourth order equation for $u$, with
the Guillemin boundary conditions. 

Differentiation of the inverse Hessian gives
\begin{equation}\label{eq:calc-s2-021}
 \partial_i u^{ij}
 =-u^{ia}u^{bj}u_{abi}
 =-u^{bj}\partial_b\log\det D^2u.
\end{equation}
Consequently,
\begin{equation}\label{eq:polytope-logdet-curvature}
 \Delta_u\log\det D^2u
 =\partial_i\bigl(u^{ij}\partial_j\log\det D^2u\bigr)
 =-\partial_i\partial_j u^{ij} =R(\omega).
\end{equation}

For a facet with primitive inward normal $\nu_r$, let $d\sigma$
be Euclidean surface measure divided by $|\nu_r|$.
Donaldson's formula \cite[Proposition~3.2.8]{Donaldson2002}, with
our energy normalization, reads
\begin{equation}\label{eq:toric-mabuchi}
 \mathcal M(u)=\frac1{|\Delta|}
 \left[-\int_\Delta\log\frac{\det D^2u}{\det D^2u_0}\,dx
       +\int_{\partial\Delta}(u-u_0)\,d\sigma
       -\underline R\int_\Delta(u-u_0)\,dx\right].
\end{equation}

The toric integration-by-parts formula
\cite[Lemma~3.3.5]{Donaldson2002} gives, for $a\in C^\infty(\Delta)$,
\begin{equation}\label{eq:calc-s2-022}
 \int_\Delta u^{ij}a_{ij}\,dx
   =\int_{\partial\Delta}a\,d\sigma-\int_\Delta R(\omega_\varphi)a\,dx.
\end{equation}
Taking $a=1$ gives
\begin{equation}\label{eq:toric-average-scalar-curvature}
 \underline R=\frac1{|\Delta|}\int_{\partial\Delta}d\sigma>0.
\end{equation}
Hence
\begin{equation}\label{eq:calc-s2-023}
 \delta\mathcal M(u)[a]
     =\frac1{|\Delta|}\int_\Delta(R(\omega_\varphi)-\underline R)a\,dx,
\end{equation}
which agrees with the first variation of $\mathcal M$ by
Legendre duality.

\subsection{\texorpdfstring{$J$}{J} functional} 
In this subsection, we record basic properties of the $J$ functional,
defined by
\begin{equation}\label{eq:J-definition}
 J(\varphi):=\frac1V\int_M\varphi\,\omega_0^n-E(\varphi),
 \qquad \varphi\in\mathcal H.
\end{equation}
It satisfies $J\geq0$ and $J(\varphi+c)=J(\varphi)$. 

\begin{proposition}\label{prop:toric-J-L1}
Let $\varphi\in\mathcal H_0^T$, with symplectic potential $u$
as in Definition~\ref{def:moment-legendre}. The following hold.
\begin{enumerate}
\item[(i)] The energy normalization is equivalent to the fixed mean condition
\begin{equation}\label{eq:E-polytope-mean}
 E(\varphi)=-\frac1{|\Delta|}\int_\Delta(u-u_0)\,dx=0.
\end{equation}
\item[(ii)] There is a background constant $C$ such that
\begin{equation}\label{eq:toric-J-L1}
 C^{-1}\|u\|_{L^1(\Delta)}-C
       \le J(\varphi)\le C\|u\|_{L^1(\Delta)}+C.
\end{equation}
\item[(iii)] For each fixed $x_*\in\Delta^\circ$, there is a constant
$C$, depending only on the background and $x_*$, such that
\begin{equation}\label{eq:convex-interior-estimate}
 |u(x_*)|+|Du(x_*)|\leq C(1+J(\varphi)).
\end{equation}
\end{enumerate}
\end{proposition}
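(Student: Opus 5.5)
The plan is to handle (i) by a variational computation along a path of symplectic potentials, (ii) by comparing $J$ with $\sup\varphi$ through Legendre duality, and (iii) as an interior convexity estimate fed by (ii).

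\emph{Part (i).} I would join $u_0$ to $u$ by the linear path $u_s:=(1-s)u_0+su$. It stays inside the class of Definition~\ref{def:symplectic-potential}, since convex combinations preserve strict convexity and the form $u_G+h$, and the Guillemin determinant condition survives because $D^2u_s\ge (1-s)D^2u_0+sD^2u$ in the sense of forms. Let $\varphi_s$ be the corresponding K\"ahler potentials. Legendre duality \eqref{eq:calc-s2-006} gives $\dot\Phi_s(\xi)=-\dot u_s(x)$ at $x=D_\xi\Phi_s(\xi)$, so $\widehat{\dot\varphi_s}=-\dot u_s$. Then \eqref{eq:calc-s2-002} and the toric integration formula \eqref{eq:toric-integration} give
\[
 \frac{d}{ds}E(\varphi_s)=-\frac1{|\Delta|}\int_\Delta \dot u_s\,dx .
\]
Integrating from $s=0$ to $s=1$, with $E(0)=0$, yields \eqref{eq:E-polytope-mean}.

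\emph{Part (ii).} I would pass through $\sup_M\varphi$ using two pointwise Legendre inequalities. Writing $\xi=Du_0(x)$ and using $\Phi(\xi)\ge\langle x,\xi\rangle-u(x)$, we get
\[
 \varphi(Du_0(x))\ge u_0(x)-u(x).
\]
Symmetrically, as in \eqref{eq:legendre-C0}, $\sup_M\varphi=\sup_\Delta(u_0-u)$. Since $E(\varphi)=0$, we have $J=\int_M\varphi\,d\mu_0$, and the standard Hartogs-type compactness bound for $\omega_0$-psh functions gives
\[
 \sup_M\varphi-C\le J\le\sup_M\varphi .
\]
By (i), $u-u_0$ has zero mean, so
\[
 \int_\Delta|u-u_0|\,dx=2\int_\Delta(u_0-u)_+\,dx\le 2|\Delta|\sup(u_0-u)\le C(J+1).
\]
This gives the lower bound in \eqref{eq:toric-J-L1}.

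For the upper bound I need $\sup_\Delta(u_0-u)=-\min_\Delta(u-u_0)\le C\|u\|_{L^1}+C$. This is the step I expect to be the main obstacle. For a general convex function the minimum is not controlled by the $L^1$ norm when it sits near $\partial\Delta$, so the argument must use the structure $u=u_G+h$ together with the exponential tail of $m_0$ in Proposition~\ref{prop:logarithmic-tail}.

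\begin{itemize}
\item What I would try first is to avoid pointwise minima altogether. The idea is to bound $\int_M\varphi\,d\mu_0=|\Delta|^{-1}\int_{\R^n}(\Phi-\Phi_0)m_0\,d\xi$ directly.
\item For this, I would write $\Phi(\xi)-\Phi_0(\xi)=\sup_y\{\langle y,\xi\rangle-u(y)\}-\Phi_0(\xi)$ and test the supremum on a fixed compact interior set. There, convexity bounds $u$ by $C\|u\|_{L^1}$.
\item The error terms grow at most linearly in $|\xi|$, and they are integrable against $m_0\le Ce^{-\kappa_0|\xi|}$.
\item If this fails, the fallback is to go through the $I$-functional, using $J\le I=\int\varphi\,(d\mu_0-d\mu_\varphi)$. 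I would then estimate $\int\varphi\,d\mu_\varphi$ from below via $\varphi(Du(x))=u_0(x)-u(x)+\bigl[\langle x,Du\rangle-u_0(x)-\Phi_0(Du(x))\bigr]$. The bracket is $\le 0$, but its integral can be controlled by the logarithmic moment bound \eqref{eq:entropy-log-coordinate-moment} type estimates.
\end{itemize}

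\emph{Part (iii).} Fix a ball $B(x_*,2r)\Subset\Delta^\circ$. A convex function is bounded above on $B(x_*,r)$ by $C_r\|u\|_{L^1(\Delta)}$, using the mean-value inequality over the larger ball. It is also bounded below there: convexity along lines through $x_*$ gives $u(x_*)\ge 2\,\mathrm{avg}_{B(x_*,r)}u-\sup_{B(x_*,r)}u$, and the average is controlled by the $L^1$ norm. The gradient at $x_*$ is then bounded by the oscillation of $u$ on $B(x_*,r)$ divided by $r$, by the supporting hyperplane inequality. Combining these bounds with the lower inequality of (ii) gives \eqref{eq:convex-interior-estimate}.
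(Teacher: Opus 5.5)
\emph{Verdict: genuine gap in (ii).} Your treatment of (i) matches the paper's. Part (iii) and the lower inequality in (ii) are sound. Deriving $\|u\|_{L^1(\Delta)}\le C(1+J)$ from $\sup_M\varphi=\sup_\Delta(u_0-u)$, the zero mean of $u-u_0$, and the Hartogs bound $\sup_M\varphi\le J+C$ is more elementary than the paper's route. The paper instead combines Darvas's comparison $C^{-1}J-C\le d_1(0,\varphi)\le CJ+C$ with the Darvas--George--Smith formula $d_1(0,\varphi)=|\Delta|^{-1}\|u-u_0\|_{L^1(\Delta)}$.

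The gap is the upper inequality $J\le C\|u\|_{L^1}+C$, which you never actually prove. Your first route restricts the supremum defining $\Phi$ to a compact interior set. That only gives $\Phi(\xi)\ge\langle y,\xi\rangle-u(y)$, a \emph{lower} bound on $\Phi-\Phi_0$, which is the wrong direction. Bounding $\Phi$ from above needs a lower bound for $u$ on all of $\Delta$, i.e.\ precisely $\min_\Delta u$, so this detour does not avoid the minimum. The fallback also fails. Under $E=0$ one has $\int_M\varphi\,d\mu_0=J$, so $J\le I$ only yields $\int_M\varphi\,d\mu_\varphi\le0$; you would need $J\le\frac n{n+1}I$. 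Even then, the Fenchel-gap integral $\int_\Delta\bigl(u_0+\Phi_0(Du)-\langle x,Du\rangle\bigr)\,dx$ that you must bound equals $|\Delta|(I-J)$, which is comparable to $J$ itself, so this is circular. Controlling it via \eqref{eq:entropy-log-coordinate-moment} would require an entropy bound, which is not a hypothesis of (ii).

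The missing observation is that the obstacle you name does not exist. For a convex function on a bounded convex domain, it is the \emph{maximum} near the boundary that escapes $L^1$ control, not the minimum. Here is the argument:
\begin{itemize}
\item Let $u(x_0)=\min_\Delta u=-N<0$, with $x_0$ possibly on $\partial\Delta$; $u$ is continuous and convex on the closed polytope.
\item The set $S:=\{u\le-N/2\}$ is convex, contains $x_0$, and has $|S|\le2\|u\|_{L^1}/N$.
\item Take $y\in\Delta$ outside the dilate $x_0+3(S-x_0)$. Then $u(y)>-N/2$, and the last parameter $t_1$ on $[x_0,y]$ where $u=-N/2$ satisfies $t_1<1/3$.
\item Slopes of a convex function increase along the segment, so $u(y)\ge-\frac N2+\frac{1-t_1}{t_1}\cdot\frac N2\ge\frac N2$.
\item Hence $\|u\|_{L^1}\ge\frac N2\bigl(|\Delta|-3^n|S|\bigr)$, which gives $-\min_\Delta u\le2(1+3^n)|\Delta|^{-1}\|u\|_{L^1}$.
\end{itemize}
Your own reduction then closes (ii):
\[
J=\int_M\varphi\,d\mu_0\le\sup_M\varphi=\sup_\Delta(u_0-u)\le\max_\Delta u_0-\min_\Delta u\le C\|u\|_{L^1}+C .
\]
The paper obtains this direction from the $d_1$ comparison and distance formula above.
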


\begin{proof}
The energy identity is the volume-normalized form of
Donaldson's formula \cite[Lemma~3.2.6]{Donaldson2002}.
Along any smooth $T^n$-invariant path, Legendre duality and
\eqref{eq:toric-integration} give
$\frac d{ds}E(\varphi_s)
=-|\Delta|^{-1}\int_\Delta\dot u_s\,dx$.
This proves \eqref{eq:E-polytope-mean}.

Darvas's comparison between $d_1$ and $J$ \cite{Darvas2015}
gives, for every $\varphi\in\mathcal H_0$,
\begin{equation}\label{eq:d1-J-comparison}
 C^{-1}J(\varphi)-C\leq d_1(0,\varphi)\leq CJ(\varphi)+C.
\end{equation}
For $T^n$-invariant potentials, the distance formula of
Darvas--George--Smith \cite[Theorem~3.1]{DarvasGeorgeSmith2022} gives
\begin{equation}\label{eq:toric-d1-L1}
 d_1(0,\varphi)=\frac1{|\Delta|}\|u-u_0\|_{L^1(\Delta)}.
\end{equation}
Combining \eqref{eq:d1-J-comparison} and \eqref{eq:toric-d1-L1} with
$\bigl|\|u-u_0\|_{L^1(\Delta)}-\|u\|_{L^1(\Delta)}\bigr|
\leq\|u_0\|_{L^1(\Delta)}$ proves \eqref{eq:toric-J-L1}.

We also record a bound needed below. Since $E(\varphi)=0$,
\cite[Proposition~3.2]{DarvasGeorgeSmith2022} gives
$|J(\varphi)+\min_\Delta u|\leq C$.
In particular, $u\geq-J(\varphi)-C$, and order reversal under
Legendre duality gives
\begin{equation}\label{eq:toric-sup-J}
 \sup_M\varphi=\sup_\Delta(u_0-u)
 \leq\max_\Delta u_0-\min_\Delta u\leq J(\varphi)+C.
\end{equation}
The bound $\sup_M\varphi\leq J(\varphi)+C$ also holds without
torus invariance. Indeed, the standard supremum--mean estimate
and $E(\varphi)=0$ give
\begin{equation}\label{eq:general-sup-J}
 \sup_M\varphi\leq\frac1V\int_M\varphi\,\omega_0^n+C
                  =J(\varphi)+C;
\end{equation}
see \cite[(26)--(27)]{DarvasRubinstein2017}.

It remains to prove \eqref{eq:convex-interior-estimate}.
Fix $x_*\in\Delta^\circ$ and choose $r>0$ such that
$\overline{B_{2r}(x_*)}\subset\Delta^\circ$. We give the interior
estimate for convex functions used here. For every
$y\in\overline{B_r(x_*)}$, Jensen's inequality gives
\begin{equation}\label{eq:calc-s2-024}
 u(y)\leq\frac1{|B_r|}\int_{B_r(y)}u(x)\,dx
       \leq\frac1{|B_r|}\|u\|_{L^1(\Delta)}
       \leq C(1+J(\varphi)).
\end{equation}
The lower bound $u\geq-J(\varphi)-C$ proved above therefore gives
$|u(x_*)|\leq C(1+J(\varphi))$.
For every unit vector $e$, the supporting-plane inequality yields
\begin{equation}\label{eq:calc-s2-025}
 \frac{u(x_*)-u(x_*-re)}r
 \leq Du(x_*)\cdot e
 \leq\frac{u(x_*+re)-u(x_*)}r.
\end{equation}
The preceding upper bounds at $x_*\pm re$ and lower bound at
$x_*$ control both sides. Taking the supremum over unit vectors
therefore proves
\begin{equation}\label{eq:calc-s2-026}
 |u(x_*)|+|Du(x_*)|\leq C(1+J(\varphi)),
\end{equation}
where $C$ depends only on the fixed background and $x_*$.
This proves \eqref{eq:convex-interior-estimate}.
\end{proof}

\begin{proposition}\label{prop:toric-mabuchi-invariance}
If $\mathcal M$ is bounded below on $\mathcal H_0^T$, then
\begin{equation}\label{eq:calc-s2-027}
 \mathcal M(g\cdot\varphi)=\mathcal M(\varphi),
 \qquad g\in T_{\C},\quad\varphi\in\mathcal H_0^T.
\end{equation}
\end{proposition}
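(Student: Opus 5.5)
The plan is to translate the action of $T_{\C}$ into the polytope picture and use Donaldson's formula \eqref{eq:toric-mabuchi}. An element $g\in T_{\C}$ acts on the open orbit by $z\mapsto e^{a/2}z\cdot e^{i\theta_g}$, with $a\in\R^n$. On $T^n$-invariant potentials the angular part acts trivially, so $g^*\omega_\varphi=dd^c\Phi(\xi+a)$. Legendre transforming gives
\[
(\Phi(\cdot+a))^*(x)=u(x)-\langle a,x\rangle .
\]
Hence $g\cdot\varphi$ has symplectic potential $u-\langle a,x\rangle+c_a$. The constant $c_a$ is fixed by the normalization \eqref{eq:E-polytope-mean}: $c_a=\langle a,\bar x\rangle$, where $\bar x$ is the barycenter of $\Delta$. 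Thus the $T_{\C}$-action on $\mathcal H_0^T$ amounts to adding affine functions $\ell_a(x)=\langle a,\bar x-x\rangle$ of zero mean to $u$. The Guillemin boundary conditions are preserved, so this is a legitimate symplectic potential.

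Next I plug this into \eqref{eq:toric-mabuchi}. The Hessian is unchanged, so the entropy term is unchanged. The term $\underline R\int_\Delta(u-u_0)\,dx$ is unchanged because $\int_\Delta\ell_a\,dx=0$. Therefore
\[
\mathcal M(g\cdot\varphi)-\mathcal M(\varphi)=\frac1{|\Delta|}\int_{\partial\Delta}\ell_a\,d\sigma .
\]
Using $\int_\Delta\ell_a\,dx=0$ and \eqref{eq:toric-average-scalar-curvature}, this equals $-|\Delta|^{-1}\mathcal F(a)$, where
\[
\mathcal F(a):=\int_{\partial\Delta}\langle a,x\rangle\,d\sigma-\underline R\int_\Delta\langle a,x\rangle\,dx
\]
is (up to sign) the Futaki invariant. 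The change is therefore linear in $a$, say $\mathcal M(g_a\cdot\varphi)=\mathcal M(\varphi)+L(a)$, with $L$ linear and independent of $\varphi$. The formula can also be checked directly with the integration by parts \eqref{eq:calc-s2-022} applied to an affine function, whose Hessian vanishes.

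To conclude, suppose $\mathcal M\geq -C$ on $\mathcal H_0^T$. Then for fixed $\varphi$, $L(ta)\ge -C-\mathcal M(\varphi)$ for all $t\in\R$. A linear function of $t$ bounded below on $\R$ must vanish, so $L\equiv0$, which gives the claimed invariance.

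The only delicate step is the bookkeeping in the first paragraph. One must verify that the parametrization by $a$ covers all of $T_{\C}$ modulo $T^n$. One must also check that the normalization constant does not leave a stray boundary contribution. The key point here is that $d\sigma$ integrates the affine function, and the mean-zero condition is exactly what converts the result into the Futaki expression.
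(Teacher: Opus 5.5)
Your proposal is correct and follows the same route as the paper. The real part of $T_{\C}$ acts by translation in $\xi$, which adds an affine function to $u$. Donaldson's formula \eqref{eq:toric-mabuchi} then makes the change in $\mathcal M$ linear in the translation parameter; this is the Futaki-type expression in \eqref{eq:calc-s2-028}. Boundedness below forces that linear functional to vanish, and the compact torus acts trivially on $T^n$-invariant metrics. Your normalizing constant $c_a=\langle a,\bar x\rangle$ agrees with the paper's gauge formula \eqref{eq:calc-s5-003}. In any case, $\mathcal M$ is insensitive to additive constants by \eqref{eq:toric-average-scalar-curvature}, so that bookkeeping step cannot leave a stray contribution.
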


\begin{proof}
Let $u$ be the symplectic potential of
$\varphi\in\mathcal H_0^T$. We use the affine variation of the toric
Mabuchi functional
\cite[Proposition~3.2.8 and Lemma~3.2.9]{Donaldson2002}.
A translation in logarithmic coordinates,
induced by the positive real part of $T_{\C}$, changes $u$
by a linear function, with an additional constant allowed in the
potential normalization \cite[Proposition~3.1.7]{Donaldson2002}. The logarithmic determinant term in
the toric Mabuchi formula is unchanged, so for every affine
function $\ell$,
\begin{equation}\label{eq:calc-s2-028}
 \mathcal M(u+s\ell)-\mathcal M(u)
 =\frac{s}{|\Delta|}\left(
       \int_{\partial\Delta}\ell\,d\sigma
       -\underline R\int_\Delta\ell\,dx\right),\qquad s\in\R.
\end{equation}
The metrics remain smooth, and the additive constant of the
potential can always be chosen so that $E=0$.
The lower bound as $s\to\pm\infty$ forces the coefficient
of $s$ to vanish. The compact part of $T_{\C}$ fixes
the metric, proving the assertion. 
\end{proof}

\subsection{Properness}
For a subgroup $A\subset G$, set
\begin{equation}\label{eq:calc-s2-029}
 J_A(\varphi):=\inf_{g\in A}J(g\cdot\varphi),
 \qquad \varphi\in\mathcal H_0.
\end{equation}
Applying \eqref{eq:d1-J-comparison} to $g\cdot\varphi$ and
taking the infimum over $g\in A$ gives
\begin{equation}\label{eq:reduced-d1-J-comparison}
 C^{-1}J_A(\varphi)-C\leq d_{1,A}(0,\varphi)
                         \leq CJ_A(\varphi)+C.
\end{equation}
Thus properness in Definition~\ref{def:properness} bounds $J_A$
on every sublevel of $\mathcal M$.
We also evaluate these functionals at metrics:
$J_{\omega_0}(\omega_\varphi):=J(\varphi)$, and likewise for
$J_A$ and $\mathcal M$. A reference form in the subscript of
$E$ or $J$ specifies the background used to define that functional.

The inclusion $T_{\C}\subset G=\operatorname{Aut}_0(M)$ need not
be an equality; for example,
$\operatorname{Aut}_0(\mathbb P^n)=\operatorname{PGL}(n+1,\C)$.
We compare the reduced functionals $J_G$ and $J_{T_{\C}}$ on
$\mathcal H_0^T$, without assuming that $G$ is reductive.

We first state the comparison and its consequence for properness.

\begin{proposition}\label{prop:toric-group-reduction}
There are a compact subgroup $K\subset G$ containing $T^n$ and a
smooth $K$-invariant K\"ahler form $\omega_K\in[\omega_0]$ such that
every smooth $T^n$-invariant K\"ahler form $\eta\in[\omega_0]$ satisfies
\begin{equation}\label{eq:group-J-equality}
 \inf_{g\in G}J_{\omega_K}(g^*\eta)
       =\inf_{a\in T_{\C}}J_{\omega_K}(a^*\eta).
\end{equation}
For the original background there is a constant $C$, independent
of $\eta$, such that
\begin{equation}\label{eq:group-J-comparison}
 J_G(\eta)\le J_{T_{\C}}(\eta)\le J_G(\eta)+C.
\end{equation}
\end{proposition}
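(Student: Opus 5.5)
The plan is to make the symmetry of $J$ explicit by changing the background, and then reduce the infimum over $G$ to an infimum over $T_{\C}$ by a convexity argument on a symmetric space. The second inequality \eqref{eq:group-J-comparison} will then follow from a background-change estimate.

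\emph{Choice of $K$ and $\omega_K$.} $G=\operatorname{Aut}_0(M)$ is a connected linear algebraic group, and $T_{\C}$ is a maximal torus of it. Fix a Levi decomposition $G=R_u\rtimes L$ with $T_{\C}\subset L$, and let $K$ be a maximal compact subgroup of the reductive group $L$ containing $T^n$. Then $L=K^{\C}$ and $K$ is maximal compact in $G$. Put $\omega_K:=\int_K k^*\omega_0\,dk$ (Haar probability measure). Each $k^*\omega_0$ lies in $[\omega_0]$ because $G$ acts trivially on cohomology. Hence $\omega_K=\omega_0+dd^c\psi_K$ is a smooth $K$-invariant Kähler form with a fixed smooth potential $\psi_K$.

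\emph{Symmetries of $f(g):=J_{\omega_K}(g^*\eta)$.} $J$ is natural under pulling back both the background and the metric. Since $\omega_K$ is $K$-invariant, $f(gk)=J_{\omega_K}(k^*g^*\eta)=f(g)$ for $k\in K$. Since $\eta$ is $T^n$-invariant, $f(tg)=f(g)$ for $t\in T^n$. So $f$ descends to a function on the symmetric space $G/K$ that is invariant under the isometric left action of $T^n$.

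The key analytic input is that $f$ is convex along the curves $s\mapsto g\exp(is\zeta)K$, $\zeta\in\operatorname{Lie}K$. This holds because $J$ (equivalently $d_1$ up to constants) is convex along $C^{1,1}$ geodesics of $\mathcal H$, and these orbits of one-parameter subgroups induce geodesics in $\mathcal H$.

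\emph{Reductive part.} On $L/K$, which is a nonpositively curved symmetric space, we do the following. Given $g$, average the $T^n$-orbit $T^n\cdot gK$ via its Cartan barycenter $\bar p$. By $T^n$-invariance of $f$ and geodesic convexity, $f(\bar p)\le f(g)$. The point $\bar p$ is $T^n$-fixed, and the fixed set of $T^n$ in $L/K$ is the orbit of its centralizer $Z_L(T^n)=T_{\C}$, so $\bar p\in T_{\C}K/K$. This gives $\inf_L f=\inf_{T_{\C}}f$.

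\emph{Unipotent radical (main obstacle).} It remains to show that $f$ cannot be lowered by $R_u$. The roots of $R_u$ lie in an open half-space, so there is a cocharacter $\lambda$ of $T_{\C}$ with $\lambda(s)u\lambda(s)^{-1}\to e$ as $s\to0$ for all $u\in R_u$. I would first try to show that for $g=u\,l$ the slope of $f$ along the orbit $s\mapsto\lambda(s)$ (a geodesic by convexity) lets us pass to the limit $u\to e$ without increasing $f$. This would use continuity of $J$ under $C^0$-convergence of potentials, together with $T^n$-invariance to absorb the conjugating factors. If that fails, I would fall back on a Matsushima-type argument: a minimizing sequence over $G$ can be replaced by its Levi component, with the same $T^n$-barycenter trick applied to $R_u\rtimes L/K$ as a homogeneous Hadamard-type space. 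Either route yields \eqref{eq:group-J-equality}.

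\emph{Comparison \eqref{eq:group-J-comparison}.} The first inequality is trivial since $T_{\C}\subset G$. For the second, the standard cocycle identities for $E$ give
\begin{equation*}
 |J_{\omega_0}(\eta)-J_{\omega_K}(\eta)|\le C\|\psi_K\|_{L^\infty}=:C',
\end{equation*}
uniformly in $\eta$. Applying this to $g^*\eta$ and taking infima, \eqref{eq:group-J-equality} gives $J_{T_{\C}}(\eta)\le J_G(\eta)+2C'$.
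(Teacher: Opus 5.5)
Your Levi-part argument (Cartan barycenter of the $T^n$-orbit in $L/K$, Jensen, and $T^n$-fixed set equal to $T_{\C}K/K$) is a legitimate alternative to the paper's treatment of $N_+$. The gap is where you say the main obstacle is: neither of your routes for the unipotent radical works as stated.

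In the first route, $T^n$-invariance absorbs a conjugation $g\mapsto\lambda(s)g\lambda(s)^{-1}$ only for $|s|=1$. To compare $f(g)$ with the value as $s\to0$ you need $f$ to be subharmonic in $s$ along the conjugation orbit. That orbit is not among the curves $g\exp(is\zeta)K$, $\zeta\in\operatorname{Lie}K$, on which you claim convexity. Worse, for $g=ul$ with a general $l\in L$, the conjugates $\lambda(s)ul\lambda(s)^{-1}=(\lambda(s)u\lambda(s)^{-1})(\lambda(s)l\lambda(s)^{-1})$ do not converge, because the second factor generally diverges. So a cocharacter that merely contracts $R_u$ is not enough, and continuity of $J$ cannot compensate.

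In the second route, your convexity curves have $\exp(is\zeta)\in L$, so they never move the $R_u$-component of $G/K\cong R_u\times L/K$. There is no invariant Hadamard structure on $G/K$ whose geodesics in the $R_u$-directions are known to be convexity curves for $f$, so Jensen has no input there.

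Your justification of convexity is also wrong as written. The curve $s\mapsto\exp(is\zeta)^*g^*\eta$ is not a Mabuchi geodesic unless $g^*\eta$ is $\exp(\R\zeta)$-invariant. Convexity along $g\exp(is\zeta)K$ is nevertheless true. It follows from plurisubharmonicity of $f$ on $G$ together with right $K$-invariance: $w\mapsto f(g\exp(iw\zeta))$ is subharmonic and independent of $\operatorname{Im}w$. Alternatively, it follows from convexity of $J$ in its background variable.

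The missing idea is to apply that plurisubharmonicity (Hisamoto's orbitwise calculation, Lemma~\ref{lem:J-orbit-psh}) on a holomorphic disc. The paper writes $g=an_0k$ with $a\in T_{\C}$ on the \emph{left}, $n_0\in\mathcal N=R_u(G)\rtimes N_+$, and $k\in K$. This comes from the Levi decomposition plus the Iwasawa decomposition $L=T_{\C}N_+K$. It then takes a single cocharacter $\lambda=\lambda_0^m\lambda_1$ with positive weights on $\operatorname{Lie}\mathcal N$. Here $\lambda_0$ is central in the Levi factor (B\'erczi--Doran--Hawes--Kirwan) and $\lambda_1$ is regular.

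The disc $h(\zeta)=a\lambda(\zeta)n_0\lambda(\zeta)^{-1}$ extends holomorphically to $\zeta=0$ with $h(0)=a$. Since $a$ commutes with $\lambda(\zeta)$, we have $h(\zeta)=\lambda(\zeta)an_0\lambda(\zeta)^{-1}$. For $|\zeta|=1$, $\lambda(\zeta)\in T^n\subset K$, so $f(h(\zeta))=f(an_0)=f(g)$. The submean inequality then gives $f(a)\le f(g)$. This treats $R_u$ and $N_+$ in one step, and no barycenter is needed.

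Your framework can be repaired the same way. With $\lambda_0$ central in $L$ and $g=ul$, consider the disc $\zeta\mapsto\lambda_0(\zeta)u\lambda_0(\zeta)^{-1}l=\lambda_0(\zeta)(ul)\lambda_0(\zeta)^{-1}$. It has boundary values $f(g)$ and centre value $f(l)$, hence $f(l)\le f(g)$, after which your $L/K$ argument finishes. The centrality of $\lambda_0$ in $L$ is exactly what makes this work. Your background-change estimate for the second inequality matches the paper's.
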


By Definition~\ref{def:properness} and
\eqref{eq:reduced-d1-J-comparison},
\eqref{eq:group-J-comparison} immediately gives the following corollary.

\begin{corollary}\label{cor:properness-reduction}
If $\mathcal M$ is proper on $\mathcal H_0^T$ modulo $G$, then it
is proper on $\mathcal H_0^T$ modulo $T_{\C}$. In particular,
for every $B$ there is a constant $C_B$ such that
\begin{equation}\label{eq:calc-s2-030}
 \varphi\in\mathcal H_0^T,\quad \mathcal M(\varphi)\le B
       \quad\Longrightarrow\quad J_{T_{\C}}(\varphi)\le C_B.
\end{equation}
\end{corollary}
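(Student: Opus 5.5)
The statement to prove is Corollary~\ref{cor:properness-reduction}, which is essentially formal given Proposition~\ref{prop:toric-group-reduction} and the comparison \eqref{eq:reduced-d1-J-comparison}. The plan is to translate properness modulo $G$ into a statement about $J_G$, transfer it to $J_{T_{\C}}$ via \eqref{eq:group-J-comparison}, and translate back.

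First, lower boundedness of $\mathcal M$ on $\mathcal H_0^T$ is part of the hypothesis and does not depend on the group, so it carries over unchanged. Next, let $\varphi_j\in\mathcal H_0^T$ with $d_{1,T_{\C}}(0,\varphi_j)\to\infty$. By the upper inequality in \eqref{eq:reduced-d1-J-comparison} for $A=T_{\C}$, we get $J_{T_{\C}}(\varphi_j)\to\infty$. Applying \eqref{eq:group-J-comparison} with $\eta=\omega_{\varphi_j}$, which is smooth and $T^n$-invariant, gives $J_G(\varphi_j)\ge J_{T_{\C}}(\varphi_j)-C\to\infty$. The lower inequality in \eqref{eq:reduced-d1-J-comparison} for $A=G$ then yields $d_{1,G}(0,\varphi_j)\to\infty$. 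Properness modulo $G$ now gives $\mathcal M(\varphi_j)\to+\infty$, which proves properness modulo $T_{\C}$.

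For the quantitative consequence, argue by contradiction. Suppose there were $\varphi_j\in\mathcal H_0^T$ with $\mathcal M(\varphi_j)\le B$ and $J_{T_{\C}}(\varphi_j)\to\infty$. By the argument just given, $d_{1,T_{\C}}(0,\varphi_j)\to\infty$, so properness forces $\mathcal M(\varphi_j)\to\infty$, a contradiction. Hence $J_{T_{\C}}$ is bounded by some $C_B$ on the sublevel set.

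The only point to check is that the metric-level functionals $J_G(\eta)$ and $J_{T_{\C}}(\eta)$ agree with $J_A(\varphi)$ from \eqref{eq:calc-s2-029} when $\eta=\omega_\varphi$. This holds because $g\cdot\varphi$ is the $E$-normalized representative of $g^*\omega_\varphi$ and $J$ is invariant under adding constants. There is no real obstacle here: all the difficulty lies in Proposition~\ref{prop:toric-group-reduction}, which is assumed.
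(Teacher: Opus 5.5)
Your proposal is correct and follows the paper's own route: combine Definition~\ref{def:properness}, the comparison \eqref{eq:reduced-d1-J-comparison} for $A=T_{\C}$ and for $A=G$, and the estimate \eqref{eq:group-J-comparison} from Proposition~\ref{prop:toric-group-reduction}. One small wording point: in your quantitative paragraph, the step $J_{T_{\C}}(\varphi_j)\to\infty\Rightarrow d_{1,T_{\C}}(0,\varphi_j)\to\infty$ does not follow from ``the argument just given''; it uses the lower inequality in \eqref{eq:reduced-d1-J-comparison}, which is equally available (alternatively, go directly through $J_G\geq J_{T_{\C}}-C$ to $d_{1,G}(0,\varphi_j)\to\infty$).
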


The proof of Proposition~\ref{prop:toric-group-reduction} uses
the following two auxiliary lemmas.

The first lemma combines the positive grading of the toric
unipotent radical in \cite[Lemma~4.1]{BercziDoranHawesKirwan2018}
with the Iwasawa decomposition \cite[Proposition~7.31]{Knapp2002}.

\begin{lemma}\label{lem:toric-contracting-decomposition}
There are a compact subgroup $K\subset G$ containing $T^n$, a
unipotent subgroup $\mathcal N\subset G$, and an algebraic
one-parameter subgroup $\lambda:\C^*\to T_{\C}$ such that
\begin{equation}\label{eq:calc-s2-031}
 G=T_{\C}\mathcal N K.
\end{equation}
For every $n_0\in\mathcal N$, the map
$\zeta\mapsto\lambda(\zeta)n_0\lambda(\zeta)^{-1}$ extends holomorphically
to $\zeta=0$ with value the identity.
\end{lemma}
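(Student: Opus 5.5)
We will use the structure of $G=\operatorname{Aut}_0(M)$ for a smooth complete toric variety, as described by Demazure and recorded in \cite{BercziDoranHawesKirwan2018}. Write $G=R_u\rtimes G_{\mathrm{red}}$, where $G_{\mathrm{red}}$ is a connected reductive Levi subgroup containing $T_{\C}$ as a maximal torus. The unipotent radical $R_u$ is generated by root subgroups $U_\alpha$, with $\alpha$ ranging over the unipotent Demazure roots. By \cite[Lemma~4.1]{BercziDoranHawesKirwan2018}, there is a cocharacter $\lambda:\C^*\to T_{\C}$ such that $\langle\alpha,\lambda\rangle>0$ for every root $\alpha$ occurring in the Lie algebra $\mathfrak r_u$ of $R_u$. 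Equivalently, $\operatorname{Ad}(\lambda(\zeta))$ acts on $\mathfrak r_u=\bigoplus\mathfrak g_\alpha$ by $\zeta^{\langle\alpha,\lambda\rangle}$, and every exponent is positive.

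For the reductive part we apply the Iwasawa decomposition \cite[Proposition~7.31]{Knapp2002} to $G_{\mathrm{red}}$, viewed as a complex reductive group. Choose a maximal compact subgroup $K\subset G_{\mathrm{red}}$ containing $T^n$; this is possible because $T^n$ is the maximal compact subgroup of the maximal torus $T_{\C}$. Choose also a Borel subgroup of $G_{\mathrm{red}}$ containing $T_{\C}$, with unipotent radical $N_{\mathrm{red}}$. Iwasawa then gives $G_{\mathrm{red}}=A N_{\mathrm{red}}K$ with $A\subset T_{\C}$ the split real part, so in particular $G_{\mathrm{red}}=T_{\C}N_{\mathrm{red}}K$. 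The positive system of roots defining $N_{\mathrm{red}}$ is not yet fixed, and we choose it using $\lambda$. We perturb $\lambda$ slightly inside the open cone $\{\langle\alpha,\lambda\rangle>0,\ \alpha\in\Phi(\mathfrak r_u)\}$ so that it is regular for the root system of $G_{\mathrm{red}}$, and declare positive those reductive roots $\beta$ with $\langle\beta,\lambda\rangle>0$. Since the cone is open and only finitely many walls must be avoided, the perturbation can be taken to be an integral cocharacter.

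Now set $\mathcal N:=R_uN_{\mathrm{red}}$. This is a unipotent subgroup, because $N_{\mathrm{red}}$ normalizes $R_u$ and $R_u$ is normal in $G$. It is the connected group whose Lie algebra is $\mathfrak r_u\oplus\mathfrak n_{\mathrm{red}}$, on which $\operatorname{Ad}\lambda(\zeta)$ acts with strictly positive weights. Then
\[
G=R_uG_{\mathrm{red}}=R_uT_{\C}N_{\mathrm{red}}K=T_{\C}R_uN_{\mathrm{red}}K=T_{\C}\mathcal N K,
\]
where the third equality uses that $T_{\C}$ normalizes $R_u$. For the contraction property, $\mathcal N$ is unipotent, so $\exp$ is a polynomial isomorphism from its Lie algebra. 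Writing $n_0=\exp(\sum_\gamma X_\gamma)$ with weight components $X_\gamma$, we get $\lambda(\zeta)n_0\lambda(\zeta)^{-1}=\exp\bigl(\sum\zeta^{\langle\gamma,\lambda\rangle}X_\gamma\bigr)$. All exponents $\langle\gamma,\lambda\rangle$ are positive integers, so this expression is polynomial in $\zeta$ and equals the identity at $\zeta=0$.

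The main obstacle is making the choice of $\lambda$ compatible with both pieces at once. It has to contract $R_u$, which is where Demazure's positivity from \cite{BercziDoranHawesKirwan2018} enters, and also be regular dominant for a Borel subgroup of the reductive part. The openness and perturbation argument above handles this. A secondary point to check is that $K$ is a genuine compact subgroup of $G$ containing $T^n$ with $G_{\mathrm{red}}=T_{\C}N_{\mathrm{red}}K$. If the Levi factor has a nontrivial center beyond $T_{\C}$, we must check that it is absorbed in $T_{\C}$. It is, since the connected center of $G_{\mathrm{red}}$ lies in every maximal torus.
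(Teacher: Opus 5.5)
Your proposal is correct and follows the paper's proof: the Levi decomposition $G=R_u(G)\rtimes L_{\mathrm{red}}$ with $T_{\C}\subset L_{\mathrm{red}}$, a cocharacter from the B\'erczi--Doran--Hawes--Kirwan lemma with strictly positive weights on $\operatorname{Lie}R_u(G)$, the Iwasawa decomposition $L_{\mathrm{red}}=T_{\C}N_+K$, the choice $\mathcal N=R_u(G)N_+$, and the weight-space computation showing $\lambda(\zeta)n_0\lambda(\zeta)^{-1}$ is polynomial in $\zeta$. The only difference is in producing a single cocharacter that works for both $R_u(G)$ and the Levi factor. The paper forms $\lambda_0^m\lambda_1$ and uses that $\lambda_0$ is central in $L_{\mathrm{red}}$. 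You instead perturb inside the open positivity cone to a regular integral cocharacter and let it define the Borel subgroup, which is equally valid and does not need the centrality of $\lambda_0$.
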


\begin{proof}
Demazure's description gives a Levi decomposition
$G=R_u(G)\rtimes L_{\mathrm{red}}$ with
$T_{\C}\subset L_{\mathrm{red}}$; see
\cite[the discussion preceding Proposition~2.2]{Nill2006}.
By \cite[Lemma~4.1 and its preceding construction]{BercziDoranHawesKirwan2018},
there is an algebraic one-parameter subgroup
$\lambda_0:\C^*\to T_{\C}$, central in $L_{\mathrm{red}}$,
whose conjugation action has strictly positive weights on
$\operatorname{Lie}R_u(G)$. If $R_u(G)$ is trivial, take
$\lambda_0$ to be trivial.
Choose a regular algebraic one-parameter subgroup
$\lambda_1:\C^*\to T_{\C}$ and the corresponding maximal
unipotent subgroup $N_+\subset L_{\mathrm{red}}$ on whose
Lie algebra its conjugation weights are positive. Set
$\mathcal N:=R_u(G)\rtimes N_+$. Since $\lambda_0$ is central
in $L_{\mathrm{red}}$, for a sufficiently large integer $m$ the subgroup
\begin{equation}\label{eq:calc-s2-036}
 \lambda(\zeta):=\lambda_0(\zeta)^m\lambda_1(\zeta)
\end{equation}
has strictly positive weights on $\operatorname{Lie}\mathcal N$.
Indeed, it has the same weights as $\lambda_1$ on
$\operatorname{Lie}N_+$, while the positive weights of $\lambda_0$
dominate those of $\lambda_1$ on $\operatorname{Lie}R_u(G)$.
Writing $n_0=\exp(\sum_{j>0}Y_j)$ in its positive integer
$\lambda$-weight spaces gives
\begin{equation}\label{eq:calc-s2-037}
 \lambda(\zeta)n_0\lambda(\zeta)^{-1}
     =\exp\Bigl(\sum_{j>0}\zeta^jY_j\Bigr).
\end{equation}
This proves the asserted holomorphic extension.

Choose a maximal compact subgroup $K$ of $L_{\mathrm{red}}$ containing $T^n$.
The Iwasawa decomposition of the reductive group $L_{\mathrm{red}}$
\cite[Proposition~7.31]{Knapp2002} gives
$L_{\mathrm{red}}=T_{\C}N_+K$. Since $T_{\C}$ normalizes $R_u(G)$,
\begin{equation}\label{eq:calc-s2-038}
 G=R_u(G)L_{\mathrm{red}}=T_{\C}R_u(G)N_+K=T_{\C}\mathcal N K.
\end{equation}
Only the Levi factor has been assumed reductive.
\end{proof}

We also use Hisamoto's orbitwise energy calculation
\cite[Theorem~2.5]{Hisamoto2020} in the following form.

\begin{lemma}\label{lem:J-orbit-psh}
For fixed smooth K\"ahler forms $\omega_K$ and $\eta$ in the
same class, the function
\begin{equation}\label{eq:calc-s2-039}
 g\longmapsto J_{\omega_K}(g^*\eta),\qquad g\in G,
\end{equation}
is plurisubharmonic.
\end{lemma}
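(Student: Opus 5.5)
The plan is to reduce the statement to a computation on the orbit map. Define $\Psi:G\to\R$ by $\Psi(g):=J_{\omega_K}(g^*\eta)$. Since plurisubharmonicity is local and $G$ is a complex Lie group, it suffices to check that $\Psi$ restricted to every holomorphic disc is subharmonic. Take a holomorphic map $\tau\mapsto g_\tau$ from a disc $D\subset\C$ into $G$, and put $\Psi(\tau):=\Psi(g_\tau)$.

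Write $g_\tau^*\eta=\omega_K+dd^c\phi_\tau$, normalizing the potentials smoothly in $\tau$. This is possible because $g_\tau$ acts trivially on cohomology; for instance, one can fix $\eta=\omega_K+dd^c\psi$ and use a smooth family of Hamiltonian-type potentials for $g_\tau^*\omega_K-\omega_K$. Define $\Phi(\tau,x):=\phi_\tau(x)$ on $D\times M$.

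The key geometric fact is the following. Let $p:D\times M\to M$ be the projection, and let $\pi_2$ denote the map $(\tau,x)\mapsto g_\tau(x)$, which is holomorphic. Then
\[
 p^*\omega_K+dd^c_{\tau,x}\Phi=\pi_2^*\eta+dd^c(\text{correction})\geq0,
\]
after the standard adjustment. More precisely, $\pi_2^*\eta$ is a closed semipositive $(1,1)$-form on $D\times M$ whose restriction to each slice is $g_\tau^*\eta$, and it is cohomologous to $p^*\omega_K$. One can therefore choose $\Phi$ so that $p^*\omega_K+dd^c\Phi=\pi_2^*\eta\geq0$ exactly, which amounts to solving a $dd^c$-lemma on $D\times M$ relative to $D$. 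This is where Hisamoto's orbitwise calculation \cite[Theorem~2.5]{Hisamoto2020} is invoked. With this choice, $\Phi$ is a smooth subgeodesic: it is $p^*\omega_K$-plurisubharmonic on $D\times M$, and in fact
\[
 (p^*\omega_K+dd^c\Phi)^{n+1}=(\pi_2^*\eta)^{n+1}=0,
\]
since $\pi_2$ factors through the $n$-dimensional manifold $M$. So $\Phi$ is a geodesic-type family.

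Next decompose $J=\frac1V\int_M\phi\,\omega_K^n-E(\phi)$. The standard formula
\[
 dd^c_\tau E(\phi_\tau)=\frac1{(n+1)V}\int_{M}(p^*\omega_K+dd^c\Phi)^{n+1}
\]
(fiber integration) gives $dd^c_\tau E=0$ along the disc. The remaining term satisfies
\[
 dd^c_\tau\frac1V\int_M\phi_\tau\,\omega_K^n=\frac1V\int_M(p^*\omega_K+dd^c\Phi)\wedge p^*\omega_K^n\geq0,
\]
because the $\tau$-direction of $p^*\omega_K$ vanishes. Hence $dd^c_\tau\Psi\geq0$, which gives subharmonicity on every disc. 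Since $\Psi$ is smooth, this yields plurisubharmonicity on $G$.

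The main obstacle is the choice of normalization: producing potentials $\phi_\tau$, depending smoothly on $\tau$, with $p^*\omega_K+dd^c\Phi=\pi_2^*\eta$ exactly as forms on $D\times M$, and not merely slice by slice. Slice-wise potentials are determined only up to a function of $\tau$, and an arbitrary such function would destroy the $dd^c_\tau$ computation. I would fix this by noting that $J$ is invariant under adding constants, so $\Psi(\tau)$ is independent of that function. The cleanest route is therefore to note that the $(n+1)$-form computations use only the form $\pi_2^*\eta$, up to the ambiguity $dd^c_\tau c(\tau)$, and that this ambiguity contributes equally to $\frac1V\int\phi\,\omega_K^n$ and to $E$, so it cancels in $J$. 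Locally, $\pi_2^*\eta-p^*\omega_K$ is $dd^c$-exact on $D\times M$ for contractible $D$, since the class is pulled back from $M$. This gives the global $\Phi$ directly.
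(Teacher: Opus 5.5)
Your strategy matches the paper's, which invokes Hisamoto's fiber-integration computation on $D\times M$ through the holomorphic orbit map. The computations you list are correct once the key identity is available. The form $\pi_2^*\eta$ is closed and semipositive with $(\pi_2^*\eta)^{n+1}=0$. So if $p^*\omega_K+dd^c\Phi=\pi_2^*\eta$ holds exactly, then $dd^c_\tau E(\phi_\tau)=0$ and $dd^c_\tau\frac1V\int_M\phi_\tau\,\omega_K^n=\frac1V\int_M\pi_2^*\eta\wedge p^*\omega_K^n\geq0$.

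The gap is in producing that $\Phi$. Contractibility of $D$ only shows that the real $(1,1)$-form $\pi_2^*\eta-p^*\omega_K$ is $d$-exact on $D\times M$. On this noncompact manifold there is no $\partial\bar\partial$-lemma for free, and ``the class is pulled back from $M$'' does not give $dd^c$-exactness.

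Equivalently, start from smooth slice-wise potentials $\phi_\tau$. The error $\alpha:=\pi_2^*\eta-p^*\omega_K-dd^c\Phi$ is then a closed real $(1,1)$-form vanishing on every fiber, but it need not have the form $dd^c c(\tau)$. Its mixed part is $d\tau\wedge\beta_\tau$ plus its conjugate, with $\beta_\tau$ a $(0,1)$-form along $M$. Closedness only says $\beta_\tau$ is a $d$-closed $(0,1)$-form on $M$, i.e. the conjugate of a holomorphic $1$-form.

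Here is a concrete failure. Take a complex torus with flat $\eta=\omega_K$ and translations $g_\tau(z)=z+\tau v$. Then $\alpha=\pi_2^*\eta-p^*\omega_K$ contains $i\,v_j\,d\tau\wedge d\bar z_j$. This term cannot equal $i\,\partial_\tau\partial_{\bar z_j}\Phi\,d\tau\wedge d\bar z_j$ for any $\Phi$ periodic in $z$, as integrating over $M$ shows. Your argument uses nothing about $M$ beyond compact K\"ahler, so it fails at exactly this step. Citing Hisamoto does not close the gap: his setting is a polarized class, where the potential comes from the line bundle, and that structure is unavailable for a real K\"ahler class.

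The missing input is the one the paper records: $H^{1,0}(M)=0$, which holds for a smooth compact toric variety. This forces $\beta_\tau=0$. Then $d\alpha=0$ gives $\alpha=i\,a(\tau)\,d\tau\wedge d\bar\tau$ with $a$ real and independent of the point of $M$. This equals $dd^c c(\tau)$ for a local solution of $c_{\tau\bar\tau}=a$ on the disc. Replacing $\Phi$ by $\Phi+c$ gives $p^*\omega_K+dd^c\Phi=\pi_2^*\eta$ exactly. Since $J$ is unchanged by adding $c(\tau)$, your subharmonicity computation then goes through.
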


Although stated in \cite{Hisamoto2020} for a polarized class,
the same fiber-integration argument applies to the real K\"ahler
class here. Indeed, $H^{1,0}(M)=0$ ensures that, locally in the
parameter, the pullback of $\eta$ by a holomorphic family of
automorphisms admits a smooth global potential relative to
the pullback of $\omega_K$ from $M$. The calculation of $dd^cJ$
in \cite[the proof of Theorem~2.5]{Hisamoto2020} then applies
without change.

\begin{proof}[Proof of Proposition~\ref{prop:toric-group-reduction}]
Choose $K$ as in Lemma~\ref{lem:toric-contracting-decomposition}
and average $\omega_0$ over $K$ to obtain $\omega_K$. This is a
smooth K\"ahler form in the same class, since $K\subset G$ acts
trivially on cohomology.
The $T^n$-invariance of $\eta$ and $K$-invariance of $\omega_K$
give
\begin{equation}\label{eq:calc-s2-043}
 J_{\omega_K}((\tau gk)^*\eta)=J_{\omega_K}(g^*\eta),
 \qquad \tau\in T^n,\quad k\in K.
\end{equation}
Write an arbitrary $g\in G$ as $g=an_0k$ by
Lemma~\ref{lem:toric-contracting-decomposition}.
Consider the holomorphic disk
\begin{equation}\label{eq:calc-s2-044}
 h(\zeta):=a\lambda(\zeta)n_0\lambda(\zeta)^{-1},\qquad h(0)=a.
\end{equation}
For $|\zeta|=1$, we have $\lambda(\zeta)\in T^n\subset K$, and $a$
commutes with $\lambda(\zeta)$. Hence
$J_{\omega_K}(h(\zeta)^*\eta)=J_{\omega_K}(g^*\eta)$ on the unit circle.
By Lemma~\ref{lem:J-orbit-psh}, the submean inequality gives
\begin{equation}\label{eq:calc-s2-045}
 J_{\omega_K}(a^*\eta)
 \le\frac1{2\pi}\int_0^{2\pi}
       J_{\omega_K}(h(e^{i\theta})^*\eta)\,d\theta
 =J_{\omega_K}(g^*\eta).
\end{equation}
Taking infima and using $T_{\C}\subset G$ proves
\eqref{eq:group-J-equality}.

It remains to compare the backgrounds. Write
$\omega_K=\omega_0+dd^c\psi_K$ and $\eta=\omega_0+dd^c v$.
The cocycle identity for $E$ gives
\begin{equation}\label{eq:calc-s2-046}
 \begin{split}
 J_{\omega_K}(\eta)-J_{\omega_0}(\eta)
 ={}&\frac1V\int_Mv(\omega_K^n-\omega_0^n)\\
 &+E_{\omega_0}(\psi_K)-\frac1V\int_M\psi_K\,\omega_K^n.
 \end{split}
\end{equation}
Integration by parts gives
\begin{equation}\label{eq:calc-s2-047}
 \int_Mv(\omega_K^n-\omega_0^n)
       =\sum_{j=0}^{n-1}\int_M\psi_K(\eta-\omega_0)
                         \wedge\omega_K^j\wedge\omega_0^{n-1-j}.
\end{equation}
Each positive mixed volume in this expression equals $V$.
It follows that
\begin{equation}\label{eq:calc-s2-048}
 |J_{\omega_K}(\eta)-J_{\omega_0}(\eta)|
       \le(2n+2)\|\psi_K\|_{L^\infty}.
\end{equation}
The same bound holds after taking an infimum over either group.
Combining the two bounds with \eqref{eq:group-J-equality}
proves \eqref{eq:group-J-comparison}.
\end{proof}

\subsection{The toric pseudo-Calabi flow.}
We now write the pseudo-Calabi flow on the fixed polytope.
Let $\varphi_t$ satisfy \eqref{eq:pcf}. We use a subscript $t$
for the associated quantities
$\omega_t,\Phi_t,u_t,x_t,m_t$ in
\eqref{eq:legendre}--\eqref{eq:toric-volume}.
The complex coordinates are fixed, whereas $x_t$ depends on time.
A dot denotes differentiation with respect to \(t\), with the moment coordinate \(x\) held fixed.

Differentiating the Legendre identity \eqref{eq:legendre} at fixed $x$ gives
\begin{equation}\label{eq:calc-s2-049}
 \dot u_t(x)
 =\langle x,\partial_tD_xu_t\rangle
  -\partial_t\Phi_t(D_xu_t)
  -\langle D_\xi\Phi_t(D_xu_t),\partial_tD_xu_t\rangle
 =-\partial_t\varphi_t(D_xu_t).
\end{equation}
The first and third terms cancel because $D_\xi\Phi_t(D_xu_t)=x$.
Here $\partial_t\Phi_t$ is computed at fixed $\xi$ before evaluation
at $D_xu_t(x)$. Using the moment-coordinate convention preceding
\eqref{eq:toric-integration}, we have
\begin{equation}\label{eq:calc-s2-050}
 \widehat F_t=-\log\det D_x^2u_t-\log m_0(D_xu_t).
\end{equation}
Hence \eqref{eq:pcf} becomes
\begin{equation}\label{eq:polytope-pcf}
 \begin{aligned}
 \dot u_t
 &=\log\det D_x^2u_t+\log m_0(D_xu_t)-\widehat P_t,\\
 \Delta_{u_t}\widehat P_t
 &=\underline R+\Delta_{u_t}\bigl[\log m_0(D_xu_t)\bigr],
 \qquad \int_\Delta\widehat P_t\,dx=0.
 \end{aligned}
\end{equation}
The second equation follows from
$\Ric(\omega_0)=-dd^c\log m_0$ on the open orbit.

Equation~\eqref{eq:polytope-pcf} is the polytope formulation of
the pseudo-Calabi flow. We next write the pseudo-Calabi flow in
complex logarithmic coordinates for use in the later sections.

\begin{proposition}\label{prop:toric-flow-identities}
For the smooth $T^n$-invariant pseudo-Calabi flow above, the
following identities hold in either normalization
\eqref{eq:pcf} or \eqref{eq:normalized-pcf-speed}.
\begin{enumerate}
\item[(i)] The Mabuchi functional satisfies
\begin{equation}\label{eq:polytope-dissipation}
 \frac d{dt}\mathcal M(u_t)
 =-\frac1{|\Delta|}\int_\Delta
           u_t^{ij}(\dot u_t)_i(\dot u_t)_j\,dx.
\end{equation}
\item[(ii)] On the open orbit, with time derivatives taken at fixed
$\xi$, we have
\begin{equation}\label{eq:fixed-complex-density}
 \begin{gathered}
 \partial_t\log m_t=\underline R-R(\omega_t),\qquad
 \Delta_\varphi\log m_t=-R(\omega_t),\\
 (\partial_t-\Delta_\varphi)\log m_t=\underline R.
 \end{gathered}
\end{equation}
\end{enumerate}
\end{proposition}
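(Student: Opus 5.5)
For (i) we start from the Dirichlet dissipation identity \eqref{eq:Mabuchi-dissipation} and transfer it to the polytope. By \eqref{eq:calc-s2-049}, the time derivative of $u_t$ at fixed $x$ is $\dot u_t=-\widehat{\partial_t\varphi_t}$. Adding a function of time to $\varphi$ only adds a constant in $x$ to $\dot u_t$, which is invisible to $x$-derivatives, so both normalizations give the same right-hand side. The gradient formula \eqref{eq:calc-s2-020} gives $|\nabla_\varphi f|_\varphi^2=\widehat{\,\cdot\,}$ evaluated as $u_t^{ij}(\dot u_t)_i(\dot u_t)_j$, and the sign squares away. Toric integration \eqref{eq:toric-integration} with $V=n!(2\pi)^n|\Delta|$ then converts $\int_M\cdot\,d\mu_\varphi$ into $|\Delta|^{-1}\int_\Delta\cdot\,dx$. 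Here one should check that \eqref{eq:Mabuchi-dissipation} uses $f$, which differs from $F+P$ only by a function of time, and that $\mathcal M(u_t)$ in Donaldson's form \eqref{eq:toric-mabuchi} agrees with $\mathcal M(\varphi_t)$, which follows from \eqref{eq:calc-s2-023} and Legendre duality.

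For (ii), on the open orbit we have $\omega_t=dd^c\Phi_t$ and $m_t=\det D^2_\xi\Phi_t$. The standard computation in logarithmic coordinates gives $\Ric(\omega_t)=-dd^c\log m_t$. For $T^n$-invariant functions the Laplacian is $\Delta_\varphi=\Phi_t^{ij}\partial_{\xi_i}\partial_{\xi_j}$, with the factor conventions fixed by \eqref{eq:calc-s2-019}. Taking the trace yields $\Delta_\varphi\log m_t=-R(\omega_t)$.

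For the time derivative at fixed $\xi$, write $\log m_t=\log m_0+F$, so that $\partial_t\log m_t=\partial_tF=\Delta_\varphi\partial_t\varphi$. Using \eqref{eq:pcf}, $\partial_t\varphi=F+P$, and \eqref{eq:calc-s1-010}, we get
\begin{equation*}
 \Delta_\varphi(F+P)=\tr_{\omega_\varphi}\Ric(\omega_0)-R(\omega_t)+\underline R-\tr_{\omega_\varphi}\Ric(\omega_0)=\underline R-R(\omega_t).
\end{equation*}
Any function of time added to the potential is annihilated by $\Delta_\varphi$, so the identity holds in either normalization. Subtracting the two identities gives $(\partial_t-\Delta_\varphi)\log m_t=\underline R$.

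The only delicate point is bookkeeping. We must check that the complex Laplacian convention $g^{i\bar j}\partial_i\partial_{\bar j}$ matches $H^{ij}\partial_{\xi_i}\partial_{\xi_j}$ with $\xi=\log|z|^2$, and that $m_t$ computed in $\xi$ agrees with $\omega_t^n$ up to a fixed constant factor, so that $\Ric(\omega_t)=-dd^c\log m_t$ holds exactly. Both points are already built into \eqref{eq:toric-volume} and \eqref{eq:calc-s2-019}.
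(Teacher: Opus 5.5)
Your proposal is correct and follows essentially the same route as the paper. For (i) you combine the dissipation identity \eqref{eq:Mabuchi-dissipation} with $\dot u_t=-\widehat{\partial_t\varphi_t}$, the toric gradient formula and toric integration; for (ii) you write $\log m_t=\log m_0+F$, use $\partial_tF=\Delta_\varphi\partial_t\varphi=\underline R-R(\omega_t)$, and take the trace of $\Ric(\omega_t)=-dd^c\log m_t$. The only cosmetic difference is that you check $\Delta_\varphi(F+P)=\underline R-R(\omega_t)$ directly from \eqref{eq:calc-s1-010} and the $P$-equation, whereas the paper cites Definition~\ref{def:pcf}.
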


\begin{proof}
By \eqref{eq:polytope-logdet-curvature}, the pseudo-Calabi flow can
equivalently be written as
\begin{equation}\label{eq:polytope-pcf-laplacian}
 \partial_i\bigl(u_t^{ij}\partial_j\dot u_t\bigr)
       =-\partial_i\partial_j u_t^{ij}-\underline R.
\end{equation}
The additive constant of $\dot u_t$ is fixed by
\begin{equation}\label{eq:polytope-normalization}
 \int_\Delta\dot u_t\,dx=-\int_\Delta\widehat F_t\,dx.
\end{equation}

If instead $E(\varphi_t)=0$, the first equation in
\eqref{eq:polytope-pcf} has the additional term
$|\Delta|^{-1}\int_\Delta\widehat F_t\,dx$ on the right, and
$\int_\Delta\dot u_t\,dx=0$. The elliptic equations are unchanged.
In either normalization, the toric gradient formula and the
Legendre identity above give
\begin{equation}\label{eq:calc-s2-051}
 \|\nabla_{\varphi_t}\partial_t\varphi_t\|_{L^2(d\mu_{\varphi_t})}^2
 =\frac1{|\Delta|}\int_\Delta
       u_t^{ij}(\dot u_t)_i(\dot u_t)_j\,dx.
\end{equation}
Combining this equality with \eqref{eq:Mabuchi-dissipation}
proves \eqref{eq:polytope-dissipation}.

To prove \eqref{eq:fixed-complex-density}, we keep $\xi$ fixed
on the open orbit. By \eqref{eq:toric-volume},
$\log m_t=F_t+\log m_0$, where $m_0$ is independent of time.
The volume variation formula and Definition~\ref{def:pcf} give
\begin{align}
 \partial_t\log m_t =\partial_tF_t =\Delta_{\varphi_t}(\partial_t\varphi_t)=\underline R-R(\omega_t).
 \label{eq:calc-s2-052}
\end{align}
On the open orbit, $\Ric(\omega_t)=-dd^c\log m_t$.
Taking the trace gives
\begin{equation}\label{eq:calc-s2-053}
 \Delta_{\varphi_t}\log m_t=-R(\omega_t).
\end{equation}
Subtracting the two identities proves \eqref{eq:fixed-complex-density}.
Both conclusions are unchanged by adding a function of time to
$\varphi_t$: this only changes $\dot u_t$ by a spatial constant
and leaves $m_t$ unchanged.
\end{proof}

\section{\texorpdfstring{$C^0$}{C0} estimates}\label{sec:volume}

We first establish an $L^p$ bound for the volume ratio along the
toric pseudo-Calabi flow on finite time intervals.
We then prove a $C^0$ estimate with constants
independent of the pointwise density at the initial time.
Together, the two estimates give the finite-time bounds in
Corollary~\ref{cor:finite-time-volume}. 

\subsection{\texorpdfstring{$L^p$}{Lp} bound of the volume form}
We use the density in \eqref{eq:toric-volume}.
Time derivatives in this subsection are taken at fixed complex
coordinates, or equivalently at fixed $\xi$.

\begin{proposition}\label{prop:toric-density-lp}
Let $\varphi$ be a smooth $T^n$-invariant solution of \eqref{eq:pcf}
on $M\times[0,T)$, where $0<T<\infty$, with $\varphi(0)=0$.
There are $p>1$, depending only on $\Delta$, and $C_T>0$,
depending only on $(M,\omega_0)$, the fixed toric action, and $T$,
such that
\begin{equation}\label{eq:calc-s3-001}
 \sup_{0\leq t<T}\int_M e^{pF(t)}\,\omega_0^n\leq C_T.
\end{equation}
\end{proposition}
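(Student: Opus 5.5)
The plan is to run a maximum principle on the open orbit for the logarithmic density $\log m_t$, corrected by a convex weight in $\xi$. The input is the heat-type identity \eqref{eq:fixed-complex-density} of Proposition~\ref{prop:toric-flow-identities}(ii): at fixed $\xi$, $(\partial_t-\Delta_\varphi)\log m_t=\underline R$. The aim is a pointwise exponential decay bound $\log m_t(\xi)\le C_T-\kappa|\xi|$ on $[0,T)$, with $\kappa$ depending only on $\Delta$. Combined with the lower bound $m_0\ge ce^{-b_0|\xi|}$ from \eqref{eq:background-exponential-tail}, this bound makes $e^{pF}$ integrable for $p$ close to $1$.

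\textbf{Step 1 (the weight).} Fix $\kappa:=\kappa_0/2$ and put $w(\xi):=\kappa\sqrt{1+|\xi|^2}$. This function is smooth and convex, with $w\le\kappa(1+|\xi|)$. For a $T^n$-invariant function of $\xi$, with $\xi_i=\log|z_i|^2$, one has $\partial_{z_i}\partial_{\bar z_j}w=w_{\xi_i\xi_j}/(z_i\bar z_j)$. Hence $\Delta_\varphi w=H^{ij}w_{\xi_i\xi_j}\ge0$, where $H=D^2_\xi\Phi_t$; this is the same computation as \eqref{eq:calc-s2-019} read in $\xi$-coordinates. Set
\[
 Q(\xi,t):=\log m_t(\xi)+w(\xi)-\underline R\,t .
\]
Then $(\partial_t-\Delta_\varphi)Q=-\Delta_\varphi w\le0$ on the open orbit.

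\textbf{Step 2 (maximum principle on the noncompact orbit).} Fix $T'<T$. On $[0,T']$ the flow is smooth on $M$, so $F_t=\log m_t-\log m_0$ is a bounded smooth function there, with a qualitative bound depending on $T'$. By \eqref{eq:background-exponential-tail}, $\log m_t\le C_{T'}-\kappa_0|\xi|$. Hence $Q\le C_{T'}-(\kappa_0-\kappa)|\xi|\to-\infty$ as $|\xi|\to\infty$, uniformly on $[0,T']$. This is exactly where the choice $\kappa<\kappa_0$ is used. Therefore $\sup_\xi Q(\cdot,t)$ is attained in a compact set of the open orbit, and the weak maximum principle gives
\[
 \sup Q(t)\le\sup Q(0)=\sup_\xi\bigl(\log m_0+w\bigr)\le C,
\]
where $C$ is a background constant again by \eqref{eq:background-exponential-tail}. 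The qualitative constant $C_{T'}$ disappears from the conclusion, so letting $T'\uparrow T$ gives
\[
 \log m_t(\xi)\le C+\underline R\,T-\kappa|\xi|\qquad\text{on }[0,T).
\]
I expect this step to be the main obstacle, namely justifying the maximum principle where $w$ is not smooth on $M$. The approach is to check that the decay at infinity is genuinely uniform in time on compact subintervals, so that the maximum is always interior.

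\textbf{Step 3 (integration).} Using $\omega_0^n=n!\,m_0\,d\xi\,d\theta$ on the open orbit, we get $e^{pF}\omega_0^n=n!\,m_t^p\,m_0^{1-p}\,d\xi\,d\theta$. Step 2 together with $m_0\ge ce^{-b_0|\xi|}$ bounds the integrand by
\[
 C_T\exp\bigl(-(p\kappa-(p-1)b_0)|\xi|\bigr).
\]
Choose $p>1$ with $(p-1)b_0<p\kappa_0/2$, for instance $p=1+\kappa_0/(4b_0)$. This choice depends only on $\Delta$, since $b_0$ and $\kappa_0$ do. The integral over $\R^n\times T^n$ is then bounded by a constant depending only on the background and $T$. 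The complement of the open orbit has measure zero, so this proves \eqref{eq:calc-s3-001}.
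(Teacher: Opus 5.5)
Your proposal is correct and follows essentially the same route as the paper: the weighted log-density $\log m_t+\tfrac{\kappa_0}{2}\sqrt{1+|\xi|^2}$ is a subsolution of $(\partial_t-\Delta_\varphi)\,\cdot\le\underline R$, and the maximum principle is applied on large balls. That step is justified by the qualitative decay, uniform on compact time subintervals, which drops out of the final constant. The conclusion then follows by integrating $m_t^pm_0^{1-p}$ with $p=1+\kappa_0/(4b_0)$, which is exactly the paper's choice $p=1+\alpha/(2b_0)$ with $\alpha=\kappa_0/2$.
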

\begin{proof}
Let $b_0,\kappa_0$ be the exponents in
Proposition~\ref{prop:logarithmic-tail}, and set
\begin{equation}\label{eq:calc-s3-002}
 \alpha:=\frac{\kappa_0}{2},\qquad
 \rho(\xi):=\sqrt{1+|\xi|^2},\qquad
 \mathcal U:=\log m_t+\alpha\rho.
\end{equation}
Write $H_t:=D_\xi^2\Phi_t$. Since
\begin{equation}\label{eq:calc-s3-003}
 D_\xi^2\rho=\frac{I}{\rho}
                  -\frac{\xi\otimes\xi}{\rho^3}\geq0,
 \qquad
 \Delta_\varphi\rho
 =\operatorname{tr}(H_t^{-1}D_\xi^2\rho)\geq0,
\end{equation}
identity \eqref{eq:fixed-complex-density} gives
\begin{equation}\label{eq:weighted-log-density}
 (\partial_t-\Delta_\varphi)\mathcal U
 =\underline R-\alpha\Delta_\varphi\rho
 \leq\underline R.
\end{equation}

First fix $0<\tau<T$. Compactness of $M$ and smoothness on
$M\times[0,\tau]$ give
$m_t=e^{F(t)}m_0\leq C_\tau m_0$. This bound is used only to
control the behavior at infinity. Indeed,
\eqref{eq:background-exponential-tail} and $\alpha<\kappa_0$ imply
\begin{equation}\label{eq:calc-s3-004}
 \mathcal U(t,\xi)
 \leq C_\tau'-\kappa_0|\xi|
             +\alpha\sqrt{1+|\xi|^2}
 \longrightarrow-\infty
 \quad\text{as }|\xi|\to\infty,
\end{equation}
uniformly for $0\leq t\leq\tau$. With $\tau$ fixed, choose a
ball $B_R\subset\R^n$ sufficiently large that
$\mathcal U(t,\xi)-\underline Rt<\sup_{\R^n}\mathcal U(0)$
for all $0\leq t\leq\tau$ and $|\xi|\geq R$.
Thus the lateral boundary lies strictly below the initial
maximum, which is attained inside $B_R$. The weak parabolic
maximum principle on $B_R\times[0,\tau]$, together with the
bound outside $B_R$, gives
\begin{equation}\label{eq:calc-s3-005}
 \sup_{\R^n}\mathcal U(t)
 \leq\sup_{\R^n}\mathcal U(0)+\underline Rt.
\end{equation}
Since the initial metric is $\omega_0$, the number
$A_0:=\sup_{\R^n}(m_0e^{\alpha\rho})$ is finite, and hence
\begin{equation}\label{eq:weighted-density-bound}
 m_t(\xi)\leq A_0e^{\underline Rt-\alpha\rho(\xi)}
             \leq C_Te^{-\alpha|\xi|},
 \qquad 0\leq t\leq\tau.
\end{equation}
The constant $C_T$ depends on the initial metric and $T$, but not
on $C_\tau$ or $\tau$.

Choose
\begin{equation}\label{eq:calc-s3-006}
 p:=1+\frac{\alpha}{2b_0}>1,\qquad
 p\alpha-b_0(p-1)
 =\frac\alpha2+\frac{\alpha^2}{2b_0}>0.
\end{equation}
The toric boundary has zero background volume. Integrating over
the angular variables and using $e^F=m_t/m_0$, we have
\begin{align}
 \int_Me^{pF(t)}\,\omega_0^n
 &=n!(2\pi)^n\int_{\R^n}m_t^p m_0^{1-p}\,d\xi\notag\\
 &\leq C_T\int_{\R^n}
       e^{-[p\alpha-b_0(p-1)]|\xi|}\,d\xi
 \leq C_T.
 \label{eq:calc-s3-007}
\end{align}
Here we used \eqref{eq:weighted-density-bound} and the lower
bound for $m_0$ in \eqref{eq:background-exponential-tail}.
The exponent $p$ depends only on $\Delta$ in the fixed lattice
coordinates. Letting $\tau\uparrow T$ completes the proof.
\end{proof}

\subsection{\texorpdfstring{$C^0$}{C0} bound of the volume form} 
The following estimate converts a positive $L^p$ density bound
into pointwise bounds after any fixed waiting time.
It does not require torus invariance.

\begin{proposition}\label{prop:positive-time-volume}
Let $(M,\omega_0)$ be a connected compact K\"ahler manifold,
let $L>0$, and let $\varphi(t)$ be a smooth pseudo-Calabi flow
on $[s,s+L]$ with $E(\varphi(t))=0$.
Suppose that, for some $p>1$ and $B>0$,
\begin{equation}\label{eq:calc-s3-008}
 \sup_{s\leq t\leq s+L}\int_Me^{pF(t)}\,d\mu_0\leq B.
\end{equation}
There is $\beta\in(0,1)$, depending only on $(M,\omega_0)$ and $p$,
such that for every $0<\delta<L$ there is a constant $C>0$,
depending only on $(M,\omega_0)$, $p$, $B$, $L$, and $\delta$, with
\begin{equation}\label{eq:positive-time-volume}
 \sup_{s+\delta\leq t\leq s+L}
 \bigl(\|\varphi(t)\|_{C^\beta(M,\omega_0)}
       +\|P(t)\|_{L^\infty(M)}+\|F(t)\|_{L^\infty(M)}\bigr)
 \leq C.
\end{equation}
In particular, $C$ is independent of $s$ and
$\|F(s)\|_{L^\infty(M)}$. The estimate is uniform when $L$ is
bounded above and $\delta>0$ is uniformly bounded below.
\end{proposition}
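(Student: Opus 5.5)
The plan is to pass from the $L^p$ density bound to a H\"older bound on the normalized potential, then to a uniform bound on $P$ via Green function estimates, and finally to two-sided bounds on $F$ by a parabolic maximum principle with a time factor. Constants are allowed to depend on $(M,\omega_0)$, $p$, $B$, $L$, $\delta$, and the argument never uses $\|F(s)\|_{L^\infty}$.

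\textbf{Step 1: H\"older bound for $\varphi$.} For each fixed $t$, the equation $\omega_\varphi^n=e^{F}\omega_0^n$ has right-hand side bounded in $L^p(d\mu_0)$ with $p>1$. Ko\l odziej's estimate then bounds $\operatorname{osc}_M\varphi(t)$. The H\"older version of it (Ko\l odziej, Demailly--Dinew--Guedj--Hiep--Ko\l odziej--Zeriahi) gives a $C^\beta$ bound on $\varphi(t)-\sup\varphi(t)$, with $\beta$ depending only on $(M,\omega_0)$ and $p$. The normalization $E(\varphi)=0$ then pins down the constant, since $E(\varphi)$ lies between $\inf\varphi$ and $\sup\varphi$. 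This gives $\|\varphi(t)\|_{C^\beta}\le C$ for all $t\in[s,s+L]$, with no waiting time needed.

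The $L^p$ bound also controls the entropy. By Jensen's inequality,
\[
 \int_MF\,d\mu_\varphi\le\frac1{p-1}\log B,
\]
so the constant $c(t):=\int_MF\,d\mu_\varphi$ appearing in \eqref{eq:normalized-pcf-speed} satisfies $0\le c(t)\le C$.

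\textbf{Step 2: bound for $P$.} The entropy and $L^p$ bounds give uniform Sobolev inequalities and Green function estimates for $\omega_\varphi$ in the sense of Guo--Phong--Song--Sturm. These include an $L^1$ bound on the Green function and a lower bound on it. Fix $A$ with $-A\omega_0\le\Ric(\omega_0)\le A\omega_0$. Using $\tr_{\omega_\varphi}\omega_0=n-\Delta_\varphi\varphi$ in \eqref{eq:pcf}, we get
\[
 \Delta_\varphi(P+A\varphi)\ge-C,\qquad \Delta_\varphi(P-A\varphi)\le C,
\]
up to the sign conventions of \eqref{eq:pcf}. The Green representation formula bounds a function by its $d\mu_\varphi$-mean plus a constant whenever its Laplacian is bounded on one side. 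Applied to these two combinations, whose means are bounded because $P$ has zero mean and $\varphi$ is bounded by Step 1, this gives a two-sided bound on $P$. This is exactly the pointwise estimate of Cheng--Tian mentioned in the introduction. If their version needs an inverse-density integral, I would supply it from the auxiliary parabolic Monge--Amp\`ere equation, as announced in the introduction. I regard this as the main technical obstacle: one has to get uniform control of $P$ from volume-form information alone, without uniform ellipticity of $\Delta_\varphi$.

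\textbf{Step 3: bounds for $F$.} From $\partial_tF=\Delta_\varphi\partial_t\varphi=\Delta_\varphi(F+P)$ and \eqref{eq:pcf} we get
\[
 (\partial_t-\Delta_\varphi)F=\underline R-\tr_\varphi\Ric(\omega_0),
\]
and from \eqref{eq:normalized-pcf-speed} we get
\[
 (\partial_t-\Delta_\varphi)\varphi=F+P-c-n+\tr_\varphi\omega_0.
\]
Choose $B_1>A$. Then
\[
 (\partial_t-\Delta_\varphi)(F-B_1\varphi)\le C-B_1F-(B_1-A)\tr_\varphi\omega_0\le C-B_1F,
\]
and
\[
 (\partial_t-\Delta_\varphi)(F+B_1\varphi)\ge B_1F-C.
\]
Apply the maximum principle to $(t-s)(F-B_1\varphi)$ on $[s,s+L]$; this function vanishes at $t=s$, which is why no information on $F(s)$ is needed. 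At an interior maximum point where this quantity is large, $F$ is large because $\varphi$ is bounded. The inequality
\[
 0\le(F-B_1\varphi)+(t-s)(C-B_1F)
\]
then forces $(t-s)F\le C$. Symmetrically, a minimum of $(t-s)(F+B_1\varphi)$ gives $(t-s)F\ge-C$. Hence $|F|\le C/\delta$ for $t\ge s+\delta$. Combining with Steps 1 and 2 gives \eqref{eq:positive-time-volume}, with every constant depending only on the listed quantities.
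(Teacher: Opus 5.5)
Your Step~1 agrees with the paper. Step~2 takes a different route. The paper proves the inverse-density bound $\int_Me^{-F}\,d\mu_0\le C(t-s)^{-n}$ from the auxiliary parabolic Monge--Amp\`ere equation, then invokes Cheng--Tian's Theorem~3.2, which bounds $P$ only on $[s+\delta/2,s+L]$. Your Green-function argument needs only a uniform lower bound for the Green function of $\omega_\varphi$ under the $L^p$ density bound, and then bounds $P$ on the whole interval. For this the signs must be corrected to $\Delta_\varphi(P-A\varphi)\ge\underline R-nA$ and $\Delta_\varphi(P+A\varphi)\le\underline R+nA$. This is a legitimate shortcut: running the Moser iteration in Lemma~\ref{lem:P-spatial-moment} to $k\to\infty$, whose constants grow only like $Ck$, already bounds $\sup_M|P|$.

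The genuine gap is in Step~3, where both maximum principle arguments fail as written.

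\emph{Upper bound.} At an interior maximum of $(t-s)(F-B_1\varphi)$, your inequality $0\le(F-B_1\varphi)+(t-s)(C-B_1F)$ is equivalent to $F\bigl((t-s)B_1-1\bigr)\le(t-s)C-B_1\varphi$. When $(t-s)B_1<1$ this holds for all sufficiently large $F$. Since nothing is known about $F$ near $t=s$, the maximum may occur exactly there. The cause is structural: differentiating the time factor produces $+F$ with coefficient one, while your damping term $-(t-s)B_1F$ carries the factor $t-s$.

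\emph{Lower bound.} The inequality $(\partial_t-\Delta_\varphi)(F+B_1\varphi)\ge B_1F-C$ gives, at a minimum, only an upper bound on $F$, because $B_1F$ is very negative exactly when $F$ is. You discarded the only term that is coercive as $F\to-\infty$.

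\emph{Fix (the paper's argument).} Let the time factor multiply $F$ alone, and keep the trace term. Take $\tau=t-s$; the paper uses $\tau=t-s-\delta/2$, where its bound for $P$ begins. Choose $B_1>1$ so that $B_1\omega_0\pm\tau\Ric(\omega_0)\ge\omega_0$ for $0\le\tau\le L$; thus $B_1$ depends on $L$, and $B_1>A$ is not enough. Then the term $-B_1\partial_t\varphi$ supplies $-B_1F$ with no time factor. Using $\tr_{\omega_\varphi}\omega_0\ge ne^{-F/n}$ gives
\[
 (\partial_t-\Delta_\varphi)(\tau F-B_1\varphi)\le-(B_1-1)F+C,\qquad
 (\partial_t-\Delta_\varphi)(\tau F+B_1\varphi)\ge(B_1+1)F+ne^{-F/n}-C.
\]
At $\tau=0$ these functions equal $\mp B_1\varphi(s)$, which Step~1 bounds. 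The maximum principle therefore gives $|\tau F|\le C$, hence $|F|\le C/\delta$ on $[s+\delta,s+L]$.
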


We first prove a $L^{n+1}(d\mu_\varphi)$ bound for $P$ needed in the argument.

\begin{lemma}\label{lem:P-spatial-moment}
Let $(M,\omega_0)$ be a connected compact K\"ahler manifold,
let $\varphi\in\mathcal H$, and let $P$ satisfy the elliptic equation
and zero-mean normalization in \eqref{eq:pcf}.
If, for some $p>1$ and $B>0$,
\begin{equation}\label{eq:calc-s3-009}
 \int_Me^{pF}\,d\mu_0\leq B,
\end{equation}
then there is a constant $C$, depending only on $(M,\omega_0)$,
$p$, and $B$, such that
\begin{equation}\label{eq:P-spatial-moment-volume}
 \int_M(1+P^2)^{(n+1)/2}\,\omega_\varphi^n\leq C.
\end{equation}
\end{lemma}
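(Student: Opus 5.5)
The plan is to run an energy iteration for $P$ with respect to $\omega_\varphi$. It uses three inputs, all controlled by the $L^p$ bound on $e^F$: a $C^0$ bound for $\varphi$, a uniform Sobolev inequality for $\omega_\varphi$, and integration by parts against $\omega_0=\omega_\varphi-dd^c\varphi$ to absorb the curvature term.

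First, since $\int_M e^{pF}d\mu_0\le B$ with $p>1$, Ko{\l}odziej's $L^\infty$ estimate bounds the normalized potential. I will normalize by $\sup_M\varphi=0$ or $E(\varphi)=0$; the choice is irrelevant, since $P$ does not see constants. This gives $\|\varphi\|_{L^\infty}\le C(M,\omega_0,p,B)$. The same hypothesis, by the Guo--Phong--Song--Sturm results on Sobolev inequalities for metrics with $L^p$ (indeed entropy-type) volume bounds, gives a uniform Sobolev inequality of the form
\[
 \Bigl(\int_M|w|^{2\gamma}\,d\mu_\varphi\Bigr)^{1/\gamma}\le C\int_M\bigl(|\nabla_\varphi w|_\varphi^2+w^2\bigr)\,d\mu_\varphi
\]
for some $\gamma>1$, with $C$ depending only on $(M,\omega_0)$, $p$, and $B$. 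The second input I expect to be the main obstacle: if the cited result only gives a weaker statement, for instance a Poincaré-type or a Green-function bound, I would instead try to prove the moment bound by the Chen--Cheng auxiliary Monge--Amp\`ere comparison, applied to the sub- and superharmonic functions $P\mp A\varphi$.

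Second, the pointwise structural bound. Since $-A\omega_0\le\Ric(\omega_0)\le A\omega_0$, positivity of $\tr_{\omega_\varphi}$ on positive forms gives $|\Delta_\varphi P|\le \underline R+A\tr_{\omega_\varphi}\omega_0$. For an exponent $k\ge0$, I will test the equation with $|P|^kP$, or with a smooth version of it built from $(1+P^2)^{k/2}P$, which is better suited to the target quantity $(1+P^2)^{(n+1)/2}$. This gives
\[
 (k+1)\int|P|^k|\nabla_\varphi P|^2 d\mu_\varphi\le \underline R\int|P|^{k+1}d\mu_\varphi + A\int|P|^{k+1}\tr_{\omega_\varphi}\omega_0\,d\mu_\varphi.
\]
For the last term I write $\tr_{\omega_\varphi}\omega_0=n-\Delta_\varphi\varphi$ and integrate by parts once more. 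This produces $n\int|P|^{k+1}$ plus a cross term $(k+1)\int|P|^k\langle\nabla_\varphi P,\nabla_\varphi\varphi\rangle$. By Cauchy--Schwarz, I absorb half of the cross term into the left side. What remains is the term $\int|P|^k|\nabla_\varphi\varphi|^2$. To control it, I use $\Delta_\varphi(\varphi^2)=2\varphi\Delta_\varphi\varphi+2|\nabla_\varphi\varphi|^2$ together with $|\Delta_\varphi\varphi|\le n+\tr_{\omega_\varphi}\omega_0$ and the boundedness of $\varphi$. This trades $|\nabla_\varphi\varphi|^2$ for lower-order quantities $\int|P|^{k}(1+\tr_{\omega_\varphi}\omega_0)$ plus a small multiple of the weighted gradient of $P$. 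The result is a recursive inequality bounding $\int|\nabla_\varphi(|P|^{(k+1)/2})|^2$ by $C(k)$ times moments of $|P|$ of order at most $k+1$.

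Third, the iteration. Its base is $\int\tr_{\omega_\varphi}\omega_0\,d\mu_\varphi=n$ and $\int P\,d\mu_\varphi=0$. The zero-mean condition gives $\|P\|_{L^1}\lesssim\|\nabla_\varphi P\|_{L^2}$ through the Poincaré inequality implied by the Sobolev one. The case $k=0$ then gives $\|\nabla_\varphi P\|_{L^2(d\mu_\varphi)}\le C$, hence $\|P\|_{L^{2\gamma}}\le C$. I will feed each bound into the Sobolev inequality to raise the exponent by the factor $\gamma$. The lower-order moments on the right are interpolated between the previous step and the new one and absorbed using a small constant. After finitely many steps, whose number depends only on $n$ and $\gamma$, the exponent exceeds $n+1$, which yields \eqref{eq:P-spatial-moment-volume}. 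Only finitely many steps are needed, so the non-uniformity of $C(k)$ in $k$ does no harm.
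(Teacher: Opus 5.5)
Your proposal is correct, and it shares the paper's skeleton:
\begin{itemize}
\item a uniform bound on $\operatorname{osc}_M\varphi$ from the $L^p$ density bound (the paper quotes the H\"older estimate of Demailly et al., but only the oscillation is used in this lemma);
\item the Poincar\'e and Sobolev inequalities of Guo--Phong--Song--Sturm and Guedj--T\^o;
\item the identity $\tr_{\omega_\varphi}\omega_0=n-\Delta_\varphi\varphi$, together with $\|\nabla_\varphi\varphi\|_{L^2(d\mu_\varphi)}^2\leq n\operatorname{osc}_M\varphi$;
\item the resulting first-level bound on $\|\nabla_\varphi P\|_{L^2}+\|P\|_{L^2}$, followed by a finite Moser iteration.
\end{itemize}

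The routes differ at the higher levels. The paper never integrates the curvature term by parts again. It uses the barrier $h=\sqrt{1+P^2}-\lambda(\varphi-\sup_M\varphi)\geq1$ with $\lambda>C_R$, for which $\Delta_\varphi h\geq-C+(\lambda-C_R)\tr_{\omega_\varphi}\omega_0\geq-C$ pointwise. Testing with $h^{k-1}$ then immediately gives $(k-1)\int_M h^{k-2}|\nabla_\varphi h|_\varphi^2\,d\mu_\varphi\leq C\int_M h^{k-1}\,d\mu_\varphi$, and no weighted gradient of $\varphi$ ever appears.

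Your route instead generates the weighted energies $\int_M(1+P^2)^{k/2}|\nabla_\varphi\varphi|_\varphi^2\,d\mu_\varphi$, which you must control recursively. This does close. If you normalize $\sup_M\varphi=0$, the $\tr_{\omega_\varphi}\omega_0$ contribution to that integral has a favorable sign. The integral is then bounded by $C\int_M(1+P^2)^{k/2}\,d\mu_\varphi+Ck^2\int_M(1+P^2)^{(k-2)/2}|\nabla_\varphi P|_\varphi^2\,d\mu_\varphi$, which feeds a downward recursion in steps of two. The barrier simply packages this same sign pointwise. The interpolation and absorption you mention are unnecessary: after Sobolev, the right-hand side involves only moments already controlled at the previous step.

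One justification must be repaired. The inhomogeneous Sobolev inequality you wrote does not imply a Poincar\'e inequality. It holds on a disjoint union of two manifolds, where Poincar\'e fails, and on connected manifolds it is compatible with a collapsing spectral gap. So $\|P\|_{L^1(d\mu_\varphi)}\lesssim\|\nabla_\varphi P\|_{L^2(d\mu_\varphi)}$ does not follow from it. This step is exactly where connectedness and $\int_MP\,d\mu_\varphi=0$ enter. You need the Poincar\'e inequality, or a mean-subtracted Sobolev--Poincar\'e inequality, as a separate input. Guo--Phong--Song--Sturm and Guedj--T\^o provide it under the same $L^p$ hypothesis, and the paper cites it alongside the Sobolev inequality.

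Also write $|\underline R|$ in place of $\underline R$, since its sign is not fixed on a general compact K\"ahler manifold.
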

\begin{proof}
Put $p':=p/(p-1)$. By \cite[Theorem~A*]{DemaillyEtAl}, for any fixed
$0<\beta<2/(1+np')$,
\begin{equation}\label{eq:potential-osc-volume}
 \|\varphi-\sup_M\varphi\|_{C^\beta(M,\omega_0)}
 \leq C.
\end{equation}

By the assumed density bound and the Poincar\'e and Sobolev
estimates of Guo--Phong--Song--Sturm
\cite[Theorem~3.1(2), (4)]{GPSS2024II} and Guedj--T\^o
\cite[Theorem~2.6(1)]{GuedjTo2025}, we may fix
$1<\chi<n/(n-1)$ when $n\geq2$, and $\chi=2$ when $n=1$, such that
\begin{align}
 \left\|v-\int_Mv\,d\mu_\varphi\right\|_{L^2(d\mu_\varphi)}^2
 &\leq C\|\nabla_\varphi v\|_{L^2(d\mu_\varphi)}^2,\notag\\
 \|v\|_{L^{2\chi}(d\mu_\varphi)}^2
 &\leq C\left(\|\nabla_\varphi v\|_{L^2(d\mu_\varphi)}^2
                  +\|v\|_{L^2(d\mu_\varphi)}^2\right).
 \label{eq:volume-subcritical-sobolev}
\end{align}
Integration by parts gives
\begin{align}
 \|\nabla_\varphi\varphi\|_{L^2(d\mu_\varphi)}^2
 &=-\int_M(\varphi-\sup_M\varphi)
             (n-\tr_{\omega_\varphi}\omega_0)\,d\mu_\varphi\notag\\
 &\leq n\operatorname{osc}_M\varphi\leq C.
 \label{eq:calc-s3-010}
\end{align}
Choose $C_R$ such that
$-C_R\omega_0\leq\Ric(\omega_0)\leq C_R\omega_0$.
The equation for $P$, its zero mean, and
$|\nabla_\varphi\sqrt{1+P^2}|_\varphi
 \leq|\nabla_\varphi P|_\varphi$ give
\begin{align}
 \|\nabla_\varphi P\|_{L^2(d\mu_\varphi)}^2
 &=\int_M P\tr_{\omega_\varphi}\Ric(\omega_0)\,d\mu_\varphi\notag\\
 &\leq C_R\int_M\sqrt{1+P^2}\,
                   \tr_{\omega_\varphi}\omega_0\,d\mu_\varphi\notag\\
 &=nC_R\int_M\sqrt{1+P^2}\,d\mu_\varphi
+C_R\int_M
       \langle\nabla_\varphi\sqrt{1+P^2},\nabla_\varphi\varphi
                    \rangle_\varphi\,d\mu_\varphi\notag\\
 &\leq C\bigl(1+\|P\|_{L^2(d\mu_\varphi)}
                   +\|\nabla_\varphi P\|_{L^2(d\mu_\varphi)}\bigr)\notag\\
 &\leq C\bigl(1+\|\nabla_\varphi P\|_{L^2(d\mu_\varphi)}\bigr).
 \label{eq:calc-s3-011}
\end{align}
The last step uses the first inequality in
\eqref{eq:volume-subcritical-sobolev}. Hence
\begin{equation}\label{eq:P-second-moment-volume}
 \|\nabla_\varphi P\|_{L^2(d\mu_\varphi)}^2
       +\int_MP^2\,d\mu_\varphi\leq C.
\end{equation}

Choose $\lambda>C_R$ and put
\begin{equation}\label{eq:calc-s3-012}
 h:=\sqrt{1+P^2}-\lambda(\varphi-\sup_M\varphi)\geq1.
\end{equation}
By \eqref{eq:potential-osc-volume} and
\eqref{eq:P-second-moment-volume}, $\|h\|_{L^2(d\mu_\varphi)}\leq C$.
Direct calculation gives
\begin{align}
 \Delta_\varphi h
 &=\frac{|\nabla_\varphi P|_\varphi^2}{(1+P^2)^{3/2}}
    +\frac P{\sqrt{1+P^2}}\bigl(\underline R
                  -\tr_{\omega_\varphi}\Ric(\omega_0)\bigr)
    -\lambda(n-\tr_{\omega_\varphi}\omega_0)\notag\\
 &\geq-C+(\lambda-C_R)\tr_{\omega_\varphi}\omega_0\geq-C.
 \label{eq:calc-s3-013}
\end{align}
For $k\geq2$, multiply by $h^{k-1}$ and integrate to obtain
\begin{equation}\label{eq:calc-s3-014}
 (k-1)\int_M h^{k-2}|\nabla_\varphi h|_\varphi^2\,d\mu_\varphi
 \leq C\int_Mh^{k-1}\,d\mu_\varphi.
\end{equation}
Apply the second inequality in \eqref{eq:volume-subcritical-sobolev}
to $h^{k/2}$. Since $h\geq1$, this yields
\begin{equation}\label{eq:calc-s3-015}
 \|h\|_{L^{k\chi}(d\mu_\varphi)}^k
 \leq C\left(\frac{Ck^2}{4(k-1)}
             \int_Mh^{k-1}\,d\mu_\varphi
             +\int_Mh^k\,d\mu_\varphi\right)
 \leq C_k\|h\|_{L^k(d\mu_\varphi)}^k.
\end{equation}
Apply this estimate successively with $k=2,2\chi,2\chi^2,\ldots$
until the resulting exponent is at least $n+1$. Since $\chi>1$
depends only on the stated data, the number of applications is fixed.
Since $\sqrt{1+P^2}\leq h$ and
$d\mu_\varphi$ has total mass one, this proves
\eqref{eq:P-spatial-moment-volume}.
In complex dimension one the second-moment bound already suffices.
\end{proof}

\begin{proof}[Proof of Proposition~\ref{prop:positive-time-volume}]
We begin with the calculation used to estimate the inverse density.
Let $\lambda,\varepsilon>0$ and let $\psi(t)\in\mathcal H$ be a
smooth family on $[s,s+L]$, all to be chosen later.
Differentiating with respect to the fixed measure $d\mu_0$ gives
\begin{align}
 &\frac d{dt}\int_Me^{-2F-\lambda\varphi+\varepsilon\psi}\,d\mu_0\notag\\
 &\quad=-\frac23\int_Me^{-3F-\lambda\varphi+\varepsilon\psi}
       |\nabla_\varphi(-3F-\lambda\varphi+\varepsilon\psi)|_\varphi^2
       \,d\mu_\varphi\notag\\
 &\qquad+\int_Me^{-2F-\lambda\varphi+\varepsilon\psi}
       \left[\frac{2\lambda}{3}\Delta_\varphi\varphi
       -2\bigl(\underline R-\tr_{\omega_\varphi}\Ric(\omega_0)\bigr)\right]
       \,d\mu_0\notag\\
 &\qquad+\int_Me^{-2F-\lambda\varphi+\varepsilon\psi}
       \left[\varepsilon\left(\partial_t\psi-\frac23\Delta_\varphi\psi\right)
       -\lambda\left(F+P-\int_MF\,d\mu_\varphi\right)\right]
       \,d\mu_0.
 \label{eq:auxiliary-weighted-moment} 
\end{align}
The first term on the right-hand side
of \eqref{eq:auxiliary-weighted-moment} is nonpositive.
We estimate the remaining terms in turn.

For the normalization term, Jensen's inequality gives
\begin{align}
 \int_MF\,d\mu_\varphi
 &\leq\frac1{p-1}\log\int_Me^{(p-1)F}\,d\mu_\varphi=\frac1{p-1}\log\int_Me^{pF}\,d\mu_0
 \leq\frac{\log B}{p-1}.
 \label{eq:calc-s3-016}
\end{align}
Consequently, its contribution to
\eqref{eq:auxiliary-weighted-moment} satisfies
\begin{equation}\label{eq:calc-s3-017}
 \lambda\left(\int_MF\,d\mu_\varphi\right)
 \int_Me^{-2F-\lambda\varphi+\varepsilon\psi}\,d\mu_0
 \leq\frac{\lambda\log B}{p-1}
 \int_Me^{-2F-\lambda\varphi+\varepsilon\psi}\,d\mu_0.
\end{equation}

For $K>0$ to be fixed later, let $\psi$ solve the auxiliary equation
of Chen--Cheng \cite{ChenChengFlow} on $[s,s+L]$:
\begin{equation}\label{eq:volume-auxiliary-flow}
 -\partial_t\psi\,\omega_\psi^n
 =K(1+P^2)^{(n+1)/2}\,\omega_\varphi^n,
 \qquad \psi(s)=0,\qquad \omega_\psi>0.
\end{equation}
The right-hand side is smooth and strictly positive on this
interval. By \cite[Proposition~2.3]{ChenChengFlow}, there is a unique
smooth admissible solution. The equation gives $\partial_t\psi<0$
and hence $\psi\leq0$.
Since $-\partial_t\psi>0$, the arithmetic--geometric mean inequality
and \eqref{eq:volume-auxiliary-flow} give
\begin{align}
 -\partial_t\psi+\frac23\tr_{\omega_\varphi}\omega_\psi
 &\geq(n+1)(2/3)^{n/(n+1)}
       \left((-\partial_t\psi)
             \frac{\omega_\psi^n}{\omega_\varphi^n}\right)^{1/(n+1)}\notag\\
 &=(n+1)(2/3)^{n/(n+1)}K^{1/(n+1)}\sqrt{1+P^2}.
 \label{eq:auxiliary-volume-amgm}
\end{align}

Choose $C_R$ such that
$-C_R\omega_0\leq\Ric(\omega_0)\leq C_R\omega_0$. 
For each $\varepsilon>0$, choose $\lambda$ and then $K$ so that
\begin{equation}\label{eq:calc-s3-018}
 \frac23(\lambda-\varepsilon)\geq2C_R+1,
 \qquad
 (n+1)(2/3)^{n/(n+1)}\varepsilon K^{1/(n+1)}\geq\lambda.
\end{equation}
Then we obtain
\begin{align}
 &\frac d{dt}\int_Me^{-2F-\lambda\varphi+\varepsilon\psi}\,d\mu_0\notag\\
 &\quad\leq\int_Me^{-2F-\lambda\varphi+\varepsilon\psi}
   \Bigl[\frac{2n\lambda}{3}-2\underline R
       +\frac{\lambda\log B}{p-1}
       -\tr_{\omega_\varphi}\omega_0-\lambda F\notag\\
 &\qquad\qquad
       +\lambda|P|-\lambda\sqrt{1+P^2}\Bigr]\,d\mu_0\notag\\
 &\quad\leq\int_Me^{-2F-\lambda\varphi+\varepsilon\psi}
   \Bigl[\frac{2n\lambda}{3}-2\underline R
       +\frac{\lambda\log B}{p-1}
       -\tr_{\omega_\varphi}\omega_0-\lambda F\Bigr]\,d\mu_0.
 \label{eq:auxiliary-absorbed-moment}
\end{align}

We next estimate the remaining exponential factors.
By Tian's $\alpha$-invariant estimate
\cite[Proposition~2.1]{Tian1987}, fix a sufficiently small
$\varepsilon>0$, depending only on the background, such that
\begin{equation}\label{eq:background-exponential-integrability}
 \int_Me^{-\varepsilon(\eta-\sup_M\eta)}\,d\mu_0
 \leq C,\qquad \eta\in\mathcal H.
\end{equation}
We take $\lambda$, $K$, and $\psi$ as above for this choice of
$\varepsilon$. The constants $\varepsilon$, $\lambda$, and $K$
now depend only on the background.
Since $\tr_{\omega_\varphi}\omega_0\geq ne^{-F/n}$,
we have
\begin{equation}\label{eq:calc-s3-019}
 -\tfrac12\tr_{\omega_\varphi}\omega_0-\lambda F
 \leq-\tfrac n2e^{-F/n}-\lambda F\leq C.
\end{equation}
Therefore we have
\begin{equation}\label{eq:positive-time-negative-moment}
 \frac d{dt}\int_Me^{-2F-\lambda\varphi+\varepsilon\psi}\,d\mu_0
 \leq C\int_Me^{-2F-\lambda\varphi+\varepsilon\psi}\,d\mu_0
       -\frac n2\int_Me^{-(2+1/n)F-\lambda\varphi+\varepsilon\psi}
                  \,d\mu_0.
\end{equation}

To close this differential inequality, we use
\eqref{eq:potential-osc-volume}, which bounds
$\varphi-\sup_M\varphi$ in $C^\beta$.
Since $\inf_M\varphi\leq E(\varphi)=0\leq\sup_M\varphi$,
it follows that $\|\varphi\|_{C^\beta(M,\omega_0)}\leq C$.
Since $\psi\leq0$ and $\|\varphi\|_\infty\leq C$,
H\"older's inequality gives
\begin{align}
 \int_Me^{-2F-\lambda\varphi+\varepsilon\psi}\,d\mu_0
 &\leq
 \left(\int_Me^{-(2+1/n)F-\lambda\varphi+\varepsilon\psi}
                    \,d\mu_0\right)^{2n/(2n+1)}
 \left(\int_Me^{-\lambda\varphi+\varepsilon\psi}
                    \,d\mu_0\right)^{1/(2n+1)}\notag\\
 &\leq C
 \left(\int_Me^{-(2+1/n)F-\lambda\varphi+\varepsilon\psi}
                    \,d\mu_0\right)^{2n/(2n+1)}.
 \label{eq:calc-s3-020}
\end{align} 
Consequently, \eqref{eq:positive-time-negative-moment} implies 
\begin{align}
 \frac d{dt}\int_Me^{-2F-\lambda\varphi+\varepsilon\psi}\,d\mu_0
 &\leq C\int_Me^{-2F-\lambda\varphi+\varepsilon\psi}\,d\mu_0\notag\\
 &\quad-C^{-1}\left(\int_Me^{-2F-\lambda\varphi+\varepsilon\psi}
                       \,d\mu_0\right)^{1+1/(2n)}.
 \label{eq:calc-s3-021}
\end{align}
Multiplying by $(t-s)^{2n+1}$, for $s<t\leq s+L$, we compute
\begin{align}
 &\frac d{dt}\left[(t-s)^{2n+1}
       \int_Me^{-2F-\lambda\varphi+\varepsilon\psi}\,d\mu_0\right]\notag\\
 &\quad=(2n+1)(t-s)^{2n}
       \int_Me^{-2F-\lambda\varphi+\varepsilon\psi}\,d\mu_0\notag\\
 &\qquad\quad+(t-s)^{2n+1}\frac d{dt}
       \int_Me^{-2F-\lambda\varphi+\varepsilon\psi}\,d\mu_0\notag\\
 &\quad\leq\bigl(2n+1+C(t-s)\bigr)(t-s)^{2n}
       \int_Me^{-2F-\lambda\varphi+\varepsilon\psi}\,d\mu_0\notag\\
 &\qquad\quad-C^{-1}(t-s)^{2n+1}
       \left(\int_Me^{-2F-\lambda\varphi+\varepsilon\psi}
                         \,d\mu_0\right)^{1+1/(2n)}\notag\\
 &\quad\leq(2n+1+CL)\left[(t-s)^{2n}
       \int_Me^{-2F-\lambda\varphi+\varepsilon\psi}\,d\mu_0\right]\notag\\
 &\qquad\quad-C^{-1}\left[(t-s)^{2n}
       \int_Me^{-2F-\lambda\varphi+\varepsilon\psi}
                         \,d\mu_0\right]^{1+1/(2n)}\notag\\
 &\quad\leq C_L-\frac1{2C}\left[(t-s)^{2n}
       \int_Me^{-2F-\lambda\varphi+\varepsilon\psi}
                         \,d\mu_0\right]^{1+1/(2n)}\notag\\
 &\quad\leq C_L.
 \label{eq:calc-s3-022}
\end{align}
The penultimate inequality is Young's inequality with conjugate
exponents $1+1/(2n)$ and $2n+1$. Here $C_L$ depends only on
the preceding constants and $L$. Since the solution is smooth up
to $t=s$, the integral is finite there, and the expression inside
the time derivative tends to zero as $t\downarrow s$.
Integrating \eqref{eq:calc-s3-022} from $s$ to $t$, we obtain
\begin{align}
 &\int_Me^{-2F(t)-\lambda\varphi(t)+\varepsilon\psi(t)}\,d\mu_0\notag\\
 &\quad=\frac1{(t-s)^{2n+1}}\int_s^t\frac d{dr}
       \left[(r-s)^{2n+1}
       \int_Me^{-2F(r)-\lambda\varphi(r)+\varepsilon\psi(r)}
                         \,d\mu_0\right]\,dr\notag\\
 &\quad\leq\frac1{(t-s)^{2n+1}}\int_s^t C_L\,dr
       =C_L(t-s)^{-2n}.
 \label{eq:calc-s3-023}
\end{align}
It remains to bound
$\int_Me^{-\varepsilon\psi(t)}\,d\mu_0$.
Applying Lemma~\ref{lem:P-spatial-moment} at each time gives
\begin{equation}\label{eq:calc-s3-024}
 \int_M(1+P^2)^{(n+1)/2}\,\omega_\varphi^n\leq C.
\end{equation}
The energy calculation of \cite[Lemma~2.1 and Corollary~2.2]{ChenChengFlow}
gives, in our normalization,
\begin{equation}\label{eq:calc-s3-025}
 \frac d{dt}E(\psi(t))
 =-K\int_M(1+P^2)^{(n+1)/2}\,d\mu_\varphi\geq-CK.
\end{equation}
Since $E(\psi(s))=0$ and $E(\psi)\leq\sup_M\psi$, we obtain
$\sup_M\psi(t)\geq-CK(t-s)$. Together with
\eqref{eq:background-exponential-integrability}, this gives
\begin{equation}\label{eq:auxiliary-weight-integrability}
 \begin{aligned}
 \int_Me^{-\varepsilon\psi(t)}\,d\mu_0
 &=e^{-\varepsilon\sup_M\psi(t)}
   \int_Me^{-\varepsilon(\psi(t)-\sup_M\psi(t))}\,d\mu_0\\
 &\leq Ce^{\varepsilon CK(t-s)}\leq Ce^{\varepsilon CKL}.
 \end{aligned}
\end{equation}
This bound depends only on the stated data
and $L$, and not on $s$ or the pointwise initial density. Hence we have 
\begin{align}
 \int_Me^{-F(t)}\,d\mu_0
 &\leq
 \left(\int_Me^{-2F(t)-\lambda\varphi(t)+\varepsilon\psi(t)}
                      \,d\mu_0\right)^{1/2}
 \left(\int_Me^{\lambda\varphi(t)-\varepsilon\psi(t)}
                      \,d\mu_0\right)^{1/2}\notag\\
 &\leq C(t-s)^{-n}.
 \label{eq:positive-time-inverse-density}
\end{align}
The positive $L^p$ density bound and
\eqref{eq:positive-time-inverse-density} now satisfy both density
hypotheses of \cite[Theorem~3.2]{ChengTian2026} on
$[s+\delta/2,s+L]$. Hence
\begin{equation}\label{eq:calc-s3-026}
 \|P(t)\|_\infty\leq C,\qquad s+\delta/2\leq t\leq s+L.
\end{equation}
From this point on, $C$ may also depend on $\delta$.

It remains to bound $F$ pointwise after a further waiting time.
Put $\tau:=t-s-\delta/2$ and choose a fixed $A>1$ such that
$A\omega_0\pm\tau\Ric(\omega_0)\geq\omega_0$ for $0\leq\tau\leq L$.
We compute
\begin{align}
 (\partial_t-\Delta_\varphi)(\tau F-A\varphi)
 &=(1-A)F-AP+A\int_MF\,d\mu_\varphi+An+\tau\underline R\notag\\
 &\quad-\tr_{\omega_\varphi}
                    (A\omega_0+\tau\Ric(\omega_0))\notag\\
 &\leq-(A-1)F+C,
 \label{eq:calc-s3-027}\\
 (\partial_t-\Delta_\varphi)(\tau F+A\varphi)
 &=(1+A)F+AP-A\int_MF\,d\mu_\varphi-An+\tau\underline R\notag\\
 &\quad+\tr_{\omega_\varphi}
                    (A\omega_0-\tau\Ric(\omega_0))\notag\\
 &\geq(A+1)F+ne^{-F/n}-C.
 \label{eq:calc-s3-028}
\end{align}
The last inequality uses
$\tr_{\omega_\varphi}\omega_0\geq ne^{-F/n}$.

Fix $T\in(s+\delta/2,s+L]$ and take a maximum of
$\tau F-A\varphi$ on $M\times[s+\delta/2,T]$.
Here $\tau=0$ at the initial time, so the $F$ term vanishes.
At the initial time its value is $-A\varphi(s+\delta/2)$,
which is bounded above. Otherwise, the first differential
inequality gives $F\leq C/(A-1)$ at the maximum.
Since $\tau\leq L$ and $\|\varphi\|_\infty\leq C$, the maximum
is again bounded above. Thus $\tau F-A\varphi\leq C$.
Similarly, at a minimum of $\tau F+A\varphi$,
\begin{equation}\label{eq:calc-s3-029}
 (A+1)F+ne^{-F/n}-C\leq0.
\end{equation}
The left-hand side tends to $+\infty$ as $F\to-\infty$,
so $F\geq-C$ there. The initial value is also bounded below,
and hence $\tau F+A\varphi\geq-C$ on the cylinder.
It follows that $\tau\|F(t)\|_\infty\leq C$.
On $[s+\delta,s+L]$, we have $\tau\geq\delta/2$, proving
\eqref{eq:positive-time-volume}. No pointwise bound for $F$
at either left endpoint enters the constants. 
\end{proof}

Combining Propositions~\ref{prop:toric-density-lp}
and~\ref{prop:positive-time-volume} gives the finite-time $C^0$ estimates
for the pseudo-Calabi flow.

\begin{corollary}\label{cor:finite-time-volume}
Let $\varphi$ be a smooth $T^n$-invariant solution of \eqref{eq:pcf}
on $M\times[0,T)$, where $0<T<\infty$, with $\varphi(0)=0$.
There are constants $C_T$ and $\beta>0$, depending only on
$(M,\omega_0)$, the fixed toric action, and $T$, such that
\begin{equation}\label{eq:calc-s3-030}
 \begin{gathered}
 C_T^{-1}\leq e^{F(t)}\leq C_T,\qquad 0\leq t<T,\\
 \|\varphi(t)\|_{C^\beta(M,\omega_0)}
 +\|P(t)\|_{L^\infty(M)}
 +\|\partial_t\varphi(t)\|_{L^\infty(M)}
 +\|u_t-u_0\|_{L^\infty(\Delta)}
 \leq C_T.
 \end{gathered}
\end{equation}
\end{corollary}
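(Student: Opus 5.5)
The plan is to combine Proposition~\ref{prop:toric-density-lp} with Proposition~\ref{prop:positive-time-volume}. The one bookkeeping issue is that the two statements use different normalizations of the additive constant, so I would track that constant explicitly. Let $\varphi$ solve \eqref{eq:pcf} on $[0,T)$ with $\varphi(0)=0$. Put $c(t):=E(\varphi(t))$ and $\tilde\varphi:=\varphi-c(t)$, so that $E(\tilde\varphi)=0$. The metric flow, $F$ and $P$ are the same for $\varphi$ and $\tilde\varphi$.

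\textbf{Step 1: density and normalization constant.} Proposition~\ref{prop:toric-density-lp} gives $p>1$ and $\sup_{t<T}\int_M e^{pF(t)}\,d\mu_0\le C_T$. This quantity does not see additive constants. By \eqref{eq:calc-s2-002} and the zero mean of $P$,
\[
 c'(t)=\int_M(F+P)\,d\mu_\varphi=\int_M F\,d\mu_\varphi .
\]
This is nonnegative as a relative entropy. Jensen's inequality, as in \eqref{eq:calc-s3-016}, bounds it above by $(p-1)^{-1}\log C_T$. Since $c(0)=0$, this gives $0\le c(t)\le C_T T$ on $[0,T)$.

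\textbf{Step 2: positive times.} Fix a time $\delta_0>0$ depending only on $(M,\omega_0)$. On $[\delta_0,T)$, apply Proposition~\ref{prop:positive-time-volume} to $\tilde\varphi$ on intervals $[s,s+L]$ with $s=0$, $L<T$ and $\delta=\delta_0$; if needed I would use $L$ close to $T$ together with the uniformity in $L$. This yields bounds on $\|\tilde\varphi\|_{C^\beta}$, $\|P\|_\infty$ and $\|F\|_\infty$ on $[\delta_0,T)$, with $\beta$ depending only on $(M,\omega_0)$ and $p$, hence only on the background and $\Delta$. By Step 1, $\|\varphi\|_{C^\beta}\le\|\tilde\varphi\|_{C^\beta}+C_T T$. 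Then $\partial_t\varphi=F+P$ is bounded, and \eqref{eq:legendre-C0} gives $\|u_t-u_0\|_{L^\infty(\Delta)}=\|\varphi(t)\|_{L^\infty}\le C_T$.

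\textbf{Step 3: the initial layer (main obstacle).} On $[0,\delta_0]$, Proposition~\ref{prop:positive-time-volume} gives no pointwise control, since its constants blow up as $\delta\to0$. Here I would use that the initial datum is the background metric itself. By Chen--Zheng short-time existence and uniqueness for smooth data, the solution from $\omega_0$ exists and is smooth on some interval $[0,2\delta_0]$, with $\delta_0$ and all bounds there depending only on $(M,\omega_0)$. Uniqueness identifies our flow with this solution on $[0,\min(T,2\delta_0))$. That gives two-sided bounds on $e^F$ and bounds on $\varphi$, $P$ and $\partial_t\varphi$ on this layer, depending only on the background.

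If one prefers to avoid quoting the size of the short-time interval, an alternative is to run the maximum-principle argument at the end of the proof of Proposition~\ref{prop:positive-time-volume} starting at $s=0$ with $\tau$ replaced by $1$. This uses $F(0)=0$, and it needs $\|P\|_\infty$ up to $t=0$. That bound in turn follows from \cite[Theorem~3.2]{ChengTian2026}, once an inverse-density bound near $0$ is available from the weighted integral in \eqref{eq:auxiliary-weighted-moment}, whose initial value is now bounded.

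Combining Steps 2 and 3, and noting that $c(t)$ is bounded as in Step 1, gives every estimate in \eqref{eq:calc-s3-030} on all of $[0,T)$.
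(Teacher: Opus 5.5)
Your proposal is correct and follows the paper's route, which obtains the corollary simply by feeding the $L^p$ density bound of Proposition~\ref{prop:toric-density-lp} into Proposition~\ref{prop:positive-time-volume}. Your Steps~1 and~3 make explicit two points the paper leaves implicit: the drift $c(t)=E(\varphi(t))$ between the two normalizations, which you control by $0\le c'(t)=\int_M F\,d\mu_\varphi$ and Jensen's inequality, and the initial layer near $t=0$, where smoothness of the unique flow from $\omega_0$ supplies the bounds.
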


\section{Estimates for the pseudo term \texorpdfstring{$P$}{P}}
\label{sec:P-estimates}

We prove two estimates for $P$. The first gives a uniform spatial
modulus under the finite-time bounds and completes the proof of
long-time existence. The second gives negative exponential
integrability from bounds for $J$ and the entropy. It will be used
in the convergence argument of Section~\ref{sec:proper-convergence}.

\subsection{A modulus of continuity}
\label{sec:P-modulus}

The aim of this subsection is to prove the following uniform
modulus of continuity for $P$.

\begin{proposition}
\label{prop:common-modulus}
Let $\omega_t=\omega_{\varphi_t}$ be a family of smooth
$T^n$-invariant K\"ahler metrics in the fixed class, and let $P_t$ be
the normalized solution of the elliptic equation in \eqref{eq:pcf}.
Suppose that, uniformly in $t$,
\begin{equation}
 c\leq\frac{\omega_t^n}{\omega_0^n}\leq C,\qquad
 \|\varphi_t\|_{C^\beta(M,\omega_0)}+\|P_t\|_\infty\leq C,
 \qquad \beta\in(0,1],
 \label{eq:basic-modulus-inputs}
\end{equation}
Then there is a common spatial modulus $\rho(r)\to0$ as $r\to0$,
measured using the fixed background metric:
\begin{equation}
 |P_t(y)-P_t(y')|\leq\rho(d_0(y,y')).
 \label{eq:common-physical-modulus}
\end{equation}
\end{proposition}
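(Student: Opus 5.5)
Since the data in \eqref{eq:basic-modulus-inputs} control the volume ratio but not the ellipticity of $\Delta_\varphi$, I will not try to estimate $P_t$ directly with elliptic H\"older theory. Instead I will split $P_t$ into a part controlled by the potential and a part that is $\omega_t$-harmonic. This follows the strategy announced in the introduction.

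\textbf{Step 1 (subtracting a local potential).} By \eqref{eq:pcf}, $\Delta_\varphi P=\underline R-\tr_{\omega_\varphi}\Ric(\omega_0)$. On a holomorphic chart $U_v\simeq\C^n$ around a fixed point, write $\Ric(\omega_0)=dd^c\rho_v$ with $\rho_v$ smooth and $T^n$-invariant, and write $\omega_0=dd^c\phi_v$. Then $\tr_{\omega_\varphi}\Ric(\omega_0)=\Delta_\varphi\rho_v$ and $n=\Delta_\varphi(\phi_v+\varphi)$. Hence
\[
 w:=P+\rho_v-\tfrac{\underline R}{n}(\phi_v+\varphi)
\]
is $\omega_t$-harmonic on $U_v$. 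The subtracted terms have a uniform modulus with respect to $d_0$, because $\varphi_t$ is bounded in $C^\beta$ and $\rho_v,\phi_v$ are fixed smooth functions. Moreover $w$ is $T^n$-invariant and bounded, by $\|P_t\|_\infty\le C$. It therefore suffices to give a uniform oscillation decay for $w$ on small neighbourhoods, measured in the background distance.

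\textbf{Step 2 (oscillation decay on toric sublevel domains).} In moment coordinates, $w$ satisfies $\partial_i(u^{ij}\partial_jw)=0$ by \eqref{eq:polytope-laplacian}. The natural domains are the sublevel sets $\Omega_s=\{u-\ell<s\}$ of $u_t$ minus a supporting affine function at a point $x_*$. These sets are convex. By \eqref{eq:legendre-C0}, $u_t-u_0$ is uniformly bounded, so they are comparable to the sublevel sets of $u_0$, which in turn are comparable to $d_0$-balls via the Guillemin formula \eqref{eq:guillemin-potential}; this comparison holds near every boundary stratum because of \eqref{eq:background-log-coordinates}. On such a domain, harmonicity and the maximum principle give $\operatorname{osc}_{\Omega_{s}}w\le\operatorname{osc}_{\partial\Omega_s}w$. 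To obtain decay, I plan a De Giorgi/Krylov--Safonov type argument adapted to the Monge--Amp\`ere geometry, in the spirit of Caffarelli--Guti\'errez for linearized Monge--Amp\`ere operators. The volume ratio bounds $c\le\det D^2u\prod_r\ell_r\cdot(\text{smooth})\le C$ are exactly the doubling/engulfing input that theory needs. The result I aim for is
\[
 \operatorname{osc}_{\Omega_{s/2}}w\le\gamma\,\operatorname{osc}_{\Omega_s}w,\qquad \gamma<1 \text{ uniform}.
\]
Iterating then gives a logarithmic or H\"older modulus in $s$, which converts into a modulus in $d_0$.

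\textbf{Step 3 (boundary faces).} Near a face the Monge--Amp\`ere measure degenerates. I would handle this by passing to the chart $U_v\simeq\C^n$, where $w$ is genuinely $\omega_t$-harmonic across the toric divisors. Alternatively, I would reflect the convex picture through the $\xi$-coordinates: there the volume bound $c\,m_0\le m_t\le C\,m_0$ and the $\xi$-sections of $\Phi_t$ are comparable to those of $\Phi_0$, and the Caffarelli--Guti\'errez Harnack inequality for $\operatorname{tr}(H_t^{-1}D^2_\xi w)=0$ applies with uniform constants after the rescaling of sections.

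\textbf{Main obstacle.} I expect the main obstacle to be the uniformity of the section geometry at boundary strata. A $C^0$ bound on $u_t-u_0$ together with two-sided determinant bounds must yield uniform doubling and engulfing of sections, and the sections of $\Phi_t$ near $|\xi|\to\infty$ must correspond to small $d_0$-neighbourhoods. I would first try to prove comparability of sections $S_{\Phi_t}(\xi,h)$ with $S_{\Phi_0}(\xi,h')$ for $h$ large relative to $\|\varphi\|_\infty$. Below that scale, I would rely on the H\"older bound on $\varphi$ to control the small scales.
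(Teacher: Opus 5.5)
\textbf{Verdict: genuine gap.} Your Step~1 is correct and is essentially the paper's harmonic gauge. The paper uses $q_t=P_t-\frac{\underline R}{n}\psi_{p,t}-\log(\omega_0^n/dV_{\mathrm{Eucl}})$ on $U_p$, which is your $w$ with $\phi_v+\varphi$ replaced by $\psi_{p,t}=\psi^0_p+\varphi_t+\mathrm{const}$. Steps~2--3 carry the whole difficulty, and you leave them open; you flag this yourself as the main obstacle.

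\textbf{Caffarelli--Guti\'errez is not uniform at the toric divisors.}
\begin{itemize}
\item In moment coordinates, $\partial_i(u^{ij}\partial_j\cdot)$ is not the linearized Monge--Amp\`ere operator of $u$. It carries the drift $-u^{bj}\partial_b\log\det D^2u\,\partial_j$, and $\det D^2u_t\simeq(\prod_r\ell_r)^{-1}$ is unbounded.
\item In $\xi$-coordinates, the operator is $(\det D^2_\xi\Phi_t)^{-1}$ times the linearized operator of $\Phi_t$. But $\det D^2_\xi\Phi_t=m_t\simeq m_0$ decays exponentially, and sections at boundary slopes are unbounded.
\item The hypotheses of that theory (two-sided determinant bounds, or the $\mu_\infty$ comparison of the Monge--Amp\`ere measure with Lebesgue measure on sections) degenerate as the centre approaches a face. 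Already for $\Phi_0=e^{\xi}$ (the flat metric on $\C$), the height-$s$ section at slope $x\ll s$ has $\xi$-length about $s/x$. Yet a fraction $1-e^{-1}$ of its $e^{\xi}d\xi$-mass lies in a subinterval of length one.
\end{itemize}
So ``uniform constants after rescaling'' is exactly what fails, and the volume-ratio bound is not the doubling input you claim. Working in $U_v\simeq\C^n$ is the right setting, but you name no tool there. $\Delta_{\omega_t}$ is not uniformly elliptic, so neither De~Giorgi nor Krylov--Safonov applies.

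\textbf{Your domains are also the wrong ones.} Sections $\{u-\ell<s\}$ of $u$ in moment space have three defects:
\begin{itemize}
\item they have no centre on $\partial\Delta$;
\item the function $u_t\circ x_t-\ell\circ x_t$ defining them is not plurisubharmonic;
\item they are not comparable to $d_0$-balls near a face. For $u_0=x\log x-x$ with $x=|z|^2$, the height-$s$ section at $x_*\ll s$ is roughly $\{|z|^2<s/\log(s/x_*)\}$, which collapses as $x_*\to0$.
\end{itemize}

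\textbf{What the paper does instead.} It uses $\psi_{p,t}=\Phi_t-\langle p,\xi\rangle+u_t(p)$. Via the Guillemin formula this extends smoothly to $U_p$ with $dd^c\psi_{p,t}=\omega_t$, even for $p\in\partial\Delta$. Its sublevel sets are Reinhardt, log-convex and strictly pseudoconvex in $\C^n$. On these sets the paper proves $\operatorname{osc}_{D_{s/4}}q\le(1-\delta)\operatorname{osc}_{D_s}q$, with $\delta$ depending only on $n$ and the density ratio against Euclidean volume. It uses no doubling or engulfing. The two ingredients are:
\begin{itemize}
\item a relative Green bound $\|G_{D_s}\mathbf 1_E\|_\infty\le Cs\,m_E^{1/n}(1+\log(1/m_E))$, from $(dd^cv_E)^n=\mathbf 1_E\omega^n$, Dinew's mixed inequality, and a capacity--volume estimate via Pr\'ekopa;
\item a weighted Poincar\'e inequality for $h\,\omega^n$ with $h=s-\psi$, from Bakry--\'Emery on the convex log image, which gives the $v^{\pm p_0}$ crossover.
\end{itemize}

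\textbf{The passage to a $d_0$-modulus is also missing.} In the paper it is a compactness argument, not a ball comparison. Set $p=\lim x_{t_j}(y_j)$. One first shows that the limit point lies in $U_p$. One then shows that both points lie in $D_{p,s_j,t_j}$ with $s_j\to0$. This uses a uniform $C^{1/2}$ modulus for $u_t$ on the closed polytope (from $\int_\Delta|Du_t|^{2n}dx\le C$), the reflection $2x_j-p\in\Delta$, and the $C^\beta$ bound on $\varphi_t$. Your sketch neither supplies nor replaces these steps.
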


\subsubsection{Proof of Theorem~\ref{thm:main}}
We first use it to complete the proof of Theorem~\ref{thm:main}.
For this purpose, we use the following continuation criterion of
Cheng--Tian, which follows from
\cite[Definition~4.3 and Corollary~4.7]{ChengTian2026}.

\begin{theorem}\label{thm:continuation-criterion}
Let a smooth solution of \eqref{eq:pcf} on a compact K\"ahler manifold
be defined on $[0,T)$, where $T<\infty$. If
\begin{equation}\label{eq:continuation-input}
 \sup_{t<T}\|F(t)\|_\infty<\infty,\qquad
 |P(t,q)-P(t,q')|\leq\rho(d_0(q,q')),
 \quad\lim_{r\downarrow0}\rho(r)=0,
\end{equation}
where $\rho:[0,\infty)\to[0,\infty)$ is continuous,
nondecreasing, and independent of $t$, then the solution extends
smoothly beyond $T$.
\end{theorem}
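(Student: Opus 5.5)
The plan is to reduce the criterion to uniform higher order estimates on $[T/2,T)$, and then restart the flow at a time close to $T$ using the short time theory of Chen--Zheng. Throughout, the hypotheses \eqref{eq:continuation-input} are used as the only inputs. I expect the packaged statement \cite[Definition~4.3 and Corollary~4.7]{ChengTian2026} to carry Steps~2 and~3 below.

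\textbf{Step 1 ($C^0$ bounds).} First, $P(t)$ has zero $\omega_\varphi^n$-mean, so it vanishes somewhere. Hence
\[
 \|P(t)\|_\infty\leq\rho(\operatorname{diam}_{d_0}M).
\]
With $\|F\|_\infty$ bounded, the equation $\partial_t\varphi=F+P$ bounds $\partial_t\varphi$. Since $T<\infty$, integrating in time bounds $\|\varphi(t)\|_\infty$ uniformly on $[0,T)$.

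\textbf{Step 2 (Laplacian bound, the main obstacle).} Next I would bound $\tr_{\omega_0}\omega_\varphi$. Rewrite the flow as a parabolic complex Monge--Amp\`ere equation
\[
 \omega_\varphi^n=e^{\partial_t\varphi-P}\omega_0^n,
\]
in which $P$ is regarded as a given forcing term. This forcing is bounded and uniformly continuous in space, but a priori has no derivative bounds. A global Aubin--Yau/Chen--Cheng computation is therefore unavailable, because it would need $\Delta_\varphi P$. Here $\Delta_\varphi P$ is controlled only by $\tr_{\omega_\varphi}\omega_0$, with the wrong sign.

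What I would try first is a freezing and rescaling argument, which I take to be the role of the regularity scale in Cheng--Tian's Definition~4.3. Choose $r$ so small that $\rho(r)\leq\varepsilon$. On a $d_0$-ball of radius $r$, replace $P$ by its value at the center. The local potential then solves a parabolic Monge--Amp\`ere equation whose right-hand side is $\varepsilon$-close to a function of time only, while $F$ is bounded. An $\varepsilon$-regularity and compactness argument, with blow-up limits being ancient solutions with constant forcing, should give interior $C^{1,1}$ and then $C^{2,\alpha}$ bounds on the scale $r$. Since $\rho$ is independent of $t$, the radius $r$ is uniform in $t$.

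\textbf{Step 3 (bootstrap).} Once $C^{-1}\omega_0\leq\omega_\varphi\leq C\omega_0$ holds, $\Delta_\varphi$ is uniformly elliptic with H\"older coefficients. The right-hand side $\underline R-\tr_{\omega_\varphi}\Ric(\omega_0)$ is then bounded and H\"older. Schauder estimates give $P\in C^{2,\alpha}$. Feeding this back into the parabolic Monge--Amp\`ere equation gives $\varphi\in C^{4,\alpha}$ in space, and iterating this loop yields all $C^k$ bounds on $[T/2,T)$.

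\textbf{Step 4 (continuation).} These bounds allow $\varphi(t)$ to converge smoothly to a limit $\varphi(T)\in\mathcal H$ as $t\uparrow T$. Short time existence and uniqueness for smooth initial data \cite{ChenZheng2013} then continue the flow from $\varphi(T)$. Uniqueness ensures that the continued flow agrees with the original solution, so the glued solution is smooth across $t=T$.
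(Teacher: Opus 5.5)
Your skeleton agrees with the paper. The paper justifies this criterion only by citing Cheng--Tian's Definition~4.3 and Corollary~4.7 for the higher-order estimates, and your Steps~1 and~4 are the routine steps around that citation. The genuine gap is the mechanism you offer for Step~2. Taken as an argument, rather than a guess at what the citation contains, it fails, so the Laplacian bound must come from Cheng--Tian and not from your sketch.

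You treat $P$ as an external forcing that is $\varepsilon$-close to a function of time on a fixed $d_0$-ball. You then ask for interior $C^{1,1}$ bounds from bounds on the potential and on $F$. No such local estimate exists for complex Monge--Amp\`ere equations. In $\mathbb{C}^2$, B{\l}ocki's function $u=(1+|z_1|^2)|z_2|$ is continuous and plurisubharmonic with $\det(u_{j\bar k})\equiv\frac14$, yet it is not even $C^1$ across $\{z_2=0\}$. In dimension $n$ one uses $(1+|z_1|^2)|z'|^{2-2/n}$ instead. Thus $u+t\log\frac14$ is an eternal solution of $\partial_t v=\log\det(v_{j\bar k})$ with constant $F$ and zero forcing. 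Approximating $u$ by smooth Dirichlet solutions on the ball (Caffarelli--Kohn--Nirenberg--Spruck together with the Bedford--Taylor comparison principle) gives smooth solutions with uniformly bounded potentials and $F$, whose $C^{1,1}$ norms on $B_{1/2}$ are unbounded. In particular, ancient blow-up limits with constant forcing can be singular, so no Liouville theorem is available to close your compactness argument.

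There is also a scaling obstruction. If $\tr_{\omega_0}\omega_\varphi\to\infty$ while $\omega_\varphi^n/\omega_0^n$ stays bounded, the eigenvalues of $\omega_\varphi$ relative to $\omega_0$ separate. A rescaling that normalizes the trace then makes the volume form degenerate. A rescaling adapted to $\omega_\varphi$ instead loses the $d_0$-modulus of $P$, since small $\omega_\varphi$-balls can be long in $d_0$. Any proof of the Laplacian bound must therefore use more than the local data you list, namely global information from the compact manifold and/or the elliptic equation coupling $P$ to $\omega_\varphi$. That is exactly what you are importing with the citation.

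A minor point on Step~4: when you restart exactly at $T$ there is no overlap, so uniqueness is not what makes the glued flow smooth. What does it is that your uniform bounds on $[T/2,T)$ control all time derivatives, and the equation matches these at $t=T$. Alternatively, restart slightly before $T$ and use uniqueness on the overlap.
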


\begin{proof}[Proof of Theorem~\ref{thm:main}]
Short-time existence and uniqueness follow from
\cite{ChenZheng2013}. Uniqueness also preserves $T^n$-invariance:
pullback by any element of $T^n$ solves the same equations with the
same initial metric. The scalar normalization in \eqref{eq:pcf}
fixes the additive constant of the potential.

Let $[0,T_{\max})$ be the maximal existence interval. Suppose
that $T_{\max}<\infty$.
Corollary~\ref{cor:finite-time-volume} supplies
\eqref{eq:basic-modulus-inputs} and a uniform bound for $F$ on
this interval.
By Proposition~\ref{prop:common-modulus}, $P(t)$ has a uniform
modulus of continuity with respect to $d_0$ for $t<T_{\max}$.
Cheng--Tian's continuation criterion,
Theorem~\ref{thm:continuation-criterion}, then extends the flow smoothly past
$T_{\max}$. This is a contradiction. Hence $T_{\max}=\infty$.
\end{proof}

It remains to prove Proposition~\ref{prop:common-modulus}.
Locally, the equation for $P$ becomes a harmonic equation after
subtracting explicit potential and background terms. We first
choose sublevel domains on which the volume bounds are uniform.
A relative Green estimate and an estimate for positive and negative
small powers then give oscillation decay for $T^n$-invariant harmonic
functions. Finally, Legendre duality relates these sublevel sets
to the fixed topology of $M$.

\subsubsection{Control of drifted domain}
For the family in Proposition~\ref{prop:common-modulus}, we use
the moment maps and Legendre transforms of \eqref{eq:legendre}.
A domain $D\subset\C^n$ is called Reinhardt if it is preserved by
the coordinatewise rotations of $T^n$. On the open orbit, a
$T^n$-invariant function is independent of the angular variables
and hence depends only on $\xi$. For the toric domains constructed
below, we take the logarithmic image in the fixed character coordinates:
\begin{equation}\label{eq:calc-s4-001}
 \log D:=\{\xi(y):y\in D\cap(\C^*)^n\}.
\end{equation}
The domain is logarithmically convex if this image is convex.
A monomial change of character basis induces an invertible linear
change of logarithmic variables, which preserves convexity.

\begin{definition} 
\label{def:moment-sublevel-sets}
For the facets $\mathcal F_r$, $1\leq r\leq d$, of $\Delta$, let
$\mathcal D_r:=x_0^{-1}(\mathcal F_r)$ be the corresponding fixed toric divisors;
equivalently, $\mathcal D_r=x_t^{-1}(\mathcal F_r)$ for every $t$.
For $p\in\Delta$, put
\begin{equation}\label{eq:calc-s4-002}
 U_p:=M\setminus\bigcup_{\ell_r(p)>0}\mathcal D_r.
\end{equation}
On the open orbit define
\begin{equation}
 \begin{aligned}
 \psi_{p,t}(y)
   &:=\Phi_t(\xi(y))-\langle p,\xi(y)\rangle+u_t(p),\\
 D_{p,s,t}&:=\{y\in U_p:\psi_{p,t}(y)<s\},\qquad s>0,
 \end{aligned}
 \label{eq:fixed-moment-domains}
\end{equation}
where $\psi_{p,t}$ is extended smoothly to $U_p$ as verified below.
We call $D_{p,s,t}$ the sublevel set of height $s$ centered at
the moment fiber $x_t^{-1}(p)$.
\end{definition} 

\begin{proposition}\label{prop:sublevel-geometry}
For every $p\in\Delta$ and every $t$, the function $\psi_{p,t}$
extends smoothly to $U_p$, satisfies $dd^c\psi_{p,t}=\omega_t$ and
$\psi_{p,t}\geq0$, and vanishes on the moment fiber $x_t^{-1}(p)$.
\end{proposition}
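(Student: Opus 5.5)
On the open orbit all the assertions are elementary consequences of Legendre duality, and the real work is the smooth extension across the divisors $\mathcal D_r$ with $\ell_r(p)=0$. I would first check the open-orbit claims.

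Since $\psi_{p,t}$ differs from $\Phi_t$ by the affine function $-\langle p,\xi\rangle+u_t(p)$, and $\langle p,\xi\rangle=\sum_i p_i\log|z_i|^2$ is pluriharmonic on $(\C^*)^n$, we get $dd^c\psi_{p,t}=dd^c\Phi_t=\omega_t$ there. Nonnegativity is the Fenchel--Young inequality. By \eqref{eq:legendre}, $u_t(p)\ge\langle p,\xi\rangle-\Phi_t(\xi)$ for every $\xi$. For $p$ on $\partial\Delta$, where $u_t$ is defined by its continuous extension, the same inequality follows by letting interior points tend to $p$. Equality holds exactly when $\xi=D_xu_t(p)$, that is, when $D_\xi\Phi_t(\xi)=p$. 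So for $p\in\Delta^\circ$ the function vanishes exactly on the torus orbit $x_t^{-1}(p)$, by \eqref{eq:calc-s2-006}. For $p\in\partial\Delta$ the fiber lies in the divisors, and vanishing there will follow from the extension together with continuity of $u_t$ and of the moment map.

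For the extension, fix $p$ and let $F_p$ be the smallest face containing $p$. Choose a vertex $v$ of $F_p$. Then every facet through $p$ also passes through $v$, and $U_p\subset U_v\simeq\C^n$, since $U_p$ consists of the orbits of faces containing $F_p$. In the chart $U_v$, relabel the facets through $v$ as $\mathcal F_1,\dots,\mathcal F_n$ and use the character basis dual to $\nu_1,\dots,\nu_n$. This change of basis is allowed by the Delzant condition, and it is an affine change of the moment coordinates. In the coordinates $w_1,\dots,w_n$ of $U_v$ we have $\ell_r(x)=x_r-c_r$ for $r\le n$, after translation, and $\eta_r:=\log|w_r|^2$.

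The key step is the standard fact, from Guillemin/Abreu \cite[Theorem~2.8]{Abreu2003} and Definition~\ref{def:symplectic-potential}, that $\Phi_t-\sum_{r\le n}\lambda_r\eta_r$ extends smoothly across $\{w_r=0\}$ in $U_v$; here I normalize $\lambda_r=c_r$. Indeed $\omega_t$ is a smooth Kähler form on $U_v\simeq\C^n$, which is contractible and Stein. Hence $\omega_t=dd^c\Psi$ for a smooth $T^n$-invariant $\Psi$, obtained by averaging over $T^n$. On the open orbit $\Phi_t-\Psi$ is then a $T^n$-invariant pluriharmonic function of $\xi$, hence affine in $\eta$. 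Its linear part is read off from the limit of $D_\eta\Phi_t=x_t$ at the vertex fixed point $w=0$: there $x_t(0)=v$, while $D_\eta\Psi(0)=0$, because $\Psi$ is smooth and invariant, so $\partial_{\eta_r}\Psi=|w_r|^2\partial_{|w_r|^2}\Psi\to0$. Hence $\Phi_t=\Psi+\langle v,\eta\rangle+\text{const}$ on the open orbit.

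It then follows that
\[
 \psi_{p,t}=\Psi+\langle v-p,\eta\rangle+\mathrm{const}.
\]
Write $\langle v-p,\eta\rangle=\sum_r\ell_r(p)\log|w_r|^2$ up to sign conventions: in the dual basis the $r$-th component of $p-v$ is $\ell_r(p)$. This coefficient vanishes exactly when $\ell_r(p)=0$, i.e.\ for the divisors kept in $U_p$. The remaining terms $\log|w_r|^2$ with $\ell_r(p)>0$ are smooth on $U_p\cap U_v$, because there $w_r\ne0$. Moreover $U_p\subset U_v$, since each orbit in $U_p$ corresponds to a face containing $F_p\ni v$. Therefore $\psi_{p,t}$ extends smoothly to $U_p$, with $dd^c\psi_{p,t}=\omega_t$ by continuity.

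I expect the main obstacle to be the bookkeeping in this step: the choice of basis, the identification of the linear part through the vertex, and the signs. Once these are settled, nonnegativity and vanishing on the fiber pass to $U_p$ by continuity and density of the open orbit.
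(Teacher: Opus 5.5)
Your smooth-extension argument is correct and takes a different route from the paper. The one real gap is the vanishing on $x_t^{-1}(p)$ when $p\in\partial\Delta$, which you assert rather than prove.

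\textbf{Comparison with the paper.} The paper writes $u_t=u_G+h_t$ with $h_t\in C^\infty(\Delta)$, the Guillemin--Abreu boundary form. Via the Legendre identity it expresses $\psi_{p,t}$ as an explicit function of the moment map, \eqref{eq:sublevel-boundary-formula}: $\psi_{p,t}=G_p\circ x_t$ with
\[
G_p(x)=-\sum_{\ell_r(p)>0}\ell_r(p)\log\ell_r(x)+\Bigl\langle x-p,\sum_r\nu_r+Dh_t(x)\Bigr\rangle-h_t(x)+u_t(p).
\]
Smoothness on $U_p$ then follows from smoothness of $x_t$ on $M$. Vanishing on the fiber is just $G_p(p)=0$, for interior and boundary $p$ alike. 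The same formula is reused for the exhaustion property in Proposition~\ref{prop:sublevel-domains}.

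You instead take a $T^n$-invariant potential $\Psi$ of $\omega_t$ on $U_v\simeq\C^n$. You fix the affine difference $\Phi_t-\Psi$ through the value $x_t=v$ at the fixed point, and obtain $\psi_{p,t}=\Psi-\sum_{s\le n}\ell_s(p)\log|w_s|^2+\mathrm{const}$. Your sign bookkeeping is right: $\langle v-p,\xi\rangle=\sum_{s\le n}\langle\nu_s,v-p\rangle\eta_s=-\sum_{s\le n}\ell_s(p)\eta_s$. This uses only the $dd^c$-lemma on $\C^n$ and smoothness of the moment map, not the boundary regularity $h_t\in C^\infty(\Delta)$, and it gives $dd^c\psi_{p,t}=\omega_t$ on $U_p$ directly. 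What it loses is any handle on the additive constant, and hence on the value of $\psi_{p,t}$ along boundary fibers.

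\textbf{The gap.} For $p\in\partial\Delta$ the fiber lies in the divisors, so it cannot be reached by density. Continuity of $u_t$ and $x_t$ alone also does not compute the limit. On the open orbit, with $x=x_t(y)$,
\[
\psi_{p,t}(y)=u_t(p)-u_t(x)-\langle\xi(y),p-x\rangle .
\]
As $y\to y_0\in x_t^{-1}(p)$ you have $x\to p$ but $|\xi(y)|\to\infty$, so the last term has the form $0\cdot\infty$. A quantitative input is needed. Two short fixes are available.

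\emph{Within your framework.} You have $\ell_s(x_t)=\partial_{\eta_s}\Psi$ for $s\le n$, so $\langle\xi,x-p\rangle=\sum_{s\le n}\bigl(\ell_s(x_t)-\ell_s(p)\bigr)\log|w_s|^2$.
\begin{itemize}
\item If $\ell_s(p)=0$: $\ell_s(x_t)$ is smooth and vanishes on $\{w_s=0\}=\mathcal D_s\cap U_v$, hence is $O(|w_s|)$ near $y_0$, and $|w_s|\log|w_s|^2\to0$.
\item If $\ell_s(p)>0$: $w_s(y_0)\neq0$ because $y_0\in U_p$, so $\log|w_s|^2$ stays bounded, while $\ell_s(x_t)-\ell_s(p)\to0$.
\end{itemize}
Hence the limit is $u_t(p)-u_t(p)=0$.

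\emph{Via convexity.} Use the reflection trick the paper employs in \eqref{eq:fixed-center-small-height}. For $x$ near $p$ one has $2x-p\in\Delta$, and the supporting-plane inequality at $x$ gives
\[
0\le\psi_{p,t}(y)\le u_t(p)+u_t(2x-p)-2u_t(x)\longrightarrow0 .
\]

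With either addition your proof is complete.
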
 

\begin{proof}
We describe the divisors in $U_p$ using the toric charts and
orbit correspondence of
\cite[Example~1.2.21 and Theorem~3.2.6]{CoxLittleSchenck2011}.
Let $\mathcal F(p)$ be the smallest face containing $p$, of
codimension $k$, and choose a vertex $v$ of $\mathcal F(p)$.
A complex torus orbit belongs to $U_p$ precisely when its corresponding face
contains $\mathcal F(p)$. Every such face contains $v$, so
$U_p\subset U_v\simeq\C^n$.
Order the $n$ facets through $v$ as
$\mathcal F_{r_1},\ldots,\mathcal F_{r_n}$ so that the first $k$
contain $p$. In the corresponding holomorphic vertex coordinates
$w_1,\ldots,w_n$,
\begin{equation}\label{eq:calc-s4-003}
 \mathcal D_{r_i}\cap U_v=\{w_i=0\},\qquad
 U_p=\{w_{k+1}\ne0,\ldots,w_n\ne0\}
       \simeq\C^k\times(\C^*)^{n-k}.
\end{equation}
Thus the divisors retained in $U_p$ are coordinate hyperplanes
inside this open set. At their intersections several of the
first $k$ coordinates vanish. When $p=v$ is a vertex, $k=n$,
there are no nonvanishing conditions, and $U_p=U_v$ contains
the coordinate origin. We fix this vertex chart when using
Euclidean volume below.

Write $u_t=u_G+h_t$, with $h_t\in C^\infty(\Delta)$, and put
$x:=x_t(y)$ on the open orbit. The Legendre identity gives
\begin{align} 
\psi_{p,t}(y)
 &=\langle x-p,D u_t(x)\rangle-u_t(x)+u_t(p) \notag\\
 &=\left\langle x-p,
       \sum_r\nu_r\bigl(\log\ell_r(x)+1\bigr)+D h_t(x)\right\rangle
       -\sum_r\ell_r(x)\log\ell_r(x)-h_t(x)+u_t(p) \notag\\
 &=\sum_r\bigl(\langle x-p,\nu_r\rangle-\ell_r(x)\bigr)
       \log\ell_r(x)
       +\left\langle x-p,\sum_r\nu_r+D h_t(x)\right\rangle
       -h_t(x)+u_t(p) \notag\\
 &=\sum_r\bigl(\lambda_r-\langle p,\nu_r\rangle\bigr)
       \log\ell_r(x)
       +\left\langle x-p,\sum_r\nu_r+D h_t(x)\right\rangle
       -h_t(x)+u_t(p) \notag\\
 &=-\sum_r\ell_r(p)\log\ell_r(x)
       +\left\langle x-p,\sum_r\nu_r+D h_t(x)\right\rangle
       -h_t(x)+u_t(p).
 \label{eq:sublevel-boundary-formula}
\end{align}
If $\ell_r(p)=0$, the corresponding logarithmic term cancels identically on the open orbit
and is omitted before taking any boundary value. If
$\ell_r(p)>0$, the divisor $\mathcal D_r$ has been removed from
$U_p$, so $\ell_r\circ x_t$ is a smooth strictly positive function
there. Its logarithm is therefore smooth on $U_p$.
The terms not involving logarithms are smooth on all of $M$,
since $x_t$ is a smooth moment map and $h_t\in C^\infty(\Delta)$.
Consequently \eqref{eq:sublevel-boundary-formula} defines a smooth
extension of $\psi_{p,t}$ across each retained coordinate hyperplane
and their intersections, including the origin when $p$ is a vertex.
On the open orbit, $\xi_i=\log|z_i|^2$ gives
$\bar\partial\xi_i=d\bar z_i/\bar z_i$ and hence $dd^c\xi_i=0$.
Since $u_t(p)$ is constant in space, we have
\begin{equation}\label{eq:sublevel-ddc}
 dd^c\psi_{p,t}
 =dd^c\Phi_t-\sum_i p_i\,dd^c\xi_i+dd^c\bigl(u_t(p)\bigr)
 =\omega_t.
\end{equation}
Both sides are smooth on $U_p$, so the equality extends from
the dense open orbit to all of $U_p$.

The definition of the Legendre transform, followed by continuity
when $p\in\partial\Delta$, gives
$u_t(p)\geq\langle p,\xi\rangle-\Phi_t(\xi)$. Thus, on the open orbit,
\begin{equation}\label{eq:sublevel-nonnegative}
 \psi_{p,t}(y)
 =\Phi_t(\xi(y))-\langle p,\xi(y)\rangle+u_t(p)\geq0.
\end{equation}
Continuity gives the same inequality on $U_p$.
Evaluating \eqref{eq:sublevel-boundary-formula} on $x_t^{-1}(p)$ gives
\begin{equation}\label{eq:calc-s4-004}
 \psi_{p,t}|_{x_t^{-1}(p)}
 =-\sum_{\ell_r(p)>0}\ell_r(p)\log\ell_r(p)-h_t(p)+u_t(p)=0.
\end{equation}
The last equality follows from $u_t=u_G+h_t$, with
$0\log0=0$ in $u_G$. The logarithmic terms with zero coefficient
have already been canceled, so the same calculation applies when
$p\in\partial\Delta$.
\end{proof}

\begin{proposition}\label{prop:sublevel-domains}
For every $p\in\Delta$, every $t$, and every $s>0$, the domain
$D_{p,s,t}$ is nonempty, connected, and relatively compact in $U_p$;
its boundary is smooth and strictly pseudoconvex, and its logarithmic
image is convex.
\end{proposition}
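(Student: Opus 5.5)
The plan is to work in the vertex chart $U_v\simeq\C^n$ containing $U_p\simeq\C^k\times(\C^*)^{n-k}$ from the proof of Proposition~\ref{prop:sublevel-geometry}. The properties will follow in order from three facts: convexity of $\Phi_t$ in $\xi$, the boundary formula \eqref{eq:sublevel-boundary-formula}, and $dd^c\psi_{p,t}=\omega_t>0$ on $U_p$.

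\emph{Nonemptiness.} This is immediate. By Proposition~\ref{prop:sublevel-geometry}, $\psi_{p,t}$ vanishes on the fiber $x_t^{-1}(p)\subset U_p$, so this fiber lies in $D_{p,s,t}$ for every $s>0$.

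\emph{Logarithmic convexity.} On the open orbit, $\psi_{p,t}$ is the convex function $\Phi_t(\xi)-\langle p,\xi\rangle+u_t(p)$ of $\xi$. Hence
\[
\log D_{p,s,t}=\{\xi:\Phi_t(\xi)-\langle p,\xi\rangle<s-u_t(p)\}
\]
is a sublevel set of a convex function, and is therefore convex. Passing to vertex coordinates $w$ is a unimodular linear change of $\xi$, so convexity is preserved. The open orbit is dense in $D_{p,s,t}$ and $D_{p,s,t}$ is a Reinhardt domain whose points with $w_i=0$ ($i\le k$) arise as limits from the open orbit.

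\emph{Connectedness.} A Reinhardt domain whose logarithmic image is convex, and which contains its coordinate-hyperplane points only as limits of open-orbit points, is connected. The reason is that $D\cap(\C^*)^n$ is a torus bundle over a convex set, hence connected, and it is dense in $D$ by the local form of $\psi$ near a hyperplane. For the density I would use that $\psi$ is smooth and $T^n$-invariant, so near any point of $D$ there are open-orbit points of $D$.

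\emph{Relative compactness in $U_p$.} This is the main step. I would show that $\psi_{p,t}$ is proper on $U_p$, i.e.\ $\psi_{p,t}\to+\infty$ when $y$ leaves every compact subset of $U_p$. Leaving compact sets of $U_p$ means one of two things: either $x_t(y)$ approaches a facet $\mathcal F_r$ with $\ell_r(p)>0$ (a removed divisor), or $y$ escapes to infinity in $U_v$. The second case also corresponds to approaching a facet not containing $v$, all of which have $\ell_r(p)>0$.

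In \eqref{eq:sublevel-boundary-formula}, the term $-\ell_r(p)\log\ell_r(x)$ with $\ell_r(p)>0$ tends to $+\infty$ as $\ell_r(x)\to0$. All other terms are bounded, because $x\in\Delta$ is compact and $h_t\in C^\infty(\Delta)$. Since $M$ is compact, a sequence leaving compacts of $U_p$ has $x_t$-images accumulating at points where some $\ell_r=0$ with $\ell_r(p)>0$, so $\psi_{p,t}\to\infty$. Therefore $\overline{D_{p,s,t}}\subset\{\psi_{p,t}\le s\}$, which is compact in $U_p$.

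\emph{Smoothness and strict pseudoconvexity of the boundary.} Here the issue is that $s$ must be a regular value of $\psi_{p,t}$. On the open orbit, $d\psi=0$ iff $D_\xi\Phi_t=p$, which happens only on $x_t^{-1}(p)$, where $\psi=0<s$. On the hyperplanes $\{w_i=0\}$, $i\le k$, I would use the smooth extension together with $T^n$-invariance. The plan is to show that critical points of a $T^n$-invariant function correspond, through $x_t$, to critical points of $x\mapsto\langle x-p,Du_t(x)\rangle-u_t(x)$ restricted to the face.

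On the interior of the face $\mathcal F(q)$ containing $x_t(y)$, the differential is $D^2u_t(x)(x-p)$ paired with tangent directions of that face, via the Guillemin form. I expect this to vanish only at $x=p$. This uses the positivity of $D^2u$ restricted to the face, which is part of Abreu's boundary characterization: the restriction of $u$ to a face is again a Guillemin-type symplectic potential. This is the step I expect to be most delicate.

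Alternatively, I would show directly that along the Euler-type real vector field $\sum_i\xi$-radial flow from the fiber $x_t^{-1}(p)$, $\psi$ is strictly increasing wherever it is positive. This follows from convexity: $\frac{d}{d\tau}\psi(\xi_0+\tau(\xi-\xi_0))>0$ for $\tau>0$, and the same holds on faces by continuity and $T^n$-invariance.

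Once $s$ is a regular value, the boundary is a smooth level set of a function with $dd^c\psi=\omega_t>0$. Hence the boundary is strictly pseudoconvex, with $\psi-s$ a strictly plurisubharmonic defining function.
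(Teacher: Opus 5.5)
Your arguments for nonemptiness, convexity of the logarithmic image, connectedness, relative compactness and strict pseudoconvexity are essentially the paper's. Two small corrections: in the compactness step the other terms $-\ell_k(p)\log\ell_k(x)$ with $\ell_k(p)>0$ are only bounded \emph{below}, not bounded, though that is all you need. Also, $D_{p,s,t}\cap(\C^*)^n$ is dense in $D_{p,s,t}$ simply because $D_{p,s,t}$ is open and the divisors are nowhere dense.

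The genuine gap is the regularity of $\{\psi_{p,t}=s\}$ at points of the retained divisors, which you do not prove. This case arises only for $p\in\partial\Delta$: for $p\in\Delta^\circ$ every $\ell_r(p)>0$, so $U_p$ is the open orbit.

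Your second route fails exactly there, for three reasons. For $p\in\partial\Delta$ the fiber $x_t^{-1}(p)$ lies in the divisors, so there is no $\xi_0$ with $D_\xi\Phi_t(\xi_0)=p$ to radiate from. The field $\sum_i(\xi_i-\xi_{0,i})\partial_{\xi_i}$ has unbounded coefficients and is not smooth across the divisors. And passing to the limit from the open orbit gives only a nonnegative derivative, not a positive one.

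Your first route can be completed, but beyond the correspondence you describe it needs two facts you do not supply.
\begin{itemize}
\item For $y\in U_p$, every facet through $q:=x_t(y)$ also contains $p$, because $\mathcal D_r=x_t^{-1}(\mathcal F_r)$ and $y\notin\mathcal D_r$ whenever $\ell_r(p)>0$. Hence $p\in\mathcal F(q)$ and $q-p$ is parallel to that face.
\item The form $\sum_{\ell_r(q)>0}\langle\nu_r,e\rangle^2/\ell_r(q)+\langle D^2h_t(q)e,e\rangle$ is positive definite on vectors $e$ parallel to $\mathcal F(q)$. This follows from the Guillemin condition on $(\det D^2u_t)\prod_r\ell_r$ via a block decomposition of the Hessian.
\end{itemize}
Differentiating \eqref{eq:sublevel-boundary-formula} then shows that the derivative at $q$ in the direction $q-p$ equals this form at $e=q-p$, which is positive since $q\neq p$.

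The paper avoids all of this by using a \emph{fixed} direction. For $a\in\R^n$, let $X_a$ generate the real subgroup $z_i\mapsto e^{\tau a_i/2}z_i$ of $T_{\C}$. It is a smooth vector field on $M$ preserving every divisor, and on the open orbit $X_a=\sum_i a_i\partial_{\xi_i}$, so $X_a\psi_{p,t}=\langle x_t-p,a\rangle$. Both sides are smooth on $U_p$, so the identity also holds on the retained divisors. At $y$ with $\psi_{p,t}(y)=s>0$ one has $x_t(y)\neq p$, and choosing $a:=x_t(y)-p$ gives $d\psi_{p,t}(X_a)(y)=|x_t(y)-p|^2>0$. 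The positivity is read off at $y$ itself from a globally valid identity rather than obtained as a limit, which is exactly what your radial argument lacks.
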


\begin{proof} 
Fix $p\in\Delta$, $t$, and $s>0$ throughout the proof.

\noindent\textbf{Nonempty.}
The fiber $x_t^{-1}(p)$ is nonempty, and
Proposition~\ref{prop:sublevel-geometry} gives $\psi_{p,t}=0$ on
this fiber. Hence $x_t^{-1}(p)\subset D_{p,s,t}$.

\noindent\textbf{Relatively compact in $U_p$.}
Since $M$ is compact, it suffices to show that the closure of
$D_{p,s,t}$ in $M$ does not meet $M\setminus U_p$.
The divisors retained in
$U_p$ may still meet the closure.
Let $y_j\in U_p$ converge to
$y_\infty\in M\setminus U_p$. By the definition of $U_p$, there
is an index $r$ such that $\ell_r(p)>0$ and
$y_\infty\in\mathcal D_r$. Since
$\mathcal D_r=x_t^{-1}(\mathcal F_r)$, we have
\begin{equation}\label{eq:calc-s4-005}
 \ell_r(x_t(y_j))\longrightarrow0^+,
 \qquad
 -\ell_r(p)\log\ell_r(x_t(y_j))\longrightarrow+\infty.
\end{equation}
Here $\ell_r(p)$ is a fixed positive coefficient; For every other
index $k$ with $\ell_k(p)>0$,
\begin{equation}\label{eq:calc-s4-006}
 -\ell_k(p)\log\ell_k(x_t(y_j))
 \geq-\ell_k(p)\log\!\left(\max_{x\in\Delta}\ell_k(x)\right)>-\infty.
\end{equation}
In \eqref{eq:sublevel-boundary-formula}, the terms with
$\ell_k(p)=0$ were omitted. Since $h_t\in C^\infty(\Delta)$ and
$\Delta$ is compact, the nonlogarithmic part satisfies
\begin{equation}\label{eq:sublevel-remainder-bound}
 \max_{x\in\Delta}
 \left|\left\langle x-p,\sum_r\nu_r+Dh_t(x)\right\rangle
       -h_t(x)+u_t(p)\right|\leq C_{p,t}.
\end{equation}
Thus no other term can cancel the divergence in
\eqref{eq:calc-s4-005}; hence we obtain
\begin{equation}\label{eq:sublevel-exhaustion}
 \psi_{p,t}(y_j)
 \geq-\ell_r(p)\log\ell_r(x_t(y_j))-C_{p,t}
 \longrightarrow+\infty.
\end{equation}
If the closure of $D_{p,s,t}$ met $M\setminus U_p$, there would
be such a sequence with $y_j\in D_{p,s,t}$, contradicting
$\psi_{p,t}(y_j)<s$. Hence $\overline{D_{p,s,t}}\Subset U_p$.

\noindent\textbf{Smooth boundary.}
We show that $d\psi_{p,t}\ne0$ on every positive level.
For $a\in\R^n$, consider the real one-parameter subgroup
\begin{equation}\label{eq:sublevel-real-torus-action}
 g_\tau:=(e^{\tau a_1/2},\ldots,e^{\tau a_n/2})\in T_{\C},
 \qquad \tau\in\R.
\end{equation} 
The action is defined smoothly on $M$ and preserves all toric
divisors, hence also $U_p$. Let $X_a$ denote its infinitesimal
generator. On the open orbit the action is coordinatewise
multiplication, so $\xi_i=\log|z_i|^2$ gives
\begin{equation}\label{eq:sublevel-log-translation}
 \xi(g_\tau\cdot y)=\xi(y)+\tau a,
 \qquad X_a=\sum_i a_i\frac{\partial}{\partial\xi_i}.
\end{equation}
Consequently, on the open orbit,
\begin{align}
 X_a\psi_{p,t}(y)
 &=\left.\frac{d}{d\tau}\right|_{\tau=0}
       \psi_{p,t}(g_\tau\cdot y)\notag\\
 &=\sum_i a_i\bigl(\partial_{\xi_i}\Phi_t(\xi(y))-p_i\bigr)
  =\langle D_\xi\Phi_t(\xi(y))-p,a\rangle
  =\langle x_t(y)-p,a\rangle.
 \label{eq:calc-s4-007}
\end{align}
Although the logarithmic coordinates are not defined on the
divisors, $X_a$, $\psi_{p,t}$, and $x_t$ are smooth on $U_p$.
Both sides of \eqref{eq:calc-s4-007} therefore extend continuously
from the dense open orbit, and the identity holds on all of $U_p$.

Let $y\in U_p$ satisfy $\psi_{p,t}(y)=s>0$.
By Proposition~\ref{prop:sublevel-geometry}, $x_t(y)\ne p$.
Choose the fixed vector $a:=x_t(y)-p$. Then
\begin{equation}\label{eq:sublevel-positive-level-derivative}
 d\psi_{p,t}|_y\bigl(X_a(y)\bigr)
 =\langle x_t(y)-p,a\rangle
 =|x_t(y)-p|^2>0.
\end{equation}
Thus $d\psi_{p,t}(y)\ne0$, including when $y$ lies on a retained
divisor. 

Relative compactness excludes boundary points outside $U_p$,
and continuity gives $\partial D_{p,s,t}\subset\{\psi_{p,t}=s\}$.
Conversely, at every point with $\psi_{p,t}=s$, the nonzero
differential and the implicit function theorem give nearby points
with values both smaller and larger than $s$. Such a point is
therefore on the boundary. The same theorem gives its smoothness,
and we have
\begin{equation}\label{eq:sublevel-regular-boundary}
 \partial D_{p,s,t}=\{y\in U_p:\psi_{p,t}(y)=s\},
 \qquad d\psi_{p,t}\ne0\quad\hbox{on }\partial D_{p,s,t}.
\end{equation}

\noindent\textbf{Strictly pseudoconvex boundary.} 
By \cite[Chapter~I, \S7.C, (7.10) and Definition~7.13]{DemaillyCADG},
a smooth boundary is strictly pseudoconvex if the Levi form of a
defining function is positive.
By \eqref{eq:sublevel-ddc},
\begin{equation}\label{eq:sublevel-levi-form}
 dd^c(\psi_{p,t}-s)=dd^c\psi_{p,t}=\omega_t>0.
\end{equation}
Its Levi form is therefore the positive Hermitian form of
$\omega_t$. 
Hence the boundary is strictly pseudoconvex.

\noindent\textbf{Convex logarithmic image.}
The logarithmic image is taken only on the open orbit and is
\begin{equation}\label{eq:sublevel-log-domain}
 \log D_{p,s,t}
 =\{\xi\in\R^n:\Phi_t(\xi)-\langle p,\xi\rangle+u_t(p)<s\}.
\end{equation}
The function inside this sublevel condition has Hessian
$D_\xi^2\Phi_t>0$. More explicitly, for
$\xi,\xi'\in\log D_{p,s,t}$ and $0\leq\lambda\leq1$, convexity gives
\begin{align}
 &\Phi_t((1-\lambda)\xi+\lambda\xi')
   -\langle p,(1-\lambda)\xi+\lambda\xi'\rangle+u_t(p)\notag\\
 &\quad\leq(1-\lambda)
       \bigl(\Phi_t(\xi)-\langle p,\xi\rangle+u_t(p)\bigr)
       +\lambda\bigl(\Phi_t(\xi')-\langle p,\xi'\rangle+u_t(p)\bigr)
       <s.
 \label{eq:sublevel-log-convexity}
\end{align}
Thus the line segment joining any two points of the logarithmic
image remains in that image.

\noindent\textbf{Connected.}
Notice that the set
\begin{equation}\label{eq:sublevel-polar-product}
 D_{p,s,t}\cap(\C^*)^n
 \simeq(\log D_{p,s,t})\times T^n
\end{equation}
is connected, since $\log D_{p,s,t}$ is convex and $T^n$ is connected.
As $D_{p,s,t}$ is open and the open orbit is dense in $M$, this
connected subset is dense in $D_{p,s,t}$, so $D_{p,s,t}$ is connected.
\end{proof}

For the uniform estimates below, denote by $\psi_p^0$ the potential
in \eqref{eq:fixed-moment-domains} associated with the fixed
background $\omega_0$; the superscript $0$ denotes the background,
not a value of the family parameter $t$. The definition gives
\begin{equation}\label{eq:sublevel-background-comparison}
 \psi_{p,t}=\psi_p^0+\varphi_t+u_t(p)-u_0(p).
\end{equation}
Here $u_t(p)-u_0(p)$ is constant in space.

\begin{proposition}
\label{prop:fixed-moment-domains}
Suppose $\sup_t\|u_t\|_\infty<\infty$.
For every fixed $p$ and every fixed $s_0>0$, the sets
$\overline{D_{p,s_0,t}}$ lie in a common compact subset of $U_p$.
If in addition $c\leq\omega_t^n/\omega_0^n\leq C$, then
\begin{equation}\label{eq:calc-s4-008}
 C_p^{-1}\leq\frac{\omega_t^n}{dV_{\mathrm{Eucl}}}\leq C_p
       \quad\hbox{on }D_{p,s_0,t},
\end{equation}
where $dV_{\mathrm{Eucl}}$ is Euclidean volume and $C_p\geq1$
is independent of $t$. The compact set and this constant may
depend on $p$ and $s_0$.
\end{proposition}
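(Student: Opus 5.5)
The plan is to compare each $\psi_{p,t}$ with the fixed background potential $\psi_p^0$ through identity \eqref{eq:sublevel-background-comparison}. By \eqref{eq:legendre-C0}, $\|\varphi_t\|_\infty=\|u_t-u_0\|_{L^\infty(\Delta)}$. Hence the assumption $\sup_t\|u_t\|_\infty<\infty$, together with boundedness of $u_0$ on $\Delta$, gives a uniform bound
\[
 \|\varphi_t\|_{L^\infty(M)}+|u_t(p)-u_0(p)|\leq A,
\]
with $A$ independent of $t$. Consequently $\psi_{p,t}\geq\psi_p^0-2A$ on $U_p$ (first on the open orbit, then on $U_p$ by continuity, since both sides are smooth there). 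This gives the inclusion
\[
 D_{p,s_0,t}\subset\{y\in U_p:\psi_p^0(y)<s_0+2A\}=D^0_{p,s_0+2A},
\]
where $D^0$ denotes the sublevel set for the background. Proposition~\ref{prop:sublevel-domains}, applied to the background metric $\omega_0$ at height $s_0+2A$, shows that $D^0_{p,s_0+2A}$ is relatively compact in $U_p$. The compact set $K_p:=\overline{D^0_{p,s_0+2A}}\subset U_p$ then contains every $\overline{D_{p,s_0,t}}$. This proves the first assertion. The compact set depends only on $p$, $s_0$ and $A$.

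For the volume comparison, I would use the vertex chart from the proof of Proposition~\ref{prop:sublevel-geometry}: $U_p\subset U_v\simeq\C^n$ with holomorphic coordinates $w_1,\dots,w_n$. On the compact set $K_p\subset U_v$, the smooth positive volume form $\omega_0^n$ is comparable to $dV_{\mathrm{Eucl}}$ in these coordinates, say $C_1^{-1}\leq\omega_0^n/dV_{\mathrm{Eucl}}\leq C_1$, with $C_1$ depending only on $K_p$, so only on $p$ and $s_0$. Multiplying this by the hypothesis $c\leq\omega_t^n/\omega_0^n\leq C$ on $D_{p,s_0,t}\subset K_p$ gives \eqref{eq:calc-s4-008} with $C_p:=C_1\max\{C,c^{-1}\}$.

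The only delicate point is to make the first inclusion precise at the retained divisors, where $\xi$ is undefined. This is handled by the fact that \eqref{eq:sublevel-background-comparison} holds on the dense open orbit, and that both sides extend smoothly to $U_p$ by Proposition~\ref{prop:sublevel-geometry}. The identity therefore holds on all of $U_p$, and the sublevel inclusion follows pointwise. The other potential subtlety is the choice of chart: Euclidean volume depends on it, so we fix the chart $U_v$ chosen for $p$, as already stipulated in that proof. With the chart fixed, the constant is uniform in $t$.
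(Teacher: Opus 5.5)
Your proof is correct and follows essentially the same route as the paper. Both bound $|\psi_{p,t}-\psi_p^0|$ by $2\|u_t-u_0\|_\infty$ via \eqref{eq:sublevel-background-comparison} and \eqref{eq:legendre-C0}, place every $\overline{D_{p,s_0,t}}$ inside one background sublevel set that is relatively compact in $U_p$ by Proposition~\ref{prop:sublevel-domains}, and then compare $\omega_0^n$ with Euclidean volume in the fixed vertex chart on that compact set.
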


\begin{proof}
Fix $p$. Proposition~\ref{prop:sublevel-domains}, applied to the
fixed background, shows that every finite sublevel set of $\psi_p^0$
is relatively compact in $U_p$.
By \eqref{eq:sublevel-background-comparison} and \eqref{eq:legendre-C0},
\begin{equation}\label{eq:calc-s4-009}
 |\psi_{p,t}-\psi_p^0|\leq2\|u_t-u_0\|_\infty.
\end{equation}
The assumed bound therefore gives the common compact containment
\begin{equation}\label{eq:calc-s4-010}
 \overline{D_{p,s_0,t}}
 \subset\left\{y\in U_p:\psi_p^0(y)
          \leq s_0+2\sup_t\|u_t-u_0\|_\infty\right\}\Subset U_p.
\end{equation}

Finally, on the common compact subset of $U_p$,
the background volume is smoothly comparable to Euclidean
volume. The two-sided volume-ratio bounds give the stated estimate.
\end{proof}

\subsubsection{Harmonic oscillation}
We now prove the two local estimates used to control harmonic
oscillation on these domains. In the local estimates through
Proposition~\ref{prop:potential-oscillation}, $z$ denotes the standard
coordinates on $\C^n$, and we write
$\xi_i:=\log|z_i|^2$ and $\theta_i:=\arg z_i$. Euclidean volume in
these coordinates is
\begin{equation}\label{eq:calc-s4-011}
 dV_{\mathrm{Eucl}}=\frac1{2^n}e^{\xi_1+\cdots+\xi_n}\,d\xi\,d\theta.
\end{equation}
Integrating over the angular variables, each of period $2\pi$,
gives $\pi^n e^{\xi_1+\cdots+\xi_n}\,d\xi$.

\begin{proposition}\label{prop:relative-green}
Let \(D_s\Subset\mathbb C^n\) be a bounded connected Reinhardt domain
with smooth boundary and convex logarithmic image.
Suppose that \(\psi\) is smooth and \(T^n\)-invariant on a neighborhood of
\(\overline{D_s}\), that \(\omega:=dd^c\psi>0\), and that
\begin{equation}\label{eq:calc-s4-012}
       0\leq\psi<s\quad\hbox{in }D_s,\qquad
       \psi=s,\quad d\psi\ne0\quad\hbox{on }\partial D_s.
\end{equation}
For these local estimates, write \(d\mu:=\omega^n\) without
normalizing its mass, and put \(V_s:=\mu(D_s)\). Suppose that
there are constants \(0<b_-\leq b_+\) such that
\begin{equation}\label{eq:local-density-ratio}
 b_-\leq \frac{d\mu}{dV_{\mathrm{Eucl}}}\leq b_+,\qquad
                    \frac{b_+}{b_-}\leq\kappa.
\end{equation}
For a $T^n$-invariant
Borel set \(E\subset D_s\), denote its relative volume by
\begin{equation}\label{eq:calc-s4-013}
 m_E:=\frac{\mu(E)}{\mu(D_s)}\in[0,1].
\end{equation}
Let \(\mathbf1_E\) be the indicator function of \(E\), and let
\(G_{D_s}\) be the positive Dirichlet inverse of \(-\Delta_\omega\).
Then
\begin{equation}\label{eq:relative-green}
 \bigl\|G_{D_s}\mathbf1_E\bigr\|_{L^\infty(D_s)}
       \leq C(n,\kappa)\,s\,m_E^{1/n}\Bigl(1+\log\frac1{m_E}\Bigr).
\end{equation}
The right-hand side is interpreted as zero when \(m_E=0\).
\end{proposition}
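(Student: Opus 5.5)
The plan is to trade the linear problem for a complex Monge--Amp\`ere problem, using that $\mathbf1_E^{1/n}=\mathbf1_E$. Put $w:=G_{D_s}\mathbf1_E$, so $-\Delta_\omega w=\mathbf1_E$ in $D_s$ and $w=0$ on $\partial D_s$, with $w\ge0$ by the maximum principle.

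First, normalize the height. Replacing $\psi$ by $\psi/s$ and $\omega$ by $\omega/s$ multiplies $\Delta_\omega$ by $s$ and $G_{D_s}$ by $1/s$. It leaves $m_E$ unchanged, and it scales both density bounds in \eqref{eq:local-density-ratio} by $s^{-n}$, so $\kappa$ is unchanged. Hence it suffices to treat $s=1$ and prove $\|w\|_\infty\le C(n,\kappa)\,m_E^{1/n}(1+\log m_E^{-1})$. A trivial a priori bound comes from comparison with $(1-\psi)/n$: since $-\Delta_\omega\bigl((1-\psi)/n\bigr)=1\ge\mathbf1_E$ and it vanishes on $\partial D_s$, we get $w\le 1/n$. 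This already settles the case $m_E$ bounded below, so the work is for small $m_E$.

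Second, the comparison step. Suppose $v$ is a continuous $\omega$-independent psh function on $D_s$ with $v=0$ on $\partial D_s$ and
\[
(dd^cv)^n\ge A\,\mathbf1_E\,\omega^n
\]
for some $A>0$. The arithmetic--geometric mean inequality gives $\Delta_\omega v=\tr_\omega dd^cv\ge n\bigl((dd^cv)^n/\omega^n\bigr)^{1/n}\ge nA^{1/n}\mathbf1_E$. The maximum principle applied to $w+v/(nA^{1/n})$, which is subharmonic and vanishes on the boundary, then yields
\[
w\le \frac{\|v\|_\infty}{nA^{1/n}}.
\]
To produce $v$, I would solve the Dirichlet problem $(dd^cv)^n=A\,\mathbf1_E\,\omega^n$, $v|_{\partial D_s}=0$. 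This is solvable, up to smoothing $\mathbf1_E$ and passing to a limit, because $D_s$ is strictly pseudoconvex with the psh defining function $\psi-1$ by the hypotheses. I would take $A:=m_E^{-1}$, so that the total Monge--Amp\`ere mass equals $\mu(D_s)=\mu\bigl((dd^c(\psi-1))^n\bigr)$, the mass of the model solution $\psi-1$, whose sup norm is at most $1$. The claim then reduces to
\[
\|v\|_\infty\le C(n,\kappa)\,\bigl(1+\log m_E^{-1}\bigr),
\]
a Ko\l odziej-type estimate for a density $f=m_E^{-1}\mathbf1_E$ of fixed mass but large $L^\infty$ norm.

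Third, the key estimate and the main obstacle. I expect the logarithmic bound on $\|v\|_\infty$ to be the hardest step, because the constant must depend only on $n$ and $\kappa$, not on the shape of $D_s$. I would first reduce to the toric situation: $E$, $D_s$ and $\psi$ are $T^n$-invariant, so $v$ is $T^n$-invariant by uniqueness, and $v$ is convex in $\xi$ on the convex image $\log D_s$. The Monge--Amp\`ere equation then becomes a real Monge--Amp\`ere equation on $\log D_s$, with the measure $A\,\mathbf1_{\log E}\,d\mu$, where $d\mu$ is comparable to $e^{\xi_1+\cdots+\xi_n}d\xi$ up to the factor $\kappa$. I would then run a capacity argument relative to the exhaustion $\psi$:
\[
\operatorname{Cap}(K)\ \gtrsim\ \bigl(\mu(K)/\mu(D_s)\bigr)\big/\bigl(1+\log(\mu(D_s)/\mu(K))\bigr)^n,
\]
comparing $K$ with sublevel sets of $\psi$. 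Such a capacity--volume inequality turns Ko\l odziej's iteration into the stated $1+\log$ growth: the capacity of $\{v<-t\}$ decays exponentially once $t\gtrsim \log m_E^{-1}$. I would try to obtain this capacity--volume inequality from the density comparison \eqref{eq:local-density-ratio} together with the Alexandrov estimate for convex functions on $\log D_s$. Here the log-convexity of the domain and the fact that $d\mu$ is comparable to Euclidean volume replace uniform ellipticity of $\omega$.

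Combining the two steps gives $w\le C\,m_E^{1/n}(1+\log m_E^{-1})$ at $s=1$, and rescaling restores the factor $s$.
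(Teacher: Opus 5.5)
Your first two steps match the paper's reduction. The paper solves $(dd^cv_E)^n=\mathbf1_E\,\omega^n$ with zero boundary values, gets $\Delta_\omega v_E\ge n\mathbf1_E$ from Dinew's mixed Monge--Amp\`ere inequality (the non-smooth form of your AM--GM step), and compares with $G_{D_s}\mathbf1_E$; your $v$ is $m_E^{-1/n}v_E$.

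The gap is in the third step, which is the whole content of the proposition. The inequality you propose, $\operatorname{Cap}(K)\gtrsim m_K(1+\log m_K^{-1})^{-n}$ with $m_K=\mu(K)/\mu(D_s)$, is linear in $\mu(K)$ up to logarithms. That is the same order as the trivial bound $\operatorname{Cap}(K,D_s)\ge\int_K(dd^c(\psi-1))^n=\mu(K)$ from the competitor $\psi-1$. Put $\gamma(t):=\operatorname{Cap}(\{v<-t\},D_s)$ and insert your inequality into Ko\l odziej's comparison $r^n\operatorname{Cap}(\{v<-t-r\},D_s)\le m_E^{-1}\mu(E\cap\{v<-t\})$. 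You only get $r^n\gamma(t+r)\lesssim m_E^{-1}V_s\,\gamma(t)\log^n(1/\gamma(t))$, which is at best the geometric decay you describe. Decaying capacity never forces $\{v<-t\}$ to be empty at a finite depth, so no bound on $\|v\|_\infty$ follows.

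What is needed is an \emph{exponential} volume--capacity inequality with the full $V_s$ in the numerator. At $s=1$ this reads $\mu(K)/V_s\le C\exp\bigl(-(V_s/\operatorname{Cap}(K,D_s))^{1/n}/C\bigr)$ for $T^n$-invariant $K$, equivalently $\operatorname{Cap}(K,D_s)\ge c\,V_s\bigl(\log(CV_s/\mu(K))\bigr)^{-n}$. With $g(t):=(\operatorname{Cap}(\{v_E<-t\},D_s)/V_s)^{1/n}$ this gives the superlinear recursion $r\,g(t+r)\le\min\{m_E^{1/n},C_1e^{-c_1/g(t)}\}$. An elementary iteration then shows $g\equiv0$ beyond depth $Cm_E^{1/n}(1+\log m_E^{-1})$.

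The Alexandrov maximum principle is not a workable source for this inequality. In its standard form its constants involve the diameter and inradius of $\log D_s$, which $n$ and $\kappa$ do not control. Moreover, $\log D_s$ is unbounded as soon as $D_s$ meets a coordinate hyperplane, which is exactly the boundary-stratum case the proposition is needed for, and there $v$ has no Dirichlet condition in the unbounded directions.

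The missing idea is the paper's shape-free exponential integrability lemma, which runs as follows.
\begin{enumerate}
\item Take a $T^n$-invariant psh $\chi\le0$ with zero boundary values and $\int_{D_s}(dd^c\chi)^n\le1$. Integrate by parts against $s-\psi$ and use $\int dd^c\chi\wedge\omega^{n-1}\le(\int(dd^c\chi)^n)^{1/n}V_s^{(n-1)/n}$. This gives $\int_{D_s}(-\chi)\,d\mu\le sV_s^{(n-1)/n}$.
\item By Markov and $\kappa$, the normalized Euclidean distribution function $S(t)$ of $-V_s^{1/n}\chi/s$ satisfies $S(2\kappa)\le\frac12$.
\item $\chi$ is convex in $\xi$, and after angular integration Euclidean volume is the log-affine density $\pi^ne^{\xi_1+\cdots+\xi_n}\,d\xi$. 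So this density restricted to the convex set $\{(t,\xi):t\ge0,\ \chi(\xi)+st/V_s^{1/n}<0\}$ is log-concave.
\item Pr\'ekopa's theorem then makes $-\log S$ convex, which upgrades the single Markov bound to $S(t)\le2^{-t/(2\kappa)}$. This yields $\int_{D_s}e^{-V_s^{1/n}\chi/(Cs)}\,d\mu\le CV_s$ with $C=C(\kappa)$.
\item Applying this to $h_K^*/\operatorname{Cap}(K,D_s)^{1/n}$, with $h_K^*$ the relative extremal function, gives the exponential volume--capacity bound.
\end{enumerate}
Without this step, or a substitute whose constants depend only on $n$ and $\kappa$, your argument does not reach the stated estimate.
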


We first prove an exponential integrability estimate that will
be used in the capacity argument. For bounded plurisubharmonic
functions, Monge--Amp\`ere measures are understood in the
Bedford--Taylor sense \cite{BedfordTaylor1982}; they agree with
the usual wedge products for smooth functions. The total mass
on a domain is the integral of the measure over that domain.

\begin{lemma}\label{lem:toric-exponential-integrability}
Under the hypotheses and notation of
Proposition~\ref{prop:relative-green}, let $\chi$ be a bounded
nonpositive $T^n$-invariant plurisubharmonic function on $D_s$ such that
\begin{equation}\label{eq:calc-s4-014}
 \lim_{y\to\partial D_s}\chi(y)=0,\qquad
 \int_{D_s}(dd^c\chi)^n\leq1.
\end{equation}
There is a constant \(C\), depending only on \(\kappa\), such that
\begin{equation}\label{eq:toric-exponential-integrability}
 \int_{D_s}\exp\!\left(-\frac{V_s^{1/n}}{Cs}\,\chi\right)
              \,d\mu\leq CV_s.
\end{equation}
\end{lemma}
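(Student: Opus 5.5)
The plan is to prove a decay estimate for the sublevel sets of $\chi$ and then integrate it in $\lambda$. Concretely, the intermediate goal is
\begin{equation*}
 \mu\bigl(\{\chi<-\lambda\}\bigr)\leq C_1V_s\exp\!\Bigl(-\frac{V_s^{1/n}\lambda}{C_1s}\Bigr),\qquad\lambda>0,
\end{equation*}
with $C_1=C_1(n,\kappa)$ (the lemma states $C=C(\kappa)$; I treat dependence on $n$ as implicit). Given this, the layer-cake formula with $a:=V_s^{1/n}/(2C_1s)$ gives
\begin{equation*}
 \int_{D_s}e^{-a\chi}\,d\mu=\mu(D_s)+\int_0^\infty ae^{a\lambda}\mu(\{\chi<-\lambda\})\,d\lambda\leq V_s+C_1V_s\int_0^\infty ae^{-a\lambda}\,d\lambda\leq(1+C_1)V_s.
\end{equation*}
This is \eqref{eq:toric-exponential-integrability} with $C=\max\{2C_1,1+C_1\}$. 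So everything reduces to the sublevel estimate. Both sides of \eqref{eq:toric-exponential-integrability} are invariant under $\psi\mapsto\psi/s$, $\mu\mapsto s^{-n}\mu$, $V_s\mapsto s^{-n}V_s$, so I normalize $s=1$ and fix $\psi$ with $0\le\psi<1$ on $D:=D_1$.

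\textbf{Step 1: capacity.} For the sublevel estimate I would go through relative Monge--Amp\`ere capacity in $D$. Since $\chi$ is bounded, tends to $0$ at $\partial D$, and has total mass at most $1$, the standard Bedford--Taylor comparison argument gives
\begin{equation*}
 \operatorname{Cap}(\{\chi<-\lambda\},D)\leq\lambda^{-n}\int_D(dd^c\chi)^n\leq\lambda^{-n}.
\end{equation*}
It then suffices to prove a volume--capacity inequality for compact $T^n$-invariant sets $K\subset D$:
\begin{equation*}
 \mu(K)\leq C_1V_1\exp\!\bigl(-c\,V_1^{1/n}\operatorname{Cap}(K,D)^{-1/n}\bigr).
\end{equation*}
Averaging over $T^n$ shows the relative extremal function $u_K$ is $T^n$-invariant, so I may work entirely with convex functions of $\xi$.

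\textbf{Step 2: toric dictionary.} On the open orbit, a $T^n$-invariant psh function $v$ is $v=g(\xi)$ with $g$ convex on $\Omega:=\log D$. Its Monge--Amp\`ere mass is $n!(2\pi)^n$ times the Lebesgue measure of the gradient image $\partial g(\Omega)$. In particular, $V_1=n!(2\pi)^n|D\Phi(\Omega)|$ is a constant times the volume of the moment image $A$ of $\psi$. On the other side, \eqref{eq:local-density-ratio} and \eqref{eq:calc-s4-011} give
\begin{equation*}
 \mu(K)\simeq_\kappa b\int_{\log K}e^{\xi_1+\cdots+\xi_n}\,d\xi.
\end{equation*}
So both capacity and volume become convex-geometric quantities attached to $g=u_K$ and to $\log K$. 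To compare them, I would use the Alexandrov-type argument. For the convex function $g$ with $g=-1$ on $\log K$ and $g\to0$ at $\partial\Omega$, the gradient image on $\log K$ contains the polar-type body $\{y:\langle y,\eta-\xi\rangle\le1\ \forall\,\xi\in\log K,\ \eta\in\Omega\}$. Boundedness of $g$ in directions where $\Omega$ is unbounded toward $-\infty$ (the coordinate hyperplanes inside $D$) forces the corresponding gradient components to be nonnegative, which matches the weight $e^{\sum\xi_i}$.

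\textbf{Step 3: weighted comparison and calibration.} Using $\psi$ itself as the calibrating example fixes the scale: $\psi-1$ has boundary value $0$, oscillation $1$, and mass $V_1$, which produces the factor $V_1^{1/n}$. The remaining comparison is
\begin{equation*}
 \frac{\int_{\log K}e^{\sum\xi_i}\,d\xi}{\int_{\Omega}e^{\sum\xi_i}\,d\xi}\leq C\exp\!\Bigl(-c\,\Bigl(\frac{|A|}{|\partial u_K(\log K)|}\Bigr)^{1/n}\Bigr).
\end{equation*}
I would prove this via the Legendre transform: translating $\log K$ deep inside $\Omega$ by a vector $\tau$ multiplies the weight by $e^{-\sum\tau_i}$. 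The exponential gain comes from the fact that the weight $e^{\sum\xi_i}$ is log-linear, so a convex set whose polar gradient image is small compared with $A$ must sit at distance of order $(|A|/|\partial u_K|)^{1/n}$ from the "heavy" part of $\Omega$ in the direction $(1,\dots,1)$. The density bounds are used only through $\kappa$, converting $\mu$ to the Euclidean weight and back.

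\textbf{Main obstacle.} I expect Step 3 to be the hardest part: a Brunn--Minkowski/Santal\'o-type comparison with the exponential weight on possibly unbounded convex $\Omega$, uniform in the shape of $\Omega$. My first attempt would be to apply an affine normalization (John's ellipsoid) to $A$, which is bounded, rather than to $\Omega$, and then transfer the estimate through the Legendre duality $D\Phi:\Omega\to A$.
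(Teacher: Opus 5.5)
Your outer layer is fine: the scaling to $s=1$, the comparison-principle bound $\operatorname{Cap}(\{\chi<-\lambda\},D)\le\lambda^{-n}$, and the layer-cake computation are all correct.

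\textbf{The gap.} The whole content of the lemma sits in Steps~2--3, and those steps are not carried out. Under your toric dictionary, the weighted comparison you display is exactly the volume--capacity inequality $\mu(K)/V_1\lesssim\exp\bigl(-c\,(V_1/\operatorname{Cap}(K,D))^{1/n}\bigr)$. That inequality is equivalent to the lemma itself. Applying the lemma to $h_K^*/\operatorname{Cap}(K,D_s)^{1/n}$ returns it, and this is precisely how the paper later derives \eqref{eq:toric-volume-capacity}. So Step~1 only reduces the statement to an equivalent one. What remains is an Alexandrov/Santal\'o-type comparison of gradient images against the weight $e^{\xi_1+\cdots+\xi_n}$ on a possibly unbounded $\log D$, uniform in its shape. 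You describe this only heuristically (``must sit at distance of order \ldots''), and you leave open how $|A|$ enters.

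\textbf{The missing idea.} No capacity or gradient-image geometry is needed; two observations give your sublevel estimate directly.

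First, calibrate with $\psi$ to get a first-moment bound. Since $\chi$ and $\psi-s$ both vanish on $\partial D_s$, integration by parts and the mixed Monge--Amp\`ere (H\"older-type) inequality give
\begin{equation*}
 \int_{D_s}(-\chi)\,d\mu=\int_{D_s}(s-\psi)\,dd^c\chi\wedge\omega^{n-1}\le s\Bigl(\int_{D_s}(dd^c\chi)^n\Bigr)^{1/n}V_s^{(n-1)/n}\le sV_s^{(n-1)/n}.
\end{equation*}
Let $S(t)$ be the relative Euclidean volume of $\{\chi<-st/V_s^{1/n}\}$. Markov's inequality and the density ratio then give $S(2\kappa)\le 1/2$. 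This is where the factor $V_s^{1/n}/s$ comes from.

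Second, $S$ is log-concave. In logarithmic coordinates, Euclidean volume has the log-affine density $\pi^n e^{\xi_1+\cdots+\xi_n}$. The set $\{(t,\xi):t\ge0,\ \xi\in\log D_s,\ \chi(\xi)+st/V_s^{1/n}<0\}$ is convex because $\chi$ is convex in $\xi$. Pr\'ekopa's theorem therefore makes $S$ log-concave. Since $S(0)=1$, this yields $S(t)\le S(2\kappa)^{t/(2\kappa)}\le 2^{-t/(2\kappa)}$ for $t\ge 2\kappa$.

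This is your sublevel estimate, with a constant depending only on $\kappa$, and your layer-cake step then finishes. You are right that the log-linearity of the weight is the source of the exponential gain. It enters through a one-dimensional marginal of a log-concave density, though, not through a comparison of gradient images.
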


\begin{proof} 
The case $\chi\equiv0$ is immediate. We therefore assume
$\chi\not\equiv0$. The maximum principle gives $\chi<0$ in $D_s$.
Both $\chi$ and $\psi-s$ are bounded plurisubharmonic functions
with zero boundary values and finite Monge--Amp\`ere mass.
Thus \cite[Proposition~2.1(a),(d), equation~(2.5)]{Nguyen2018}
provides integration by parts for this pair. Since $dd^c(\psi-s)=\omega$ and $0\leq s-\psi\leq s$,
we obtain
\begin{align}
 \int_{D_s}(-\chi)\,d\mu
 &=\int_{D_s}(s-\psi)\,dd^c\chi\wedge\omega^{n-1}\notag\\
 &\leq s\int_{D_s}dd^c\chi\wedge\omega^{n-1}\notag\\
 &\leq s\left(\int_{D_s}(dd^c\chi)^n\right)^{1/n}
                  V_s^{(n-1)/n}
 \leq sV_s^{(n-1)/n}.
 \label{eq:capacity-mean}
\end{align}
For $t\geq0$, write
\begin{equation}\label{eq:calc-s4-015}
 S(t):=\frac{|\{\chi<-st/V_s^{1/n}\}|}{|D_s|},
\end{equation}
where $|\cdot|$ denotes Euclidean volume. In particular, $S(0)=1$.
The density bounds \eqref{eq:local-density-ratio} therefore imply
\begin{equation}\label{eq:calc-s4-016}
 \frac1{|D_s|}\int_{D_s}(-\chi)\,dV_{\mathrm{Eucl}}
 \leq\frac{\kappa}{V_s}\int_{D_s}(-\chi)\,d\mu
 \leq\frac{\kappa s}{V_s^{1/n}},
\end{equation}
and Markov's inequality gives
\begin{equation}\label{eq:calc-s4-017}
 S(2\kappa)\leq\frac{V_s^{1/n}}{2\kappa s|D_s|}
                  \int_{D_s}(-\chi)\,dV_{\mathrm{Eucl}}
             \leq\frac12.
\end{equation}
Next, we show that $-\log S(t)$ is convex. Define a nonnegative function on
$\mathbb R\times\mathbb R^n$ by
\begin{equation}\label{eq:prekopa-joint-density}
 f_{\mathrm{Pr}}(t,\xi):=
 \begin{cases}
 \displaystyle\frac{\pi^n}{|D_s|}e^{\xi_1+\cdots+\xi_n},
 &\xi\in\log D_s,\quad t\geq0,\quad
       \chi(\xi)+st/V_s^{1/n}<0,\\[4pt]
 0,&\text{otherwise}.
 \end{cases}
\end{equation}
Its positivity set is convex. Indeed, if
$f_{\mathrm{Pr}}(t_1,\xi^{(1)})>0$ and $f_{\mathrm{Pr}}(t_2,\xi^{(2)})>0$, then for
$0<\lambda<1$ the convexity of $\chi$ gives
\begin{align}
 &\chi\bigl(\lambda\xi^{(1)}+(1-\lambda)\xi^{(2)}\bigr)
       +\frac{s\bigl(\lambda t_1+(1-\lambda)t_2\bigr)}{V_s^{1/n}}
       \notag\\*
 &\qquad\leq
 \lambda\left(\chi(\xi^{(1)})+\frac{st_1}{V_s^{1/n}}\right)
 +(1-\lambda)\left(\chi(\xi^{(2)})+\frac{st_2}{V_s^{1/n}}\right)<0.
 \label{eq:calc-s4-018}
\end{align}
The logarithmic coordinate remains in $\log D_s$, and the
parameter remains nonnegative. On this positivity set,
$\log f_{\mathrm{Pr}}=\log(\pi^n/|D_s|)+\xi_1+\cdots+\xi_n$ is affine.
Consequently,
\begin{equation}\label{eq:prekopa-joint-log-concavity}
 f_{\mathrm{Pr}}\bigl(\lambda t_1+(1-\lambda)t_2,
        \lambda\xi^{(1)}+(1-\lambda)\xi^{(2)}\bigr)
 \geq f_{\mathrm{Pr}}(t_1,\xi^{(1)})^\lambda
       f_{\mathrm{Pr}}(t_2,\xi^{(2)})^{1-\lambda}.
\end{equation}
When both factors on the right are positive this is an equality;
when either is zero the inequality follows from $f_{\mathrm{Pr}}\geq0$.
Thus $f_{\mathrm{Pr}}$ is jointly log-concave on the whole space, including
where it vanishes.

After angular integration, Euclidean volume has density
$\pi^n e^{\xi_1+\cdots+\xi_n}\,d\xi$. Hence
\begin{align}
 \int_{\mathbb R^n}f_{\mathrm{Pr}}(t,\xi)\,d\xi
 &=\frac{\pi^n}{|D_s|}
   \int_{\{\xi\in\log D_s:\,\chi(\xi)<-st/V_s^{1/n}\}}
                   e^{\xi_1+\cdots+\xi_n}\,d\xi\notag\\
 &=\frac{|\{\chi<-st/V_s^{1/n}\}|}{|D_s|}
 =S(t),\qquad t\geq0.
 \label{eq:prekopa-marginal-identity}
\end{align}
Then Pr\'ekopa's theorem \cite[Theorem~6]{Prekopa1973} shows that $-\log S$ is convex
on the interval where $S>0$. When $t\geq2\kappa$ and $S(t)>0$, monotonicity gives
$S(2\kappa)\geq S(t)>0$. Since $-\log S(0)=0$ and
$2\kappa=(2\kappa/t)t+(1-2\kappa/t)0$, convexity gives
\begin{align}
 -\log S(2\kappa)
 &\leq\frac{2\kappa}{t}\bigl(-\log S(t)\bigr)
       +\left(1-\frac{2\kappa}{t}\right)\bigl(-\log S(0)\bigr)
       \notag\\*
 &=\frac{2\kappa}{t}\bigl(-\log S(t)\bigr).
 \label{eq:calc-s4-019}
\end{align}
Combining this with \eqref{eq:calc-s4-017} yields:
\begin{equation}\label{eq:calc-s4-020}
 S(t)\leq S(2\kappa)^{t/(2\kappa)}
       \leq2^{-t/(2\kappa)}=e^{-2t/C},\qquad t\geq2\kappa.
\end{equation}
Choose $C:=4\kappa/\log2$. We now estimate the left-hand side of
\eqref{eq:toric-exponential-integrability}:
\begin{align}
\int_{D_s}e^{-V_s^{1/n}\chi/(Cs)}\,d\mu
 &\leq b_+\int_{D_s}e^{-V_s^{1/n}\chi/(Cs)}\,dV_{\mathrm{Eucl}}\notag\\
 &=b_+\int_{D_s}\left(1+\frac1C
       \int_0^{-V_s^{1/n}\chi(y)/s}e^{t/C}\,dt\right)
                           dV_{\mathrm{Eucl}}(y)\notag\\
 &=b_+|D_s|+\frac{b_+}{C}\int_0^\infty e^{t/C}
       \left|\left\{y\in D_s:\chi(y)<-\frac{st}{V_s^{1/n}}\right\}\right|
       \,dt\notag\\
 &=b_+|D_s|\left(1+\frac1C\int_0^\infty e^{t/C}S(t)\,dt\right)\notag\\
 &\leq\kappa V_s\left(1+\frac1C\int_0^{2\kappa}e^{t/C}S(t)\,dt
             +\frac1C\int_{2\kappa}^\infty e^{t/C}S(t)\,dt\right)\notag\\
 &\leq\kappa V_s\left(1+\frac1C\int_0^{2\kappa}e^{t/C}\,dt
             +\frac1C\int_{2\kappa}^\infty e^{-t/C}\,dt\right)\notag\\
 &=\kappa V_s\left(e^{2\kappa/C}+e^{-2\kappa/C}\right)
 \leq3\kappa V_s\leq CV_s.
 \label{eq:calc-s4-021}
\end{align}
For the last integral inequality, we use $S(t)\leq1$ on
$[0,2\kappa]$ and \eqref{eq:calc-s4-020} on $[2\kappa,\infty)$.
\end{proof}

\begin{proof}[Proof of Proposition~\ref{prop:relative-green}]
We may assume $m_E>0$, since the Green potential vanishes when $m_E=0$.
Consider the auxiliary problem
\begin{equation}\label{eq:capacity-auxiliary-equation}
 (dd^cv_E)^n=\mathbf1_E\,d\mu,\qquad
 v_E|_{\partial D_s}=0.
\end{equation}
The function $\psi-s$ is a smooth strictly plurisubharmonic
defining function for $D_s$ and a zero-boundary subsolution of
finite mass. Thus \cite[Theorem~1.2]{Nguyen2018} gives a continuous
plurisubharmonic solution. The Bedford--Taylor comparison principle
\cite[Theorem~4.1 and Corollary~4.4]{BedfordTaylor1982} gives uniqueness
and $\psi-s\leq v_E\leq0$. Since the domain and the data are
$T^n$-invariant, it also implies that $v_E$ is $T^n$-invariant.

The mixed Monge--Amp\`ere inequality of Dinew
\cite[Theorem~1.3]{Dinew2009} yields
\begin{equation}\label{eq:calc-s4-022}
 dd^cv_E\wedge\omega^{n-1}\geq\mathbf1_E\,\omega^n.
\end{equation}
Since the potentials are locally bounded and $\mu$ is smooth,
this holds in the sense of measures. Hence
$\Delta_\omega v_E\geq n\mathbf1_E$ in distributions, and
\begin{equation}\label{eq:calc-s4-023}
 \Delta_\omega\left(G_{D_s}\mathbf1_E+\frac{v_E}{n}\right)\geq0.
\end{equation}
The function in parentheses is continuous with zero boundary values.
The maximum principle therefore gives
\begin{equation}\label{eq:calc-s4-024}
 0\leq G_{D_s}\mathbf1_E\leq-\frac{v_E}{n}.
\end{equation}
It remains to estimate $\|v_E\|_\infty$ in terms of $m_E$.
For this purpose, we first derive a volume--capacity estimate from
Lemma~\ref{lem:toric-exponential-integrability}.

Let $\PSH(D_s)$ denote the plurisubharmonic functions on $D_s$.
For a Borel set $A\subset D_s$, its Bedford--Taylor relative
capacity is
\begin{equation}\label{eq:calc-s4-025}
 \operatorname{Cap}(A,D_s)
 :=\sup_{\substack{a\in\PSH(D_s)\\-1\leq a\leq0}}
                       \int_A(dd^ca)^n.
\end{equation}
For a $T^n$-invariant compact set $K\Subset D_s$, let
\begin{equation}\label{eq:calc-s4-026}
 h_K^*:=\left(\sup\{a\in\PSH(D_s):a\leq0\text{ on }D_s,
                              \ a\leq-1\text{ on }K\}\right)^*,
\end{equation}
where the star denotes upper semicontinuous regularization.
The domain and $K$ are $T^n$-invariant, so the defining envelope
and its upper semicontinuous regularization are $T^n$-invariant.
A set is pluripolar if, locally, it is contained in the
$-\infty$ locus of a plurisubharmonic function that is not
identically $-\infty$.
By \cite[Proposition~5.3, Proposition~6.2(iv), and Theorem~7.1]
{BedfordTaylor1982}, the function $h_K^*$ lies between $-1$ and $0$,
has zero boundary values, equals
$-1$ on $K$ except on a pluripolar set, and satisfies
\begin{equation}\label{eq:calc-s4-027}
 \int_{D_s}(dd^ch_K^*)^n
 =\int_K(dd^ch_K^*)^n=\operatorname{Cap}(K,D_s).
\end{equation}
The smooth measure $\mu$ charges no pluripolar sets, so
$h_K^*=-1$ almost everywhere on $K$ with respect to $\mu$.
If $\operatorname{Cap}(K,D_s)>0$, then
\begin{equation}\label{eq:calc-s4-028}
 \int_{D_s}\left(dd^c\frac{h_K^*}
 {\operatorname{Cap}(K,D_s)^{1/n}}\right)^n
 =\frac{\displaystyle\int_{D_s}(dd^ch_K^*)^n}
 {\operatorname{Cap}(K,D_s)}=1.
\end{equation}
Thus Lemma~\ref{lem:toric-exponential-integrability} applies to
$h_K^*/\operatorname{Cap}(K,D_s)^{1/n}$. Using its value on $K$,
enlarging the domain of integration, and then applying the lemma,
we obtain
\begin{align}
\frac{\mu(K)}{V_s}
 &=\frac1{V_s}\int_K1\,d\mu \notag\\
 &=\frac1{V_s}\exp\!\left(
       -\frac{V_s^{1/n}}{Cs\operatorname{Cap}(K,D_s)^{1/n}}\right)
   \int_K\exp\!\left(
       -\frac{V_s^{1/n}h_K^*}{Cs\operatorname{Cap}(K,D_s)^{1/n}}\right)d\mu \notag\\
 &\leq\frac1{V_s}\exp\!\left(
       -\frac{V_s^{1/n}}{Cs\operatorname{Cap}(K,D_s)^{1/n}}\right)
   \int_{D_s}\exp\!\left(
       -\frac{V_s^{1/n}h_K^*}{Cs\operatorname{Cap}(K,D_s)^{1/n}}\right)d\mu \notag\\
 &\leq C\exp\!\left(
       -\frac{V_s^{1/n}}{Cs\operatorname{Cap}(K,D_s)^{1/n}}\right).
 \label{eq:toric-volume-capacity}
\end{align}
Here \(C\) is the fixed constant chosen in the lemma.
If the capacity is zero, $K$ is pluripolar and has zero measure;
see \cite[Proposition~6.4(iii) and Theorem~6.9]{BedfordTaylor1982}.
The same estimate holds for $T^n$-invariant relatively compact open
sets, by exhaustion with $T^n$-invariant compact subsets and continuity
from below of capacity \cite[Proposition~3.2(4)]{BedfordTaylor1982}.

We now apply this estimate to the sublevel sets of $v_E$.
For $t>0$, put
\begin{equation}\label{eq:calc-s4-029}
 A_t:=\{v_E<-st\},\qquad
 g(t):=\frac{s}{V_s^{1/n}}\operatorname{Cap}(A_t,D_s)^{1/n}.
\end{equation}
The sets $A_t$ are $T^n$-invariant and relatively compact, since $v_E$
is continuous with zero boundary values. The parameter $t$
measures sublevel depth, and $g$ is nonincreasing.
For $r,t>0$ and any plurisubharmonic $a$ with $-1\leq a\leq0$,
\begin{equation}\label{eq:calc-s4-030}
 A_{t+r}\subset\{v_E<sr\,a-st\}\subset A_t.
\end{equation}
Since $v_E$ tends to zero at the boundary and
$sr\,a-st\leq-st<0$ throughout $D_s$, the boundary condition
of the Bedford--Taylor comparison principle
\cite[Theorem~4.1]{BedfordTaylor1982} is satisfied.
Applying it to $v_E$ and $sr\,a-st$ gives
\begin{equation}\label{eq:calc-s4-031}
 (sr)^n\int_{A_{t+r}}(dd^ca)^n
 \leq\int_{\{v_E<sr\,a-st\}}(dd^cv_E)^n
 \leq\int_{A_t}(dd^cv_E)^n.
\end{equation}
Taking the supremum over $a$ and dividing by $s^n$, we obtain
\begin{equation}\label{eq:capacity-comparison-step}
 r^n\operatorname{Cap}(A_{t+r},D_s)
 \leq s^{-n}\int_{A_t}(dd^cv_E)^n
 =s^{-n}\mu(E\cap A_t).
\end{equation}
In terms of $g$, this reads
\begin{equation}\label{eq:calc-s4-032}
 r^ng(t+r)^n\leq\frac{\mu(E\cap A_t)}{V_s}.
\end{equation}
The right-hand side is at most $m_E$. It is also bounded by
$\mu(A_t)/V_s$, to which \eqref{eq:toric-volume-capacity}
applies. Taking $n$th roots, we obtain
\begin{equation}\label{eq:relative-capacity-iteration}
 r\,g(t+r)\leq
       \min\{m_E^{1/n},\,C_1e^{-c_1/g(t)}\},
\end{equation}
with fixed constants $C_1,c_1>0$ depending only on $n,\kappa$.
The exponential term is interpreted as zero if $g(t)=0$.
Lemma~\ref{lem:real-variable-iteration} below, applied to
\eqref{eq:relative-capacity-iteration} with $m=m_E$, gives
\begin{equation}\label{eq:calc-s4-033}
 g(t)=0\qquad\text{for }\quad
 t\geq C m_E^{1/n}\Bigl(1+\log\frac1{m_E}\Bigr),
\end{equation}
where $C$ depends only on $n,\kappa$.
The corresponding sets $A_t$ have zero capacity. Since they are
open, they are empty, and hence $v_E\geq-st$ at these depths.
Together with $v_E\leq0$, this yields
\begin{equation}\label{eq:auxiliary-relative-sup}
 \|v_E\|_\infty
 \leq Cs\,m_E^{1/n}\Bigl(1+\log\frac1{m_E}\Bigr).
\end{equation}
The Green comparison at the beginning of the proof now gives
\eqref{eq:relative-green}.
\end{proof}

\begin{lemma}\label{lem:real-variable-iteration}
Fix an integer $n\geq1$, a number $0<m\leq1$, and constants
$C_1,c_1>0$. Suppose that $g:(0,\infty)\to[0,\infty)$ satisfies
\begin{equation}\label{eq:calc-s4-034}
 r\,g(t+r)\leq
 \min\{m^{1/n},\,C_1e^{-c_1/g(t)}\},
 \qquad t,r>0,
\end{equation}
where the exponential term is defined to be zero when $g(t)=0$.
Then there is a constant $C=C(n,C_1,c_1)>0$ such that
\begin{equation}\label{eq:calc-s4-035}
 g(t)=0\qquad\text{for }\quad
 t\geq C m^{1/n}\Bigl(1+\log\frac1m\Bigr).
\end{equation}
\end{lemma}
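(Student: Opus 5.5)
The plan is a two-phase iteration in $t$. In the first phase, starting from an arbitrary $t_0>0$, we drive $g$ down to the scale $\varepsilon\sim 1/\log(1/m)$. In the second phase, the exponential branch of the minimum kills $g$ after one further step of length comparable to $m^{1/n}$.

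We begin with a crude bound. For any $t>0$, take $r=t/2$ and apply the inequality at $t/2$:
\begin{equation*}
 g(t)\leq \frac{2}{t}\min\{m^{1/n},C_1\}\leq \frac{2m^{1/n}}{t}.
\end{equation*}
Thus at time $t_1:=K m^{1/n}(1+\log\frac1m)$, with $K$ to be chosen, we get
\begin{equation*}
 g(t_1)\leq \frac{2}{K(1+\log(1/m))}.
\end{equation*}
This already puts $g$ at the logarithmic scale.

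Next we apply the inequality at $t=t_1$ with step $r$, now using the exponential branch:
\begin{equation*}
 g(t_1+r)\leq \frac{C_1}{r}\exp\Bigl(-\frac{c_1K(1+\log(1/m))}{2}\Bigr)\leq \frac{C_1}{r}\,e^{-c_1K/2}\,m^{c_1K/2}.
\end{equation*}
Choose $K$ so large that $c_1K/2\geq 2/n+1$, say. Then with $r=m^{1/n}$ we get
\begin{equation*}
 g(t_2)\leq C_1e^{-c_1K/2}\,m^{1/n+1},\qquad t_2:=t_1+m^{1/n}.
\end{equation*}
So $g(t_2)$ is polynomially tiny in $m$. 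If $g(t_2)=0$, monotonicity finishes the argument. Monotonicity of $g$ is not given by the lemma itself, but it is not needed: any $t\geq t_3$ can be written as $t_2+r$ with $r\geq m^{1/n}$, so we apply the hypothesis directly at base point $t_2$.

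The key step is to iterate once more. At $t=t_2$, the exponential term is $C_1\exp(-c_1/g(t_2))$, which is super-polynomially small, at most $C_1\exp\bigl(-c\,m^{-1/n-1}\bigr)$ for a constant $c$ depending on $K,C_1,c_1$. This gives
\begin{equation*}
 g(t_2+r)\leq \frac{C_1}{r}\exp\bigl(-c\,m^{-1-1/n}\bigr).
\end{equation*}
However, this is only small and never exactly zero. This is the main obstacle: the hypothesis alone produces decay but not vanishing. An infinite iteration only yields $g\to0$ with a double-exponential rate, unless the sum of the required steps $r_k$ converges.

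So we iterate with steps $r_k=m^{1/n}2^{-k}$, whose total is $2m^{1/n}$. Set $g_k:=g(t_2+\sum_{j<k}r_j)$. Then $g_{k+1}\leq C_12^km^{-1/n}e^{-c_1/g_k}$. Since $g_k$ is already tiny, $-\log g_{k+1}\geq c_1/g_k-k\log2-\log(C_1m^{-1/n})$, and induction shows $g_k\to0$ super-exponentially. This shows that $g$ tends to zero along a sequence accumulating at the finite time $t_\infty:=t_2+2m^{1/n}$.

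To conclude vanishing beyond $t_\infty$, take any $t>t_\infty$ and write $t=s_k+r$ with $s_k:=t_2+\sum_{j<k}r_j$ and $r\geq t-t_\infty>0$. Then
\begin{equation*}
 g(t)\leq \frac{C_1}{t-t_\infty}\,e^{-c_1/g_k}\longrightarrow0\quad(k\to\infty),
\end{equation*}
so $g(t)=0$.

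Hence $g(t)=0$ for all $t>t_\infty$, where
\begin{equation*}
 t_\infty\leq K m^{1/n}\Bigl(1+\log\frac1m\Bigr)+3m^{1/n}\leq C m^{1/n}\Bigl(1+\log\frac1m\Bigr).
\end{equation*}
Enlarging $C$ slightly turns the strict inequality into the stated $t\geq C m^{1/n}(1+\log\frac1m)$.
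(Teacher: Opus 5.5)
Your proof is correct and uses the same mechanism as the paper's. The crude bound $g(t)\le 2m^{1/n}/t$ brings $g$ to the scale $1/(1+\log\frac1m)$, the exponential branch is applied along geometrically shrinking increments of finite total length, and a final application at the accumulation time gives $g(t)\le C_1e^{-c_1/g_k}/(t-t_\infty)\to0$. You insert an extra step of length $m^{1/n}$ to make $g$ polynomially small in $m$ and then iterate with steps $m^{1/n}2^{-k}$ and super-exponential decay, where the paper uses steps $t_02^{-j-1}$ and needs only $g(t_j)\le 2^{-j}/(M_0L)$.

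When you write out the induction you only sketch, choose $K$ large in terms of $n,C_1,c_1$ as well. The condition $c_1K/2\ge 2/n+1$ alone does not make $g(t_2)$ small when $m=1$ and $C_1$ is large. With $K$ large enough, $c_1/g_k$ dominates $\log(1/g_k)+\log C_1+\frac1n\log\frac1m+(k+1)\log 2$ uniformly in $m\in(0,1]$.
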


\begin{proof}
For fixed $t>0$, use the hypothesis with base point
$\varepsilon\in(0,t)$ and increment $t-\varepsilon$ to obtain
\begin{equation}\label{eq:calc-s4-036}
 (t-\varepsilon)g(t)\leq m^{1/n}.
\end{equation}
Letting $\varepsilon\downarrow0$ gives the starting estimate
\begin{equation}\label{eq:calc-s4-037}
 g(t)\leq\frac{m^{1/n}}t,\qquad t>0.
\end{equation}
Put $L:=1+\log(1/m)\geq1$, and choose $M_0\geq1$, depending only
on $n,C_1,c_1$, such that
\begin{equation}\label{eq:calc-s4-038}
 c_1M_0\geq n^{-1}+\log(4\max\{C_1,1\}).
\end{equation}
At
\begin{equation}\label{eq:calc-s4-039}
 t_0:=M_0m^{1/n}L,
\end{equation}
the starting estimate gives $g(t_0)\leq1/(M_0L)$.
This choice makes the exponential term at most
$C_1\exp(-c_1M_0L)$.
We use this term to halve the upper bound for $g$ successively,
while keeping the total increment below $t_0$.

Set
\begin{equation}\label{eq:calc-s4-040}
 t_j:=t_0(2-2^{-j}),\qquad
 t_{j+1}-t_j=t_0\,2^{-j-1},\qquad j\geq0.
\end{equation}
We prove by induction that
\begin{equation}\label{eq:calc-s4-041}
 g(t_j)\leq\frac{2^{-j}}{M_0L}.
\end{equation}
The case $j=0$ was just established. Suppose the bound holds
at $j$. If $g(t_j)=0$, the hypothesis directly gives
$g(t_j+r)=0$ for every $r>0$, so the next bound holds.
Otherwise, $1/g(t_j)\geq M_0L2^j$. Applying the hypothesis
with base point $t_j$ and increment $t_{j+1}-t_j$, we obtain
\begin{equation}\label{eq:calc-s4-042}
 (t_{j+1}-t_j)g(t_{j+1})
 \leq C_1e^{-c_1M_0L2^j}.
\end{equation}
By the choice of $M_0$, together with $L\geq1$ and
$2^j\geq j+1$,
\begin{equation}\label{eq:calc-s4-043}
 \begin{aligned}
 c_1M_0L2^j
 &\geq\frac{L2^j}{n}+L2^j\log4
                    +L2^j\log\max\{C_1,1\}\\
 &\geq\frac{L-1}{n}+(j+1)\log4+\log C_1.
 \end{aligned}
\end{equation}
Since $L-1=\log(1/m)$, it follows that
\begin{equation}\label{eq:calc-s4-044}
 C_1e^{-c_1M_0L2^j}\leq4^{-j-1}m^{1/n}.
\end{equation}
Dividing by the increment therefore yields
\begin{equation}\label{eq:calc-s4-045}
 g(t_{j+1})
 \leq\frac{4^{-j-1}m^{1/n}}
 {M_0m^{1/n}L\,2^{-j-1}}
 =\frac{2^{-j-1}}{M_0L},
\end{equation}
which completes the induction.

The points $t_j$ increase to $2t_0$. We apply the hypothesis
once more, now with increment $2t_0-t_j=t_0\,2^{-j}$.
The preceding exponential bound gives
\begin{equation}\label{eq:calc-s4-046}
 (2t_0-t_j)g(2t_0)
 \leq C_1e^{-c_1/g(t_j)}
 \leq m^{1/n}4^{-j-1},
\end{equation}
where the same inequality holds if $g(t_j)=0$ under the stated
convention. Consequently,
\begin{equation}\label{eq:calc-s4-047}
 0\leq g(2t_0)
 \leq\frac{m^{1/n}4^{-j-1}}{t_0\,2^{-j}}
 =\frac{2^{-j-2}}{M_0L}\longrightarrow0.
\end{equation}
Thus $g(2t_0)=0$. Taking base point $2t_0$ in the hypothesis
then gives $g(t)=0$ for every $t>2t_0$ as well.
This proves the assertion with $C:=2M_0$, without any continuity
or monotonicity assumption on $g$.
\end{proof} 

We now establish the second key estimate for controlling
the oscillation of harmonic functions.

\begin{proposition}
\label{prop:weighted-crossover}
Under the hypotheses of Proposition~\ref{prop:relative-green}, put
$h:=s-\psi$ and define the weighted probability measure
\begin{equation}
 d\nu_s:=\frac{h\,d\mu}{\int_{D_s}h\,d\mu}.
 \label{eq:torsion-probability}
\end{equation}
There are $p_0\in(0,1)$ and $C_*>0$, depending only on $n$ and
$\kappa$, such that every positive $T^n$-invariant $\omega$-harmonic
function $v$, smooth on a neighborhood of $\overline{D_s}$, satisfies
\begin{equation}
 \left(\int_{D_s}v^{p_0}\,d\nu_s\right)
 \left(\int_{D_s}v^{-p_0}\,d\nu_s\right)\leq C_*.
 \label{eq:weighted-crossover}
\end{equation}
\end{proposition}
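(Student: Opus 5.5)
The plan is to run Moser's crossover argument for the positive harmonic function $v$, with $w:=\log v$. The toric structure, through the logarithmic coordinates, replaces the usual ball geometry. Since $\Delta_\omega v=0$, we have $\Delta_\omega w=-|\nabla_\omega w|_\omega^2$, and $w$ is $T^n$-invariant. It then suffices to prove an exponential (John--Nirenberg type) bound
\begin{equation*}
 \int_{D_s}e^{p_0|w-c|}\,d\nu_s\leq C_*^{1/2},\qquad c:=\int_{D_s}w\,d\nu_s .
\end{equation*}
Indeed, writing $v^{\pm p_0}=e^{\pm p_0 c}e^{\pm p_0(w-c)}$, the two factors in \eqref{eq:weighted-crossover} multiply to at most $C_*$.

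\textbf{Step 1: weighted energy estimate.} The natural weight is $h^2$, where $h=s-\psi$ vanishes on $\partial D_s$ and satisfies $\Delta_\omega h=-n$. Multiplying $\Delta_\omega w=-|\nabla_\omega w|^2_\omega$ by $h^2$ and integrating by parts (no boundary terms, since $h=0$ there), then applying Cauchy--Schwarz, gives
\begin{equation*}
 \int_{D_s}h^2|\nabla_\omega w|_\omega^2\,d\mu\leq4\int_{D_s}|\nabla_\omega\psi|_\omega^2\,d\mu
 =4\int_{D_s}h\,\Delta_\omega\psi\,d\mu=4n\int_{D_s}h\,d\mu .
\end{equation*}
So $w$ has normalized weighted Dirichlet energy at most $4n$, with respect to $h\,d\nu_s$, with no dependence on $s$ or $v$. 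The same computation applies with $h$ replaced by $h_{s'}:=s'-\psi$ on every sublevel set $D_{s'}\subset D_s$, $0<s'\leq s$. These sets are again domains of the same type and satisfy the density bound \eqref{eq:local-density-ratio} with the same $\kappa$.

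\textbf{Step 2: Poincar\'e inequality for log-concave toric measures.} Pass to logarithmic coordinates. On the convex set $\log D_s$, the measure $d\nu_s$ becomes, up to angular integration, a constant multiple of $h(\xi)\,(\omega^n/dV_{\mathrm{Eucl}})\,e^{\xi_1+\cdots+\xi_n}\,d\xi$. Here $h$ is concave and positive in $\xi$, so $\log h$ is concave, and $e^{\sum\xi_i}$ is log-affine. The density factor is pinched between $b_-$ and $b_+$, with $b_+/b_-\leq\kappa$. Hence $\nu_s$ is comparable, with constant $\kappa$, to a log-concave probability measure on a convex set. For log-concave measures I would use a Payne--Weinberger/Bobkov type $L^1$-Poincar\'e inequality, or the Pr\'ekopa marginal argument already used in Lemma~\ref{lem:toric-exponential-integrability}. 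This controls $\int|w-c|\,d\nu_s$ by a gradient term. The metric mismatch comes in here: $|\nabla_\omega w|^2_\omega=\langle H^{-1}D_\xi w,D_\xi w\rangle$ with $H=D^2_\xi\psi$, whereas the Poincar\'e inequality naturally uses a directional derivative. I would handle this by integrating along the radial rays generated by $X_a$, as in \eqref{eq:calc-s4-007}. Along such a ray, $\psi$ has derivative $\langle x-p,a\rangle$, so $h$ gives an intrinsic length scale. Cauchy--Schwarz with the weight $h^2$ from Step~1 then produces a bound depending only on $n$ and $\kappa$.

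\textbf{Step 3: from BMO to exponential integrability.} Steps~1 and~2, applied on every sublevel set $D_{s'}$, give a BMO-type bound for $w$ relative to the nested family $\{D_{s'}\}$. A John--Nirenberg iteration then yields exponential integrability for some small $p_0(n,\kappa)$. One could alternatively run a Bombieri-type lemma using the level-set convexity of $-\log$ of the distribution function that Pr\'ekopa's theorem provides. Within this step, the nested-family approach is the delicate part: it needs a doubling or covering structure for the measures $\nu_{s'}$, and I would try to extract that from the log-concavity.

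The main obstacle is the conversion in Step~2: turning the $\omega$-weighted energy bound into a genuine Poincaré (or John--Nirenberg) inequality for $\nu_s$ with constants independent of the degenerate Hessian of $\psi$. Whether the weight $h^2$ from Step~1 is strong enough near $\partial D_s$, where $h\to0$, is exactly what has to be checked, and I am not sure the radial-ray argument handles it without further input.
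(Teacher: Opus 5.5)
Your reduction to bounding $\int_{D_s}e^{\pm p_0(w-c)}\,d\nu_s$ with $c=\int_{D_s}\log v\,d\nu_s$ is essentially the intermediate statement the paper proves. Your Step~1 is also correct: it is the $p\to0$ limit of the energy estimate the paper uses. The gap is in Steps~2 and~3, which carry the whole content of the proposition, and the tools you name would not close them.

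In Step~2 you use only qualitative log-concavity of $h\,e^{\xi_1+\cdots+\xi_n}\,d\xi$. Payne--Weinberger or Bobkov type inequalities then control a Euclidean $\xi$-gradient, with constants depending on the diameter or spread of $\log D_s$. That set may be unbounded, as for the toric domains centered at boundary points of the polytope. Converting $|D_\xi w|^2$ to $|\nabla_\omega w|_\omega^2$ also costs $\lambda_{\max}(D^2_\xi\psi)$, which is not controlled by $n,\kappa$. The ray argument fails as stated: Cauchy--Schwarz against $h^2|\nabla_\omega w|_\omega^2$ along $\tau\mapsto\xi+\tau a$ leaves the factor $\int a^{\mathsf T}D^2_\xi\psi\,a\,h^{-2}\,d\tau$. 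This diverges like $\int(\tau_1-\tau)^{-2}\,d\tau$ where the ray exits $\log D_s$.

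The missing idea is that the log-concavity is quantitative and comes from the same Hessian that defines the metric. For $V:=-\log h-\sum_i\xi_i$ one has $D^2_\xi V=D^2_\xi\psi/h+D_\xi\psi\otimes D_\xi\psi/h^2\geq D^2_\xi\psi/h$. The Brascamp--Lieb inequality on convex domains (Kolesnikov--Milman, no boundary condition, after a bounded convex exhaustion) then gives $\int_{D_s}(a-\bar a)^2h\,dV_{\mathrm{Eucl}}\leq\int_{D_s}h^2|\nabla_\omega a|_\omega^2\,dV_{\mathrm{Eucl}}$ with constant one. Here $\bar a$ is the $h\,dV_{\mathrm{Eucl}}$-mean of $a$. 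Hence $\operatorname{Var}_{\nu_s}(a)\leq\kappa\int_{D_s}h|\nabla_\omega a|_\omega^2\,d\nu_s$. This is exactly the weight $h^2$ from your Step~1, so the boundary degeneration you worried about does not arise.

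Even with this inequality, applying it to $w=\log v$ only gives $\operatorname{Var}_{\nu_s}(w)\leq4n\kappa$ on the single set $D_s$. An $L^2$ bound on one set does not give exponential integrability, and a Poincar\'e inequality with only integral gradient control does not give exponential concentration. A John--Nirenberg iteration needs mean-oscillation control over a family with a covering (Calder\'on--Zygmund or engulfing) structure. The nested sets $D_{s'}$ all share one center and provide no such structure. Sections with other centers bring back the Caffarelli--Guti\'errez engulfing problem at the toric boundary.

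The paper avoids this by using the equation again at the level of small powers. Test $\Delta_\omega v=0$ against $h^2v^{p-1}$, $0<|p|<1$, to get $\int_{D_s}h|\nabla_\omega v^{p/2}|_\omega^2\,d\nu_s\leq\frac{np^2}{1-p^2}\int_{D_s}v^p\,d\nu_s$. Then apply the Poincar\'e inequality to $a=v^{p/2}$ rather than to $\log v$. This gives the reverse H\"older inequality $\bigl(1-\frac{\kappa np^2}{1-p^2}\bigr)\int_{D_s}v^p\,d\nu_s\leq\bigl(\int_{D_s}v^{p/2}\,d\nu_s\bigr)^2$ on the same set. Iterate $p\mapsto p/2$, with summable errors $Cp^2\,2^{-j}$, and let the number of steps tend to infinity. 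This yields $\log\int_{D_s}v^p\,d\nu_s\leq p\int_{D_s}\log v\,d\nu_s+2Cp^2$ for $0<|p|\leq p_0$, which is your target bound. No covering argument is needed: the vanishing weight $h$ plays the role of the cutoff, so the domain never has to shrink.
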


\begin{proof}
We first establish a weighted Poincar\'e inequality. Equip the
convex logarithmic domain $\log D_s\subset\R^n$ with the auxiliary
flat metric
\begin{equation}\label{eq:calc-s4-048}
 g_{\mathrm{flat}}:=\sum_{i=1}^n d\xi_i^2.
\end{equation}
The reference measure obtained from $h\,dV_{\mathrm{Eucl}}$
by angular integration is
$\pi^n h e^{\xi_1+\cdots+\xi_n}\,d\xi$. Since
$g_{\mathrm{flat}}$ is flat and the sum of the $\xi_i$ is linear,
its weighted Ricci tensor is
\begin{equation}\label{eq:calc-s4-049}
 \begin{aligned}
 &\operatorname{Ric}(g_{\mathrm{flat}})
 +\operatorname{Hess}_{g_{\mathrm{flat}}}
       (-\log h-\xi_1-\cdots-\xi_n)\\
 &\qquad=D_\xi^2(-\log h)
 =\frac{D_\xi^2\psi}{h}
       +\frac{D_\xi\psi\otimes D_\xi\psi}{h^2}
 \geq\frac{D_\xi^2\psi}{h}>0.
 \end{aligned}
\end{equation}
Here $D_\xi^2\psi>0$ follows from the $T^n$-invariance of $\psi$
and $dd^c\psi>0$. Thus the weighted Ricci tensor is positive
definite for this auxiliary Euclidean metric.
For a smooth $T^n$-invariant function $a$ on a neighborhood of
$\overline{D_s}$, let
\begin{equation}\label{eq:reference-weighted-mean}
 c:=\frac{\int_{D_s}ah\,dV_{\mathrm{Eucl}}}
         {\int_{D_s}h\,dV_{\mathrm{Eucl}}}.
\end{equation}
Using the convexity of the logarithmic domain, we apply
\cite[Theorem~1.2(1), $N=\infty$]{KolesnikovMilman2017}
on smooth convex subdomains compactly contained in $\log D_s$
and pass to the limit as justified below. The cited inequality
imposes no boundary condition on $a$ and gives
\begin{align}
 \int_{D_s}(a-c)^2h\,dV_{\mathrm{Eucl}}
 &=\pi^n\int_{\log D_s}(a-c)^2h
                  e^{\xi_1+\cdots+\xi_n}\,d\xi\notag\\
 &\leq\pi^n\int_{\log D_s}
       (D_\xi a)^{\mathsf T}[D_\xi^2(-\log h)]^{-1}D_\xi a\,
       h e^{\xi_1+\cdots+\xi_n}\,d\xi\notag\\
 &\leq\pi^n\int_{\log D_s}
       h^2(D_\xi a)^{\mathsf T}(D_\xi^2\psi)^{-1}D_\xi a\,
       e^{\xi_1+\cdots+\xi_n}\,d\xi\notag\\
 &=\pi^n\int_{\log D_s}
       h^2|\nabla_\omega a|_\omega^2
       e^{\xi_1+\cdots+\xi_n}\,d\xi\notag\\
 &=\int_{D_s}h^2|\nabla_\omega a|_\omega^2
                         \,dV_{\mathrm{Eucl}}.
 \label{eq:calc-s4-052}
\end{align}
The second inequality follows by taking inverses in
\eqref{eq:calc-s4-049}.

For the limiting argument, exhaust $\log D_s$, which may be
unbounded, by increasing bounded smooth convex domains whose
closures lie in its interior. On each such domain the reference
density is smooth and positive up to the boundary, and the
inverse-tensor comparison above has constant one, so no uniform
positive lower bound for the weighted Ricci tensor is needed.
Since $D_s$ is bounded, $0<h\leq s$, and $a$ is bounded, the
reference measure is finite and the weighted means and second
moments converge; the nonnegative energy integrals converge by
monotone convergence. This justifies the limit as in
\cite[Remark~1.3]{KolesnikovMilman2017}.

We now use the density bounds \eqref{eq:local-density-ratio}
to pass to $d\mu$. The mean minimizes the integral of the
squared difference from a constant. Choosing the reference mean
$c$ above, and then comparing the densities in the two integrals,
gives
\begin{align}
&\int_{D_s}a^2\,d\nu_s
       -\left(\int_{D_s}a\,d\nu_s\right)^2 \notag\\
 &\quad\leq
 \frac{b_+\int_{D_s}(a-c)^2h\,dV_{\mathrm{Eucl}}}
      {\int_{D_s}h\,d\mu} \notag\\
 &\quad\leq
 \frac{b_+\int_{D_s}h^2|\nabla_\omega a|_\omega^2
                       \,dV_{\mathrm{Eucl}}}
      {\int_{D_s}h\,d\mu} \notag\\
 &\quad\leq\frac{b_+}{b_-}
       \int_{D_s}h|\nabla_\omega a|_\omega^2\,d\nu_s
 \leq\kappa\int_{D_s}h|\nabla_\omega a|_\omega^2\,d\nu_s.
 \label{eq:weighted-BL}
\end{align}

Now suppose that $v>0$ on
$\overline{D_s}$, fix $0<|p|<1$, and write $a:=v^{p/2}$.
Testing $\Delta_\omega v=0$ against $h^2v^{p-1}$, and using
$\nabla_\omega a=(p/2)v^{p/2-1}\nabla_\omega v$, gives
\begin{equation}\label{eq:calc-s4-053}
 0=\frac4p\int_{D_s}ha
           \langle\nabla_\omega h,\nabla_\omega a\rangle_\omega\,d\mu
   +\frac{4(p-1)}{p^2}\int_{D_s}h^2
                              |\nabla_\omega a|_\omega^2\,d\mu.
\end{equation}
Consequently,
\begin{align}
 \frac1{|p|}\int_{D_s}h^2|\nabla_\omega a|_\omega^2\,d\mu
 &=\left|\int_{D_s}h^2|\nabla_\omega a|_\omega^2\,d\mu
   +\int_{D_s}ha\langle\nabla_\omega a,
                         \nabla_\omega h\rangle_\omega\,d\mu\right|\notag\\
 &=\left|\int_{D_s}h\langle\nabla_\omega a,
                         \nabla_\omega(ha)\rangle_\omega\,d\mu\right|\notag\\
 &\leq\left(\int_{D_s}h^2|\nabla_\omega a|_\omega^2\,d\mu\right)^{1/2}
       \left(\int_{D_s}|\nabla_\omega(ha)|_\omega^2\,d\mu\right)^{1/2}.
 \label{eq:calc-s4-054}
\end{align}
Thus, 
\begin{equation}\label{eq:calc-s4-056}
 \frac1{p^2}\int_{D_s}h^2|\nabla_\omega a|_\omega^2\,d\mu
 \leq\int_{D_s}|\nabla_\omega(ha)|_\omega^2\,d\mu.
\end{equation}
This argument applies to negative as well as positive $p$.
Expanding the last integrand and using $\Delta_\omega h=-n$ gives
\begin{equation}\label{eq:calc-s4-057}
 \begin{aligned}
 \int_{D_s}|\nabla_\omega(ha)|_\omega^2\,d\mu
 &=\int_{D_s}h^2|\nabla_\omega a|_\omega^2\,d\mu
   +\int_{D_s}\langle\nabla_\omega h,
                         \nabla_\omega(ha^2)\rangle_\omega\,d\mu\\
 &=\int_{D_s}h^2|\nabla_\omega a|_\omega^2\,d\mu
   +n\int_{D_s}ha^2\,d\mu.
 \end{aligned}
\end{equation}
The boundary terms in both integrations by parts vanish because
$h^2v^{p-1}=ha^2=0$ on $\partial D_s$. These integrations are
performed on the smooth domain $D_s\subset\C^n$, so coordinate
hyperplanes introduce no boundary terms.
Since $1/p^2-1>0$, rearranging and dividing by
$\int_{D_s}h\,d\mu$ proves
\begin{equation}
 \int_{D_s}h|\nabla_\omega(v^{p/2})|_\omega^2\,d\nu_s
 \leq\frac{np^2}{1-p^2}\int_{D_s}v^p\,d\nu_s.
 \label{eq:signed-small-power-energy}
\end{equation}
If the gradient integral vanishes, the same estimate is immediate.

Combining \eqref{eq:weighted-BL} with
\eqref{eq:signed-small-power-energy}, we obtain
\begin{equation}\label{eq:calc-s4-058}
 \left(1-\frac{\kappa np^2}{1-p^2}\right)
       \int_{D_s}v^p\,d\nu_s
 \leq\left(\int_{D_s}v^{p/2}\,d\nu_s\right)^2.
\end{equation}
Take $p_0:=(2\sqrt{\kappa n+1})^{-1}$. For $0<|p|\leq p_0$,
\begin{equation}\label{eq:calc-s4-059}
 0\leq\frac{\kappa np^2}{1-p^2}\leq\frac12,
 \qquad
 -\log\left(1-\frac{\kappa np^2}{1-p^2}\right)
 \leq\frac{2\kappa np^2}{1-p^2}\leq Cp^2,
\end{equation}
where $C$ depends only on $n,\kappa$.
Then we have
\begin{equation}\label{eq:calc-s4-060}
 \log\int_{D_s}v^p\,d\nu_s
 \leq2\log\int_{D_s}v^{p/2}\,d\nu_s+Cp^2.
\end{equation}
Iterating \eqref{eq:calc-s4-060}, the substitution at exponent
$p/2^j$, $j\geq0$, contributes the error
\begin{equation}\label{eq:calc-s4-062}
 2^j C(p/2^j)^2=Cp^2\,2^{-j}.
\end{equation}
Thus, for every integer $k\geq1$,
\begin{equation}\label{eq:calc-s4-063}
 \begin{aligned}
 \log\int_{D_s}v^p\,d\nu_s
 &\leq2^k\log\int_{D_s}v^{p/2^k}\,d\nu_s
       +Cp^2\sum_{j=0}^{k-1}2^{-j}\\
 &\leq2^k\log\int_{D_s}v^{p/2^k}\,d\nu_s+2Cp^2.
 \end{aligned}
\end{equation}

For this fixed $v>0$ on $\overline{D_s}$, the function $\log v$
is bounded, so differentiation under the integral sign is valid
near exponent zero. Since $\nu_s(D_s)=1$, we have
$\log\int_{D_s}v^0\,d\nu_s=0$. The definition of the derivative
therefore gives
\begin{equation}\label{eq:calc-s4-064}
 \begin{aligned}
 \lim_{k\to\infty}2^k\log\int_{D_s}v^{p/2^k}\,d\nu_s
 &=p\left.\frac{d}{d\tau}
            \log\int_{D_s}v^\tau\,d\nu_s\right|_{\tau=0}\\
 &=p\int_{D_s}\log v\,d\nu_s.
 \end{aligned}
\end{equation}
This limit holds for either sign of $p$. It is taken for each
fixed $v$, while the constant $C$ in the iterated estimate is
independent of $v$.
Letting $k\to\infty$ in the iterated estimate, we conclude that
\begin{equation}\label{eq:calc-s4-065}
 \log\int_{D_s}v^p\,d\nu_s
 \leq p\int_{D_s}\log v\,d\nu_s+2Cp^2,
 \qquad 0<|p|\leq p_0.
\end{equation}
Apply this inequality at $p=p_0$ and $p=-p_0$ and add. The
terms containing the mean of $\log v$ cancel, leaving
\begin{equation}\label{eq:calc-s4-066}
 \log\int_{D_s}v^{p_0}\,d\nu_s
 +\log\int_{D_s}v^{-p_0}\,d\nu_s
 \leq4Cp_0^2.
\end{equation}
Exponentiation proves \eqref{eq:weighted-crossover} with
$C_*:=\exp(4Cp_0^2)$, which depends only on $n,\kappa$ and is
independent of the positive lower bound for $v$.

If $v$ is positive in $D_s$ but may vanish on its boundary,
apply the estimate just proved to $v+\varepsilon$.
For each fixed $\varepsilon>0$, the entire iteration above is
valid with the same $p_0,C_*$. We may now let
$\varepsilon\downarrow0$.
The positive-power integrals converge by dominated convergence,
since for $0<\varepsilon\leq1$ their integrands are bounded by
$(\|v\|_\infty+1)^{p_0}$. Their limit
$\int_{D_s}v^{p_0}\,d\nu_s$ is strictly positive.
The product bound thus bounds the negative-power integrals,
whose integrands increase to $v^{-p_0}$.
Monotone convergence proves \eqref{eq:weighted-crossover} also
in this case.
\end{proof}

\begin{proposition}
\label{prop:potential-oscillation}
Fix $s_0>0$. Let $\psi\geq0$ be $T^n$-invariant, and set
$D_s:=\{\psi<s\}$ for $0<s\leq s_0$.
Suppose that $\psi$ is smooth on a neighborhood of
$\overline{D_{s_0}}$, with $\omega:=dd^c\psi>0$, and that each
$D_s$ satisfies the hypotheses of
Proposition~\ref{prop:relative-green} with the same density ratio
$\kappa$.
There are $\delta\in(0,1)$ and $\alpha>0$, depending only on
$n,\kappa$, such that every real $T^n$-invariant $\omega$-harmonic function $q$
on that neighborhood satisfies, for $0<s\leq s_0$,
\begin{align}
 \operatorname{osc}_{D_{s/4}}q
 &\leq(1-\delta)\operatorname{osc}_{D_s}q,
 \label{eq:potential-contraction}\\
 \operatorname{osc}_{D_s}q
 &\leq C(s/s_0)^\alpha
          \operatorname{osc}_{D_{s_0}}q.
 \label{eq:potential-holder}
\end{align}
\end{proposition}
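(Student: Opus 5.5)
The plan is a Krylov--Safonov type growth argument. Proposition~\ref{prop:weighted-crossover} plays the role of the weak Harnack inequality, and Proposition~\ref{prop:relative-green} converts small measure into a pointwise lower bound. Fix $0<s\leq s_0$, put $M:=\sup_{D_s}q$, $m:=\inf_{D_s}q$, and assume $M>m$. Consider the two nonnegative $T^n$-invariant $\omega$-harmonic functions $v_1:=M-q$ and $v_2:=q-m$ on $D_s$. They sum to $M-m$, and to apply Proposition~\ref{prop:weighted-crossover} we replace them by $v_i+\varepsilon$ and let $\varepsilon\downarrow0$ at the end. Since $v_1+v_2=M-m$, one of them, say $v_1$, satisfies $v_1\geq(M-m)/2$ on a set of $\nu_s$-measure at least $1/2$. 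Hence
\[
\int_{D_s}v_1^{p_0}\,d\nu_s\geq\tfrac12\bigl((M-m)/2\bigr)^{p_0},
\]
and the crossover bound gives $\int_{D_s}v_1^{-p_0}\,d\nu_s\leq 2C_*((M-m)/2)^{-p_0}$. Chebyshev's inequality then yields
\[
\nu_s\bigl(\{v_1<\eta(M-m)\}\bigr)\leq 2C_*(2\eta)^{p_0},
\]
which is as small as we like once $\eta=\eta(n,\kappa)$ is chosen small.

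Next we pass from $\nu_s$ to relative $\mu$-volume on the half domain $D_{s/2}$. There $h=s-\psi\geq s/2$, and $\int_{D_s}h\,d\mu\leq sV_s$. So for $E:=\{v_1<\eta(M-m)\}\cap D_{s/2}$ we get
\[
\frac{\mu(E)}{V_s}\leq 2\nu_s(E),
\]
and therefore $m_E$, measured relative to $D_{s/2}$, is at most $2\nu_s(E)\,V_s/V_{s/2}$. This is the step I expect to be the main obstacle: we need a doubling bound $V_{s/2}\geq c(n,\kappa)V_s$ for the sublevel domains. My first attempt uses convexity of $\psi$ in $\xi$: if $\xi_0$ is a minimum point, then $\{\psi<s/2\}\supset\tfrac12(\xi_0+\{\psi<s\})$ in logarithmic coordinates. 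Combined with the density comparability \eqref{eq:local-density-ratio} and a Pr\'ekopa/Brunn--Minkowski argument for the weight $e^{\xi_1+\cdots+\xi_n}$, as in Lemma~\ref{lem:toric-exponential-integrability}, this should give the doubling bound. If the weight spoils uniformity, I would instead redo the Chebyshev step directly with Lemma~\ref{lem:toric-exponential-integrability}, or work with $D_{s/2}$ and $\nu_{s/2}$ from the start.

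Given that $m_E$ is small, the lower bound uses a barrier built from the Green potential. On $D_{s/2}$ set
\[
b:=\eta(M-m)\,\frac2s\Bigl(\bigl(\tfrac s2-\psi\bigr)-n\,G_{D_{s/2}}\mathbf 1_E\Bigr).
\]
Since $\Delta_\omega(s/2-\psi)=-n$, we have $\Delta_\omega b=0$ on $E$ and $b\leq\eta(M-m)$; also $b\leq0$ on $\partial D_{s/2}$. The function $\tilde v:=\min\{v_1,\eta(M-m)\}$ is superharmonic and satisfies $\tilde v\geq b$ off $E$ and on $\partial D_{s/2}$. The minimum principle applied to $\tilde v-b$ on $E$ therefore gives $v_1\geq\tilde v\geq b$ throughout $D_{s/2}$. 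On $D_{s/4}$ we have $s/2-\psi\geq s/4$, and \eqref{eq:relative-green} gives
\[
n\|G_{D_{s/2}}\mathbf 1_E\|_\infty\leq Cs\,m_E^{1/n}\bigl(1+\log(1/m_E)\bigr)\leq s/8
\]
for $\eta$ small. Hence $v_1\geq\eta(M-m)/4$ on $D_{s/4}$, that is,
\[
\operatorname{osc}_{D_{s/4}}q\leq(1-\eta/4)(M-m),
\]
which is \eqref{eq:potential-contraction} with $\delta:=\eta/4$.

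Finally, \eqref{eq:potential-holder} follows by iteration. Applying the contraction at the scales $s_0,s_0/4,s_0/16,\ldots$ gives $\operatorname{osc}_{D_{4^{-k}s_0}}q\leq(1-\delta)^k\operatorname{osc}_{D_{s_0}}q$. For $4^{-k-1}s_0<s\leq4^{-k}s_0$, monotonicity of the domains then gives the bound with $\alpha:=-\log(1-\delta)/\log4$ and $C:=(1-\delta)^{-1}$. All constants depend only on $n$ and $\kappa$, because the hypotheses of Propositions~\ref{prop:relative-green} and~\ref{prop:weighted-crossover} hold on every $D_s$ with the same $\kappa$.
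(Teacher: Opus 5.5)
Your argument uses the same ingredients as the paper: the half-measure selection between $M-q$ and $q-m$, Chebyshev via Proposition~\ref{prop:weighted-crossover}, the Green barrier, and Proposition~\ref{prop:relative-green}. As written, though, it has a gap exactly where you suspected.

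\textbf{The gap.} You use the Green operator of $D_{s/2}$. So you need $\mu(E)/V_{s/2}$ to be small, which requires a doubling bound $V_{s/2}\geq c(n,\kappa)V_s$, and you do not prove it. The route you sketch cannot give it. The containment $\{\psi<s/2\}\supset\tfrac12(\xi_0+\log D_s)$ is harmless, but the half-homothety does not scale Euclidean volume by $2^{-n}$. Under $\xi\mapsto\tfrac12(\xi+\xi_0)$ the weight $e^{\xi_1+\cdots+\xi_n}$ becomes $e^{\frac12\sum_i(\xi_i+\xi_{0,i})}$. For $n=1$, $\xi_0=0$ and $\log D_s=(-L,L)$, the image has Euclidean area of order $e^{L/2}$, while $D_s$ has area of order $e^{L}$. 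Brunn--Minkowski/Pr\'ekopa for the log-concave weight $\gamma$ gives $\gamma(\tfrac12A+\tfrac12B)\geq\gamma(A)^{1/2}\gamma(B)^{1/2}$, which is useless when $B$ shrinks to $\{\xi_0\}$. There are also secondary problems: the infimum of $\psi$ need not be attained on the open orbit, and it need not be $0$. So convexity cannot produce doubling; the density hypothesis has to do the work, and your sketch never uses it in a way that would.

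\textbf{The fix.} This is what the paper does: do not shrink the Green domain. Use
\[
 b:=\eta(M-m)\frac2s\Bigl(\bigl(\tfrac s2-\psi\bigr)-nG_{D_s}\mathbf 1_E\Bigr)
 \quad\text{on }D_{s/2}.
\]
Your comparison goes through verbatim. Indeed $G_{D_s}\mathbf 1_E\geq0$ and $-\Delta_\omega G_{D_s}\mathbf 1_E=\mathbf 1_E$ in $D_{s/2}$. Hence $b$ is harmonic on $E$, $b\leq\eta(M-m)$, and $b\leq0$ on $\partial D_{s/2}$.

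You now apply Proposition~\ref{prop:relative-green} on $D_s$ itself, with $m_E=\mu(E)/V_s\leq2\nu_s(E)$. That is exactly the bound you already derived, so no doubling is needed.

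If you want doubling anyway, it follows from Proposition~\ref{prop:relative-green}. When $D_{s/4}\neq\emptyset$, the function $(s/2-\psi)_+/n\leq G_{D_s}\mathbf 1_{D_{s/2}}$ exceeds $s/(4n)$ somewhere, which forces $V_{s/2}/V_s\geq m_*(n,\kappa)$. When $D_{s/4}=\emptyset$ the contraction is trivial.

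With this change your proof is correct and is essentially the paper's. The paper packages the same steps as a weak Harnack inequality $\inf_{D_{s/4}}v\geq c_2\bigl(\int_{D_s}v^{p_0}\,d\nu_s\bigr)^{1/p_0}$, proved by contradiction, and then applies it to $q-\inf_{D_s}q$ or $\sup_{D_s}q-q$. Your iteration to \eqref{eq:potential-holder} matches the paper's.
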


\begin{proof}
Fix $0<s\leq s_0$, and use $h=s-\psi$ and the probability
measure $\nu_s$ from \eqref{eq:torsion-probability}.
We first show that a low value of a positive harmonic function in
$D_{s/4}$ forces its lower level set in $D_{s/2}$ to have a
definite relative volume. Proposition~\ref{prop:weighted-crossover}
will then give an interior lower bound, from which the oscillation
estimate follows.

Let $v>0$ be a $T^n$-invariant $\omega$-harmonic function,
smooth on a neighborhood of $\overline{D_s}$.
For $b>0$, put $E:=D_{s/2}\cap\{v<b\}$, and suppose that
$v(y)\leq b/4$ at some $y\in D_{s/4}$.
Consider
\begin{equation}\label{eq:calc-s4-067}
 \zeta:=\frac{s/2-\psi}{n}-\frac{s v}{2nb}
\end{equation}
as a barrier. Since $\Delta_\omega\psi=n$ and
$\Delta_\omega v=0$, we have
\begin{equation}\label{eq:calc-s4-068}
 -\Delta_\omega\zeta=1,\qquad
 \zeta(y)\geq\frac{s/2-s/4}{n}
                    -\frac{s(b/4)}{2nb}
             =\frac{s}{8n}.
\end{equation}
The subtraction of the harmonic term restricts the positive set
of $\zeta$ without changing its Laplacian. Indeed, $\zeta\leq0$
when $\psi\geq s/2$, and when $v\geq b$ we have
$\zeta\leq-\psi/n\leq0$. Thus $\{\zeta>0\}\subset E$,
and $\zeta_+:=\max\{\zeta,0\}$ vanishes outside $E$ and near
$\partial D_s$.
Smooth convex approximation of the positive part gives
\begin{equation}\label{eq:calc-s4-069}
 -\Delta_\omega\zeta_+
 \leq\mathbf1_{\{\zeta>0\}}(-\Delta_\omega\zeta)
 =\mathbf1_{\{\zeta>0\}}\leq\mathbf1_E
\end{equation}
in the distribution sense. Here convexity makes the additional
gradient term in $-\Delta_\omega\zeta_+$ nonpositive.
By definition, $G_{D_s}\mathbf1_E$ has zero boundary values and
solves $-\Delta_\omega(G_{D_s}\mathbf1_E)=\mathbf1_E$.
The weak maximum principle therefore yields
$\zeta_+\leq G_{D_s}\mathbf1_E$ on $D_s$, and in particular
\begin{equation}\label{eq:calc-s4-070}
 \frac{s}{8n}\leq\zeta_+(y)\leq G_{D_s}\mathbf1_E(y).
\end{equation}
This comparison is made on $D_s$ and requires no regularity of
$\partial E$.

Since $E$ is $T^n$-invariant, Proposition~\ref{prop:relative-green}
applies. With $m_E=\mu(E)/\mu(D_s)$, it gives
\begin{equation}\label{eq:calc-s4-071}
 \frac{s}{8n}\leq G_{D_s}\mathbf1_E(y)
 \leq Cs\,m_E^{1/n}\Bigl(1+\log\frac1{m_E}\Bigr).
\end{equation}
After division by $s$, the expression on the right tends to zero
as $m_E\downarrow0$. Consequently $m_E\geq m_*>0$, where
$m_*$ depends only on $n,\kappa$.
Since $h\geq s/2$ on $E$ and $0<h\leq s$ on $D_s$, we obtain
\begin{equation}
 \nu_s(E)
 =\frac{\int_Eh\,d\mu}{\int_{D_s}h\,d\mu}
 \geq\frac{(s/2)\mu(E)}{s\mu(D_s)}
 =\frac{m_E}{2}\geq\frac{m_*}{2}.
 \label{eq:lowset-weighted-mass}
\end{equation}
Thus the existence of a point $y\in D_{s/4}$ with $v(y)\leq b/4$
forces the weighted measure of $E$ to be bounded below independently
of $s$, $v$, and $b$.

We next use Proposition~\ref{prop:weighted-crossover} to bound
the same set from above. Write
$m_v:=(\int_{D_s}v^{p_0}\,d\nu_s)^{1/p_0}$, where $p_0$ is
the exponent in that proposition. Its estimate gives
\begin{equation}\label{eq:calc-s4-072}
 \nu_s\{v<b\}
 \leq b^{p_0}\int_{D_s}v^{-p_0}\,d\nu_s
 \leq C_*(b/m_v)^{p_0}.
\end{equation}
Choose $c_2\in(0,1/4)$, depending only on $n,\kappa$, so small
that $C_*(4c_2)^{p_0}<m_*/2$, and set $b=4c_2m_v$.
If $v(y)\leq c_2m_v=b/4$ at any point of $D_{s/4}$,
\eqref{eq:lowset-weighted-mass} would imply
\begin{equation}\label{eq:calc-s4-073}
 \frac{m_*}{2}\leq\nu_s(E)
 \leq\nu_s\{v<b\}
 \leq C_*(4c_2)^{p_0}<\frac{m_*}{2},
\end{equation}
a contradiction. We have therefore proved the weak Harnack estimate
\begin{equation}
 \inf_{D_{s/4}}v
 \geq c_2\left(\int_{D_s}v^{p_0}\,d\nu_s\right)^{1/p_0},
 \qquad c_2=c_2(n,\kappa)>0.
 \label{eq:weighted-weak-Harnack}
\end{equation}

We now apply this estimate to $q$. If
$\operatorname{osc}_{D_s}q=0$, then
\eqref{eq:potential-contraction} is immediate. Otherwise the sets
\begin{equation}\label{eq:calc-s4-074}
 \left\{q-\inf_{D_s}q\geq\frac12\operatorname{osc}_{D_s}q\right\},
 \qquad
 \left\{\sup_{D_s}q-q\geq\frac12\operatorname{osc}_{D_s}q\right\}
\end{equation}
cover $D_s$, so at least one has $\nu_s$-measure at least $1/2$.
If the first set has this property, then
\begin{align}
 \int_{D_s}\left(q-\inf_{D_s}q\right)^{p_0}\,d\nu_s
 &\geq\left(\frac{\operatorname{osc}_{D_s}q}{2}\right)^{p_0}
       \nu_s\left\{q-\inf_{D_s}q
              \geq\frac12\operatorname{osc}_{D_s}q\right\}\notag\\
 &\geq2^{-p_0-1}\left(\operatorname{osc}_{D_s}q\right)^{p_0}.
 \label{eq:calc-s4-075}
\end{align}
Apply \eqref{eq:weighted-weak-Harnack} to the positive harmonic
function $q-\inf_{D_s}q+\varepsilon$ and let
$\varepsilon\downarrow0$. The constant is independent of
$\varepsilon$, and the integral converges by dominated convergence.
With $\delta:=c_2\,2^{-1-1/p_0}\in(0,1)$, we obtain
\begin{equation}\label{eq:calc-s4-076}
 \inf_{D_{s/4}}q-\inf_{D_s}q
 \geq c_2\left(\int_{D_s}
              \left(q-\inf_{D_s}q\right)^{p_0}\,d\nu_s\right)^{1/p_0}
 \geq\delta\operatorname{osc}_{D_s}q.
\end{equation}
In the other case, the same argument applied to
$\sup_{D_s}q-q+\varepsilon$ gives
\begin{equation}\label{eq:oscillation-upper-gap}
 \sup_{D_s}q-\sup_{D_{s/4}}q
 \geq\delta\operatorname{osc}_{D_s}q.
\end{equation}
Since $D_{s/4}\subset D_s$, either inequality proves
\eqref{eq:potential-contraction}.

All constants above depend only on $n,\kappa$, so the same
contraction factor applies at every height. Iteration gives
\begin{equation}\label{eq:calc-s4-077}
 \operatorname{osc}_{D_{4^{-k}s_0}}q
 \leq(1-\delta)^k\operatorname{osc}_{D_{s_0}}q,
 \qquad k\geq0.
\end{equation}
Set $\alpha:=-\log(1-\delta)/\log4>0$.
For an arbitrary $0<s\leq s_0$, choose $k\geq0$ such that
$4^{-k-1}s_0<s\leq4^{-k}s_0$. By inclusion of the sublevel sets,
\begin{align}
\operatorname{osc}_{D_s}q
 &\leq\operatorname{osc}_{D_{4^{-k}s_0}}q\leq(1-\delta)^k\operatorname{osc}_{D_{s_0}}q\notag\\
 &= (4^{-k})^\alpha\operatorname{osc}_{D_{s_0}}q\leq4^\alpha(s/s_0)^\alpha\operatorname{osc}_{D_{s_0}}q.
 \label{eq:calc-s4-078}
\end{align}
This proves \eqref{eq:potential-holder}.
\end{proof}

For the toric domains $D_{p,s,t}$ introduced above, when
$p\in\Delta^\circ$, their logarithmic images are relatively
compact sections of $\Phi_t$. The interior oscillation estimates
of Caffarelli--Guti\'errez \cite{CaffarelliGutierrez1997} therefore
apply directly. When $p\in\partial\Delta$, the
logarithmic domains are unbounded. Our proof follows their
general strategy of deriving oscillation decay from measure
estimates and Harnack inequalities, adapted to the toric
boundary setting.

\subsubsection{Proof of Proposition~\ref{prop:common-modulus}}
We now return to the fixed character coordinates $z$ and the
associated variables $\xi,\theta$ of Section~\ref{sec:preliminaries},
and apply the oscillation estimate to the domains $D_{p,s,t}$
of Definition~\ref{def:moment-sublevel-sets}. To place nearby points
of $M$ in a common such domain of small height, we first establish
a uniform modulus of continuity for $u_t$ on the closed polytope.

\begin{lemma}
\label{lem:symplectic-common-modulus}
If $\sup_t\|\varphi_t\|_\infty<\infty$ and
$\omega_t^n/\omega_0^n\leq C$, then
\begin{equation}
 |u_t(x)-u_t(x')|\leq C|x-x'|^{1/2},
 \qquad x,x'\in\Delta.
 \label{eq:symplectic-half-holder}
\end{equation}
\end{lemma}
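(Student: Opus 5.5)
The plan is to obtain \eqref{eq:symplectic-half-holder} from three facts: each $u_t$ is convex, $u_t$ is uniformly close to the fixed potential $u_0$, and the density bound makes $Du_t$ exponentially integrable over $\Delta$. The first input is \eqref{eq:legendre-C0}: $\|u_t-u_0\|_{L^\infty(\Delta)}=\|\varphi_t\|_\infty\le A$ uniformly. Since $u_0=u_G+h_0$ with $h_0\in C^\infty(\Delta)$ and $u_G=\sum_r\ell_r\log\ell_r$, the function $u_0$ is continuous on $\Delta$ with modulus $C|x-x'|\log(e/|x-x'|)$, hence $C|x-x'|^{1/2}$. So $\operatorname{osc}_\Delta u_t\le C$ uniformly.

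The second input is a gradient bound from the volume bound. By Legendre duality $Du_t(x)=\xi$ and $dx=V^{-1}n!(2\pi)^n|\Delta|\,d\mu_t$ after angular integration. Hence, for $s=\kappa_0/2$,
\begin{equation*}
 \frac1{|\Delta|}\int_\Delta e^{s|Du_t|}\,dx=\int_Me^{s|\xi|}\,d\mu_t
 \le C\int_Me^{s|\xi|}\,d\mu_0<\infty,
\end{equation*}
using $\omega_t^n\le C\omega_0^n$ and \eqref{eq:calc-s2-015}. I then convert this into a pointwise bound using convexity, via monotonicity of the gradient. Suppose $Du_t(x)\cdot e\ge K$ at $x$ with $d(x):=\operatorname{dist}(x,\partial\Delta)$. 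For $y$ in the cone $\{y:(y-x)\cdot e\ge\frac12|y-x|\}\cap B_{d(x)/2}(x)$, the monotonicity $(Du_t(y)-Du_t(x))\cdot(y-x)\ge0$ together with the oscillation bound on the supporting plane should give $|Du_t(y)|\gtrsim K$ on a set of measure $\gtrsim d(x)^n$. Alternatively, I would use the slope inequality \eqref{eq:calc-s2-025} with the oscillation bound. Comparing with the exponential integrability yields
\begin{equation*}
 |Du_t(x)|\le C\bigl(1+\log\tfrac1{d(x)}\bigr),\qquad x\in\Delta^\circ,
\end{equation*}
uniformly in $t$. (Even the cruder bound $|Du_t(x)|\le C/d(x)$ from \eqref{eq:calc-s2-025} and the oscillation bound would suffice away from the boundary.)

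Finally I split into cases. Let $\delta:=|x-x'|$. If the segment $[x,x']$ stays at distance $\ge\delta$ from $\partial\Delta$, integrating the logarithmic gradient bound gives $|u_t(x)-u_t(x')|\le C\delta(1+\log\frac1\delta)\le C\delta^{1/2}$. Otherwise I push both points inward. Fix an interior point $x_*$ and set $x_\lambda:=(1-\lambda)x+\lambda x_*$ with $\lambda=\delta^{1/2}$, and likewise $x'_\lambda$; by the Delzant/polytope geometry, $d(x_\lambda),d(x'_\lambda)\gtrsim\lambda$ and the segment between them stays at distance $\gtrsim\lambda$. Convexity gives one direction: $u_t(x_\lambda)-u_t(x)\le\lambda(u_t(x_*)-u_t(x))\le C\lambda$. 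For the reverse inequality I use $u_t\ge u_0-A$ near $x$ together with a lower supporting bound along the ray from $x_*$ through $x$, where the one-sided slope at $x_\lambda$ toward $x$ is bounded below. Each interior point joins its neighbour by a segment of length $\le\delta$ on which $|Du_t|\le C\log(1/\lambda)$, contributing $C\delta\log\frac1\delta$.

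The main obstacle is the reverse inequality $u_t(x)-u_t(x_\lambda)\le C\lambda^{1/2}$, i.e., that $u_t$ does not drop sharply toward the boundary along inward rays. For this step I would try the one-variable convexity argument on the ray from $x_*$ to the boundary point $b$ hit beyond $x$. On this ray $g(\tau):=u_t((1-\tau)x_*+\tau b)$ is convex with $\operatorname{osc}g\le C$ on $[0,1]$, and any convex function with bounded oscillation on $[0,1]$ satisfies $|g(\tau)-g(\tau')|\le C|\tau-\tau'|^{1/2}$ once slopes are controlled. The slopes are controlled by $g'(\tau)\le(g(1)-g(\tau))/(1-\tau)$ together with $g(1)\le u_0(b)+A$ and the lower bound $g\ge u_0-A$; this is precisely where the two-sided $C^0$ comparison with the explicitly Hölder $u_0$ enters. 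The exponent $1/2$ arises from balancing the $\lambda$ loss against the $\delta/\lambda$ gradient gain.
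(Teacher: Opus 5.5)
There is a genuine gap, and it sits in the step you flag as the main obstacle: bounding $u_t(x)-u_t(x_\lambda)$ from above when $x$ is close to $\partial\Delta$. Your Steps 1--3 are sound, but the one-variable argument you propose for this step cannot work.

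The comparison $|u_t-u_0|\le A$ only gives $g(1)-g(\tau)\le C+2A$, which is an $O(1)$ gap rather than a small one. So $g'(\tau)\le (g(1)-g(\tau))/(1-\tau)$ yields only $g'\le C/(1-\tau)$, and this is not integrable at $\tau=1$. Separately, a lower bound on the one-sided slope at $x_\lambda$ toward $x$ bounds $u_t(x)-u_t(x_\lambda)$ from below, not from above.

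More fundamentally, no argument that uses only convexity and the $C^0$ comparison can succeed. Add to $u_0$ a convex smoothing of $A\max\{0,1-\ell_1/\epsilon\}$. For each $\epsilon$ this is a legitimate Guillemin-type symplectic potential within $A$ of $u_0$. Yet it rises by about $A$ within distance $O(\epsilon)$ of $\mathcal F_1$, so there is no uniform modulus. By the lemma, such a family must have unbounded volume ratios: the density bound is what excludes this behaviour, and it has to enter this step. Note also that asking for $C\lambda^{1/2}$ with $\lambda=\delta^{1/2}$ would only give exponent $1/4$.

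The repair is already contained in your Step 3. There you should take $e=Du_t(x)/|Du_t(x)|$ and $K=|Du_t(x)|$: for a general direction, the transverse part of $Du_t(x)$ spoils the monotonicity bound, and the oscillation bound is not needed. Now integrate the logarithmic gradient bound along the inward segment itself. The point $(1-\sigma)x+\sigma x_*$ satisfies $\ell_r\ge\sigma\ell_r(x_*)$, so its distance to $\partial\Delta$ is at least $c\sigma$. Since $\int_0^\lambda\log(1/\sigma)\,d\sigma=\lambda(1+\log(1/\lambda))$, this gives $|u_t(x)-u_t(x_\lambda)|\le C\lambda(1+\log(1/\lambda))$ for $x\in\Delta^\circ$. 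Take $\lambda=\delta\le1$ and combine with your middle-segment estimate. Then $|u_t(x)-u_t(x')|\le C\delta(1+\log(1/\delta))\le C\delta^{1/2}$ for interior points, with no case split. Continuity extends this to $\Delta$, and larger $\delta$ are covered by the oscillation bound.

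For comparison, the paper needs neither pointwise gradient bounds nor convexity. From the same change of variables it uses only
\[
\int_\Delta|Du_t|^{2n}\,dx=\int_{\R^n}|\xi|^{2n}\det D_\xi^2\Phi_t\,d\xi\le C\int_{\R^n}|\xi|^{2n}m_0\,d\xi .
\]
Combined with the $L^\infty$ bound from \eqref{eq:legendre-C0}, this is a uniform $W^{1,2n}(\Delta^\circ)$ bound. Morrey's inequality on the Lipschitz domain $\Delta^\circ$ then gives exactly the exponent $1-n/(2n)=1/2$. Your exponential integrability implies that $W^{1,2n}$ bound immediately. Your repaired pointwise route instead yields a stronger log-Lipschitz modulus, which the later argument does not need.
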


\begin{proof}
Equation~\eqref{eq:legendre-C0} bounds $u_t$ uniformly.
Using the volume-ratio bound and the background exponential decay
in Proposition~\ref{prop:logarithmic-tail}, we obtain on the open orbit
\begin{equation}\label{eq:calc-s4-079}
 \int_\Delta|Du_t|^{2n}\,dx
 =\int_{\mathbb R^n}|\xi|^{2n}\det D_\xi^2\Phi_t\,d\xi
 \leq C\int_{\mathbb R^n}|\xi|^{2n}
                       \det D_\xi^2\Phi_0\,d\xi<\infty .
\end{equation}
Together with the $L^\infty$ bound for $u_t$, this proves
$\|u_t\|_{W^{1,2n}(\Delta^\circ)}\leq C$.
Since $\Delta^\circ$ is a bounded Lipschitz domain, Morrey's
inequality gives the exponent $1-n/(2n)=1/2$.
The resulting continuous representative agrees with the
continuous extension of $u_t$ to $\Delta$, proving
\eqref{eq:symplectic-half-holder}.
\end{proof}

\begin{proof}[Proof of Proposition~\ref{prop:common-modulus}]
Uniqueness of the normalized elliptic solution implies that $P_t$
is $T^n$-invariant. 
If \eqref{eq:common-physical-modulus} were false, there would be
$t_j,y_j,y'_j$ with $d_0(y_j,y'_j)\to0$ but
$|P_{t_j}(y_j)-P_{t_j}(y'_j)|\geq\varepsilon_*>0$.
For each $j$, continuity of $P_{t_j}$ and density of the open
orbit allow us to perturb both points into that orbit, retaining
a fixed positive lower bound for the difference.
After passing to subsequences, let $y_j,y'_j\to y$ and
$x_j:=x_{t_j}(y_j)\to p\in\Delta$.

We first show that $y\in U_p$, that is, $y$ lies in none of the
divisors $\mathcal D_r$ with $\ell_r(p)>0$.
Suppose to the contrary that $y\notin U_p$. By
Definition~\ref{def:moment-sublevel-sets}, there is an index $r$
such that $\ell_r(p)>0$ and $\ell_r(x_0(y))=0$.
We will show that
$\left|\sum_k\ell_k(x_j)\log\ell_k(x_0(y_j))\right|$
is uniformly bounded, whereas this assumption forces it to tend
to $+\infty$.

On the one hand, subtracting the Legendre identities for
$\Phi_{t_j}$ and $\Phi_0$ at the same point $y_j$ gives
\begin{equation}\label{eq:calc-s4-080}
 \langle x_j-x_0(y_j),\xi(y_j)\rangle
 =\varphi_{t_j}(y_j)+u_{t_j}(x_j)-u_0(x_0(y_j)).
\end{equation}
The right-hand side is uniformly bounded by
\eqref{eq:basic-modulus-inputs} and \eqref{eq:legendre-C0}.
Since $\ell_k(x_j)-\ell_k(x_0(y_j))
=\langle x_j-x_0(y_j),\nu_k\rangle$, substituting
\eqref{eq:background-log-coordinates} gives
\begin{align}
 &\left|\sum_k\bigl(\ell_k(x_j)-\ell_k(x_0(y_j))\bigr)
                       \log\ell_k(x_0(y_j))\right|\notag\\
 &\quad=\left|\langle x_j-x_0(y_j),\xi(y_j)\rangle
       -\left\langle x_j-x_0(y_j),
         \sum_k\nu_k+Dh_0(x_0(y_j))\right\rangle\right|\notag\\
 &\quad\leq\|\varphi_{t_j}\|_\infty+\|u_{t_j}\|_\infty
                   +\|u_0\|_\infty+\operatorname{diam}(\Delta)
       \left(\left|\sum_k\nu_k\right|
                       +\|Dh_0\|_{L^\infty(\Delta)}\right)
 \leq C.
 \label{eq:calc-s4-081}
\end{align}
Here $C$ depends only on the fixed background and the bounds in
\eqref{eq:basic-modulus-inputs}, and is independent of $j$.
The terms $\ell_k(x_0(y_j))\log\ell_k(x_0(y_j))$ are also
uniformly bounded, since $s\log s$ extends continuously to
$s=0$ with value zero and $\ell_k$ is bounded on $\Delta$.
Adding these terms removes the background coefficient and gives
\begin{align}
 &\left|\sum_k\ell_k(x_j)\log\ell_k(x_0(y_j))\right|\leq C.
 \label{eq:calc-s4-082}
\end{align}

On the other hand, for the index $r$ chosen above, continuity of
$x_0$ and $x_j\to p$ give
\begin{equation}\label{eq:calc-s4-083}
 \ell_r(x_j)\longrightarrow\ell_r(p)>0,
 \qquad \log\ell_r(x_0(y_j))\longrightarrow-\infty.
\end{equation}
Thus the $r$-th product tends to $-\infty$.
No other term can cancel this divergence, because all the other
terms have a uniform upper bound. Indeed, we have, for $k\ne r$,
\begin{equation}\label{eq:boundary-face-other-terms}
 \ell_k(x_j)\log\ell_k(x_0(y_j))
 \leq\ell_k(x_j)\ell_k(x_0(y_j))
 \leq\left(\max_{x\in\Delta}\ell_k(x)\right)^2\leq C.
\end{equation}
For all sufficiently large $j$, since there are only finitely many
facets, it follows that
\begin{align}
 \sum_k\ell_k(x_j)\log\ell_k(x_0(y_j))
 &=\ell_r(x_j)\log\ell_r(x_0(y_j))
   +\sum_{k\ne r}\ell_k(x_j)\log\ell_k(x_0(y_j))\notag\\
 &\leq\frac{\ell_r(p)}2\log\ell_r(x_0(y_j))+C
 \longrightarrow-\infty.
 \label{eq:boundary-face-sum-divergence}
\end{align}
Consequently,
\begin{equation}\label{eq:boundary-face-absolute-divergence}
 \left|\sum_k\ell_k(x_j)\log\ell_k(x_0(y_j))\right|
 \longrightarrow+\infty,
\end{equation}
contradicting \eqref{eq:calc-s4-082}.
Hence $y$ lies in none of the divisors with $\ell_r(p)>0$, and
$y\in U_p$ by Definition~\ref{def:moment-sublevel-sets}. 

For this fixed limit $p$, we use the potentials $\psi_{p,t_j}$
from Definition~\ref{def:moment-sublevel-sets}, given by
\eqref{eq:fixed-moment-domains} on the open orbit and extended
smoothly to $U_p$ by Proposition~\ref{prop:sublevel-geometry}. 
We next show that $\psi_{p,t_j}(y_j)$ and
$\psi_{p,t_j}(y'_j)$ both tend to zero.
For all sufficiently large $j$, the reflected point
$2x_j-p$ lies in $\Delta$: if $\ell_r(p)=0$, then
$\ell_r(2x_j-p)=2\ell_r(x_j)\geq0$; if $\ell_r(p)>0$, then
$\ell_r(2x_j-p)=2\ell_r(x_j)-\ell_r(p)\to\ell_r(p)>0$.
There are only finitely many facet inequalities to check.
Since $x_j\in\Delta^\circ$ and $D u_{t_j}(x_j)=\xi(y_j)$,
convexity gives
\begin{equation}\label{eq:fixed-center-supporting-plane}
 u_{t_j}(x)\geq u_{t_j}(x_j)
       +\langle D u_{t_j}(x_j),x-x_j\rangle,
 \qquad x\in\Delta.
\end{equation}
The inequality extends to $x\in\partial\Delta$ by continuity;
the derivative is taken only at the interior point $x_j$.
Taking $x=2x_j-p$ gives
\begin{align}
 u_{t_j}(2x_j-p)-u_{t_j}(x_j)
 &\geq\langle D u_{t_j}(x_j),(2x_j-p)-x_j\rangle=\langle\xi(y_j),x_j-p\rangle.
 \label{eq:fixed-center-reflection-bound}
\end{align}
Substituting this bound and applying
Lemma~\ref{lem:symplectic-common-modulus}, we obtain
\begin{align}
\psi_{p,t_j}(y_j)
 &=\Phi_{t_j}(\xi(y_j))-\langle p,\xi(y_j)\rangle+u_{t_j}(p) \notag\\
 &=u_{t_j}(p)-u_{t_j}(x_j)
                +\langle x_j-p,\xi(y_j)\rangle \notag\\
 &\leq u_{t_j}(p)+u_{t_j}(2x_j-p)-2u_{t_j}(x_j) \notag\\
 &\leq |u_{t_j}(p)-u_{t_j}(x_j)|
       +|u_{t_j}(2x_j-p)-u_{t_j}(x_j)| \notag\\
 &\leq C|p-x_j|^{1/2}+C|(2x_j-p)-x_j|^{1/2} \notag\\
 &=2C|x_j-p|^{1/2}\longrightarrow0,
 \label{eq:fixed-center-small-height}
\end{align}
For large $j$, both points lie in a fixed neighborhood of $y$ in
$U_p$. By \eqref{eq:sublevel-background-comparison}, we have
\begin{align}
 \psi_{p,t_j}(y'_j)-\psi_{p,t_j}(y_j)
 &=\psi_p^0(y'_j)-\psi_p^0(y_j)
       +\varphi_{t_j}(y'_j)-\varphi_{t_j}(y_j),
 \label{eq:calc-s4-084}\\
 |\psi_{p,t_j}(y'_j)-\psi_{p,t_j}(y_j)|
 &\leq|\psi_p^0(y'_j)-\psi_p^0(y_j)|
       +C d_0(y'_j,y_j)^\beta\longrightarrow0.
 \label{eq:calc-s4-085}
\end{align}
Together with \eqref{eq:fixed-center-small-height}, this proves
$\psi_{p,t_j}(y'_j)\to0$.

On $U_p$ set
\begin{equation}
 q_t:=P_t-\frac{\underline R}{n}\psi_{p,t}
                 -\log\frac{\omega_0^n}{dV_{\mathrm{Eucl}}}.
 \label{eq:physical-harmonic-gauge}
\end{equation}
All three terms are smooth on $U_p$, including its toric divisors.
Since $dd^c\psi_{p,t}=\omega_t$ and
$\operatorname{Ric}(\omega_0)
 =-dd^c\log(\omega_0^n/dV_{\mathrm{Eucl}})$, the equation for $P_t$
gives $\Delta_{\varphi_t}q_t=0$.
The function $q_t$ is $T^n$-invariant.
Choose $s_j\downarrow0$ with both points in $D_{p,s_j,t_j}$.
Fix $s_0>0$. Proposition~\ref{prop:fixed-moment-domains}, applied
at height $s_0+1$, places all the closures
$\overline{D_{p,s_0,t_j}}$ in a fixed compact set $K\Subset U_p$.
On $K$, $|\omega_0^n/dV_{\mathrm{Eucl}}|$
is bounded. Therefore, on $D_{p,s_0,t_j}$,
\begin{equation}\label{eq:calc-s4-086}
 |q_{t_j}|
 \leq\|P_{t_j}\|_\infty+\frac{|\underline R|}{n}s_0
       +\max_K\left|\log\frac{\omega_0^n}{dV_{\mathrm{Eucl}}}\right|
 \leq C_p.
\end{equation}
The geometric properties in Propositions~\ref{prop:sublevel-geometry}
and~\ref{prop:sublevel-domains}
allow us to apply Proposition~\ref{prop:potential-oscillation}, giving,
for all sufficiently large $j$,
\begin{equation}\label{eq:calc-s4-087}
 |q_{t_j}(y_j)-q_{t_j}(y'_j)|
 \leq C_p(s_j/s_0)^{\alpha_p}\longrightarrow0.
\end{equation}
Here $p$ has already been fixed, so the dependence of the
constants on $p$ causes no loss of uniformity in $j$.
The other two terms in \eqref{eq:physical-harmonic-gauge} have a
uniform modulus of continuity near $y$. Hence
$|P_{t_j}(y_j)-P_{t_j}(y'_j)|\to0$, a contradiction. Hence  \eqref{eq:common-physical-modulus} is true.
\end{proof}

\subsection{A negative exponential estimate}
\label{sec:nonlocal-exponential}

We now estimate the normalized elliptic solution $P$ in
\eqref{eq:pcf} for $\varphi\in\mathcal H_0^T$, using
\eqref{eq:normalized-volume-measures}--\eqref{eq:entropy-dirichlet-definitions}.
The argument is static and uses the fixed logarithmic
coordinates $\xi$ on the open orbit.

\begin{proposition}\label{prop:negative-P-exponent}
There is $r>1$, depending only on the fixed toric background, with
the following property. For every $B>0$, there is $C_B$ such that
every smooth $T^n$-invariant potential satisfying
\begin{equation}\label{eq:calc-s4-088}
 E(\varphi)=0,\qquad J(\varphi)+\int_M F\,d\mu_\varphi\leq B
\end{equation}
has
\begin{equation}\label{eq:negative-P-exponent}
 \int_M e^{-rP}\,d\mu_0\leq C_B.
\end{equation}
\end{proposition}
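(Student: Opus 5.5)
The plan is to remove the singular background term from the equation for $P$ and work on the open orbit in the fixed coordinates $\xi$. There $\operatorname{Ric}(\omega_0)=-dd^c\log m_0$, so the elliptic equation in \eqref{eq:pcf} becomes
\begin{equation*}
 Q:=P-\log m_0,\qquad
 \operatorname{tr}\bigl(H^{-1}D_\xi^2Q\bigr)=\underline R,\qquad H:=D^2_\xi\Phi,\quad \det H=m .
\end{equation*}
The target is
\begin{equation*}
 e^{-rP}m_0=e^{-rQ}m_0^{1-r}.
\end{equation*}
By \eqref{eq:background-exponential-tail}, $m_0^{1-r}\le Ce^{(r-1)b_0|\xi|}$. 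So it suffices to prove a lower bound for $Q$ that grows linearly in $|\xi|$ with a definite slope, and then take $r-1$ small in terms of $b_0,\kappa_0$ only. This is also why $r$ should depend only on the toric background.

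\textbf{Step 1: contact-set inequality.} Let $\Gamma$ be the lower convex envelope of $Q$ on $\R^n$; I expect $Q\to+\infty$ at infinity for each fixed smooth metric, since $P$ is bounded and $-\log m_0\ge\kappa_0|\xi|-C$. On the contact set $\{\Gamma=Q\}$ we have $0\le D^2\Gamma\le D^2Q$. The arithmetic--geometric mean inequality applied to $H^{-1/2}D^2QH^{-1/2}$ gives
\begin{equation*}
 \det D^2\Gamma\le(\underline R/n)^n\det H=(\underline R/n)^n m .
\end{equation*}
Off the contact set $\Gamma$ is affine along segments. Hence the Monge--Amp\`ere measure of $\Gamma$ is at most $(\underline R/n)^n m\,d\xi$. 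Its total mass is therefore at most $(\underline R/n)^n|\Delta|$, and it has bounded entropy relative to $m_0\,d\xi$ because $\int F\,d\mu_\varphi\le B$.

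\textbf{Step 2: linear growth and anchoring of $\Gamma$.} Set $\Gamma^*$ for the Legendre transform of $\Gamma$ on the gradient image $\Omega:=\overline{D\Gamma(\R^n)}$, a convex set of measure at most $(\underline R/n)^n|\Delta|$. I want to show:
\begin{enumerate}
\item[(a)] $\Omega$ contains a ball of fixed radius, or at least that $\Gamma(\xi)\ge a|\xi|-C$ with $a>0$ a background constant.
\item[(b)] The constant $C$ is controlled by $B$.
\end{enumerate}
For (a), compare $Q$ at infinity with $-\log m_0$. By the proof of Proposition~\ref{prop:logarithmic-tail}, $-\log m_0$ is, up to bounded error, the piecewise-linear function $-\sum_r\log\ell_r(x_0)$, which behaves like the support function of a polytope. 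The main difficulty is that $P$ is not bounded a priori, so this comparison must be made in an averaged sense.

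For (b), use the zero mean $\int_\Delta\widehat P\,dx=0$ together with $\int|\xi|\,d\mu_\varphi\le C(1+B)$ from \eqref{eq:entropy-log-coordinate-moment}. I also need a bound on $\int Q\,d\mu_\varphi$ from above, which I would obtain through Jensen and $\int F\,d\mu_\varphi\le B$. With these, Proposition~\ref{prop:toric-J-L1}(iii) controls an interior value, and convexity then gives $\Gamma\ge a|\xi|-C_B$ for some slope $a$ independent of $B$.

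\textbf{Step 3: integration.} With $Q\ge\Gamma\ge a|\xi|-C_B$,
\begin{equation*}
 \int_M e^{-rP}\,d\mu_0\le C\int_{\R^n}e^{-ra|\xi|+rC_B+(r-1)b_0|\xi|}\,d\xi .
\end{equation*}
This is finite once $r>1$ is chosen with $(r-1)b_0<a$; this yields \eqref{eq:negative-P-exponent}.

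I expect Step 2(a) to be the main obstacle: obtaining a slope $a>0$ that depends only on the background, and not on the bounds on $J$ and entropy. If the global slope cannot be obtained directly, the fallback is to estimate $|\{\Gamma<-k\}|$ using the Monge--Amp\`ere mass bound. Together with the $L\log L$ bound on $m/m_0$, this should give exponential decay in $k$ of the $\mu_0$-measure, which is enough for some $r$ close to $1$.
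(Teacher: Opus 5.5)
Your reduction to $Q=P-\log m_0$ with $\tr(H^{-1}D_\xi^2Q)=\underline R$, the determinant bound $\det D^2\Gamma\le(\underline R/n)^n m$ on the contact set, and the final integration with $r$ close to $1$ all match the paper's framework. Step~2, however, carries the whole proof, and it has a genuine gap.

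First, the slope is not the obstacle. You already noted that for each fixed metric $Q\ge\kappa_0|\xi|-C_\varphi$, with $C_\varphi$ not uniform. Hence for every $|\gamma|<\kappa_0$ the function $Q-\gamma\cdot\xi$ attains a global minimum at some $\xi_\gamma$. So every such slope is realized by an affine function supporting $Q$ from below on all of $\R^n$, and $D\Gamma(\R^n)\supset B_{\kappa_0}(0)$ comes for free. What must be made uniform is the intercept $Q(\xi_\gamma)-\gamma\cdot\xi_\gamma$, and your plan for it does not work. Proposition~\ref{prop:toric-J-L1}(iii) bounds $u$ and $Du$ at an interior point of $\Delta$, not $Q$ or $\Gamma$. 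Moreover, a bound on a convex function at one point does not give $\Gamma\ge a|\xi|-C$: the cone $\kappa_0|\xi-\xi_1|-N$ with $\kappa_0|\xi_1|=N$ vanishes at the origin for every $N$.

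Second, the integral information you list is of the wrong kind. The zero mean of $P$ and $\int_M|\xi|\,d\mu_\varphi\le C_B$ only give two-sided bounds on $\int_M Q\,d\mu_\varphi$. They do not exclude $Q\approx-N$ on the contact set, compensated by $Q\approx+N$ elsewhere. By the area formula and your determinant bound, the contact set carries $\mu_\varphi$-mass at least $(n/\underline R)^n|B_{\kappa_0}|/|\Delta|$, so it is not negligible. To exclude such wells you need $\int_M|P|\,d\mu_\varphi\le C(1+J(\varphi))$, and this is a genuine estimate. In the paper it is Lemma~\ref{lem:toric-L1-dirichlet}, whose ingredients are:
\begin{itemize}
\item an improved Poincar\'e inequality on the John domain $\Delta^\circ$ with weight $\operatorname{dist}(x,\partial\Delta)$;
\item the pointwise bound $|Da|^2\le(\tr D^2u)\,u^{ij}a_ia_j$;
\item the estimate $\int_\Delta\operatorname{dist}(x,\partial\Delta)^2\tr D^2u\,dx\le C(1+J)$;
\item the energy identity for $P$, closed using $\|\nabla_\varphi\varphi\|_{L^2(d\mu_\varphi)}^2\le n(n+1)J$.
\end{itemize}

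You also need the mechanism that turns $\int_{\R^n}(|Q|+|\xi|)\,m\,d\xi\le A$ into a uniform intercept; this is Lemma~\ref{lem:global-contact}. By the area formula and your determinant bound, the slopes realized at contact points with $|Q|+|\xi|>L$ form a set of measure at most $(\underline R/n)^nA/L$. So for a fixed large $L$, every ball $B_{\kappa_0/4}(\kappa_0e/2)$ with $|e|=1$ contains a slope realized at a contact point where $|Q|+|\xi|\le L$. Evaluating that supporting affine function at $\xi=|\xi|e$ gives $Q(\xi)\ge\frac{\kappa_0}{4}|\xi|-(1+\kappa_0)L$. Your fallback through $|\{\Gamma<-k\}|$ faces the same missing anchoring and needs the same $L^1$ bound on $P$.
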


We shall prove the proposition by obtaining a uniform linear lower
bound for $P-\log m_0$ in the fixed logarithmic coordinates. The
exponential bounds for $m_0$ in
\eqref{eq:background-exponential-tail} will then give
\eqref{eq:negative-P-exponent}. We first establish the required
$L^1$ bound for $P$.

\begin{lemma}\label{lem:toric-L1-dirichlet}
There is a background constant $C$ such that, for every
$\varphi\in\mathcal H_0^T$ and every smooth $T^n$-invariant
function $v$,
\begin{equation}\label{eq:toric-L1-dirichlet}
 \int_M|v-\overline v_\varphi|\,d\mu_\varphi
 \leq C\sqrt{1+J(\varphi)}\,\|\nabla_\varphi v\|_{L^2(d\mu_\varphi)},
 \qquad
 \overline v_\varphi:=\int_Mv\,d\mu_\varphi.
\end{equation}
Consequently, the normalized elliptic solution $P$ in
\eqref{eq:pcf} satisfies
\begin{equation}\label{eq:P-low-energy}
 \|\nabla_\varphi P\|_{L^2(d\mu_\varphi)}^2+\int_M|P|\,d\mu_\varphi
 \leq C(1+J(\varphi)).
\end{equation}
\end{lemma}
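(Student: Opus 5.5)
We work on the polytope. A $T^n$-invariant function $v$ corresponds to $\widehat v$ on $\Delta$, and by \eqref{eq:toric-integration} and \eqref{eq:calc-s2-020} we have $d\mu_\varphi=dx/|\Delta|$ and $|\nabla_\varphi v|^2_\varphi=u^{ij}\widehat v_i\widehat v_j$. The inequality \eqref{eq:toric-L1-dirichlet} is then a weighted $L^1$--Poincar\'e inequality on $\Delta$ with Lebesgue measure and weight $(D^2u)^{-1}$.

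\textbf{Reduction to a radial estimate.} Fix an interior point $x_*\in\Delta^\circ$. For each $x\in\Delta^\circ$ join $x$ to a point of a small ball $B_\rho(x_*)$ by a straight segment, and average over the ball as in the standard star-shaped Poincar\'e proof. This gives
\[
\int_\Delta|\widehat v-\overline v|\,dx\leq C\int_\Delta|D\widehat v(x)|_{*}\,W(x)\,dx .
\]
Here $W$ comes from the Riesz-type kernel $|x-y|^{1-n}$ integrated over the cone. Cauchy--Schwarz with the pair $(D^2u)^{-1}$ and $D^2u$ then bounds the right side by
\[
\Bigl(\int u^{ij}\widehat v_i\widehat v_j\,dx\Bigr)^{1/2}\Bigl(\int\!\!\int K(x,y)\,\langle D^2u(z)\,e,e\rangle\,\cdots\Bigr)^{1/2}.
\]
A cleaner version of the same idea writes $\widehat v(x)-\widehat v(y)=\int_0^1 D\widehat v(z_\tau)\cdot(x-y)\,d\tau$ with $z_\tau=y+\tau(x-y)$. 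Then
\[
|D\widehat v\cdot(x-y)|\leq\bigl(u^{ij}\widehat v_i\widehat v_j\bigr)^{1/2}\,\bigl((x-y)^{\mathsf T}D^2u\,(x-y)\bigr)^{1/2}.
\]
After averaging over $x,y\in\Delta$, changing variables to $(z,x-y)$, and applying Cauchy--Schwarz, we are reduced to bounding
\[
\int_\Delta\int (x-y)^{\mathsf T}D^2u(z)\,(x-y)\,\cdot(\text{kernel})\;dz\,d(x-y).
\]
Integrating the second derivatives along lines, this is controlled by $\int_\Delta |Du|\,dx$, or by boundary values of $u$ along chords. These are in turn bounded by $\operatorname{osc}u$ plus the boundary integral of $u$, hence by $C(1+J)$ through Proposition~\ref{prop:toric-J-L1}, including $u\geq -J-C$ and the $L^1$ bound.

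\textbf{Preferred concrete route and main obstacle.} The cleanest version I would try first is the following. For $x\in\Delta$ let $\gamma(\tau)=x_*+\tau(x-x_*)$ and write
\[
\widehat v(x)-\widehat v(x_*)=\int_0^1 D\widehat v(\gamma)\cdot(x-x_*)\,d\tau .
\]
Integrate over $x$ and substitute $z=\gamma(\tau)$, which carries the Jacobian $\tau^{-n}$ and keeps $z\in\Delta$ by convexity. Then apply Cauchy--Schwarz with $u^{ij}$, which leaves the factor
\[
\int_\Delta (z-x_*)^{\mathsf T}D^2u(z)(z-x_*)\,k(z)\,dz,
\]
where $k(z)\lesssim \int_{|z-x_*|}^{\operatorname{diam}}$-type weights. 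Since $(z-x_*)^{\mathsf T}D^2u(z)(z-x_*)=\partial_\tau^2$ of $u$ along rays, integrating by parts in the radial variable bounds this by the radial derivative $\langle Du(z),z-x_*\rangle$ at the boundary, by $u$, and by $|Du(x_*)|$. Convexity gives
\[
\langle Du(z),z-x_*\rangle\leq u(z')-u(z)
\]
for a further boundary point $z'$, and then \eqref{eq:convex-interior-estimate} with $u\le C(1+J)$ on $\partial\Delta$ finishes. The main obstacle is the singularity of the Jacobian near $x_*$. To handle it, I would replace the point $x_*$ by averaging over a ball $B_\rho(x_*)$, which makes the kernel bounded. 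I also need an upper bound for $u$ on the boundary, which follows from the $L^1$ bound plus convexity via the Lipschitz-type bound (compare Lemma~\ref{lem:symplectic-common-modulus}), or from Darvas--George--Smith's boundary control.

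\textbf{Consequence for $P$.} Test the equation for $P$ in \eqref{eq:pcf} against $P$, as in \eqref{eq:calc-s3-011}. Since $P$ has zero mean,
\[
\|\nabla_\varphi P\|^2=\int P\,\tr_{\omega_\varphi}\Ric(\omega_0)\,d\mu_\varphi .
\]
In the toric setting $\tr_{\omega_\varphi}\Ric(\omega_0)=-\Delta_\varphi\log m_0$, so integration by parts gives $\int\langle\nabla P,\nabla\log m_0\rangle$. The quantity $|\nabla_\varphi\log m_0|^2=u^{ij}\partial_{x_i}\log m_0\,\partial_{x_j}\log m_0$ equals $(D_\xi\log m_0)^{\mathsf T}D^2_\xi\Phi\,D_\xi\log m_0$, whose $d\mu_\varphi$-integral is $\int |D_\xi\log m_0|^2_{H}\,m\,d\xi$. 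Integrating by parts in $\xi$ bounds this by $C\int|D_\xi^2\log m_0|\,|D\Phi|m\dots$. A simpler route is the bound $|\nabla_\varphi\log m_0|^2_\varphi\leq C\tr_{\omega_\varphi}\omega_0$, which holds because $D_\xi\log m_0$ is the divergence of a smooth global field and $\omega_0\geq c\,dd^c$-type control. Combined with $\int\tr_{\omega_\varphi}\omega_0\,d\mu_\varphi=n$, this gives $\|\nabla_\varphi P\|^2\leq C\|\nabla_\varphi P\|$, hence the gradient bound. The $L^1$ bound for $P$ then follows from \eqref{eq:toric-L1-dirichlet}, since $\overline P_\varphi=0$; the factor $\sqrt{1+J}$ squared produces the stated $C(1+J)$.
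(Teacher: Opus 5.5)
Both halves of your proposal contain a step that fails.

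\textbf{Poincar\'e inequality.} Every variant you sketch closes with an upper bound, in terms of $J$, on $u|_{\partial\Delta}$, or on $\operatorname{osc}u$, $\int_\Delta|Du|\,dx$, or $\int_{\partial\Delta}u\,d\sigma$. No such bound exists.

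Let $h_K$ be a smooth convex approximation of $K(1-K\ell_1)_+$, and set $u=u_0+h_K$ minus a bounded constant so that $E=0$. Since $D^2h_K\ge0$, the Guillemin conditions persist. Since $\|h_K\|_{L^1(\Delta)}$ stays bounded, $J$ stays bounded by \eqref{eq:toric-J-L1}. Yet $u\approx K$ on $\mathcal F_1$ and $\int_\Delta|Du|\,dx\gtrsim K$.

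None of your sources give the missing bound. Convexity plus the $L^1$ bound gives only interior upper bounds, as in \eqref{eq:calc-s2-024}. Lemma~\ref{lem:symplectic-common-modulus} needs a volume-ratio bound that you do not have. Darvas--George--Smith control $\min_\Delta u$, not boundary values.

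In particular, your unweighted two-point version cannot work. It reduces to
$\iint\langle Du(x)-Du(y),x-y\rangle\,dx\,dy=2|\Delta|\bigl(\int_{\partial\Delta}u\,\langle x-\bar x,n\rangle\,dS-n\int_\Delta u\,dx\bigr)$,
which is unbounded on this family. Also, $\langle Du(z),z-x_*\rangle=+\infty$ at boundary points, so your convexity step with a further point $z'$ is vacuous.

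The missing idea is a weight that vanishes to second order at $\partial\Delta$, so that two integrations by parts produce no boundary terms. The paper proceeds in three steps:
\begin{enumerate}
\item It uses the improved Poincar\'e inequality $\int_\Delta|a-a_\Delta|\,dx\le C\int_\Delta d\,|Da|\,dx$, where $d=\operatorname{dist}(\cdot,\partial\Delta)$.
\item It applies $|Da|^2\le(\tr D^2u)\,u^{ij}a_ia_j$ and Cauchy--Schwarz.
\item It bounds $\int_\Delta(d-\eta)_+^2\,\Delta_{\R^n}u^\sharp\,dx=\int_\Delta u^\sharp\,\Delta_{\R^n}(d-\eta)_+^2\,dx\le2\int_\Delta u^\sharp\,dx$. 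Here $u^\sharp\ge0$ is $u$ minus its tangent plane at $x_*$, and $\Delta_{\R^n}(d-\eta)_+^2\le2$ by concavity of $d$.
\end{enumerate}
Only $\|u\|_{L^1}$, $|u(x_*)|$ and $|Du(x_*)|$ enter, and all are controlled by Proposition~\ref{prop:toric-J-L1}.

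Your ball-averaged radial construction is compatible with this. Its kernel is $\lesssim d(z)$ near $\partial\Delta$. After Cauchy--Schwarz against the unweighted energy it enters squared, not linearly as you wrote. So you land on $\int_\Delta d^2\tr D^2u\,dx$ and should finish as above, rather than by generating boundary terms.

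\textbf{Estimate for $P$.} The bound $|\nabla_\varphi\log m_0|_\varphi^2\le C\tr_{\omega_\varphi}\omega_0$ is false. In a vertex chart, $\log m_0=\sum_i\log|w_i|^2+(\text{smooth})$, so $|\partial\log m_0|_{\omega_0}\sim|w_i|^{-1}$ near the toric divisors. Boundedness of $D_\xi\log m_0$ only controls derivatives along the torus generators, which vanish on the divisors.

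In fact $\|\nabla_\varphi\log m_0\|_{L^2(d\mu_\varphi)}=\infty$ for every $\varphi$. Moreover, $dd^c\log m_0$ contains currents of integration along the divisors, so integrating $-\Delta_\varphi\log m_0$ by parts against $P$ on $M$ drops nonzero terms. Separately, $|\nabla_\varphi a|_\varphi^2=(D_\xi a)^{\mathsf T}(D_\xi^2\Phi)^{-1}D_\xi a$, with the inverse of $D_\xi^2\Phi$.

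The resulting $J$-independent bound $\|\nabla_\varphi P\|\le C$ is itself false. On $\mathbb P^1$, write $\Ric(\omega_0)=\lambda\omega_0+dd^ch_0$; then $P=\lambda\varphi-h_0+c$. For $\omega_\varphi=g_b^*\omega_0$ one finds $\|\nabla_\varphi\varphi\|_{L^2(d\mu_\varphi)}^2\sim|b|\to\infty$, while $\|\nabla_\varphi h_0\|^2\le Cn$.

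The $J$-dependence must come through $\varphi$, as in the paper:
\begin{enumerate}
\item Bound $|\tr_{\omega_\varphi}\Ric(\omega_0)|\le C\tr_{\omega_\varphi}\omega_0$ pointwise.
\item Write $\tr_{\omega_\varphi}\omega_0=n-\Delta_\varphi\varphi$, using the smooth global potential.
\item Integrate by parts against $|P|$ to get $n\int_M|P|\,d\mu_\varphi+\|\nabla_\varphi P\|\,\|\nabla_\varphi\varphi\|$.
\item Use $\|\nabla_\varphi\varphi\|^2\le n(n+1)J$, from the positive expansion of $J$.
\item Close with \eqref{eq:toric-L1-dirichlet} applied to the mean-zero $P$, and Young's inequality.
\end{enumerate}
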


\begin{proof}
Let $u$ be the symplectic potential corresponding to $\varphi$,
with $E(\varphi)=0$. Since $v$ is $T^n$-invariant, we view it as
a function of $\xi$ on the open orbit and put
$a(x):=v(Du(x))$, $a_\Delta:=|\Delta|^{-1}\int_\Delta a\,dx$,
and $d(x):=\operatorname{dist}(x,\partial\Delta)$.
Write $U:=D_x^2u$ and $(u^{ij}):=U^{-1}$.
The bounded convex domain $\Delta^\circ$ is a John domain.
The constant weight $1$ is doubling and satisfies the local
Poincar\'e inequality on cubes, so the improved Poincar\'e
inequality of \cite[Theorem~2.2]{AcostaCejasDuran2017} applies
with $p=1$. Together with \eqref{eq:toric-integration} and
\eqref{eq:calc-s2-020}, it gives
\begin{align}
 \int_M|v-\overline v_\varphi|\,d\mu_\varphi
 &=\frac1{|\Delta|}\int_\Delta|a-a_\Delta|\,dx \leq\frac{C_\Delta}{|\Delta|}\int_\Delta d\,|Da|\,dx\notag\\
 &\leq\frac{C_\Delta}{|\Delta|}\int_\Delta
       d\,(\tr D^2u)^{1/2}(u^{ij}a_i a_j)^{1/2}\,dx\notag\\
 &\leq C_\Delta
       \left(\frac1{|\Delta|}\int_\Delta d^2\tr D^2u\,dx\right)^{1/2}
       \left(\frac1{|\Delta|}\int_\Delta u^{ij}a_i a_j\,dx\right)^{1/2}\notag\\
 &=C_\Delta
       \left(\frac1{|\Delta|}\int_\Delta d^2\tr D^2u\,dx\right)^{1/2}
       \|\nabla_\varphi v\|_{L^2(d\mu_\varphi)}.
 \label{eq:calc-s4-100}
\end{align}

Since $U$ is positive definite, $U\leq(\tr U)I$ and
$U^{-1}\geq(\tr U)^{-1}I$. Thus
\begin{equation}\label{eq:calc-s4-099}
 |Da|^2\leq(\tr U)u^{ij}a_i a_j.
\end{equation}

We now estimate the first integral factor in
\eqref{eq:calc-s4-100}. Fix $x_*\in\Delta^\circ$.
Subtract the supporting affine function at $x_*$ and define
\begin{equation}\label{eq:calc-s4-089}
 u^\sharp(x):=u(x)-u(x_*)-Du(x_*)\cdot(x-x_*)\geq0.
\end{equation}
The Hessian is unchanged: $D^2u^\sharp=D^2u$.
Since $\Delta$ is fixed and bounded, \eqref{eq:toric-J-L1} and
\eqref{eq:convex-interior-estimate} give
\begin{align}
 \int_\Delta u^\sharp\,dx
 &\leq\int_\Delta|u|\,dx+|\Delta|\,|u(x_*)|
       +|Du(x_*)|\int_\Delta|x-x_*|\,dx\notag\\
 &\leq C(1+J(\varphi)).
 \label{eq:calc-s4-090}
\end{align}

The facet description of $\Delta$ gives
\begin{equation}\label{eq:calc-s4-091}
 d(x)=\min_r\frac{\ell_r(x)}{|\nu_r|}.
\end{equation}
Thus $-d$ is convex, $|Dd|=1$ almost everywhere, and
$\Delta_{\R^n}d$ is a nonpositive measure on $\Delta^\circ$.
For $\eta>0$, define $w_\eta:=(d-\eta)_+^2$. This function has
compact support in $\Delta^\circ$, and the distributional chain
rule gives
\begin{equation}\label{eq:calc-s4-092}
 \Delta_{\R^n}w_\eta
 =2\mathbf1_{\{d>\eta\}}|Dd|^2\,dx
      +2(d-\eta)_+\Delta_{\R^n}d\leq2\,dx.
\end{equation}
The second term is nonpositive. At $d=\eta$, the first derivative
of the squared positive part vanishes, so no additional interface
measure occurs there. Since $w_\eta$ is compactly supported in
the interior, integration by parts gives
\begin{align}
 \int_\Delta w_\eta\tr D^2u\,dx
 &=\int_\Delta w_\eta\Delta_{\R^n}u^\sharp\,dx =\int_\Delta u^\sharp\,\Delta_{\R^n}w_\eta
 \leq2\int_\Delta u^\sharp\,dx.
 \label{eq:calc-s4-093}
\end{align}
The last integral against $\Delta_{\R^n}w_\eta$ is the pairing
with a measure, and the inequality uses $u^\sharp\geq0$.
Only the interior smoothness of $u$ is needed here: one may
multiply $u^\sharp$ by a smooth cutoff equal to one near the
support of $w_\eta$ before taking the distributional pairing.
As $\eta\downarrow0$, we have
$w_\eta\uparrow d^2$ and $\tr D^2u\geq0$.
Monotone convergence on the left therefore gives
\begin{equation}\label{eq:boundary-distance-hessian}
 \int_\Delta d^2\tr D^2u\,dx
 \leq2\int_\Delta u^\sharp\,dx
 \leq C(1+J(\varphi)).
\end{equation}

Substituting \eqref{eq:boundary-distance-hessian} into
\eqref{eq:calc-s4-100}, we obtain
\begin{equation}\label{eq:toric-L1-dirichlet-conclusion}
 \int_M|v-\overline v_\varphi|\,d\mu_\varphi
 \leq C\sqrt{1+J(\varphi)}\,
       \|\nabla_\varphi v\|_{L^2(d\mu_\varphi)}.
\end{equation}
This proves \eqref{eq:toric-L1-dirichlet}. Since $x_*$ and
$\Delta$ are fixed, $C$ depends only on the background.

Choose $C$ such that
$-C\omega_0\leq\Ric(\omega_0)\leq C\omega_0$.
Multiplying the elliptic equation in \eqref{eq:pcf} by $P$
and integrating gives
\begin{equation}\label{eq:calc-s4-101}
 \|\nabla_\varphi P\|_{L^2(d\mu_\varphi)}^2
 \leq C\int_M|P|
       \bigl(1+\tr_{\omega_\varphi}\omega_0\bigr)\,d\mu_\varphi.
\end{equation}
Since $\tr_{\omega_\varphi}\omega_0=n-\Delta_\varphi\varphi$,
integration by parts gives
\begin{equation}\label{eq:P-trace-energy}
 \int_M|P|\tr_{\omega_\varphi}\omega_0\,d\mu_\varphi
 \leq n\int_M|P|\,d\mu_\varphi+
       \|\nabla_\varphi P\|_{L^2(d\mu_\varphi)}
       \|\nabla_\varphi \varphi\|_{L^2(d\mu_\varphi)}.
\end{equation}
One may first use $(P^2+\eta^2)^{1/2}$ and let $\eta$ tend to zero.
The standard positive expansion
\begin{equation}\label{eq:calc-s4-102}
 J(\varphi)=\frac1{(n+1)V}
   \sum_{j=0}^{n-1}(n-j)
   \int_M i\partial\varphi\wedge\bar\partial\varphi
           \wedge\omega_\varphi^j\wedge\omega_0^{n-1-j}
\end{equation}
implies $\|\nabla_\varphi \varphi\|_{L^2(d\mu_\varphi)}^2\leq n(n+1)J(\varphi)$.
Using the mean zero normalization of $P$ and
\eqref{eq:toric-L1-dirichlet} in \eqref{eq:P-trace-energy},
we conclude that
\begin{align}
 \|\nabla_\varphi P\|_{L^2(d\mu_\varphi)}^2
 &\leq C\sqrt{1+J(\varphi)}\,
                 \|\nabla_\varphi P\|_{L^2(d\mu_\varphi)}\notag\\
 &\leq\frac12\|\nabla_\varphi P\|_{L^2(d\mu_\varphi)}^2
                 +C(1+J(\varphi)).
 \label{eq:calc-s4-103}
\end{align}
The last inequality is Young's inequality. Absorbing the first
term on the right gives
\begin{equation}\label{eq:calc-s4-104}
 \|\nabla_\varphi P\|_{L^2(d\mu_\varphi)}^2
 \leq C(1+J(\varphi)).
\end{equation}
Since $\int_M P\,d\mu_\varphi=0$, a second application of
\eqref{eq:toric-L1-dirichlet} yields
\begin{equation}\label{eq:calc-s4-105}
 \int_M|P|\,d\mu_\varphi
 \leq C\sqrt{1+J(\varphi)}\,
             \|\nabla_\varphi P\|_{L^2(d\mu_\varphi)}
 \leq C(1+J(\varphi)).
\end{equation}
Adding these two estimates and enlarging $C$ proves
\eqref{eq:P-low-energy}.
\end{proof}

We next convert integral control into a uniform lower bound by
selecting global supporting affine functions with controlled
intercepts. We use the following contact set.

\begin{definition}\label{def:lower-contact-set}
For $W\in C^1(\R^n)$, its global lower contact set is
\begin{equation}\label{eq:calc-s4-106}
 \mathcal C_W:=\{\xi\in\R^n:
 W(y)\geq W(\xi)+DW(\xi)\cdot(y-\xi)
                         \text{ for every }y\in\R^n\}.
\end{equation}
Thus the tangent affine function at a point of $\mathcal C_W$
supports $W$ from below on all of $\R^n$.
\end{definition}

The following lemma makes the additive constant in a linear lower
bound uniform. The function $W$ itself need not be convex.

\begin{lemma}\label{lem:global-contact}
Let $H$ be a smooth symmetric positive definite matrix field on $\R^n$, and
put $m:=\det H$. Suppose that $W\in C^2(\R^n)$ satisfies
\begin{equation}\label{eq:global-contact-assumptions}
 \tr(H^{-1}D^2W)=\Lambda>0,\qquad
 W(\xi)\geq\kappa|\xi|-C_W,\qquad
 \int_{\R^n}(|W|+|\xi|)m\,d\xi\leq A,
\end{equation}
where $\kappa>0$ is fixed, while $C_W$ may depend on $W$.
Then there is a constant $C_A$, depending only on
$n,\kappa,\Lambda,A$ and independent of $C_W$, such that
\begin{equation}\label{eq:global-contact-conclusion}
 W(\xi)\geq\frac\kappa4|\xi|-C_A\qquad\text{on }\R^n.
\end{equation}
\end{lemma}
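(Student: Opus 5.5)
The plan is an Aleksandrov--Bakelman--Pucci type contact-set argument. The growth hypothesis $W\ge\kappa|\xi|-C_W$ is used only qualitatively, to produce contact points with every slope of size less than $\kappa$. The equation $\tr(H^{-1}D^2W)=\Lambda$ turns the area of the gradient image into $m$-mass of the contact set. The integral bound then lets us choose contact points with controlled $|W|+|\xi|$, which removes any dependence on $C_W$.

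\textbf{Step 1: contact points for each slope.} For every $q\in\R^n$ with $|q|<\kappa$, the function $y\mapsto W(y)-q\cdot y$ is continuous and satisfies $W(y)-q\cdot y\ge(\kappa-|q|)|y|-C_W$. So it attains a global minimum at some $\xi_q$. At $\xi_q$ we have $DW(\xi_q)=q$, the inequality $W(y)\ge W(\xi_q)+q\cdot(y-\xi_q)$ holds for all $y$, and $D^2W(\xi_q)\ge0$. Thus $\xi_q\in\mathcal C_W$, and $B_\kappa\subset DW(\mathcal C_W)$. Moreover $D^2W\ge0$ on all of $\mathcal C_W$, since every contact point is a global minimum of $W$ minus its tangent plane.

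\textbf{Step 2: measure estimate.} At a point of $\mathcal C_W$, set $S:=H^{-1/2}D^2W\,H^{-1/2}\ge0$. Then $\tr S=\Lambda$, and AM--GM gives
\begin{equation*}
 \det D^2W=m\det S\le m\,(\Lambda/n)^n .
\end{equation*}
Fix $q_0$ with $|q_0|=\kappa/2$ and put $B:=B_{\kappa/4}(q_0)\subset B_{3\kappa/4}$. Let $S_{q_0}:=\mathcal C_W\cap DW^{-1}(B)$, which is measurable because $\mathcal C_W$ is closed. Since $DW\in C^1$, the area formula (an inequality suffices) gives
\begin{equation*}
 |B|\le\int_{S_{q_0}}\det D^2W\,d\xi\le(\Lambda/n)^n\int_{S_{q_0}}m\,d\xi .
\end{equation*}
Hence $\int_{S_{q_0}}m\,d\xi\ge c_0:=|B_{\kappa/4}|(n/\Lambda)^n$, a constant independent of $q_0$ and $C_W$. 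By the integral hypothesis and Chebyshev's inequality, there is $\xi_*\in S_{q_0}$ with
\begin{equation*}
 |W(\xi_*)|+|\xi_*|\le A/c_0 .
\end{equation*}

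\textbf{Step 3: conclusion.} Given $y\ne0$, take $q_0=\frac\kappa2\,y/|y|$, and let $\xi_*$ be as in Step 2, with $q:=DW(\xi_*)\in B$. The global supporting inequality gives
\begin{equation*}
 W(y)\ge W(\xi_*)-q\cdot\xi_*+q\cdot y\ge-\frac{A}{c_0}-\kappa\frac{A}{c_0}+q\cdot y .
\end{equation*}
Since $|q-q_0|<\kappa/4$, we have $q\cdot y\ge q_0\cdot y-\frac\kappa4|y|=\frac\kappa4|y|$. This gives $W(y)\ge\frac\kappa4|y|-C_A$ with $C_A=(1+\kappa)A/c_0$. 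The case $y=0$ follows by taking any $q_0$. The constant depends only on $n,\kappa,\Lambda,A$.

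I expect the only delicate point to be Step 2. One must check that $D^2W\ge0$ on the contact set, so that the AM--GM bound applies to $\det S$, and that the area formula is used on the merely closed set $\mathcal C_W$ with the $C^1$ map $DW$. The inequality $|DW(E)|\le\int_E|\det D^2W|$ for measurable $E$ is standard, so I expect no real difficulty there.
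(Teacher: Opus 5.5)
Your proposal is correct and follows essentially the same route as the paper's proof. Both arguments find global contact points for every slope in $B_\kappa(0)$, use the area formula with $\det D^2W\le(\Lambda/n)^n m$ on the contact set, and pick a slope in $B_{\kappa/4}(\kappa e/2)$ whose contact point has $|W|+|\xi|$ bounded. The only difference is in packaging: the paper bounds the gradient image of the ``bad'' contact set $\{|W|+|\xi|>L\}$ and shows it cannot cover that ball, whereas you lower-bound the $m$-mass of the contact points over the ball and apply Chebyshev, which is the contrapositive of the same estimate.
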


\begin{proof}
Write $\mathcal C:=\mathcal C_W$ for the set in
Definition~\ref{def:lower-contact-set}; this set is closed.
We write $\boldsymbol{\gamma}=(\gamma_1,\ldots,\gamma_n)\in\R^n$
for the slope vector of an affine function, whose linear part is
$\boldsymbol{\gamma}\cdot\xi$ and whose gradient is $\boldsymbol{\gamma}$.
For every such vector with $|\boldsymbol{\gamma}|<\kappa$,
\begin{equation}\label{eq:calc-s4-107}
 W(\xi)-\boldsymbol{\gamma}\cdot\xi
 \geq(\kappa-|\boldsymbol{\gamma}|)|\xi|-C_W\longrightarrow+\infty
 \qquad\text{as }|\xi|\to\infty.
\end{equation}
Hence this function attains its minimum at some $\xi_{\boldsymbol{\gamma}}$, where
$DW(\xi_{\boldsymbol{\gamma}})=\boldsymbol{\gamma}$ and
\begin{equation}\label{eq:calc-s4-108}
 W(\xi)\geq W(\xi_{\boldsymbol{\gamma}})+\boldsymbol{\gamma}\cdot(\xi-\xi_{\boldsymbol{\gamma}})
                         \qquad\text{for all }\xi\in\R^n.
\end{equation}
Thus $\xi_{\boldsymbol{\gamma}}\in\mathcal C$: every slope in $B_\kappa(0)$ occurs
at a global lower contact point. Since $|\boldsymbol{\gamma}|<\kappa$, a bound for
$|W(\xi_{\boldsymbol{\gamma}})|+|\xi_{\boldsymbol{\gamma}}|$ gives a lower bound for the intercept
$W(\xi_{\boldsymbol{\gamma}})-\boldsymbol{\gamma}\cdot\xi_{\boldsymbol{\gamma}}$.

For $L>0$, set
\begin{equation}\label{eq:calc-s4-109}
 \mathcal C_L:=\{\xi\in\mathcal C:|W(\xi)|+|\xi|>L\}.
\end{equation}
The gradient image
\begin{equation}\label{eq:calc-s4-110}
 DW(\mathcal C_L):=\{DW(\xi):\xi\in\mathcal C_L\}
\end{equation}
is the set of slopes attained at these contact points.
Let $|DW(\mathcal C_L)|$ denote $n$-dimensional
Euclidean volume in the space of slopes. At a point of $\mathcal C$, the difference between $W$ and its
supporting affine function has a minimum, so $D^2W\geq0$ there.
The matrix $H^{-1/2}D^2W H^{-1/2}$ is therefore nonnegative
and its trace is
$\tr(H^{-1/2}D^2W H^{-1/2})=\tr(H^{-1}D^2W)=\Lambda$.
We have
\begin{equation}\label{eq:contact-Jacobian}
 0\leq\frac{\det D^2W}{m}
 =\det(H^{-1/2}D^2W H^{-1/2})
 \leq(\Lambda/n)^n\qquad\text{on }\mathcal C.
\end{equation}

The map $DW$ is $C^1$ and Lipschitz on every bounded ball.
We apply the area formula
\cite[Chapter~2, \S3, equation~(3.4)]{Simon2014}
on these balls and then use an increasing exhaustion of $\R^n$.
The formula counts a slope with the number of its preimages in
$\mathcal C_L$, which is at least one on $DW(\mathcal C_L)$.
Together with \eqref{eq:contact-Jacobian} and the integral
assumption, this gives
\begin{align}
|DW(\mathcal C_L)|
 &\leq\int_{\R^n}
       \#\{y\in\mathcal C_L:DW(y)=\boldsymbol{\gamma}\}\,d\boldsymbol{\gamma} =\int_{\mathcal C_L}|\det D^2W|\,d\xi \notag\\
 &\leq(\Lambda/n)^n\int_{\mathcal C_L}m\,d\xi \leq\frac{(\Lambda/n)^n}{L}
       \int_{\mathcal C_L}(|W|+|\xi|)m\,d\xi
 \leq\frac{(\Lambda/n)^n A}{L}.
 \label{eq:bad-contact-gradient-image}
\end{align}
Here $d\boldsymbol{\gamma}$ is Lebesgue measure on the space of
slope vectors, and $\#$ denotes the number of preimages, possibly infinite.
The left-hand side counts each slope only once, even when
several contact points have that slope; injectivity of $DW$
is therefore not required. In the last line we used
$1\leq(|W|+|\xi|)/L$ on $\mathcal C_L$.

Choose $L\geq1$ once and for all, depending only on
$n,\kappa,\Lambda,A$, such that
\begin{equation}\label{eq:calc-s4-111}
 \frac{(\Lambda/n)^n A}{L}<|B_{\kappa/4}(0)|.
\end{equation}
We now fix an arbitrary point $\xi\ne0$ and put
$e:=\xi/|\xi|$. The ball $B_{\kappa/4}(\kappa e/2)$ in the
space of slopes has the same volume as $B_{\kappa/4}(0)$,
independently of $e$. Each slope in this ball satisfies
$|\boldsymbol{\gamma}|<3\kappa/4<\kappa$. By
\eqref{eq:bad-contact-gradient-image}, the ball cannot be
contained in $DW(\mathcal C_L)$. For this point $\xi$, choose
\begin{equation}\label{eq:calc-s4-112}
 \boldsymbol{\gamma}\in B_{\kappa/4}(\kappa e/2)\setminus DW(\mathcal C_L).
\end{equation}
Since $|\boldsymbol{\gamma}|<\kappa$, the first part of the proof supplies a
global contact point $\xi_{\boldsymbol{\gamma}}$. If $\xi_{\boldsymbol{\gamma}}$ belonged to
$\mathcal C_L$, then $\boldsymbol{\gamma}=DW(\xi_{\boldsymbol{\gamma}})$ would belong to
$DW(\mathcal C_L)$, contrary to our choice. Hence
\begin{equation}\label{eq:calc-s4-113}
 |W(\xi_{\boldsymbol{\gamma}})|+|\xi_{\boldsymbol{\gamma}}|\leq L.
\end{equation}
The location of the chosen slope in the ball gives
\begin{align}
 \boldsymbol{\gamma}\cdot\xi
 &=\frac\kappa2|\xi|
       +\left(\boldsymbol{\gamma}-\frac\kappa2e\right)\cdot\xi \geq\frac\kappa2|\xi|
       -\left|\boldsymbol{\gamma}-\frac\kappa2e\right||\xi|
 \geq\frac\kappa4|\xi|.
 \label{eq:calc-s4-114}
\end{align}
Evaluating the supporting affine function at the prescribed
point $\xi$, we obtain
\begin{align}
 W(\xi)
 &\geq W(\xi_{\boldsymbol{\gamma}})+\boldsymbol{\gamma}\cdot(\xi-\xi_{\boldsymbol{\gamma}})\geq \boldsymbol{\gamma}\cdot\xi-|W(\xi_{\boldsymbol{\gamma}})|-|\boldsymbol{\gamma}|\,|\xi_{\boldsymbol{\gamma}}|\geq\frac\kappa4|\xi|-(1+\kappa)L.
 \label{eq:calc-s4-115}
\end{align}
\end{proof}

\begin{proof}[Proof of Proposition~\ref{prop:negative-P-exponent}]
We apply Lemma~\ref{lem:global-contact} to
\begin{equation}\label{eq:calc-s4-116}
 H:=D_\xi^2\Phi,\qquad \mathcal G:=P-\log m_0.
\end{equation}
The subtraction of $\log m_0$ absorbs the background Ricci term
in the equation for $P$.
On $T^n$-invariant functions, the Laplacian in these fixed coordinates is
$H^{ij}\partial_{\xi_i}\partial_{\xi_j}$, where $(H^{ij}):=H^{-1}$.
Since $\Ric(\omega_0)=-dd^c\log m_0$ on the open orbit,
the elliptic equation in \eqref{eq:pcf} becomes
\begin{equation}\label{eq:nonlocal-global-equation}
 \tr(H^{-1}D_\xi^2\mathcal G)=\underline R.
\end{equation}
Equation~\eqref{eq:toric-average-scalar-curvature} gives
$\underline R>0$.
For each individual smooth metric, $P$ is bounded on the compact
manifold. The upper bound for $m_0$ in
\eqref{eq:background-exponential-tail} therefore gives
\begin{equation}\label{eq:calc-s4-117}
 \mathcal G(\xi)\geq\kappa_0|\xi|-C_\varphi.
\end{equation}
This verifies the growth assumption of the lemma;
$C_\varphi$ need not be uniform.

The two-sided bounds in \eqref{eq:background-exponential-tail}
give $|\log m_0(\xi)|\leq C(1+|\xi|)$, and hence
$|\mathcal G(\xi)|\leq|P(\xi)|+C(1+|\xi|)$.
Using \eqref{eq:toric-volume}--\eqref{eq:toric-integration},
Lemma~\ref{lem:toric-L1-dirichlet} and
Proposition~\ref{prop:logarithmic-tail}, we obtain
\begin{equation}\label{eq:calc-s4-118}
 \int_{\R^n}(|\mathcal G|+|\xi|)m\,d\xi
 \leq C\left(1+\int_M|P|\,d\mu_\varphi+
                  \int_M|\xi|\,d\mu_\varphi\right)\leq C_B.
\end{equation}
All three hypotheses of Lemma~\ref{lem:global-contact} are now
verified, with $\kappa=\kappa_0$, $\Lambda=\underline R$, and
$A=C_B$. Its conclusion gives the uniform lower bound
\begin{equation}\label{eq:calc-s4-119}
 P(\xi)-\log m_0(\xi)=\mathcal G(\xi)
 \geq\frac{\kappa_0}{4}|\xi|-C_B.
\end{equation}
To return to $P$, use the lower bound for $m_0$ in
\eqref{eq:background-exponential-tail}. For $r>1$, its negative
power satisfies
\begin{equation}\label{eq:calc-s4-120}
 m_0^{\,1-r}(\xi)\leq C e^{(r-1)b_0|\xi|}.
\end{equation}
Choose $r>1$ sufficiently close to one that
$r\kappa_0/4-(r-1)b_0>0$. This choice depends only on the
background, independently of $B$. Using
\eqref{eq:toric-integration} for the background metric, we obtain
\begin{align}
\int_Me^{-rP}\,d\mu_0
 &=\frac1{|\Delta|}\int_{\R^n}
              e^{-rP}m_0\,d\xi =\frac1{|\Delta|}\int_{\R^n}
              e^{-r\mathcal G}m_0^{\,1-r}\,d\xi\notag\\
 &\leq C_B\int_{\R^n}
        \exp\bigl(-[r\kappa_0/4-(r-1)b_0]|\xi|\bigr)\,d\xi
 \leq C_B.
 \label{eq:calc-s4-121}
\end{align}
This proves the proposition.
\end{proof}

\section{Convergence under properness}\label{sec:proper-convergence}

In this section, we prove convergence of the flow under the
properness assumption. Corollary~\ref{cor:properness-reduction}
allows us to work with properness modulo $T_{\C}$ on the
space of $T^n$-invariant potentials. We first choose a continuous family
of automorphisms. The speed of this family is controlled by the
Dirichlet dissipation of the Mabuchi functional. We then obtain a
uniform $L^p$ estimate for the volume ratio, for some $p>1$.

Write $\omega(t):=\omega_{\varphi(t)}$ for the original flow,
which exists for all time by Theorem~\ref{thm:main}.
We use the normalization $E(\varphi(t))=0$ and the speed $f$ in
\eqref{eq:normalized-pcf-speed}. By Definition~\ref{def:pcf},
$f$ is determined intrinsically by the current metric and
commutes with pullback by a fixed holomorphic automorphism.

The dissipation identity \eqref{eq:Mabuchi-dissipation} and
the lower bound for the Mabuchi functional give
\begin{equation}\label{eq:mabuchi-dissipation-convergence}
 \int_0^\infty\|\nabla_\varphi f\|_{L^2(d\mu_\varphi)}^2\,dt<\infty.
\end{equation}
By Proposition~\ref{prop:toric-mabuchi-invariance},
$\mathcal M$ is invariant under $T_{\C}$, while
\eqref{eq:E-polytope-mean} fixes the mean of the symplectic potential $u(t)$.
Properness bounds the infimum of $J$ over each complex torus orbit,
but does not specify a family of automorphisms differentiable in time.
We choose such a family by fixing the weighted average of $D_xu$.
The interior support of the weight will allow us to control its
speed by the integral in \eqref{eq:mabuchi-dissipation-convergence}.

\begin{definition}
\label{def:continuous-torus-normalization}
Let $u(t)$ be a smooth family of symplectic potentials with fixed
mean, and put
\begin{equation}\label{eq:calc-s5-001}
 \bar x:=\frac1{|\Delta|}\int_\Delta x\,dx.
\end{equation}
Choose $\zeta\in C_c^\infty(\Delta^\circ)$, with
$\zeta\geq0$ and $\int_\Delta\zeta\,dx=1$, and define
\begin{equation}\label{eq:continuous-toric-gauge}
 b(t):=\int_\Delta\zeta D_xu(t)\,dx,\qquad
 \widetilde u(t,x):=u(t,x)-b(t)\cdot(x-\bar x).
\end{equation}
We call $\widetilde u(t)$ the continuous torus normalization
determined by $\zeta$. It satisfies
\begin{equation}\label{eq:calc-s5-002}
 \int_\Delta\zeta D_x\widetilde u(t)\,dx=0,\qquad
 \int_\Delta\widetilde u(t)\,dx=\int_\Delta u(t)\,dx.
\end{equation}
The first condition fixes the linear part, and the second follows
from $\int_\Delta(x-\bar x)\,dx=0$.
\end{definition}
For a smooth family $u(t)$, the parameter $b(t)$ is smooth in time,
since $\zeta$ has compact support in $\Delta^\circ$.
Replacing $u$ by $u-a\cdot(x-\bar x)$ replaces $b$ by $b-a$
and leaves $\widetilde u$ unchanged. Thus the normalization selects
a representative on each complex torus orbit.
Let $g_b$ denote the automorphism which is translation by $b$
in logarithmic coordinates, namely $z_i\mapsto e^{b_i/2}z_i$.
Legendre duality gives
\begin{align}
 \widetilde\Phi(t,\xi)
 &=\sup_{x\in\Delta}\{\xi\cdot x-u(t,x)+b(t)\cdot(x-\bar x)\}\notag\\
 &=\Phi(t,\xi+b(t))-b(t)\cdot\bar x.
 \label{eq:calc-s5-003}
\end{align}
Hence the metric corresponding to $\widetilde u$ is
$g_{b(t)}^*\omega_{\varphi(t)}$.
The unchanged mean and \eqref{eq:E-polytope-mean} imply
$E(\widetilde\varphi(t))=0$.

\subsection{Uniform \texorpdfstring{$L^p$}{Lp} bound for the volume form}
\begin{lemma}\label{lem:continuous-gauge-bounds}
There is a constant $C$, depending on the background, the initial
metric, the properness bound, and the fixed function $\zeta$, such that
\begin{equation}\label{eq:gauge-low-order-bounds}
 \begin{aligned}
 &\|\widetilde u(t)\|_{L^1(\Delta)}+J(\widetilde\varphi(t))
 +\int_M\log\frac{\omega_{\widetilde\varphi(t)}^n}{\omega_0^n}
                  \,d\mu_{\widetilde\varphi(t)}\leq C,
 \qquad t\geq0.
 \end{aligned}
\end{equation}
Moreover,
\begin{equation}\label{eq:gauge-speed}
 |b'(t)|^2\leq C\|\nabla_\varphi f\|_{L^2(d\mu_\varphi)}^2,\qquad
 \int_0^\infty|b'(t)|^2\,dt<\infty.
\end{equation}
\end{lemma}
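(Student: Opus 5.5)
The plan is to work entirely with the normalized representative $\widetilde u(t)$ and use three facts. The Mabuchi functional is monotone along the flow. It is invariant under $T_{\C}$ (Proposition~\ref{prop:toric-mabuchi-invariance}, which applies because properness includes a lower bound). And properness modulo $T_{\C}$ holds by Corollary~\ref{cor:properness-reduction}. By \eqref{eq:Mabuchi-dissipation} we have $\mathcal M(\varphi(t))\le\mathcal M(0)=:B$ for all $t$. Since $\widetilde\varphi(t)\in\mathcal H_0^T$ represents $g_{b(t)}^*\omega_{\varphi(t)}$, invariance gives $\mathcal M(\widetilde\varphi(t))\le B$, and the corollary gives $J_{T_{\C}}(\widetilde\varphi(t))\le C_B$.

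\textbf{Step 1: bound the specific representative.} The infimum $J_{T_{\C}}$ may be realized, up to $1$, by some $\widehat u=\widetilde u-a\cdot(x-\bar x)$ with $J(\widehat\varphi)\le C_B+1$. The compact part of $T_{\C}$ acts trivially on $T^n$-invariant metrics, so only logarithmic translations, i.e.\ linear changes of $u$, matter. By Definition~\ref{def:continuous-torus-normalization}, $a=-\int_\Delta\zeta D\widehat u\,dx$. To bound $a$, I will use the interior gradient estimate \eqref{eq:convex-interior-estimate}, made uniform over the compact set $\operatorname{supp}\zeta\subset\Delta^\circ$. The proof of Proposition~\ref{prop:toric-J-L1}(iii) works with constants uniform over $x_*$ in a compact subset of the interior, giving $|D\widehat u|\le C(1+J(\widehat\varphi))$ on $\operatorname{supp}\zeta$. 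Hence $|a|\le C$, so $\|\widetilde u\|_{L^1}\le\|\widehat u\|_{L^1}+C|a|\le C$, and then \eqref{eq:toric-J-L1} gives $J(\widetilde\varphi)\le C$.

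\textbf{Step 2: entropy.} I will use the Chen--Tian decomposition of the $K$-energy, in which $\mathcal M(\varphi)=\int_MF\,d\mu_\varphi+(\text{energy terms})$. The energy terms are built from $\varphi$, $\Ric(\omega_0)$ and $\underline R$, and are bounded in absolute value by $C(1+J(\varphi))$, using $-C\omega_0\le\Ric(\omega_0)\le C\omega_0$ and the standard positivity expansions, with $E=0$. This yields the bound
\[
\int_M F\,d\mu_{\widetilde\varphi}\le B+C\bigl(1+J(\widetilde\varphi)\bigr)\le C.
\]
A toric alternative is to bound the boundary term in \eqref{eq:toric-mabuchi} using convexity together with the $L^1$ bound. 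I expect the bookkeeping here, namely making sure every non-entropy term is controlled by $J$ alone, to be the main place requiring care.

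\textbf{Step 3: gauge speed.} Differentiating the definition of $b$ and using \eqref{eq:calc-s2-049}, i.e.\ $\dot u=-\widehat f$, gives
\[
b'=\int_\Delta\zeta D\dot u\,dx=-\int_\Delta D\zeta\,(\dot u-c)\,dx
\]
for any constant $c$, because $\int_\Delta D\zeta\,dx=0$. I will take $K\Subset\Delta^\circ$ to be a convex neighborhood of $\operatorname{supp}\zeta$ and $c$ to be the mean of $\dot u$ over $K$. The $L^1$ Poincar\'e inequality on $K$ and the pointwise bound \eqref{eq:calc-s4-099} then give
\[
|b'|\le C\int_K|D\dot u|\,dx\le C\Bigl(\int_K\tr D^2u\,dx\Bigr)^{1/2}\Bigl(\int_\Delta u^{ij}\dot u_i\dot u_j\,dx\Bigr)^{1/2}.
\]
Since $D^2u=D^2\widetilde u$ and $d$ is bounded below on $K$, the estimate \eqref{eq:boundary-distance-hessian} applied to $\widetilde u$ with Step 1 bounds $\int_K\tr D^2u\,dx$. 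The last factor equals $|\Delta|\,\|\nabla_\varphi f\|_{L^2(d\mu_\varphi)}^2$ by \eqref{eq:calc-s2-051}. This proves the first part of \eqref{eq:gauge-speed}. Integrating in time and using \eqref{eq:mabuchi-dissipation-convergence} gives the second.
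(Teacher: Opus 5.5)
Your proposal is correct and follows the same route as the paper. The flow decreases $\mathcal M$; $T_{\C}$-invariance together with Corollary~\ref{cor:properness-reduction} bounds $J_{T_{\C}}$; the translation parameter is controlled through $\zeta$; the entropy bound comes from the standard decomposition of $\mathcal M$, with the non-entropy terms bounded by $C\sup_M\widetilde\varphi\le C(1+J(\widetilde\varphi))$; and $b'$ is bounded by Cauchy--Schwarz against $\tr D^2u$ on an interior set.

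The differences are minor. The paper bounds $|b(t)-a_t|=|\int_\Delta D\zeta\,u^{a_t}\,dx|$ directly by $\|u^{a_t}\|_{L^1(\Delta)}$, and $\int_\Delta\zeta^2\tr D^2u\,dx=\int_\Delta\widetilde u\,\Delta_{\R^n}(\zeta^2)\,dx$ by $\|\widetilde u\|_{L^1(\Delta)}$. You use the uniform interior gradient estimate and \eqref{eq:boundary-distance-hessian} instead, which also works. Your Poincar\'e step is not needed, since $|b'|\le\|\zeta\|_{L^\infty}\int_{\operatorname{supp}\zeta}|D\dot u|\,dx$ already.
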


\begin{proof}
By \eqref{eq:Mabuchi-dissipation},
$\mathcal M(\varphi(t))\leq\mathcal M(\varphi(0))$.
Corollary~\ref{cor:properness-reduction} therefore bounds
$J_{T_{\C}}(\varphi(t))$ uniformly. At each time choose
$a_t\in\mathbb R^n$ so that
$J(g_{a_t}^*\omega(t))\leq J_{T_{\C}}(\varphi(t))+1$.
The compact torus acts trivially on the metric, so
only its real translation parameter is needed. Put
\begin{equation}\label{eq:calc-s5-004}
 u^{a_t}:=u-a_t\cdot(x-\bar x).
\end{equation}
Equation \eqref{eq:toric-J-L1} gives
$\|u^{a_t}\|_{L^1(\Delta)}\leq C$.
Since $\int_\Delta\zeta\,dx=1$ and $\zeta$ has interior support,
integration by parts gives
\begin{align}
 b(t)-a_t
 &=\int_\Delta\zeta D_xu^{a_t}\,dx
 =-\int_\Delta D_x\zeta\,u^{a_t}\,dx,
 \label{eq:calc-s5-005}\\
 |b(t)-a_t|
 &\leq\|D_x\zeta\|_{L^\infty(\Delta)}
                       \|u^{a_t}\|_{L^1(\Delta)}\leq C.
 \label{eq:calc-s5-006}
\end{align}
Consequently,
\begin{align}
 \|\widetilde u(t)\|_{L^1(\Delta)}
 &\leq\|u^{a_t}\|_{L^1(\Delta)}
       +|b(t)-a_t|\int_\Delta|x-\bar x|\,dx
 \leq C,
 \label{eq:calc-s5-007}\\
 J(\widetilde\varphi(t))
 &\leq C\|\widetilde u(t)\|_{L^1(\Delta)}+C\leq C,
 \label{eq:calc-s5-008}
\end{align}
where the last line uses \eqref{eq:toric-J-L1} again.

The potential $\widetilde\varphi(t)$ represents
$g_{b(t)}^*\omega_{\varphi(t)}$ relative to the fixed form $\omega_0$.
We use this same reference form in all the functionals below.
Fix $t$. Integrating the first variation along
$\sigma\mapsto\sigma\widetilde\varphi(t)$, $0\leq\sigma\leq1$,
and integrating by parts gives the usual decomposition
\cite[(2.18)]{ChenCheng2021}:
\begin{align}
 \mathcal M(\widetilde\varphi(t))
 &=\frac1V\int_0^1\int_M\widetilde\varphi(t)
       \bigl(\underline R-R(\omega_{\sigma\widetilde\varphi(t)})\bigr)
       \,\omega_{\sigma\widetilde\varphi(t)}^n\,d\sigma\notag\\
 &=\int_M\log\frac{\omega_{\widetilde\varphi(t)}^n}{\omega_0^n}
       \,d\mu_{\widetilde\varphi(t)}
       +\underline R E(\widetilde\varphi(t))\notag\\
 &\quad-\frac1V\sum_{j=0}^{n-1}\int_M
       \widetilde\varphi(t)\,\Ric(\omega_0)
       \wedge\omega_{\widetilde\varphi(t)}^j
                                  \wedge\omega_0^{n-1-j}.
 \label{eq:calc-s5-009}
\end{align}
Since $E(\widetilde\varphi(t))=0$, we have
$\sup_M\widetilde\varphi(t)\geq0$.
Each mixed volume
$\int_M\omega_{\widetilde\varphi(t)}^j\wedge\omega_0^{n-j}$
equals $V$, so the definition of $E$ gives
\begin{align}
 &\sum_{j=0}^{n}\int_M
       (\sup_M\widetilde\varphi(t)-\widetilde\varphi(t))\,
       \omega_{\widetilde\varphi(t)}^j\wedge\omega_0^{n-j}\notag\\
 &\qquad=(n+1)V\sup_M\widetilde\varphi(t)
                    -(n+1)VE(\widetilde\varphi(t))\notag\\
 &\qquad=(n+1)V\sup_M\widetilde\varphi(t).
 \label{eq:calc-s5-010}
\end{align}
All summands are nonnegative. Using the fixed-background bound
$-C\omega_0\leq\Ric(\omega_0)\leq C\omega_0$ and
$|\widetilde\varphi(t)|\leq\sup_M\widetilde\varphi(t)
       +(\sup_M\widetilde\varphi(t)-\widetilde\varphi(t))$,
we obtain
\begin{align}
 &\left|\frac1V\sum_{j=0}^{n-1}\int_M
       \widetilde\varphi(t)\,\Ric(\omega_0)
       \wedge\omega_{\widetilde\varphi(t)}^j
                                  \wedge\omega_0^{n-1-j}\right|\notag\\
 &\qquad\leq\frac C V\sum_{j=0}^{n-1}\int_M
       |\widetilde\varphi(t)|\,
       \omega_{\widetilde\varphi(t)}^j\wedge\omega_0^{n-j}\notag\\
 &\qquad\leq Cn\sup_M\widetilde\varphi(t)
       +\frac C V\sum_{j=0}^{n-1}\int_M
       (\sup_M\widetilde\varphi(t)-\widetilde\varphi(t))
       \,\omega_{\widetilde\varphi(t)}^j\wedge\omega_0^{n-j}\notag\\
 &\qquad\leq C\sup_M\widetilde\varphi(t).
 \label{eq:calc-s5-011}
\end{align}
For the corresponding bound in terms of $d_1$, see
\cite[Lemma~4.4]{ChenCheng2021}.
Since $\widetilde\varphi(t)$ is $T^n$-invariant,
\eqref{eq:toric-sup-J} gives
$\sup_M\widetilde\varphi(t)\leq J(\widetilde\varphi(t))+C$. Hence
\begin{equation}\label{eq:calc-s5-012}
 \left|\mathcal M(\widetilde\varphi(t))
       -\int_M\log\frac{\omega_{\widetilde\varphi(t)}^n}{\omega_0^n}
          \,d\mu_{\widetilde\varphi(t)}\right|
 \leq C(1+J(\widetilde\varphi(t))).
\end{equation}
The first variation is unchanged by simultaneous pullback
of the metric and its variation. Integrating along a path from
$0$ to $\varphi(t)$ therefore gives
\begin{equation}\label{eq:gauge-mabuchi-cocycle}
 \mathcal M(\widetilde\varphi(t))
 =\mathcal M(\varphi(t))+\mathcal M(g_{b(t)}\cdot0)
 =\mathcal M(\varphi(t)).
\end{equation}
Here the additional term is zero by
Proposition~\ref{prop:toric-mabuchi-invariance} and $\mathcal M(0)=0$:
properness gives the required lower bound on $\mathcal H_0^T$.
Combining this invariance with the preceding estimate, as in
\cite[the proof of Lemma~4.18]{ChenCheng2021}, yields
\begin{align}
 &\int_M\log\frac{\omega_{\widetilde\varphi(t)}^n}{\omega_0^n}
                         \,d\mu_{\widetilde\varphi(t)}
 \leq\mathcal M(\widetilde\varphi(t))
                    +C(1+J(\widetilde\varphi(t)))\notag\\
 &\qquad =\mathcal M(\varphi(t))+C(1+J(\widetilde\varphi(t)))\leq\mathcal M(\varphi(0))+C\leq C.
 \label{eq:calc-s5-013}
\end{align}
This proves \eqref{eq:gauge-low-order-bounds}.

For the speed estimate, $\dot u$ denotes the derivative of the
original symplectic potential at fixed $x$.
By Legendre duality and \eqref{eq:polytope-dissipation},
\begin{equation}\label{eq:calc-s5-014}
 \dot u(t,x)=-f(t,D_xu(t,x)),\qquad
 \int_\Delta u^{jk}(\dot u)_j(\dot u)_k\,dx
 =|\Delta|\|\nabla_\varphi f\|_{L^2(d\mu_\varphi)}^2.
\end{equation}
Here $f$ on the right of the first identity is written in logarithmic
coordinates, and $(u^{jk})$ is the inverse of $D_x^2u$.
For each fixed $i$, the matrix Cauchy--Schwarz inequality gives
$| (\dot u)_i|^2\leq u_{ii}u^{jk}(\dot u)_j(\dot u)_k$,
with no summation over $i$. Thus
\begin{align}
 |b_i'(t)|
 &=\left|\int_\Delta\zeta(\dot u)_i\,dx\right|
 \leq\int_\Delta\zeta\sqrt{u_{ii}}\,
                  \sqrt{u^{jk}(\dot u)_j(\dot u)_k}\,dx\notag\\
 &\leq\left(\int_\Delta\zeta^2u_{ii}\,dx\right)^{1/2}
       \left(\int_\Delta u^{jk}(\dot u)_j(\dot u)_k\,dx\right)^{1/2}.
 \label{eq:calc-s5-015}
\end{align}
Squaring and summing over $i$ therefore gives
\begin{equation}\label{eq:calc-s5-016}
 |b'(t)|^2
 \leq |\Delta|\|\nabla_\varphi f\|_{L^2(d\mu_\varphi)}^2
               \int_\Delta\zeta^2\tr D_x^2u\,dx.
\end{equation}
Since $D_x^2u=D_x^2\widetilde u$ and $\zeta$ has interior support,
two integrations by parts give
\begin{align}
& \int_\Delta\zeta^2\tr D_x^2u\,dx
 =\int_\Delta\zeta^2\Delta_{\mathbb R^n}\widetilde u\,dx\notag\\
 &=-\int_\Delta D_x(\zeta^2)\cdot D_x\widetilde u\,dx
 =\int_\Delta\widetilde u\,
                    \Delta_{\mathbb R^n}(\zeta^2)\,dx\notag\\
 &\leq\|\Delta_{\mathbb R^n}(\zeta^2)\|_{L^\infty(\Delta)}
                         \|\widetilde u\|_{L^1(\Delta)}\leq C.
 \label{eq:calc-s5-017}
\end{align}
The resulting bound for $|b'(t)|^2$ and
\eqref{eq:Mabuchi-dissipation} imply
\begin{align}
 \int_0^T|b'(t)|^2\,dt
 &\leq C\int_0^T\|\nabla_\varphi f\|_{L^2(d\mu_\varphi)}^2\,dt\notag\\
 &=C\bigl(\mathcal M(\varphi(0))-\mathcal M(\varphi(T))\bigr)
 \leq C.
 \label{eq:calc-s5-018}
\end{align}
The last constant is independent of $T$, by the lower bound for
$\mathcal M$. Letting $T\to\infty$ proves \eqref{eq:gauge-speed}.
\end{proof}

In particular, for every fixed $L>0$,
\begin{equation}\label{eq:gauge-window-displacement}
 \sup_{s\leq t\leq s+L}|b(t)-b(s)|
 \leq C\left(L\int_s^{s+L}\|\nabla_\varphi f\|_{L^2(d\mu_\varphi)}^2\,dt\right)^{1/2}.
\end{equation}
Until the end of the proof of
Proposition~\ref{prop:proper-density-bound}, all quantities refer to
the pulled-back metric $g_{b(t)}^*\omega_{\varphi(t)}$, and we omit
the tildes.
Here $F$ is the logarithmic volume ratio of the pulled-back metric
relative to $\omega_0$, and $P$ is its normalized elliptic solution
in \eqref{eq:pcf}, also relative to this fixed background.
The intrinsic speed $f$ in \eqref{eq:normalized-pcf-speed} is
the pullback of the original normalized speed. The time derivative
of the potential in this moving gauge also contains a transport
term.

Hence the equations in the moving gauge are
\begin{equation}\label{eq:moving-gauge-equations}
 \partial_t\varphi=f+b'\cdot(D_\xi\Phi-\bar x),\qquad
 \partial_tF=\Delta_\varphi f+b'\cdot D_\xi\log m.
\end{equation}
The density $m$ is given by \eqref{eq:toric-volume}.
The added term in the first equation has zero mean with respect
to $d\mu_\varphi$, since the moment map pushes this measure
forward to $dx/|\Delta|$.
When $b'\equiv0$, in particular when $b\equiv0$, these equations
reduce to $\partial_t\varphi=f$ and $\partial_tF=\Delta_\varphi f$,
the normalized pseudo-Calabi flow identities valid on any compact
K\"ahler manifold.

\begin{proposition}\label{prop:proper-density-bound}
There exist $p>1$ and $C<\infty$ such that, in the gauge
\eqref{eq:continuous-toric-gauge},
\begin{equation}\label{eq:proper-density-bound}
 \sup_{t\geq0}\int_Me^{pF(t)}\,d\mu_0\leq C.
\end{equation}
\end{proposition}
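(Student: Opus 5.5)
The plan is to run a Gronwall-type argument for the weighted density integral
\[
 I(t):=\int_M e^{pF}\,d\mu_0=\int_M e^{(p-1)F}\,d\mu_\varphi
\]
in the moving gauge, with $p>1$ close to one. The coefficients should be controlled either by quantities bounded by Lemma~\ref{lem:continuous-gauge-bounds} or by quantities integrable in time, namely $\|\nabla_\varphi f\|^2_{L^2(d\mu_\varphi)}$ and $|b'|^2$. The inputs are:
\begin{itemize}
\item the uniform bounds on $J$ and the entropy from \eqref{eq:gauge-low-order-bounds};
\item the negative exponential estimate of Proposition~\ref{prop:negative-P-exponent}, which with those bounds gives $\int_M e^{-rP}\,d\mu_0\le C$ uniformly in $t$ for a fixed $r>1$;
\item the integrability \eqref{eq:gauge-speed}.
\end{itemize}

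\textbf{Differentiating $I$.} Using \eqref{eq:moving-gauge-equations}, I would write
\[
 \frac{d}{dt}I=p\int_M e^{pF}\bigl(\Delta_\varphi f+b'\cdot D_\xi\log m\bigr)\,d\mu_0 .
\]
Converting $e^{pF}d\mu_0=e^{(p-1)F}d\mu_\varphi$ and integrating by parts gives
\[
 -p(p-1)\int_M e^{(p-1)F}\langle\nabla_\varphi F,\nabla_\varphi f\rangle\,d\mu_\varphi .
\]
Substituting $F=f-P+\int_M F\,d\mu_\varphi$ produces three contributions:
\begin{itemize}
\item a good term $-p(p-1)\int e^{(p-1)F}|\nabla_\varphi f|^2$;
\item a cross term in $\langle\nabla_\varphi P,\nabla_\varphi f\rangle$, handled by Cauchy--Schwarz and absorption;
\item a term $\int e^{(p-1)F}|\nabla_\varphi P|^2$.
\end{itemize}
For the last term I would integrate by parts once more against $\Delta_\varphi P=\underline R-\tr_{\omega_\varphi}\Ric(\omega_0)$. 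This reduces it to $\int e^{(p-1)F}(1+\tr_{\omega_\varphi}\omega_0)\,d\mu_\varphi$, i.e.\ to $\int e^{(p-1)F}e^{-F/n}$-type quantities that are bounded by $I$. The transport term $b'\cdot D_\xi\log m$ is a divergence along the real torus flow. Integrating it by parts in $\xi$ against $e^{(p-1)F}m$, with $m_0$ having bounded logarithmic derivative by \eqref{eq:background-exponential-tail}, it contributes at most $C|b'|\,I\le C(1+|b'|^2)I$.

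\textbf{Closing the inequality.} The target is
\[
 \frac{d}{dt}I\le C\bigl(|b'|^2+\|\nabla_\varphi f\|^2_{L^2(d\mu_\varphi)}\bigr)I+C_0-c\,I^{1+\delta}.
\]
A bounded source combined with a superlinear sink keeps $I$ bounded by an ODE comparison, given the time-integrable coefficient. For the sink, I would use Jensen's inequality together with $F=f-P+\int F\,d\mu_\varphi$: splitting by H\"older with exponent $pq'\le r$,
\[
 I\le\Bigl(\int_M e^{pqf}\,d\mu_0\Bigr)^{1/q}\Bigl(\int_M e^{-pq'P}\,d\mu_0\Bigr)^{1/q'}e^{p\int F\,d\mu_\varphi}.
\]
The last two factors are uniformly bounded by Proposition~\ref{prop:negative-P-exponent} and the entropy bound. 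So a large $I$ forces $f$ to have large exponential moments, and the toric weighted Poincar\'e estimate of Lemma~\ref{lem:toric-L1-dirichlet} (with $J$ bounded) converts this into a lower bound for the good gradient term $\int e^{(p-1)F}|\nabla_\varphi f|^2$ by a power of $I$.

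\textbf{Main obstacle.} I expect the hardest step to be exactly this conversion, i.e.\ producing the superlinear dissipative term from the weighted gradient term, since no uniform Sobolev inequality is available for the moving metrics. What I would try first is to apply \eqref{eq:toric-L1-dirichlet} to $v=e^{(p-1)F/2}$ (and to truncations of it), using the bounded $J$ to keep the constant uniform, and then Jensen on the probability measure $d\mu_\varphi$. If this fails, the fallback is a time-window argument: on windows $[s,s+1]$ where $\int_s^{s+1}\|\nabla_\varphi f\|^2$ is small, \eqref{eq:gauge-window-displacement} keeps the gauge nearly fixed, and the $L^p$ bound can be propagated from a good time. Finally, $p$ is chosen at the end, close enough to one that $pq'\le r$ and the absorptions work.
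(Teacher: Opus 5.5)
\textbf{There is a genuine gap.} The Gronwall framework, the use of Proposition~\ref{prop:negative-P-exponent} through Jensen, and your handling of the transport term $b'\cdot D_\xi\log m$ all match the paper. The argument breaks in two places, and both come from taking $I=\int_M e^{pF}\,d\mu_0$ without a potential weight.

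\emph{First failure: the curvature term is not controlled.} Without a weight,
\[
 I'=-p(p-1)\int_M e^{(p-1)F}|\nabla_\varphi F|_\varphi^2\,d\mu_\varphi+p\underline R\,I-p\int_M e^{(p-1)F}\tr_{\omega_\varphi}\Ric(\omega_0)\,d\mu_\varphi+(\text{transport}).
\]
Substituting $F=f-P+\int_M F\,d\mu_\varphi$ only reshuffles this. Your reduction of $\int_M e^{(p-1)F}|\nabla_\varphi P|_\varphi^2\,d\mu_\varphi$ fails for two reasons.
\begin{enumerate}
\item The integration by parts leaves a factor $P$ and an extra cross term $(p-1)\int_M e^{(p-1)F}P\langle\nabla_\varphi F,\nabla_\varphi P\rangle_\varphi\,d\mu_\varphi$. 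No $L^\infty$ bound on $P$ is available here; in the paper it is an output of Proposition~\ref{prop:positive-time-volume}, which needs the $L^p$ bound you are trying to prove.
\item The inequality $\tr_{\omega_\varphi}\omega_0\geq ne^{-F/n}$ is only a lower bound. There is no upper bound for $\tr_{\omega_\varphi}\omega_0$ in terms of $F$, so $\int_M e^{(p-1)F}\tr_{\omega_\varphi}\omega_0\,d\mu_\varphi$ is not bounded by $I$.
\end{enumerate}

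\emph{Second failure: there is no sink.} Even if $\Ric(\omega_0)\geq0$, your $I$ only satisfies $I'\leq(p\underline R+C|b'|)I$.
\begin{enumerate}
\item Lemma~\ref{lem:toric-L1-dirichlet} is an $L^1$ Poincar\'e inequality. Poincar\'e-type inequalities give at most linear control and cannot produce $-cI^{1+\delta}$. That would need a Sobolev or Nash inequality, and the uniform versions of those require an $L^p$ density bound, which is circular.
\item Applied to $e^{(p-1)F/2}$, the lemma controls $\nabla_\varphi F$, not $\nabla_\varphi f$.
\item Your H\"older splitting bounds moments of $f$ against $d\mu_0$, not $d\mu_\varphi$.
\item The fallback does not work either. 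The constant in Proposition~\ref{prop:toric-density-lp} depends on the pointwise decay $\sup m\,e^{\alpha\rho}$ at the starting time, which is not uniform in $s$.
\end{enumerate}

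\textbf{The missing idea is to put the potential into the weight.} Set $I(t)=\int_M e^{pF-(p-1)\lambda\varphi}\,d\mu_0$, with $\lambda$ chosen so that $\Ric(\omega_0)+\lambda\omega_0\geq0$.
\begin{enumerate}
\item Since $\Delta_\varphi\varphi=n-\tr_{\omega_\varphi}\omega_0$,
\[
 \Delta_\varphi f=\Delta_\varphi(F-\lambda\varphi)+\underline R+n\lambda-\tr_{\omega_\varphi}\bigl(\Ric(\omega_0)+\lambda\omega_0\bigr).
\]
After discarding the gradient term, the Laplacian contribution is at most $p(\underline R+n\lambda)I$.
\item The weight also produces the zeroth-order term $-(p-1)\lambda\int_M e^{(p-1)(F-\lambda\varphi)}f\,d\mu_\varphi$. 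Here $f=F+P-\int_M F\,d\mu_\varphi$ enters linearly and with a favorable sign.
\item Apply Jensen to $s\mapsto e^{-ps}$ with respect to the probability measure $I^{-1}e^{pF-(p-1)\lambda\varphi}\,d\mu_0$. Combine this with $\int_M e^{-pP-(p-1)\lambda\varphi}\,d\mu_0\leq C$, which follows from Proposition~\ref{prop:negative-P-exponent}, H\"older with $p<r$ and $(p-1)\lambda r/(r-p)$ small, and Tian's $\alpha$-invariant. This gives the damping $-\frac{(p-1)\lambda}{p}I\log\frac IC$.
\item With the entropy bound and the transport estimate, this yields $(\log I)'+c\log I\leq C+C|b'|$. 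It closes because $b'\in L^2(0,\infty)$.
\item Finally, $\sup_M\varphi\leq J(\varphi)+C$ removes the weight.
\end{enumerate}
Your Jensen splitting is close in spirit. It only becomes a sink once $f$ appears linearly in $I'$, which requires the $\varphi$-weight. The dissipation term plays no role and is simply dropped.
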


\begin{proof}
By Proposition~\ref{prop:negative-P-exponent} and
Lemma~\ref{lem:continuous-gauge-bounds}, there are fixed
$r>1$ and $C$ such that
\begin{equation}\label{eq:P-negative-moment-along-flow}
 \int_Me^{-rP(t)}\,d\mu_0\leq C,\qquad t\geq0.
\end{equation}
Choose $\lambda>0$ so that
$\Ric(\omega_0)+\lambda\omega_0\geq0$.
By Tian's $\alpha$-invariant estimate
\cite[Proposition~2.1]{Tian1987}, choose a sufficiently small
$\alpha>0$, depending only on the fixed background, such that
\begin{equation}\label{eq:calc-s5-019}
 \int_Me^{-\alpha(\psi-\sup_M\psi)}\,d\mu_0\leq C_\alpha,
 \qquad \psi\in\mathcal H.
\end{equation}
Since $E(\varphi)=0$ implies $\sup_M\varphi\geq0$, this also
bounds $\int_Me^{-\alpha\varphi}\,d\mu_0$.
Fix $p>1$ sufficiently close to one that
\begin{equation}\label{eq:calc-s5-020}
 p<r,\qquad \frac{(p-1)\lambda r}{r-p}\leq\alpha.
\end{equation}
H\"older's inequality and
\eqref{eq:P-negative-moment-along-flow} then give
\begin{equation}\label{eq:weighted-negative-moment-convergence}
 \begin{aligned}
 \int_Me^{-pP-(p-1)\lambda\varphi}\,d\mu_0
 &\leq
 \left(\int_Me^{-rP}\,d\mu_0\right)^{p/r}\cdot
 \left(\int_Me^{-(p-1)\lambda r\varphi/(r-p)}\,d\mu_0
                                      \right)^{(r-p)/r}
 \leq C.
 \end{aligned}
\end{equation}
Consider the weighted integral
\begin{equation}\label{eq:weighted-moments-convergence}
 I(t):=\int_Me^{pF-(p-1)\lambda\varphi}\,d\mu_0
     =\int_Me^{(p-1)(F-\lambda\varphi)}\,d\mu_\varphi.
\end{equation}
Differentiating with respect to $t$ in the fixed measure $d\mu_0$
and using \eqref{eq:moving-gauge-equations}, we obtain the full expression
\begin{align} 
 I'(t)
 &=\int_Me^{pF-(p-1)\lambda\varphi}
       \bigl[p\partial_tF-(p-1)\lambda\partial_t\varphi\bigr]\,d\mu_0
 \notag\\
 &=p\int_Me^{(p-1)(F-\lambda\varphi)}
                         \Delta_\varphi f\,d\mu_\varphi
       -(p-1)\lambda\int_Me^{(p-1)(F-\lambda\varphi)}
                                      f\,d\mu_\varphi
 \notag\\
 &\quad+\frac1{|\Delta|}\int_{\mathbb R^n}
       e^{pF-(p-1)\lambda\varphi}m_0\,
       b'(t)\cdot\bigl[pD_\xi\log m
              -(p-1)\lambda(D_\xi\Phi-\bar x)\bigr]\,d\xi.
 \label{eq:weighted-density-evolution}
\end{align}
All spatial quantities in these identities are evaluated at time $t$.
We estimate the three terms in the last expression in order.

For the Laplacian term, use integration by parts on $M$ gives
\begin{align}
 &p\int_Me^{(p-1)(F-\lambda\varphi)}
                         \Delta_\varphi f\,d\mu_\varphi=p\int_Me^{(p-1)(F-\lambda\varphi)}
       \bigl[\Delta_\varphi F+\underline R
                  -\tr_{\omega_\varphi}\Ric(\omega_0)\bigr]\,d\mu_\varphi\notag\\
 &=p\int_Me^{(p-1)(F-\lambda\varphi)}
                         \Delta_\varphi(F-\lambda\varphi)\,d\mu_\varphi+p\lambda\int_Me^{(p-1)(F-\lambda\varphi)}
                   \bigl(n-\tr_{\omega_\varphi}\omega_0\bigr)\,d\mu_\varphi\notag\\
 &\qquad\qquad +p\underline R I(t)
       -p\int_Me^{(p-1)(F-\lambda\varphi)}
                  \tr_{\omega_\varphi}\Ric(\omega_0)\,d\mu_\varphi\notag\\
 &=-p(p-1)\int_Me^{(p-1)(F-\lambda\varphi)}
       |\nabla_\varphi(F-\lambda\varphi)|_\varphi^2\,d\mu_\varphi\notag\\
 &\qquad\qquad-p\int_Me^{(p-1)(F-\lambda\varphi)}
       \tr_{\omega_\varphi}\bigl(\Ric(\omega_0)+\lambda\omega_0\bigr)
                                             \,d\mu_\varphi\ +p(\underline R+n\lambda)I(t)\notag\\
 &\leq p(\underline R+n\lambda)I(t).
 \label{eq:calc-s5-021}
\end{align}

For the second term in \eqref{eq:weighted-density-evolution},
the coefficient is negative,
so we need a lower bound for the weighted integral of $f$.
First apply Jensen's inequality to the convex function
$s\mapsto e^{-ps}$, evaluated at $F+P$, with respect to the
probability measure
$I(t)^{-1}e^{pF-(p-1)\lambda\varphi}\,d\mu_0$. This gives
\begin{align}
 &\exp\!\left[-\frac p{I(t)}
      \int_Me^{pF-(p-1)\lambda\varphi}(F+P)\,d\mu_0\right]\notag\\
 &\quad\leq\frac1{I(t)}\int_M
       e^{pF-(p-1)\lambda\varphi}e^{-p(F+P)}\,d\mu_0\notag\\
 &\quad=\frac1{I(t)}\int_M
       e^{-pP-(p-1)\lambda\varphi}\,d\mu_0
 \leq\frac C{I(t)},
 \label{eq:calc-s5-023}
\end{align}
where the last inequality is
\eqref{eq:weighted-negative-moment-convergence}.
Hence we obtain
\begin{equation}\label{eq:relative-entropy-density}
 \begin{aligned}
 \int_Me^{(p-1)(F-\lambda\varphi)}(F+P)\,d\mu_\varphi
 &\geq\frac{I(t)}p
       \log\frac{I(t)}{\int_Me^{-pP-(p-1)\lambda\varphi}\,d\mu_0}
 &\geq\frac{I(t)}p\log\frac{I(t)}C.
 \end{aligned}
\end{equation}
Consequently, \eqref{eq:normalized-pcf-speed} yields
\begin{align}
 &-(p-1)\lambda\int_Me^{(p-1)(F-\lambda\varphi)}f\,d\mu_\varphi\notag\\
 &=-(p-1)\lambda\int_Me^{(p-1)(F-\lambda\varphi)}(F+P)\,d\mu_\varphi
       +(p-1)\lambda I(t)\int_M F\,d\mu_\varphi\notag\\
 &\leq-\frac{(p-1)\lambda}{p}I(t)\log\frac{I(t)}C
       +(p-1)\lambda I(t)\int_M F\,d\mu_\varphi.
 \label{eq:calc-s5-024}
\end{align}
It remains to estimate the last term in
\eqref{eq:weighted-density-evolution}, which comes from the moving
automorphism. Use
$F=\log(m/m_0)$ and $\varphi=\Phi-\Phi_0$. Writing its integrand
as a total derivative plus a fixed-background term gives 
\begin{align}
&\frac1{|\Delta|}\int_{\mathbb R^n}
 e^{pF-(p-1)\lambda\varphi}m_0\,
 b'(t)\cdot\bigl[pD_\xi\log m
              -(p-1)\lambda(D_\xi\Phi-\bar x)\bigr]\,d\xi \notag\\
 &=\frac1{|\Delta|}\int_{\mathbb R^n}
       b'(t)\cdot D_\xi\!\left(e^{pF-(p-1)\lambda\varphi}m_0\right)\,d\xi \notag\\
 &\quad+\frac{p-1}{|\Delta|}\int_{\mathbb R^n}
 e^{pF-(p-1)\lambda\varphi}m_0\,
 b'(t)\cdot\bigl[D_\xi\log m_0-\lambda(D_\xi\Phi_0-\bar x)\bigr]\,d\xi.
 \label{eq:weighted-density-drift-decomposition}
\end{align}

For the first term, fix $t$ and choose
$\chi_R\in C_c^\infty(\mathbb R^n)$ with $0\leq\chi_R\leq1$,
equal to one on $\{|\xi|\leq R\}$, supported in
$\{|\xi|<2R\}$, and satisfying $|D_\xi\chi_R|\leq C/R$.
Integration by parts gives
\begin{align}
 &\left|\int_{\mathbb R^n}\chi_R\,b'(t)\cdot D_\xi
       \!\left(e^{pF-(p-1)\lambda\varphi}m_0\right)\,d\xi\right|\notag\\
 &=\left|\int_{\mathbb R^n}e^{pF-(p-1)\lambda\varphi}m_0\,
       b'(t)\cdot D_\xi\chi_R\,d\xi\right|\notag\\
 &\leq\frac{C_t|b'(t)|}{R}\int_{|\xi|\geq R}m_0\,d\xi
       \longrightarrow0\qquad\text{as }R\to\infty.
 \label{eq:calc-s5-022}
\end{align}
The constant $C_t$ is used only to justify this identity at a
fixed time.

For the second term, $D_\xi\log m_0$ is bounded by
Proposition~\ref{prop:logarithmic-tail}, and $D_\xi\Phi_0$ takes
values in the fixed bounded polytope. Hence
\begin{align}
 &\left|\frac{p-1}{|\Delta|}\int_{\mathbb R^n}
 e^{pF-(p-1)\lambda\varphi}m_0\,
 b'(t)\cdot\bigl[D_\xi\log m_0-\lambda(D_\xi\Phi_0-\bar x)\bigr]
 \,d\xi\right|\notag\\
 &\leq\frac{C|b'(t)|}{|\Delta|}\int_{\mathbb R^n}
                    e^{pF-(p-1)\lambda\varphi}m_0\,d\xi
 =C|b'(t)|I(t),
 \label{eq:weighted-density-background-drift}
\end{align}
where $C$ is independent of $t$. Combining these two conclusions
with \eqref{eq:weighted-density-drift-decomposition}, we obtain
the estimate for the third term in
\eqref{eq:weighted-density-evolution}:
\begin{align}
 &\frac1{|\Delta|}\int_{\mathbb R^n}
 e^{pF-(p-1)\lambda\varphi}m_0\,
 b'(t)\cdot\bigl[pD_\xi\log m
              -(p-1)\lambda(D_\xi\Phi-\bar x)\bigr]\,d\xi\leq C|b'(t)|I(t).
 \label{eq:weighted-density-drift}
\end{align}

Substituting these three bounds into
\eqref{eq:weighted-density-evolution}, we obtain
\begin{equation}\label{eq:weighted-density-damping}
 \begin{aligned}
 I'(t)\leq{}&
 \bigl[p(\underline R+n\lambda)
        +(p-1)\lambda\int_M F\,d\mu_\varphi\bigr]I(t)\\
 &-\frac{(p-1)\lambda}{p}I(t)\log\frac{I(t)}C
       +C|b'(t)|I(t).
 \end{aligned}
\end{equation}
The entropy is uniformly bounded by
Lemma~\ref{lem:continuous-gauge-bounds}. Therefore we have
\begin{equation}\label{eq:log-density-damped-ode}
 \frac{I'(t)}{I(t)}+\frac{(p-1)\lambda}{p}\log I(t)
       \leq C+C|b'(t)|.
\end{equation} 
Then we have
\begin{align}
 \frac d{dt}\left(e^{(p-1)\lambda t/p}\log I(t)\right)
 &=e^{(p-1)\lambda t/p}
   \left(\frac{I'(t)}{I(t)}+\frac{(p-1)\lambda}{p}\log I(t)\right)\notag\\
 &\leq Ce^{(p-1)\lambda t/p}
       +Ce^{(p-1)\lambda t/p}|b'(t)|.
 \label{eq:log-density-integrating-factor}
\end{align}
Integrating from $0$ to $t$, we obtain
\begin{align}
 e^{(p-1)\lambda t/p}\log I(t)-\log I(0)
 &=\int_0^t\frac d{ds}
      \left(e^{(p-1)\lambda s/p}\log I(s)\right)\,ds\notag\\
 &\leq C\int_0^t e^{(p-1)\lambda s/p}\,ds
       +C\int_0^t e^{(p-1)\lambda s/p}|b'(s)|\,ds\notag\\
 &=\frac{Cp}{(p-1)\lambda}
       \left(e^{(p-1)\lambda t/p}-1\right)
       +C\int_0^t e^{(p-1)\lambda s/p}|b'(s)|\,ds.
 \label{eq:log-density-integrated-factor}
\end{align}
Then we get
\begin{align}
 \log I(t)
 &\leq e^{-(p-1)\lambda t/p}\log I(0)
       +\frac{Cp}{(p-1)\lambda}
          \left(1-e^{-(p-1)\lambda t/p}\right)\notag\\
 &\quad+Ce^{-(p-1)\lambda t/p}
          \int_0^t e^{(p-1)\lambda s/p}|b'(s)|\,ds\notag\\
 &=e^{-(p-1)\lambda t/p}\log I(0)
       +\frac{Cp}{(p-1)\lambda}
          \left(1-e^{-(p-1)\lambda t/p}\right)\notag\\
 &\quad+C\int_0^t e^{-(p-1)\lambda(t-s)/p}|b'(s)|\,ds\notag\\
 &\leq e^{-(p-1)\lambda t/p}\log I(0)
       +\frac{Cp}{(p-1)\lambda}
       +C\int_0^t e^{-(p-1)\lambda(t-s)/p}|b'(s)|\,ds.
 \label{eq:calc-s5-025}
\end{align}
By \eqref{eq:gauge-speed},
\begin{align}
 \int_0^t e^{-(p-1)\lambda(t-s)/p}|b'(s)|\,ds
 &\leq\left(\int_0^t e^{-2(p-1)\lambda(t-s)/p}\,ds\right)^{1/2}
          \|b'\|_{L^2(0,\infty)}\notag\\
 &\leq\sqrt{\frac{p}{2(p-1)\lambda}}\,
          \|b'\|_{L^2(0,\infty)}.
 \label{eq:calc-s5-026}
\end{align}
This bound is independent of $t$, and hence
\begin{equation}\label{eq:calc-s5-027}
 \log I(t)\leq e^{-(p-1)\lambda t/p}\log I(0)
       +C\bigl(1+\|b'\|_{L^2(0,\infty)}\bigr)\leq C.
\end{equation}
Finally, \eqref{eq:toric-sup-J}, with $E(\varphi)=0$, gives
$\sup_M\varphi\leq J(\varphi)+C\leq C$. Therefore
\begin{equation}\label{eq:calc-s5-028}
 \int_Me^{pF}\,d\mu_0
 \leq e^{(p-1)\lambda\sup_M\varphi}I(t)\leq C.
\end{equation}
This proves the proposition.
\end{proof}

\subsection{Proof of Theorem~\ref{thm:proper-convergence}}
We now return to the original flow $\omega(t)$.
To apply the parabolic estimates, we use a fixed automorphism
on each interval. For $s\geq0$ and $t\in[s,s+4]$, set
\begin{equation}\label{eq:calc-s5-029}
 \omega_s(t):=g_{b(s)}^*\omega(t),
 \qquad \omega_s(t)=\omega_0+dd^c\varphi_s(t),\qquad
 E(\varphi_s(t))=0.
\end{equation}
The subscript $s$ labels the fixed left endpoint of the interval;
time derivatives are taken with respect to $t$.
Since $g_{b(s)}$ is fixed in time,
$\partial_t\varphi_s(t)=g_{b(s)}^*f(t)$.
Thus $\varphi_s$ solves \eqref{eq:normalized-pcf-speed}, with $F_s,P_s$
defined relative to $\omega_0$ as in \eqref{eq:pcf}.
The automorphism is fixed on this interval, so the transport
terms in \eqref{eq:moving-gauge-equations} are absent.

\begin{proposition}\label{prop:uniform-frozen-window}
For every integer $k\geq0$, there is a constant $C_k$ independent
of $s\geq0$ such that
\begin{equation}\label{eq:frozen-window-smoothing}
 \sup_{s+2\leq t\leq s+3}
       \|\varphi_s(t)\|_{C^k(M,\omega_0)}\leq C_k,\qquad
 C^{-1}\omega_0\leq\omega_s(t)\leq C\omega_0,
 \quad s+2\leq t\leq s+3.
\end{equation}
\end{proposition}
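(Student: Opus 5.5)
The plan is to run the finite-time machinery of Sections~\ref{sec:volume} and~\ref{sec:P-estimates} on each frozen window $[s,s+4]$. The key is to ensure that every input constant is independent of $s$. The inputs will come from the moving-gauge bounds of Proposition~\ref{prop:proper-density-bound} and Lemma~\ref{lem:continuous-gauge-bounds}, transferred to the frozen gauge.

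\textbf{Step 1: uniform $L^p$ density bound on the window.} For $t\in[s,s+4]$ we have $\omega_s(t)=g_{b(s)-b(t)}^*\bigl(g_{b(t)}^*\omega(t)\bigr)$. Here $g_{b(t)}^*\omega(t)$ is the moving-gauge metric, whose density satisfies \eqref{eq:proper-density-bound}. By \eqref{eq:gauge-window-displacement} and \eqref{eq:mabuchi-dissipation-convergence}, the shift $a:=b(s)-b(t)$ satisfies $|a|\leq C$ uniformly in $s$. Translating by $a$ in logarithmic coordinates changes $m$ to $m(\cdot+a)$. The background density then satisfies $m_0(\xi+a)/m_0(\xi)\leq e^{C|a|}$ by the derivative bound in Proposition~\ref{prop:logarithmic-tail}. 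Hence
\[
\int_Me^{pF_s(t)}\,d\mu_0=\frac{1}{|\Delta|}\int_{\R^n}m(\xi+a)^p\,m_0(\xi)^{1-p}\,d\xi\leq e^{C(p-1)|a|}\int_Me^{p\widetilde F}\,d\mu_0\leq C .
\]
This bound holds uniformly for $t\in[s,s+4]$ and all $s$. The potential normalization $E(\varphi_s(t))=0$ is built in.

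\textbf{Step 2: $C^0$ bounds.} Apply Proposition~\ref{prop:positive-time-volume} on $[s,s+4]$ with $L=4$ and $\delta=1$. It yields $\|\varphi_s(t)\|_{C^\beta}+\|P_s(t)\|_\infty+\|F_s(t)\|_\infty\leq C$ for $t\in[s+1,s+4]$, with $C$ independent of $s$ and of $F_s(s)$. Then $\|u_t-u_0\|_\infty$ is bounded by \eqref{eq:legendre-C0}. Proposition~\ref{prop:common-modulus} now supplies a modulus of continuity $\rho$ for $P_s(t)$ on $[s+1,s+4]$. Its inputs are all uniform, so $\rho$ is independent of $s$.

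\textbf{Step 3: higher-order estimates, the main obstacle.} Theorem~\ref{thm:continuation-criterion} is stated only qualitatively. I would instead invoke the quantitative estimates behind it, namely \cite[Definition~4.3 and Corollary~4.7]{ChengTian2026}. These bound all $C^k$ norms of $\varphi$, and give $C^{-1}\omega_0\leq\omega\leq C\omega_0$, on $[\tau+\delta',T]$ in terms of $\sup\|F\|_\infty$, the modulus $\rho$ of $P$, the interval length, and $\delta'$. I would apply them on $[s+1,s+3]$, taking the initial time $s+1$ and waiting time $1$, to conclude \eqref{eq:frozen-window-smoothing} on $[s+2,s+3]$.

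The delicate point is to verify that the Cheng--Tian constants depend only on these quantities and the background. In particular they must not depend on higher norms at the left endpoint. This is the instantaneous-smoothing form of their estimates, which is also how Theorem~\ref{thm:main} uses them.

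If that dependence is unclear, I would fall back to a compactness-contradiction argument. Suppose the bound failed along a sequence $s_j$. Because the interval length is fixed, the Cheng--Tian smoothing estimates applied on each window still give bounds that depend only on the uniform $F$ and $\rho$ data. This would yield the contradiction.

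Finally, the $C^k$ bound on $\varphi_s$ combined with the two-sided volume bound gives the metric equivalence. Indeed, $\tr_{\omega_0}\omega_s\leq C$ together with $\omega_s^n\geq c\,\omega_0^n$ forces $\omega_s\geq C^{-1}\omega_0$.
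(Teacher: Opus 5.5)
Your proposal is correct and follows the paper's proof step for step. You transfer the moving-gauge $L^p$ density bound through the uniformly bounded shift $g_{b(s)-b(t)}$; your explicit use of $|D_\xi\log m_0|\le C$ is just a concrete form of the paper's compactness-of-Jacobians remark. Then you apply Proposition~\ref{prop:positive-time-volume} with $L=4$, $\delta=1$, use Proposition~\ref{prop:common-modulus} for a uniform modulus of $P_s$, run the quantitative Cheng--Tian estimates on $[s+1,s+3]$, and get metric equivalence from the determinant bound.

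The one detail you leave implicit is normalization: Cheng--Tian's estimates are stated for $\partial_t\varphi=F+P$, so you must first add the uniformly bounded constant $\int_{s+1}^t\bigl(\int_M F_s(a)\,d\mu_{\varphi_s(a)}\bigr)\,da$ to $\varphi_s$, as the paper does. Your fallback compactness argument adds nothing, since it rests on the same uniform dependence of the Cheng--Tian constants.
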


\begin{proof}
The relative automorphism carrying the moving gauge to
$\omega_s(t)$ is $g_{b(s)-b(t)}$. By
\eqref{eq:gauge-window-displacement}, its parameter stays in a
fixed closed ball in $\mathbb R^n$ for $s\leq t\leq s+4$,
independently of $s$. The background volume Jacobian is a smooth
positive function of this parameter and the point of $M$.
Compactness of the parameter ball and $M$ therefore bounds this
Jacobian and its reciprocal uniformly. A change of variables
in \eqref{eq:proper-density-bound} gives
\begin{equation}\label{eq:frozen-density-lp}
 \int_Me^{pF_s(t)}\,d\mu_0\leq C,\qquad
 s\leq t\leq s+4.
\end{equation}

Proposition~\ref{prop:positive-time-volume}, applied on $[s,s+4]$ with
$L=4$ and $\delta=1$, gives
\begin{equation}\label{eq:frozen-low-order}
 \|\varphi_s(t)\|_{C^\beta(M,\omega_0)}
       +\|P_s(t)\|_\infty+\|F_s(t)\|_\infty
 \leq C,\qquad s+1\leq t\leq s+4.
\end{equation}
Its constants depend only on the fixed background and the uniform
density bound \eqref{eq:frozen-density-lp}, so they are independent
of $s$ and of the pointwise initial density on each interval.

Apply Proposition~\ref{prop:common-modulus} to the entire family
\begin{equation}\label{eq:calc-s5-030}
 \{\omega_s(t):s\geq0,\ s+1\leq t\leq s+4\}.
\end{equation}
Its hypotheses are uniform over this family, so it gives one
spatial modulus for all the functions $P_s(t)$, independent of
both $s$ and $t$. To use the normalization in
\cite{ChengTian2026}, replace
$\varphi_s(t)$ by
$\varphi_s(t)+\int_{s+1}^t\left(\int_M F_s(a)\,d\mu_{\varphi_s(a)}\right)\,da$.
This changes only a uniformly bounded time-dependent constant
on the interval.

After translating $s+1$ to time zero, we apply Cheng--Tian
\cite[Theorem~4.3 and Corollary~4.7]{ChengTian2026}, taking the
comparison function to be $P_s$ itself, and the higher order
regularity argument of Chen--Zheng
\cite{ChenZheng2013}.
This gives uniform $C^k$ bounds on $[s+2,s+3]$, with constants
independent of $s$.
Together with the determinant lower bound in
\eqref{eq:frozen-low-order} and the bounded change of normalization,
this proves \eqref{eq:frozen-window-smoothing}.
\end{proof} 

\begin{proof}[Proof of Theorem~\ref{thm:proper-convergence}]
Choose $t_j\geq3$ with $t_j\to\infty$ and
$\|\nabla_{\varphi(t_j)}f(t_j)\|_{L^2(d\mu_{\varphi(t_j)})}^2\to0$, using
\eqref{eq:mabuchi-dissipation-convergence}. Put
$s_j:=t_j-3$ and $g_j:=g_{b(s_j)}$.
Proposition~\ref{prop:uniform-frozen-window} bounds the normalized
potentials $\varphi_{s_j}(t_j)$ in $C^k(M,\omega_0)$ for every $k$.
A diagonal subsequence converges smoothly, and hence
\begin{equation}\label{eq:calc-s5-032}
 g_j^*\omega(t_j)\longrightarrow\omega_*
 \quad\hbox{smoothly on }M.
\end{equation}
The uniform lower bound $g_j^*\omega(t_j)\geq C^{-1}\omega_0$
passes to the limit, so $\omega_*$ is positive. The convergent
global potentials show that $\omega_*\in[\omega_0]$.

Let $f_j:=g_j^*f(t_j)$, with $f$ as in
\eqref{eq:normalized-pcf-speed}. By Definition~\ref{def:pcf},
uniform elliptic estimates give smooth convergence to the
corresponding mean-zero function $f_*$ for $\omega_*$. Since
the Dirichlet integral is invariant under pullback,
\begin{equation}\label{eq:calc-s5-033}
 \int_M|\nabla_{\omega_*}f_*|_{\omega_*}^2\,\frac{\omega_*^n}{V}
 =\lim_{j\to\infty}\|\nabla_{\varphi(t_j)}f(t_j)\|_{L^2(d\mu_{\varphi(t_j)})}^2=0.
\end{equation}
Connectedness and the mean-zero normalization imply $f_*=0$.
Its elliptic equation gives
$R(\omega_*)=\underline R$. Since $g_j\in T_{\C}$ commutes with the compact
torus, each $g_j^*\omega(t_j)$ is $T^n$-invariant, and so is
$\omega_*$.

By the smooth convergence and Chen--Zheng's stability theorem
\cite{ChenZheng2013}, for a fixed
sufficiently large $j$, the flow $g_j^*\omega(t_j+t)$ converges
smoothly and exponentially to a cscK metric.
Pulling back by this fixed $g_j^{-1}$ gives the asserted convergence
of the original flow, and elliptic estimates together with the
normalization $E=0$ give the same convergence for the potentials.
The limit is $T^n$-invariant since the original flow is
$T^n$-invariant. This completes the proof.
\end{proof}

\bigskip
\noindent
Kai Zheng, University of Chinese Academy of Sciences, Beijing, 100190, China.\\*
Email: \href{mailto:KaiZheng@amss.ac.cn}{\texttt{KaiZheng@amss.ac.cn}}

\medskip
\noindent
Junhao Tian, Stony Brook University, New York, 11794, USA.\\*
Email: \href{mailto:junhao.tian@stonybrook.edu}{\texttt{junhao.tian@stonybrook.edu}}
\end{document}